\documentclass[12pt,a4paper,twoside]{article}

\usepackage[margin=1.8cm]{geometry}
\usepackage[T1]{fontenc}
\usepackage{amsmath,amsfonts,amssymb,amsthm}
\usepackage[numbers]{natbib}
\usepackage[colorlinks=true,citecolor=blue,urlcolor=blue,linkcolor=black]{hyperref}
\usepackage[nameinlink,noabbrev]{cleveref}
\usepackage{graphicx}
\usepackage{multirow}
\usepackage{authblk}
\usepackage{caption}
\usepackage{array}
\usepackage{tikz}
\usepackage{booktabs,makecell}
\usetikzlibrary{decorations.pathreplacing}

\allowdisplaybreaks

\theoremstyle{plain}
\newtheorem{theorem}{Theorem}
\newtheorem{lemma}{Lemma}
\newtheorem{proposition}{Proposition}
\newtheorem{corollary}{Corollary}

\theoremstyle{definition}

\newtheorem{condition}{Condition}
\newtheorem{definition}{Definition}
\newtheorem{remark}{Remark}

\DeclareMathOperator*{\argmin}{arg\,min}

\def\bbR{\mathbb{R}}
\def\bbE{\mathbb{E}}
\def\bfI{{\mathbf{I}}}
\def\bbP{\mathbb{P}}

\def\bbB{\mathbb{B}}

\def\caN{\mathcal{N}}
\def\supp{{\rm supp}}
\def\CI{{\rm CI}}
\def\al{\alpha}
\def\ds{\displaystyle}
\def\ind{\mathbf{1}}

\def\Var{{\rm Var}}
\def\rmd{\,{\rm d}\,}

\def\bfz{{\mathbf{0}}}
\def\bfA{{\mathbf{A}}}

\def\alphab{\boldsymbol{\delta}}
\def\sign{\mathrm{sign}}

\crefname{theorem}{theorem}{theorems}
\Crefname{theorem}{Theorem}{Theorems}
\crefname{lemma}{lemma}{lemmas}
\Crefname{lemma}{Lemma}{Lemmas}
\crefname{proposition}{proposition}{propositions}
\Crefname{proposition}{Proposition}{Propositions}
\crefname{corollary}{corollary}{corollaries}
\Crefname{corollary}{Corollary}{Corollaries}
\crefname{definition}{definition}{definitions}
\Crefname{definition}{Definition}{Definitions}
\crefname{assumption}{assumption}{assumptions}
\Crefname{assumption}{Assumption}{Assumptions}
\crefname{condition}{condition}{conditions}
\Crefname{condition}{Condition}{Conditions}
\crefname{remark}{remark}{remarks}
\Crefname{remark}{Remark}{Remarks}
\crefname{example}{example}{examples}
\Crefname{example}{Example}{Examples}

\def\THETITLE{Linear Functional Testing with General Loadings in Sparse Regression: Separation Rates and Computational Barriers}
\title{\THETITLE}
\author{Jie Xie}
\author{Dongming Huang}
\affil{Department of Statistics and Data Science, National University of Singapore}
\date{}

\begin{document}

\maketitle

\begin{abstract}
We study the problem of testing $H_0: \xi^\top\beta=t_0$ in high-dimensional sparse linear regression with Gaussian random design and unknown design covariance.
The loading vector $\xi$ is arbitrary, and the exact sparsity level $k$ is unknown but bounded by a known value $k_u$.
Tests are required to control Type I error uniformly over the $k_u$-sparse null, while power is evaluated against $k$-sparse alternatives.
We construct a computationally efficient mixed test that gives an upper bound on the adaptive separation distance and establish an information-theoretic lower bound calibrated to the magnitude profile of $\xi$.
In the ultra-sparse regime $k_u\lesssim \sqrt n/\log p$, these bounds characterize the adaptive separation rate up to logarithmic factors for arbitrary $\xi$.
In the moderately sparse regime $\sqrt n/\log p\ll k_u\lesssim n/\log p$, these bounds match for several classes of loading vectors but may differ in general.
In this regime, we further prove a low-degree lower bound that matches the upper bound up to logarithmic factors.
This provides evidence that improving on the rate of the mixed test, if statistically possible, may be computationally hard.
For flat sparse loadings, we complement this evidence with a polynomial-time reduction from sparse CCA.
Finally, we examine how information about the design covariance affects the adaptive separation rate in two settings.
Under a sparse signed-spiked covariance model, the information-theoretic lower bound is attainable up to logarithmic factors by a computationally inefficient procedure, while the low-degree lower bound and sparse-CCA reduction continue to apply, providing evidence for a statistical-computational gap.
When the design covariance is known and diagonal, the adaptive separation rate takes the same form as in the ultra-sparse regime.
\end{abstract}

\section{Introduction}

High-dimensional regression is commonly studied in the regime where the number of covariates $p$ exceeds the sample size $n$. Consider the linear regression model
\begin{equation}\label{eq: linear regression}
    Y = X\beta + \varepsilon, \qquad \varepsilon \sim {\caN}(0, \sigma^2 \mathbf{I}_n),
\end{equation}
where $Y \in \mathbb{R}^n$, $X \in \mathbb{R}^{n \times p}$, and $\beta \in \mathbb{R}^p$. Under suitable conditions on $X$, regularized estimators such as the Lasso \citep{tibshirani1996regression} can attain the minimax-optimal rate \(k\log p/n\) for the squared estimation error over $k$-sparse vectors with $k \le  c n/\log p$ for some constant $c>0$; see, for example, \cite{bickel2009simultaneous,raskutti2011minimax}.

Beyond estimation, uncertainty quantification and hypothesis testing are also important tasks in high-dimensional regression.
In this paper, we study testing problems of the form
\begin{equation}\label{eq: hypothesis test}
    H_0: \xi^\top \beta = t_0,
\end{equation}
where \(t_0 \in \mathbb{R}\) is fixed and \(\xi \in \mathbb{R}^p\) is a loading vector.
Through test inversion, this testing problem is related to confidence interval construction for the linear functional $L(\beta)=\xi^\top \beta$. This formulation includes coordinate-wise
inference as the special case where \(\xi\) is a standard basis vector, and also
covers prediction-type targets, where \(\xi\) represents a test-point covariate
vector and \(\xi^\top\beta\) is the corresponding conditional mean.

For coordinate-wise inference, Zhang and Zhang \cite{zhang2014confidence}, van de Geer et al.\ \cite{vandeGeer2014optimal}, and Javanmard and Montanari \cite{javanmard2014confidence} develop debiasing Lasso methods that yield asymptotically valid confidence intervals. However, these procedures are primarily designed for the ultra-sparse regime \(k \lesssim \sqrt{n}/\log p\), which is substantially more restrictive than the condition \(k \lesssim n/\log p\) required for consistent estimation.

Inference for general linear functionals is more delicate, and existing optimality theory is largely tied to structured loading vectors.
Cai and Guo \cite{cai2017confidence} study confidence
intervals for \(\xi^\top\beta\) under the regular loading condition
\begin{equation}\label{eq: tony loading vector}
    \frac{\max_{i \in \supp(\xi)} |\xi_i|}{\min_{i \in \supp(\xi)} |\xi_i|}
    \le  \bar{c},
    \qquad \text{for some constant } \bar{c} \ge 1,
\end{equation}
and distinguish two settings: the \emph{sparse-loading}  setting where \(\|\xi\|_0\lesssim k\) and the \emph{dense-loading} setting where $k \asymp p^{\gamma}$, $\|\xi\|_0 \asymp p^{\gamma_\xi}$, and  \(\gamma_\xi>2\gamma\).
For both settings, they derive the minimax expected length of confidence intervals for $\xi^\top \beta$.
Cai, Cai and
Guo~\cite{cai2019optimal} develop an inference procedure that is valid for arbitrary $\xi$, but their lower-bound theory is
established for loadings satisfying \eqref{eq: tony loading vector}, and for a specific class of polynomially decaying loadings.

We study the problem \eqref{eq: hypothesis test} in an adaptive testing framework, where the true sparsity level \(k\) is unknown and only an upper bound \(k_u\) is available.
This question lies at the core of adaptive inference, a central theme in high-dimensional statistics \cite{cai2017confidence,cai2018accuracy,cai2019optimal,bradic2022testability}.
Following the adaptive testing formulation of \cite{cai2019optimal}, we require the test to control Type I error uniformly over the enlarged parameter space with sparsity \(k_u\), while its power is evaluated over alternatives with the true sparsity level \(k\le k_u\).
An \textit{adaptive separation distance} quantifies the smallest signal size needed to distinguish the null from the local alternative when the test is only allowed to use $k_u$.

Within this adaptive framework, our goal is to characterize the separation distance for general loading vectors. As discussed above, existing optimality theory is largely tied to structured loading classes. In particular, prior results mainly cover loadings that satisfy the regular loading condition \eqref{eq: tony loading vector} and whose support size falls into either the sparse-loading regime or the dense-loading regime. The intermediate regime between these two cases remains open even under regular loading. More generally, existing results do not describe the adaptive separation distance for heterogeneous loading vectors. Such heterogeneity is natural in prediction problems, where \(\xi\) may be a test-point covariate vector. For instance, if the coordinates of \(\xi\) are i.i.d.\ Gaussian, then the nonzero coordinates need not be of the same order, so \(\xi\) violates the regular loading condition \eqref{eq: tony loading vector} with high probability and does not belong to the polynomial-decay classes. This motivates the central question studied in this paper:

\begin{center}
\fbox{
\begin{minipage}{0.8\linewidth}
\centering
\textit{Given only $k_u$, what is the separation scale for testing $H_0: \xi^\top \beta=t_0$ against $k$-sparse alternatives with any loading vector $\xi$?}
\end{minipage}
}
\end{center}

\subsection{Major Contributions}
This paper develops a loading-dependent theory to characterize the adaptive separation distance for testing a linear functional in high-dimensional sparse regression with Gaussian design.
Notably, we identify a potential computational barrier that arises in the moderately sparse regime
\(
\sqrt{n}/\log p \ll k_u \lesssim n/\log p
\)
when the design covariance matrix $\Sigma$ is unknown.

Our theory has three parts: adaptive separation distances for general loadings in the ultra-sparse regime, computational evidence for the gaps in the moderately sparse regime, and refined analysis to identify the source of the gaps.
We next describe the main results in more detail.

\textit{Adaptive separation distances for general loading vectors in the ultra-sparse regime.}
In the ultra-sparse regime $k_u \lesssim \sqrt{n}/\log p$,
we characterize the adaptive separation rate, up to logarithmic factors, for testing~\eqref{eq: hypothesis test} with an arbitrary loading vector \(\xi \in \mathbb{R}^p\). This extends prior work \cite{cai2017confidence,cai2019optimal}, which mainly treats loading vectors satisfying the regular loading condition~\eqref{eq: tony loading vector} or a polynomial-decay condition.
Our upper bound is obtained by decomposing $\xi$ into large and small coordinates and mixing debiased and plug-in tests; the lower bound uses least favorable priors calibrated to the magnitude profile of $\xi$.
Extending into the moderately sparse regime, the upper and lower bounds continue to match for some classes of loading vectors, but may differ in general.

\textit{Evidence for computational barriers in the moderately sparse regime.}
In the moderately sparse regime, we prove a low-degree lower bound showing that no degree-$D$ polynomial weakly separates the null and alternative at and below a certain separation level.
For $D=O(\log p)$, the low-degree lower bound matches the computationally feasible upper bound up to logarithmic factors.
Under the standard low-degree heuristic \citep{hopkins2018statistical,kunisky2019notes}, this result provides evidence for a computational barrier.
In addition, for flat sparse loadings, we derive an explicit polynomial-time reduction to demonstrate that the testing problem~\eqref{eq: hypothesis test} is at least as hard as sparse canonical correlation analysis (CCA). This connection is non-obvious: testing a linear functional in sparse linear regression has no apparent structural resemblance to sparse CCA or sparse PCA, and the existing literature on linear functional inference gives little indication that such a connection should arise.
Since sparse CCA is widely believed to exhibit computational barriers \cite{gao2017sparse,laha2023support}, our reduction provides complementary evidence for the low-degree barrier.

To the best of our knowledge, this is the first work to provide low-degree lower-bound evidence and sparse-CCA reduction evidence for computational barriers in testing linear functionals in high-dimensional sparse linear regression. Existing computational lower bounds for sparse regression mainly concern recovery, estimation, or detection of the sparse signal itself \cite{reeves2019all,bresler2018sparse}; by contrast, our problem concerns inference for a linear functional, where the difficulty arises from the interaction between the unknown covariance structure and sparsity level.

\textit{Refined analysis with knowledge of covariance.}
We calibrate the scope of the lower-bound analysis through two benchmarks with additional information about the design covariance.
When the design covariance is known and diagonal, the upper bound is refined to match the lower bound up to logarithmic factors.
When $\Sigma$ remains unknown but satisfies a sparse signed-spiked assumption, the lower bound is again sharp up to logarithmic factors, but now the attaining optimal test is computationally inefficient. In this setting, the low-degree lower bound and sparse-CCA reduction continue to apply, thereby providing evidence for a statistical--computational gap.

\subsection{Related literature}
We review several lines of research related to our setting.

\textit{Minimax linear functional estimation.}
The minimax theory for estimating linear functionals has been extensively developed under various statistical models; see, for example, \cite{ibragimov1985nonparametric,cai2004minimax,cai2005adaptive,collier2017minimax,xie2025minimax}. The work closest to ours is \cite{xie2025minimax}, which establishes minimax rates for estimating a general linear functional \(\xi^\top\mu\), with an arbitrary loading vector \(\xi\), in the Gaussian sequence model with a sparse mean vector \(\mu\).
Our analysis differs from theirs in three respects: the observations come from a random-design regression model, the quantity of interest is an adaptive testing boundary rather than an estimation risk, and the unknown design covariance plays an important role in the moderately sparse regime.

\textit{Linear hypothesis testing in high-dimensional linear models.}
Beyond the debiased Lasso approach discussed earlier, linear hypothesis testing in high-dimensional regression has also been studied in \cite{javanmard2020flexible,zhu2018linear,bradic2022testability,zhao2024estimation}. Javanmard and Lee \cite{javanmard2020flexible} reduce testing a general linear functional to testing its projections onto a selected orthogonal basis, and then construct debiased estimators for the corresponding basis coefficients. Their approach, however, relies critically on sparsity of the loading vector \(\xi\), and therefore does not extend to general loadings.
Zhao et al.~\cite{zhao2024estimation} propose an estimator that adapts to the sparsity of \(\beta\) and to the strength of correlations among predictors, but their objectives and assumptions differ substantially from those considered here.

Zhu and Bradic \cite{zhu2018linear} construct confidence intervals for linear functionals under the assumption that
\[
\mathbb{E}\!\left[\xi^\top X_i \,\middle|\, \omega_1^\top X_i,\ldots,\omega_{p-1}^\top X_i\right]
\]
is a sparse linear combination of \(\omega_1^\top X_i,\ldots,\omega_{p-1}^\top X_i\), where \(\{\omega_j\}_{1\le j\le p-1}\) is an orthonormal basis of the subspace orthogonal to \(\xi\). When this conditional sparsity level is sufficiently small, their procedure attains the parametric rate \(n^{-1/2}\) for estimating the linear functional. Bradic et al.~\cite{bradic2022testability} develop related ideas for inference on a single coefficient \(\beta_1\), assuming that \(X_1\mid X_{-1}\) admits a sparse linear representation in terms of \(X_{-1}\), and establish corresponding minimax rates.
Although these structural conditions lead to elegant inferential procedures, they need not hold in our setting, where \(\xi\) is arbitrary and \(\Sigma\) is only assumed to have bounded spectrum.

\textit{Computational barriers in high-dimensional problems.}
Several lines of work address computational barriers in high-dimensional statistical problems.
One common strategy is to establish polynomial-time reductions from conjecturally hard problems to the statistical task of interest.
Such reductions have been used to provide evidence of computational barriers in a variety of high-dimensional statistical problems, including sparse PCA \citep{berthet2013complexity}, submatrix detection \citep{ma2015computational}, and sparse CCA \citep{gao2017sparse}, typically under the assumption that certain instances of the planted clique problem cannot be solved by randomized polynomial-time algorithms \citep{rossman2010average,feldman2017statistical}.
However, constructing these reductions is often delicate: the resulting transformations can be fragile, may require additional structural assumptions, and can limit the scope of the resulting hardness conclusions.

A complementary line of work establishes lower bounds for broad classes of algorithms, including sum-of-squares algorithms \cite{deshpande2015improved,potechin2022sub,ma2015sum}, statistical query algorithms \cite{feldman2017statistical}, and low-degree polynomial algorithms \cite{hopkins2018statistical,kunisky2019notes}. Although these classes do not capture all polynomial-time algorithms, they are sufficiently expressive to model many practical procedures and often provide strong evidence of computational hardness. Among them, the low-degree polynomial framework has proven particularly powerful, as the behavior of such algorithms can be analyzed directly and sharply. This framework has been successfully applied to a wide range of high-dimensional testing problems, including sparse PCA \cite{hopkins2018statistical,kunisky2019notes}, tensor PCA \cite{hopkins2018statistical}, sparse CCA \cite{laha2023support}, graphon estimation \cite{luo2024computational}, and independent component analysis \cite{auddy2025large}.
In this work, our computational results rely on these two complementary forms of evidence: a direct low-degree lower bound for general loading vectors and a polynomial-time reduction from sparse CCA for flat sparse loading vectors.

\subsection{Organization of the paper}
The remainder of the paper is organized as follows.
\Cref{sec: preliminary and preview} introduces the framework for analyzing the minimax properties of linear hypothesis testing and provides a preview of the main results.
\Cref{sec: minimax rate} develops the general upper and lower bounds for the adaptive separation distance under arbitrary loading vectors and identifies conditions under which these bounds match.
\Cref{sec: computational barrier} provides evidence for computational barriers in the moderately sparse regime \( \sqrt{n}/\log p \ll k_u \lesssim n/\log p \).
\Cref{sec: extensions} examines two cases with prior knowledge about design covariance, including sparse signed-spiked covariance and known design covariance.
\Cref{sec: discussion} concludes with further discussions and open directions.

\subsection{Notation}

For a positive integer \(p\), let \([p]=\{1,\ldots,p\}\). For a set \(S\), denote its cardinality by \(|S|\). For \(x,y\in\mathbb{R}\), let \(x\vee y=\max\{x,y\}\), \(x\wedge y=\min\{x,y\}\), and \(x_+=x\vee 0\). For \(x\in\mathbb{R}\), let \(\lfloor x\rfloor\) denote the greatest integer less than or equal to \(x\).
For a vector \(x\in\bbR^p\) and a subset \(J\subset[p]\), \(x_J\) denotes the subvector of \(x\) indexed by \(J\), and \(x_{-J}\) denotes the subvector indexed by \(J^c\). We write \(\supp(x)\) for the support of \(x\). For \(q>0\), define \(\|x\|_q=\left(\sum_{i=1}^p |x_i|^q\right)^{1/q}\).
We also use the conventions
\(\|x\|_0=|\supp(x)|\) and \(\|x\|_\infty=\max_{1\le j\le p}|x_j|\).
Let \(e_i\) denote the \(i\)th standard basis vector in \(\bbR^p\).
For a matrix \(X\in\bbR^{n\times p}\), \(X_{i\cdot}\), \(X_{\cdot j}\), and \(X_{ij}\) denote, respectively, its \(i\)th row, \(j\)th column, and \((i,j)\) entry. The notation \(X_{i,-j}\) denotes the \(i\)th row of \(X\) with the \(j\)th coordinate removed, and \(X_{-j}\) denotes the submatrix of \(X\) obtained by removing its \(j\)th column. For \(J\subset[p]\), \(X_{\cdot J}\) denotes the submatrix of \(X\) formed by the columns indexed by \(J\). For a symmetric matrix \(A\), \(\lambda_{\min}(A)\) and \(\lambda_{\max}(A)\) denote its smallest and largest eigenvalues, respectively.
We use \(c\) and \(C\) to denote generic positive constants whose values may vary from line to line. For two positive sequences \(a_n\) and \(b_n\), we write \(a_n\lesssim b_n\) if there exists a constant \(C>0\) such that \(a_n\le Cb_n\) for all \(n\). We write \(a_n\gtrsim b_n\) if \(b_n\lesssim a_n\), and \(a_n\asymp b_n\) if both \(a_n\lesssim b_n\) and \(b_n\lesssim a_n\). We write \(a_n\ll b_n\) if \(a_n/b_n\to 0\), and \(a_n\gg b_n\) if \(b_n\ll a_n\). The notation \(a_n\asymp_{\log} b_n\) means that \(a_n\lesssim b_n\) and \(b_n\lesssim a_n\) hold up to logarithmic factors in \(p\).
For a measure \(\pi\), we denote by \(\pi^{\otimes 2}\) the product measure \(\pi\otimes\pi\).

\section{Preliminaries and Preview}\label{sec: preliminary and preview}
In this section, we introduce the framework for linear hypothesis testing in high-dimensional linear regression and state several technical preliminaries used in the subsequent analysis. We also provide an informal summary of the main results in \Cref{sec: main results}, which serves as a theorem map for the rest of the paper.

\subsection{Problem setup}\label{sec: problem setup}
Throughout the article, we focus on the high-dimensional linear model \eqref{eq: linear regression} with random design, where the rows of $X$ satisfy $X_{i\cdot}\stackrel{\text{i.i.d.}}{\sim} \caN(0,\Sigma),i=1,\cdots,n$, and are independent of $\varepsilon$. Both $\Sigma$ and the noise level $\sigma$ are treated as unknown.
The observed data are ${\cal Z}=\{Z_1,\cdots,Z_n\}$, where $Z_i=(Y_i,X_{i \cdot})\in \bbR^{p+1}$ for $i=1,\cdots,n$.
The distribution of the data is now indexed by the parameter
$$\theta=(\beta,\Sigma,\sigma),$$
which consists of the signal $\beta$, the covariance matrix $\Sigma=\bbE[X_{i \cdot}X_{i \cdot}^\top]$ for the random design, and the noise level $\sigma$.
We consider the following collection of parameter spaces:
\begin{equation}\label{eq: parameter space}
    \begin{aligned}
    \Theta(k)=&\left\{\theta=(\beta,\Sigma,\sigma):\left\|\beta\right\|_0\le  k,\frac{1}{M_1}\le  \lambda_{\min}(\Sigma)\le  \lambda_{\max}(\Sigma)\le  M_1, 0<\sigma\le  M_2\right\},
    \end{aligned}
\end{equation}
where $M_1>1$ and $M_2>0$ are some positive constants.
The eigenvalue bound and noise-variance bound
are two mild regularity conditions \cite{cai2017confidence, cai2018accuracy,cai2020semi}. We use $\bbP_\theta^n$ and $\bbE_\theta$ to denote the probability and expectation with respect to the distribution of the data ${\cal Z}$ indexed by $\theta$.

\subsection{Optimality framework for hypothesis testing: minimaxity and adaptivity}\label{sec: framework}
For $0<\alpha<1$ and a given parameter space $\Theta$, the class of tests of nominal level $\alpha$ for testing the null hypothesis $\theta\in\Theta$ is defined as
\begin{equation}\label{eq: null test}
\Psi_{\alpha}(\Theta)
=
\left\{
\psi : \mathcal{Z} \mapsto [0,1]
:\;
\sup_{\theta \in \Theta} \mathbb{E}_{\theta} \psi \le  \alpha
\right\},
\end{equation}
see, for example, Lehmann and Romano \cite{lehmann2005testing}. Here, we allow for both randomized and non-randomized tests.
We consider the linear hypothesis testing problem \eqref{eq: hypothesis test}. The corresponding null parameter space is given by
\begin{equation}\label{eq: null space}
\Theta(k;\xi,t_0)
=
\left\{
\theta=(\beta,\Sigma,\sigma)\in \Theta(k)
:\;
\xi^\top \beta = t_0
\right\}.
\end{equation}
To characterize local alternatives, we define
\begin{equation}\label{eq: alternative space}
\Theta_{\pm\tau}(k;\xi,t_0)
=
\left\{
\theta=(\beta,\Sigma,\sigma)\in \Theta(k)
:\;
\lvert \xi^\top \beta - t_0 \rvert \ge \tau
\right\},
\end{equation}
where $\tau>0$ is a given separation level. Larger values of \(\tau\) correspond to alternatives that are farther from the null and are therefore easier to distinguish.

For fixed \(0<\alpha,\eta<1\), we define the minimax separation distance by
\begin{equation}\label{eq: minimax separation}
\tau_{\text{mini}}(k\,;\,\xi)
=
\inf\left\{
\tau
:\;
\sup_{\psi\in\Psi_\alpha(\Theta(k;\xi,t_0))}
\inf_{\theta\in\Theta_{\pm\tau}(k;\xi,t_0)}
\mathbb{E}_\theta \psi
\ge 1-\eta
\right\}.
\end{equation}
Thus, \(\tau_{\mathrm{mini}}(k\,;\,\xi)\) is the smallest separation level at which there exists a test with Type I error at most \(\alpha\) uniformly over \(\Theta(k;\xi,t_0)\) and power at least \(1-\eta\) uniformly over \(\Theta_{\pm\tau}(k;\xi,t_0)\).
The separation distance depends on $\xi,k,t_0,\alpha$ and $\eta$. Throughout the paper, we suppress the dependence on $t_0$, $\alpha$, and $\eta$ in the notation $\tau_{\text{mini}}(k\,;\,\xi)$, as these quantities do not affect the rate of the separation distance. The constants $\alpha$ and $\eta$ can be chosen arbitrarily small.

A test is said to be \emph{valid} if its Type~I error over the null parameter space does not exceed $\alpha$, and \emph{powerful} if its power over the local alternative parameter space is at least $1-\eta$. We say that a test $\psi$ is \emph{minimax optimal} if
\begin{equation}\label{eq: optimal}
\psi \in \Psi_\alpha(\Theta(k;\xi,t_0))
\quad\text{and}\quad
\inf_{\theta\in\Theta_{\pm\tau}(k;\xi,t_0)}
\mathbb{E}_\theta \psi
\ge 1-\eta
\quad\text{for}\quad
\tau \asymp \tau_{\text{mini}}(k\,;\,\xi).
\end{equation}

The minimax separation distance in~\eqref{eq: minimax separation} is defined for a fixed sparsity level \(k\). To formalize adaptivity, we follow \cite{cai2019optimal} and consider two sparsity levels \(k\le  k_u\), where \(k\) is the unknown true sparsity level and \(k_u\) is a known upper bound. When prior information about sparsity is limited, \(k_u\) may be substantially larger than \(k\). The test must therefore control its Type I error uniformly over the larger null parameter space \(\Theta(k_u;\xi,t_0)\), while its power is evaluated over alternatives with sparsity level \(k\). Accordingly, we define the \emph{adaptive separation distance} by
\begin{equation}\label{eq: adaptive separation}
    \tau_{\text{adap}}(k_u, k; \xi) = \inf
    \left\{
    \tau : \sup_{\psi\in \Psi_\alpha(\Theta(k_u;\xi,t_0))} \inf_{\theta\in \Theta_{\pm \tau}(k;\xi, t_0)} \bbE_\theta \psi\ge  1 - \eta
    \right\}.
\end{equation}
This work focuses on the asymptotic regime in which $n$, $p$, and $k_u$ diverge.
For simplicity, we use the notation $\lim$, $\asymp$, $\to$, $o(\cdot)$, and $O(\cdot)$ to denote limits and asymptotic relations.

Comparing \eqref{eq: adaptive separation} with \eqref{eq: minimax separation}, we observe that the lack of precise knowledge about the sparsity level affects only the size control over a larger parameter space, while the power functions in \eqref{eq: minimax separation} and \eqref{eq: adaptive separation} are evaluated over the same parameter space. It is evident that \( \tau_{\text{mini}}(k\,;\,\xi) = \tau_{\text{adap}}(k,k; \xi) \). Analogous to \eqref{eq: optimal}, a test \( \psi \) is defined as \textit{adaptively optimal} if it satisfies:
\begin{equation}\label{eq: adaptively optimal}
    \psi\in \Psi_\alpha(\Theta(k_u;\xi,t_0))\quad \text{and}\quad \inf_{\theta\in \Theta_{\pm \tau}(k;\xi, t_0)}\bbE_\theta \psi \ge  1 - \eta \quad \text{for}\quad \tau \asymp \tau_{\text{adap}}(k_u, k; \xi).
\end{equation}

The quantities $\tau_{\text{mini}}(k\,;\,\xi)$ and $\tau_{\text{adap}}(k_u,k\,;\,\xi)$ do not depend on a specific testing procedure but instead reflect the intrinsic difficulty of the testing problem~\eqref{eq: hypothesis test}, which is determined by the parameter space and the loading vector $\xi$.

We can tell whether one pays a statistical price for not knowing $k$ by comparing $\tau_{\text{mini}}(k\,;\,\xi)$ and $\tau_{\text{adap}}(k_u,k\,;\,\xi)$. We focus on the
nontrivial case \(k\ll k_u\). When \(k\asymp k_u\), the upper bound \(k_u\)
already localizes the sparsity level sufficiently well, and one typically has
\(\tau_{\text{mini}}(k;\xi)\asymp \tau_{\text{adap}}(k_u,k;\xi)\).
If
\[
\tau_{\text{mini}}(k\,;\,\xi) \asymp \tau_{\text{adap}}(k_u,k\,;\,\xi),
\]
then we say the hypothesis testing problem~\eqref{eq: hypothesis test} is \emph{adaptive to sparsity}; that is, even without knowing the exact sparsity level, it is possible to construct a test that achieves the same separation rate as if the sparsity level were known. In contrast, if
\[
\tau_{\text{mini}}(k\,;\,\xi) \ll \tau_{\text{adap}}(k_u,k\,;\,\xi),
\]
then the hypothesis testing problem~\eqref{eq: hypothesis test} is \emph{non-adaptive}, indicating that information about the sparsity level is essential.
In this case, the adaptive separation distance $\tau_{\text{adap}}(k_u,k\,;\,\xi)$ is of interest, as it quantifies the best achievable performance in the absence of precise sparsity information.

As pointed out in \cite{cai2019optimal}, the minimax detection boundary quantifies the intrinsic difficulty of the testing problem when the sparsity level is known, whereas the adaptive separation distance addresses a more challenging setting where the sparsity level is unknown. In practice, adaptively optimal tests that satisfy condition \eqref{eq: adaptively optimal} are more useful than minimax optimal tests, as the exact sparsity level $k$ is typically unknown in real applications.
Furthermore, the definitions immediately yield the identity $\tau_{\text{mini}}(k\,;\,\xi)=\tau_{\text{adap}}(k, k; \xi)$, so it suffices to focus on characterizing the adaptive separation distance.

In addition, we consider the setting in which the covariance matrix is known and fixed as
\(\Sigma = \Sigma_0\) for some $\Sigma_0\in \bbR^{p \times p}$ satisfying the eigenvalue condition in \eqref{eq: parameter space}.
In this case, we specify the parameter space as
\[
\Theta(k,\Sigma_0)
= \left\{ \theta = (\beta,\Sigma_0,\sigma) : \theta \in \Theta(k) \right\}.
\]
Analogous to \eqref{eq: null space}, \eqref{eq: alternative space}, \eqref{eq: minimax separation}, and
\eqref{eq: adaptive separation}, we define
\begin{subequations}
\begin{align}
    \Theta(k,\Sigma_0;\xi,t_0)
        &= \left\{ \theta = (\beta,\Sigma_0,\sigma) \in \Theta(k;\xi,t_0) \right\},\nonumber \\
    \Theta_{\pm \tau}(k,\Sigma_0;\xi, t_0)
        &= \left\{ \theta = (\beta,\Sigma_0,\sigma) \in \Theta_{\pm \tau}(k;\xi, t_0) \right\}, \nonumber \\
    \tau_{\text{mini}}(k,\Sigma_0\,;\,\xi)
        &= \inf
        \left\{
            \tau :\;
            \sup_{\psi \in \Psi_\alpha(\Theta(k,\Sigma_0;\xi,t_0))}
            \inf_{\theta \in \Theta_{\pm \tau}(k,\Sigma_0;\xi, t_0)}
            \mathbb{E}_\theta \psi \ge 1 - \eta
        \right\}, \label{eq: minimax separation design} \\
    \tau_{\text{adap}}(k_u,k,\Sigma_0\,;\,\xi)
        &= \inf
        \left\{
            \tau :\;
            \sup_{\psi \in \Psi_\alpha(\Theta(k_u,\Sigma_0;\xi,t_0))}
            \inf_{\theta \in \Theta_{\pm \tau}(k,\Sigma_0;\xi, t_0)}
            \mathbb{E}_\theta \psi \ge 1 - \eta
        \right\}. \label{eq: adaptive separation design}
\end{align}
\end{subequations}

\subsection{Estimation and sparsity conditions}
Before turning to the inference problem, we first state several conditions used throughout this article.

To develop the inference procedures, we assume the existence of estimators $\hat{\beta}$ and $\hat{\sigma}^2$ satisfying the following conditions.
\begin{condition}\label{cdt: linear estimator}
    With probability approaching 1, the estimator $\hat{\beta}$ satisfies
    \begin{equation}\label{eq: linear estimator}
        \|\hat{\beta}-\beta\|_1 \le  c_\beta \sigma k_u \sqrt{\frac{\log p}{n}}, \quad \text{and} \quad \|\hat{\beta}-\beta\|_2 \le  C_\beta \sigma\sqrt{\frac{k_u \log p}{n}},
    \end{equation}
    for some constants \( c_\beta, C_\beta > 0 \).
\end{condition}
\begin{condition}\label{cdt: linear variance}
    $\hat{\sigma}^2$ is a consistent estimator of $\sigma^2$, i.e., $|\hat{\sigma}^2/\sigma^2-1|\stackrel{p}{\to}0$.
\end{condition}

\Cref{cdt: linear estimator,cdt: linear variance} are used in constructing upper bounds on adaptive separation distances.
Various polynomial-time regularized estimators satisfy Conditions~\ref{cdt: linear estimator} and \ref{cdt: linear variance}; one example is the scaled Lasso, defined by
\[
\{\hat{\beta},\hat{\sigma}\} = \argmin_{\beta \in \bbR^p,\, \sigma \in \bbR^+} \left\{ \frac{\|Y - X\beta\|_2^2}{2n\sigma} + \frac{\sigma}{2} + \sqrt{\frac{2.01 \log p}{n}} \sum_{j=1}^p \frac{\|X_{\cdot j}\|_2}{\sqrt{n}} |\beta_j| \right\},
\]
which Sun and Zhang \cite{sun2012scaled} showed satisfies these conditions. A key assumption required for general regularized estimators to meet these conditions is the \textit{restricted eigenvalue condition} on the design matrix \( X \), originally introduced by \cite{bickel2009simultaneous}:
\begin{equation}\label{eq: restricted eigenvalue}
    \kappa(X, k_u, \alpha_0) = \min_{\substack{J_0 \subseteq \{1, \ldots, p\} \\ |J_0| \le  k_u}} \ \min_{\substack{\alphab \neq 0 \\ \|\alphab_{J_0^c}\|_1 \le  \alpha_0 \|\alphab_{J_0}\|_1}} \frac{\|X \alphab\|_2}{\sqrt{n} \|\alphab_{J_0}\|_2}.
\end{equation}

In the classical analysis of sparsity-inducing estimators
\citep{bickel2009simultaneous}, the constants \(c_\beta\) and \(C_\beta\) in
\eqref{eq: linear estimator} typically depend on the restricted eigenvalue condition;
in particular, the error bound deteriorates as \(\kappa(X,k_u,\alpha_0)\) decreases.
Furthermore, directly verifying the restricted eigenvalue condition is computationally difficult
\citep{bandeira2013certifying,tillmann2013computational}.

In our analysis, the conditions can be handled probabilistically. Specifically,
for Gaussian random designs whose covariance matrix \(\Sigma\) satisfies the eigenvalue
condition in \eqref{eq: parameter space}, we can show that, for each fixed
\(\alpha_0>0\), there exists a constant \(c_{\rm RE}>0\), depending only on \(M_1\)
and \(\alpha_0\), such that if \(k_u\log p/n \le c_{\rm RE}\), then
\(\kappa(X,k_u,\alpha_0)\) is bounded away from zero with high probability. This
high-probability lower bound can then be used to choose admissible values of
\(c_\beta\) and \(C_\beta\). A more detailed discussion is deferred to
\Cref{sec: restricted eigenvalue}.

\begin{remark}
Although the estimation error in \eqref{eq: linear estimator} is adaptive to the unknown exact sparsity level $k$, Cai and Guo \cite{cai2018accuracy} showed that the accuracy assessment of the \( \ell_q \)-loss, for \( 1 \le  q \le  2 \), is hard and generally non-adaptive. Consequently, we bound the estimation loss using the upper bound in \eqref{eq: linear estimator}, stated in terms of the sparsity upper bound $k_u$ rather than the unknown exact sparsity level $k$.
While such a bound may seem non-tight, we will later show that it still leads to an optimal test, thereby justifying the use of the $k_u$-based error bound.
\end{remark}

\begin{condition}\label{cdt: sparsity assumption}
There exists a constant \(\gamma\in[0,1/2)\) such that
\(
    k_u \lesssim  p^\gamma.
\)
\end{condition}
Condition \ref{cdt: sparsity assumption} is standard and it places $k_u$ in the sparsity regime commonly assumed when deriving minimax rates in high-dimensional settings \cite{cai2017confidence, cai2018accuracy, bradic2022testability}.

\subsection{Informal theorem map}\label{sec: main results}

We now give an informal summary of the main results. Throughout the paper, we assume without loss of generality that the loading vector \(\xi=(\xi_1,\ldots,\xi_p)^\top\in\mathbb R^p\) is ordered by decreasing magnitude,
\(
    |\xi_1|\ge |\xi_2|\ge \cdots \ge |\xi_p|.
\)
We also write \(k_\xi=\|\xi\|_0\) for the support size of \(\xi\).
For \(t>0\), define the top-\(\lceil t\rceil\) norm of \(\xi\) by
\begin{equation}\label{eq: H definition}
    H(t;\xi)
    :=
    \sqrt{\sum_{j\le \lceil t\rceil \wedge p}\xi_j^2}
    =
    \max_{\substack{A\subseteq[p]\\ |A|\le \lceil t\rceil}}
    \|\xi_A\|_2,
\end{equation}
and $H(0;\xi)=0$.
The main rate statements are summarized in \Cref{tab: informal theorem map}.

\begin{table}[bt]
\centering
\renewcommand{\arraystretch}{1}
\caption{Informal summary of the main separation rates. All comparisons are
understood up to logarithmic factors.}
\label{tab: informal theorem map}
\begin{tabular}
{
>{\centering\arraybackslash}m{0.35\linewidth}
>{\centering\arraybackslash}m{0.55\linewidth}
}
\toprule
Setting & Informal rate statement \\
\midrule
\makecell[c]{Unknown \(\Sigma\)\\
\(k_u \lesssim \sqrt n/\log p\)}
&
\(\displaystyle
    \tau_{\text{adap}}(k_u,k;\xi)
    \asymp_{\log}
    \frac{H(k_u^2\log p;\xi)}{\sqrt n}
\)
\\
\midrule
\makecell[c]{Unknown \(\Sigma\)\\
\(\sqrt n/\log p \ll k_u \lesssim n/\log p\)}
&
\(\displaystyle
    \tau_{\text{com}}(k_u,k;\xi)
    \asymp_{\log}
    H({n}/{\log p};\xi)\frac{k_u\log p}{n}
\)
\\
\midrule
\makecell[c]{
Sparse signed-spiked covariance \(\Sigma\)\\
All sparsity levels: \(k_u\lesssim n/\log p\)}
&
\(\displaystyle
    \tau^{\text{spike}}_{\text{adap}}(k_u,k;\xi)
    \asymp_{\log}
    \frac{H(k_u^2\log p;\xi)}{\sqrt n}
    +
    H(k_u;\xi)\frac{k_u\log p}{n}
\)
\\
\midrule
\makecell[c]{Known diagonal \(\Sigma=\Sigma_0\)\\
All sparsity levels: \(k_u\lesssim n/\log p\)}
&
\(\displaystyle
    \tau_{\text{adap}}(k_u,k,\Sigma_0^{\textrm{diag}};\xi)
    \asymp_{\log}
    \frac{H(k_u^2\log p;\xi)}{\sqrt n}
\)
\\
\bottomrule
\end{tabular}
\begin{minipage}{0.95\linewidth}
\footnotesize\itshape

1. \(\tau_{\text{com}}(k_u,k;\xi)\) denotes the separation rate  achieved by the computationally feasible test and matched by a low-degree lower bound.
It should be interpreted as an evidence-based computational barrier.

2. The formal definition of sparse signed-spiked covariance is given in
\Cref{sec: np hard}.

3. \(\tau_{\text{adap}}(k_u,k,\Sigma_0^{\textrm{diag}};\xi)\) denotes the adaptive
separation distance when \(\Sigma=\Sigma_0\) is known and diagonal.

\end{minipage}
\end{table}

The entries in Table~\ref{tab: informal theorem map} should be read as follows. The first, third, and fourth rows give adaptive separation rates matched by statistical lower bounds up to logarithmic factors. The second row concerns the moderately sparse unknown-covariance regime and should instead be interpreted as a computational separation distance: it is achieved by computationally feasible tests and is supported by computational lower-bound evidence.

The sparse signed-spiked row shows that, under additional covariance structure, the
adaptive separation distance can be characterized up to logarithmic factors.
This rate is no larger than the computational separation distance in the
general unknown-covariance setting. Indeed, since \(k_u\lesssim n/\log p\), we
have \(H(k_u;\xi)\lesssim H(n/\log p;\xi)\). Moreover, in the moderately sparse
regime \(\sqrt n/\log p\ll k_u\), we have
\(n/\log p\lesssim k_u^2\log p\). By the definition of \(H(t;\xi)\) and the
decreasing ordering of \(|\xi_j|\), this implies
\[
    \frac{H(k_u^2\log p;\xi)}{\sqrt{k_u^2\log p}}
    \lesssim
    \frac{H(n/\log p;\xi)}{\sqrt{n/\log p}} .
\]
Consequently, the sparse signed-spiked rate is no larger, up to logarithmic factors,
than the computational separation rate
in the moderately sparse regime $\sqrt{n}/\log p\ll k_u\lesssim n/\log p$. However, the attaining procedure relies on computationally inefficient
covariance estimation, as discussed in \Cref{sec: np hard}. Finally, when the covariance matrix \(\Sigma_0\) is known and diagonal, the separation distance can be further reduced. This is consistent with the intuition that, even under sparse signed-spiked structure, the
covariance matrix must still be estimated, whereas the known-diagonal benchmark
removes covariance uncertainty entirely.

We next relate these results to the known-sparsity minimax rate
\(\tau_{\text{mini}}(k;\xi)\) and discuss adaptivity for
testing~\eqref{eq: hypothesis test}. Consider the ultra-sparse
unknown-covariance setting, corresponding to the first row of
Table~\ref{tab: informal theorem map}. In this setting,
\[
    \tau_{\text{mini}}(k;\xi)
    \asymp_{\log}
    \frac{H(k^2\log p;\xi)}{\sqrt n},
    \qquad
    \tau_{\text{adap}}(k_u,k;\xi)
    \asymp_{\log}
    \frac{H(k_u^2\log p;\xi)}{\sqrt n}.
\]
Thus, testing~\eqref{eq: hypothesis test} is adaptive, in the sense that
unknown \(k\) incurs no additional cost up to logarithmic factors, whenever, for \(k\ll k_u\),
\[
    H(k_u^2\log p;\xi)
    \asymp_{\log}
    H(k^2\log p;\xi).
\]
If \(\xi\) satisfies the regular loading condition
\eqref{eq: tony loading vector}, this condition reduces, up to logarithmic factors, to \(k_\xi=\|\xi\|_0 \lesssim k^2\). In this case, the minimax separation rate simplifies to \(\|\xi\|_2/\sqrt n\), consistent with the adaptivity results in \cite{cai2017confidence,cai2019optimal}.

\section{Adaptive separation distance} \label{sec: minimax rate}
In this section, we investigate the adaptive separation distance for the linear hypothesis testing problem.

Since $\xi$ is nonzero, we have $k_\xi=\|\xi\|_0\ge 1$.
Let $\zeta \in \mathbb{R}$ be the solution to the equation
\begin{equation}\label{eq: equation sol}
    \frac{\sum_{j=1}^{k_\xi} |\xi_j| \exp(-\zeta / \xi_j^2)}
    {\sqrt{\sum_{j=1}^{k_\xi} \xi_j^2 \exp(-\zeta / \xi_j^2)}}
    = \frac{k_u}{2},
    \qquad \text{and set } \lambda = \sqrt{\zeta_+}.
\end{equation}
\Cref{lem: decreasing} guarantees that \eqref{eq: equation sol} admits a unique solution, since the left-hand side is continuous, strictly decreasing, and tends to $+\infty$ as $\zeta \to -\infty$ and to $0$ as $\zeta \to +\infty$.

The first key quantity in our analysis is
\begin{equation}\label{eq: nu1 definition}
    \nu_1=\nu_1(k_u;\xi)
    =
    \lambda k_u
    +
    \sqrt{\sum_{j=1}^{k_\xi}
    \xi_j^2\exp\{-\lambda^2/\xi_j^2\}} .
\end{equation}
The second key quantity is
\begin{equation}\label{eq: nu2 definition}
    \nu_2=\nu_2(k_u;\xi)
    =
    H(k_u;\xi),
\end{equation}
where \(H(t;\xi)\) is defined in \eqref{eq: H definition}.

\subsection{General upper and lower bounds}\label{sec: bounds on separation}
For arbitrary loading vectors,
\Cref{thm: hypothesis} below gives general upper and lower bounds for the adaptive separation distance of testing \eqref{eq: hypothesis test}.

\begin{theorem}\label{thm: hypothesis}
Under \Cref{cdt: sparsity assumption},
there exists some constant $c>0$ such that if $n\ge c\,k_u\log p$,
then for any \(1\le k\le k_u\), the following statements hold:
    \begin{enumerate}
        \item \textit{Upper Bound:} Let $\xi_{p+1}=0$, then
        \begin{equation}\label{eq: upper bound}
            \tau_{\text{adap}}(k_u, k; \xi) \lesssim \min_{0\le  m\le  p}\left(
             H(m\,;\,\xi)
            \left(\frac{1}{\sqrt{n}}+\frac{k_u\log p}{n}\right)
            +|\xi_{m+1}|k_u\sqrt{\frac{\log p}{n}}\right).
        \end{equation}
        \item \textit{Lower Bound:}
        \begin{equation}\label{eq: lower bound}
            \tau_{\text{adap}}(k_u, k; \xi) \gtrsim \nu_1\frac{1}{\sqrt{n}}\vee \nu_2\frac{k_u\log p}{n}.
        \end{equation}
    \end{enumerate}
\end{theorem}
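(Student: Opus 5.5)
For the upper bound I fix $m$ and split $\xi=\xi_{S}+\xi_{S^c}$, where $S=\{1,\dots,m\}$ indexes the $m$ largest coordinates. The target $\xi^\top\beta-t_0$ is estimated by a hybrid estimator
\[
\hat T_m=\xi_S^\top\hat\beta+\hat u^\top X^\top(Y-X\hat\beta)/n+\xi_{S^c}^\top\hat\beta .
\]
This debiases only the $S$-part and uses the plain plug-in for the tail. The projection direction $\hat u$ is obtained from the Cai--Guo type program
\[
\min_u\; u^\top\hat\Sigma u \quad\text{subject to}\quad \|\hat\Sigma u-\xi_S\|_\infty\le \|\xi_S\|_2\sqrt{\log p/n}, \;\; |\xi_S^\top\hat\Sigma u-\|\xi_S\|_2^2|\le \|\xi_S\|_2^2\sqrt{\log p/n}.
\]
The error then decomposes into three pieces:
\[
\hat T_m-\xi^\top\beta=\hat u^\top X^\top\varepsilon/n+(\hat\Sigma\hat u-\xi_S)^\top(\beta-\hat\beta)+\xi_{S^c}^\top(\hat\beta-\beta).
\]
\begin{itemize}
\item The noise term is conditionally Gaussian with variance $\sigma^2\hat u^\top\hat\Sigma\hat u/n\lesssim \|\xi_S\|_2^2/n$. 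Feasibility of $u=\Sigma^{-1}\xi_S$ with high probability gives this bound, since $\xi_S^\top\Sigma^{-1}\xi_S\asymp\|\xi_S\|_2^2$.
\item The bias term is at most $\|\xi_S\|_2\sqrt{\log p/n}\,\|\hat\beta-\beta\|_1\lesssim H(m;\xi)k_u\log p/n$ by \Cref{cdt: linear estimator}.
\item The tail term is at most $\|\xi_{S^c}\|_\infty\|\hat\beta-\beta\|_1\le|\xi_{m+1}|c_\beta\sigma k_u\sqrt{\log p/n}$.
\end{itemize}
The test rejects when $|\hat T_m-t_0|$ exceeds $z_{\alpha/2}\hat\sigma\sqrt{\hat u^\top\hat\Sigma\hat u/n}$ plus the two deterministic bias allowances, with $\sigma$ replaced by $\hat\sigma$ using \Cref{cdt: linear variance}. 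All of these bounds hold uniformly over $\Theta(k_u)$, so the test has size control over the $k_u$-sparse null. Power at $\tau$ equal to a constant times the bracketed expression follows immediately. Finally I choose the minimizing $m$; the cases $m=0$ (pure plug-in) and $m=p$ (pure debiasing) are included. The restricted eigenvalue discussion in \Cref{sec: restricted eigenvalue} supplies the conditions on $\hat\beta$ once $n\ge ck_u\log p$.

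For the lower bound I prove the two terms separately by the usual two-point / mixture argument. By Le Cam, it suffices to exhibit priors $\pi_0$ on (essentially) $\Theta(k_u;\xi,t_0)$ and $\pi_1$ on $\Theta_{\pm\tau}(k;\xi,t_0)$ whose induced data laws have bounded $\chi^2$ divergence.

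For the $\nu_1/\sqrt n$ term I fix $\Sigma=\mathbf I$.
\begin{itemize}
\item The alternative is a single point $\beta^{(1)}$ with $\xi^\top\beta^{(1)}=t_0+\tau$. I take a 1-sparse $\beta^{(1)}=\tau e_1/\xi_1$, or, after a shift, $t_0=0$ with a tiny alternative.
\item The null is a sparse mixture: each $j\le k_\xi$ is included independently with probability $q_j\propto\exp(-\lambda^2/\xi_j^2)$ (calibrated via \eqref{eq: equation sol}), and given inclusion $\beta_j=-\rho\,\sign(\xi_j)/|\xi_j|$-type spikes are chosen so that $\xi^\top\beta$ concentrates near $t_0$ after a small correction. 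Equivalently, I follow the construction of \cite{xie2025minimax} for sequence models.
\end{itemize}
To compare these, I use that for random design with $\Sigma=\mathbf I$ the $\chi^2$ between Gaussian regression mixtures reduces to
\[
\bbE\,(1+\langle\beta,\beta'\rangle/\sigma^2)^{-n}\text{-type}\quad\text{or}\quad \bbE\exp(n\langle\beta,\beta'\rangle/\sigma^2)
\]
terms over independent copies. This reduces the problem to the Gaussian sequence calculation with noise level $1/\sqrt n$. The expected sparsity is $\sum q_j\le k_u/2$, and conditioning on the event $\|\beta\|_0\le k_u$ is handled by a Chernoff bound. The equation \eqref{eq: equation sol} is designed exactly so that the $\chi^2$ term stays bounded while the achievable mean shift is $\nu_1/\sqrt n$. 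That matches the \cite{xie2025minimax} linear-functional lower bound, and the magnitude truncation $\lambda$ explains the $\exp(-\lambda^2/\xi_j^2)$ weights.

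For the $\nu_2 k_u\log p/n$ term the unknown covariance must be exploited, in the spirit of \cite{cai2017confidence,cai2019optimal}.
\begin{itemize}
\item Under the null I take $\beta$ drawn uniformly over $k_u$-sparse vectors supported away from the top-$k_u$ coordinates $A$ of $\xi$, with entries of size $\sqrt{\log p/n}$. I pair this with a covariance whose off-diagonal block $\Sigma_{A,A^c}$ is chosen so that the joint law of $(Y,X)$ mimics that of an alternative with $\beta$ supported on $A$ along $\xi_A/\|\xi_A\|_2$.
\item This matching produces a shift $\xi^\top\beta$ of order $\|\xi_A\|_2\cdot k_u\log p/n$.
\end{itemize}
The $\chi^2$ bound then involves a hypergeometric overlap sum $\bbE\exp(c\,n\cdot\frac{\log p}{n}|S\cap S'|)$, which is bounded under \Cref{cdt: sparsity assumption} ($k_u\lesssim p^\gamma$, $\gamma<1/2$) for a small enough constant. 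I expect this covariance-perturbation construction to be the main obstacle. The difficulty is keeping $\Sigma$ within the eigenvalue bounds of \eqref{eq: parameter space} while making the null and alternative joint Gaussian laws of $(Y,X_{i\cdot})$ nearly indistinguishable, and simultaneously keeping the alternative $k$-sparse with $k$ possibly equal to 1. My first attempt will be to compare the two $(p+1)$-dimensional Gaussian covariances directly and bound the mixture $\chi^2$ via $\det(\mathbf I-\Delta\Delta')^{-n/2}$ expansions, controlled by $\|\Delta\|$ being of order $\sqrt{k_u\log p/n}$, which the assumption $n\ge ck_u\log p$ keeps small.
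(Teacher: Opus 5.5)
Your upper bound is the paper's mixed test: $\hat T_m$ is exactly the center of the Minkowski sum of the debiased interval for $\xi_S^\top\beta$ and the plug-in interval for $\xi_{S^c}^\top\beta$, with the same three-term error decomposition. The extra Cai--Guo constraint is unnecessary. The $\ell_\infty$ tolerance, however, needs a large constant $C_\xi$: with constant $1$ and $\Sigma=\mathbf I$, $\|\hat\Sigma\Sigma^{-1}\xi_S-\xi_S\|_\infty\approx\|\xi_S\|_2\sqrt{2\log p/n}$, so $\Sigma^{-1}\xi_S$ is typically infeasible.

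Your $\nu_1$ part also follows the paper (random-sparsity prior calibrated by $\lambda$, scalar rescaling for the exact constraint, Chernoff for sparsity), with two caveats. First, the spikes must be $\sign(\xi_j)/\sqrt n$ on coordinates with $|\xi_j|\ge\lambda$. Spikes $\lambda/(\sqrt n\,\xi_j)$ throughout lose the $(\sum_j\xi_j^2e^{-\lambda^2/\xi_j^2})^{1/2}$ part of $\nu_1$. Second, and more important, the reduction of the $\chi^2$ to $\bbE\exp(cn\langle\beta,\beta'\rangle)$ holds only if you keep $\operatorname{Var}(Y_i)=\sigma_*^2$ fixed across the prior by letting $\sigma^2=\sigma_*^2-\|\beta\|_2^2$ vary. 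With $\sigma$ fixed, the integral is $[(1-\langle\beta,\beta'\rangle/\sigma^2)^2-\|\beta\|_2^2\|\beta'\|_2^2/\sigma^4]^{-n/2}$. Since $\|\beta\|_2^2$ can be of order $k_u\log p/n$ under this prior, the extra factor diverges once $k_u\gg\sqrt n/\log p$, which is exactly where the $\nu_1$ term is still needed for dense loadings.

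The genuine gap is the $\nu_2$ construction, at precisely the obstacle you flag but do not resolve. As you set it up, the vector aligned with $\xi_A$ lives in the alternative. That alternative is then $|A|\asymp k_u$-sparse and lies outside $\Theta_{\pm\tau}(k;\xi,t_0)$ for $k<k_u$. A $1$-sparse alternative along $e_1$ would only give $|\xi_1|k_u\log p/n$ rather than $H(k_u;\xi)k_u\log p/n$. You also have no mechanism making $\xi^\top\beta=t_0$ exact under the null, because $\xi_{A^c}^\top\beta_{A^c}$ fluctuates with the random support and may cancel the shift.

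The missing idea is to hide the $\xi_A$-aligned component inside the null and make the alternative trivial.
\begin{itemize}
\item Take $\theta_*=(\mathbf 0,\mathbf I_p,\sigma_*)$, after the bijective shift $Y\mapsto Y-X\beta_0$ by a $1$-sparse $\beta_0$ that absorbs $t_0$.
\item Under the null, let $|A|=\lfloor k_u/4\rfloor$ and $\Sigma_{A,A^c}=\alphab_1\alphab_2^\top$ with $\alphab_1=-\xi_A/\|\xi_A\|_2$.
\item Set $\operatorname{Cov}(Y,X_A)=0$, $\operatorname{Cov}(Y,X_{A^c})=\kappa\alphab_2$, and $\operatorname{Var}(Y)=\sigma_*^2$. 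Here $\alphab_2$ has uniformly random support of size $\lfloor k_u/4\rfloor$ and entries $c_1\sign(\xi_j)\sqrt{\log p/n}$.
\end{itemize}
Then $\beta=\Sigma^{-1}\operatorname{Cov}(X,Y)$ has $\beta_A=\kappa\|\alphab_2\|_2^2(1-\|\alphab_2\|_2^2)^{-1}\xi_A/\|\xi_A\|_2$ and $\beta_{A^c}=\kappa(1-\|\alphab_2\|_2^2)^{-1}\alphab_2$. This $\beta$ is $k_u/2$-sparse, and both parts of $\xi^\top\beta$ are nonnegative, so $\xi^\top\beta\ge\kappa\|\alphab_2\|_2^2H(\lfloor k_u/4\rfloor;\xi)$. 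The scalar $\kappa\in(0,1]$ is then solved draw by draw to get $\xi^\top\beta=\tau$ exactly. Shifting your own configuration by $Y\mapsto Y-X\beta_{\mathrm{alt}}$ produces essentially this construction.

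Finally, the $\chi^2$ is not controlled by $\|\Delta\|\lesssim\sqrt{k_u\log p/n}$, which alone only gives $\exp(ck_u\log p)$. It works because, with $\Delta=(\Sigma_*^z)^{-1}(\Sigma^z-\Sigma_*^z)$, the matrix $\mathbf I-\Delta\tilde\Delta$ has only two non-unit eigenvalues, both equal to $1-(\kappa\tilde\kappa/\sigma_*^2+1)\alphab_2^\top\tilde{\alphab}_2$. The integral therefore reduces to the hypergeometric overlap of the two random supports of $\alphab_2$ and $\tilde{\alphab}_2$. Smallness of $\|\Delta\|$ is used only to linearize via $(1-x)^{-1}\le e^{2x}$.
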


We discuss the implications of these bounds and identify conditions where they match.
The following proposition rewrites the upper bound in an equivalent and more interpretable form.

\begin{proposition}\label{prop: upper bound equivalent}
We write $\xi_{p+1}=0$.
For any \(t\ge  1\), it holds that
\[
    \min_{0\le  m\le  p}
    \left\{
        H(m\,;\,\xi)
        +
        |\xi_{m+1}|\,t
    \right\}
    \asymp
    H(t^2\,;\,\xi).
\]
Consequently, the upper bound in~\eqref{eq: upper bound} is equivalent to
\begin{equation}\label{eq: upper bound equivalent}
\tau_{\text{adap}}(k_u,k\,;\,\xi)
\;\lesssim\;
\begin{cases}
\displaystyle
\frac{1}{\sqrt n}
H(k_u^2\log p\,;\,\xi),
&
\displaystyle
k_u \lesssim \frac{\sqrt n}{\log p},
\\[1.2em]
\displaystyle
\frac{k_u\log p}{n}
H(n/\log p\,;\,\xi),
&
\displaystyle
k_u \gg \frac{\sqrt n}{\log p}.
\end{cases}
\end{equation}
\end{proposition}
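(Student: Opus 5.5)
The proof has two parts. First I establish the equivalence $\min_m\{H(m;\xi)+|\xi_{m+1}|t\}\asymp H(t^2;\xi)$. Then I substitute the two regimes into \eqref{eq: upper bound}.

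\emph{Upper direction.} Set $m=\lceil t^2\rceil\wedge p$. The first term is then $H(t^2;\xi)$ exactly. For the second term, the decreasing ordering gives, when $m<p$,
\[
|\xi_{m+1}|^2\le \frac1m\sum_{j\le m}\xi_j^2 .
\]
Hence $|\xi_{m+1}|\,t\le t\,H(t^2;\xi)/\sqrt m\le H(t^2;\xi)$, since $m\ge t^2$. When $m=p$ the second term vanishes because $\xi_{p+1}=0$. So the minimum is at most $2H(t^2;\xi)$.

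\emph{Lower direction.} Fix any $m$. If $m\ge \lceil t^2\rceil$, then $H(m;\xi)\ge H(t^2;\xi)$ by monotonicity of $H$. If $m<\lceil t^2\rceil$, split the top $\lceil t^2\rceil\wedge p$ coordinates into the first $m$ and the rest. The rest are each bounded by $|\xi_{m+1}|$, and there are at most $\lceil t^2\rceil\le 2t^2$ of them (using $t\ge1$). Therefore
\[
H(t^2;\xi)^2\le H(m;\xi)^2+2t^2\xi_{m+1}^2,
\]
so $H(t^2;\xi)\le H(m;\xi)+\sqrt2\,t|\xi_{m+1}|$. Taking the minimum over $m$ gives $H(t^2;\xi)\lesssim\min_m\{\cdots\}$. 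The only care needed in this part is the ceiling and truncation at $p$, which $t\ge1$ handles.

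\emph{Consequence.} Write $a=n^{-1/2}+k_u\log p/n$ and $b=k_u\sqrt{\log p/n}$. Then the bound in \eqref{eq: upper bound} equals $a\min_m\{H(m;\xi)+|\xi_{m+1}|\,b/a\}$.

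In the regime $k_u\lesssim\sqrt n/\log p$ we have $a\asymp n^{-1/2}$, so $b/a\asymp k_u\sqrt{\log p}=:t$. Also $t\ge1$ for $p\ge3$. Constants in $t$ only change $H(t^2)$ by constant factors, because $H(c^2t^2;\xi)\le \lceil c^2\rceil^{1/2}\,\cdot$ (a constant times) $H(t^2;\xi)$ by the same block-splitting argument. Applying the equivalence gives $n^{-1/2}H(k_u^2\log p;\xi)$.

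In the regime $k_u\gg\sqrt n/\log p$ we have $a\asymp k_u\log p/n$, so $b/a\asymp\sqrt{n/\log p}\ge1$. The equivalence then gives $(k_u\log p/n)\,H(n/\log p;\xi)$.

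The only genuinely delicate point is that $a$ and $b/a$ are determined only up to constants. I handle this with the monotonicity of $t\mapsto\min_m\{H(m)+|\xi_{m+1}|t\}$ together with the scaling bound $H(c\,s;\xi)\le\sqrt{\lceil c\rceil}\,H(s;\xi)$, which holds because each additional block of $s$ coordinates contributes at most the $\ell_2$ mass of the top block.
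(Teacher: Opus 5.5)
Your proof is correct and is essentially the argument the paper relies on. The paper does not write out a proof of this proposition, but the rate-optimal cutoff it records in \eqref{eq: optimal cutoff} is exactly your choice $m=\lceil t^2\rceil\wedge p$ with $t=k_u\sqrt{\log p}$ or $t=\sqrt{n/\log p}$, and your block-splitting lower bound is the natural converse.

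One small simplification: for absorbing the constants in $b/a$, use $G(t)=\min_{0\le m\le p}\{H(m;\xi)+|\xi_{m+1}|t\}$. Monotonicity of $G$ together with the homogeneity $G(ct)\le c\,G(t)$ for $c\ge 1$ gives $G(b/a)\asymp G(k_u\sqrt{\log p})$ directly. Unlike your route through the scaling bound on $H$, this does not require the rescaled argument to stay above $1$.
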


\Cref{prop: upper bound equivalent} separates the upper bound into two sparsity regimes:
the ultra-sparse regime \(k_u \lesssim \sqrt n/\log p\) and the moderately sparse regime
\(\sqrt n/\log p \ll k_u \lesssim n/\log p\).
We next discuss the corresponding lower bounds in these two regimes.

\medskip

\textit{Ultra-sparse regime.}
In this regime, the quantity \(\nu_1\) in the lower bound in \Cref{eq: lower bound} plays an important role.
It has a more explicit characterization that relates to the upper bound.

\begin{proposition}\label{prop: nu1-expression}
    Let \( j_1 = \max\left\{j\in[p]: |\xi_j|\ge  \lambda\right\}\)
with the convention that \(j_1=0\) if the set is empty. Then, for \(\nu_1\) defined in~\eqref{eq: nu1 definition},
\[
     \nu_1 \asymp H(j_1\,;\,\xi) + \lambda k_u .
\]
\end{proposition}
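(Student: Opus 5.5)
The plan is to prove the two directions of $\nu_1 \asymp H(j_1;\xi)+\lambda k_u$ separately. Write $e_j=\exp\{-\lambda^2/\xi_j^2\}$ for $j\le k_\xi$ and $S=\sum_{j=1}^{k_\xi}\xi_j^2e_j$, so that $\nu_1=\lambda k_u+\sqrt S$.

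\emph{Degenerate case $\zeta\le 0$.} Then $\lambda=0$, so every index satisfies $|\xi_j|\ge\lambda$ and $j_1=p$. Also $e_j=1$ for all $j\le k_\xi$, hence $\nu_1=\|\xi\|_2=H(p;\xi)=H(j_1;\xi)$ exactly. From now on assume $\zeta>0$, so $\lambda=\sqrt\zeta>0$ and the defining equation \eqref{eq: equation sol} holds with $\zeta=\lambda^2$. Since $\lambda>0$, every $j\le j_1$ has $\xi_j\neq 0$, so $j_1\le k_\xi$.

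\emph{Lower bound.} Trivially $\nu_1\ge\lambda k_u$. For $j\le j_1$ we have $|\xi_j|\ge\lambda$, hence $e_j\ge e^{-1}$. Therefore
\[
S\ge e^{-1}\sum_{j\le j_1}\xi_j^2=e^{-1}H(j_1;\xi)^2 .
\]
Combining, $\nu_1\ge \tfrac12\big(\lambda k_u+e^{-1/2}H(j_1;\xi)\big)$.

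\emph{Upper bound.} Split $S=S_{\rm head}+S_{\rm tail}$ according to $j\le j_1$ or $j_1<j\le k_\xi$. Using $e_j\le 1$ gives $S_{\rm head}\le H(j_1;\xi)^2$. The main step is controlling $S_{\rm tail}$, and this is where the defining equation enters. For tail indices $|\xi_j|<\lambda$, so $\xi_j^2e_j\le\lambda|\xi_j|e_j$. Summing and then using \eqref{eq: equation sol} gives
\[
S_{\rm tail}\le\lambda\sum_{j=1}^{k_\xi}|\xi_j|e_j=\frac{\lambda k_u}{2}\sqrt S\le\frac{\lambda k_u}{2}\Big(H(j_1;\xi)+\sqrt{S_{\rm tail}}\Big).
\]
This is a quadratic inequality in $x=\sqrt{S_{\rm tail}}$, namely $x^2\le a x+ab$ with $a=\lambda k_u/2$ and $b=H(j_1;\xi)$. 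It yields
\[
x\le a+\sqrt{ab}\le \tfrac32 a+\tfrac12 b,
\]
so $\sqrt{S_{\rm tail}}\lesssim\lambda k_u+H(j_1;\xi)$. Hence
\[
\nu_1\le\lambda k_u+\sqrt{S_{\rm head}}+\sqrt{S_{\rm tail}}\lesssim H(j_1;\xi)+\lambda k_u .
\]

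The only nontrivial point is this self-bounding tail estimate. All other steps are immediate from the threshold definition of $j_1$ and the bounds $e^{-1}\le e_j\le 1$ on the head.
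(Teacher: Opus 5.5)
Your argument is correct and complete, and it comes with explicit absolute constants. The lower bound does not even need the factor $\tfrac12$, since $\nu_1=\lambda k_u+\sqrt S\ge \lambda k_u+e^{-1/2}H(j_1;\xi)$. Your upper bound works out to $\nu_1\le \tfrac74\lambda k_u+\tfrac32 H(j_1;\xi)$.

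The paper does not write out a proof of this proposition. It only invokes it, for instance to justify the cutoff $m=j_1$ in the ultra-sparse discussion and in the sparse signed-spiked upper bound. So there is no direct route to compare against, and your argument fills that gap.

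The closest argument in the paper is the last step of the localization lemma for the sub-Weibull random loadings. There, $S_2(\lambda)\le\|\xi\|_\infty S_1(\lambda)$ is combined with the defining equation $S_1(\lambda)=\tfrac{k_u}{2}S_2(\lambda)^{1/2}$ to obtain $S_2(\lambda)^{1/2}\le \tfrac{k_u}{2}\|\xi\|_\infty$. Your proof sharpens the same self-bounding trick by applying it only on the tail, where $|\xi_j|<\lambda$, so that $\lambda$ replaces $\|\xi\|_\infty$. The price is the head/tail split and the resulting quadratic inequality. That is exactly what turns the crude bound $\|\xi\|_\infty k_u$ into the two-sided characterization $H(j_1;\xi)+\lambda k_u$, valid for arbitrary loading profiles.
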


In the ultra-sparse regime \(k_u \lesssim \sqrt n/\log p\), the upper bound in \Cref{eq: upper bound} can be bounded by the choice of $m=j_1$ so that
\begin{align*}
    \tau_{\text{adap}}(k_u,k\,;\,\xi)
    &
    \;\lesssim\;
    \frac{1}{\sqrt n}
    H(j_1\,;\,\xi)
    +
    \lambda k_u\sqrt{\frac{\log p}{n}}
    \;\lesssim\;
    \nu_1\sqrt{\frac{\log p}{n}} .
\end{align*}
Therefore, the term $\nu_1/\sqrt{n}$ in the lower bound in \Cref{eq: lower bound} for the adaptive separation distance is sharp up to logarithmic factors, as summarized in the following corollary.

\begin{corollary}\label{cor: nu1-sharp-ultrasparse}
Assume the conditions in \Cref{thm: hypothesis} hold.
In the ultra-sparse regime \(k_u \lesssim \sqrt n/\log p\), it holds for all $1\le k\le  k_u$ that
$$\tau_{\text{adap}}(k_u,k\,;\,\xi)\asymp_{\log} \frac{\nu_1}{\sqrt n} \asymp_{\log} \frac{H(k_u^2\log p\,;\,\xi)}{\sqrt n}
 .$$
If further $k_u^2 \log p \lesssim j_1$, it holds that
$$
\tau_{\text{adap}}(k_u,k\,;\,\xi)\asymp \frac{\nu_1}{\sqrt n},
$$
that is, the logarithmic gap disappears.
\end{corollary}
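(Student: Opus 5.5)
Throughout, I take \Cref{thm: hypothesis}, \Cref{prop: upper bound equivalent} and \Cref{prop: nu1-expression} as given.

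\textbf{Lower bound.} By \eqref{eq: lower bound}, $\tau_{\text{adap}}\gtrsim \nu_1/\sqrt n$.

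\textbf{Matching upper bound.} In the ultra-sparse regime, $k_u\log p/n\lesssim 1/\sqrt n$. I choose $m=j_1$ in \eqref{eq: upper bound}. Since $|\xi_{j_1+1}|<\lambda$, this gives
\[
\tau_{\text{adap}}\lesssim \frac{H(j_1;\xi)}{\sqrt n}+\lambda k_u\sqrt{\frac{\log p}{n}}\le \sqrt{\log p}\,\frac{H(j_1;\xi)+\lambda k_u}{\sqrt n}\asymp \sqrt{\log p}\,\frac{\nu_1}{\sqrt n}.
\]
The last step is \Cref{prop: nu1-expression}. This proves $\tau_{\text{adap}}\asymp_{\log}\nu_1/\sqrt n$.

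\textbf{Comparison with $H(k_u^2\log p;\xi)$.} \Cref{prop: upper bound equivalent} gives $\tau_{\text{adap}}\lesssim H(k_u^2\log p;\xi)/\sqrt n$. For the reverse comparison, I show $H(k_u^2\log p;\xi)\lesssim \sqrt{\log p}\,\nu_1$. Apply \Cref{prop: upper bound equivalent} with $t=k_u\sqrt{\log p}$ and take $m=j_1$:
\[
H(k_u^2\log p;\xi)\lesssim H(j_1;\xi)+|\xi_{j_1+1}|k_u\sqrt{\log p}\le \sqrt{\log p}\,\bigl(H(j_1;\xi)+\lambda k_u\bigr)\asymp \sqrt{\log p}\,\nu_1 .
\]
In the other direction, $\nu_1/\sqrt n\lesssim\tau_{\text{adap}}\lesssim H(k_u^2\log p;\xi)/\sqrt n$. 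Hence the two quantities are comparable up to a $\sqrt{\log p}$ factor.

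\textbf{Exact rate when $k_u^2\log p\lesssim j_1$.} I use the sharper upper bound from \Cref{prop: upper bound equivalent}: $\tau_{\text{adap}}\lesssim H(k_u^2\log p;\xi)/\sqrt n$. Because $H(\cdot;\xi)$ is nondecreasing, $H(k_u^2\log p;\xi)\lesssim H(C j_1;\xi)$.

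To finish, I need $H(Cj_1;\xi)\lesssim H(j_1;\xi)$. Monotonicity of $|\xi_j|$ gives
\[
H(Cj_1;\xi)^2\le \lceil C\rceil\, H(j_1;\xi)^2,
\]
since each block of $j_1$ consecutive coordinates has squared norm at most that of the first block. So $H(k_u^2\log p;\xi)\lesssim H(j_1;\xi)\lesssim\nu_1$. Combined with the lower bound, this gives $\tau_{\text{adap}}\asymp \nu_1/\sqrt n$ with no logarithmic loss.

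\textbf{Where care is needed.} The only delicate points are bookkeeping.
\begin{itemize}
\item The case $j_1=0$ is excluded in the final statement, since then $k_u^2\log p\lesssim j_1$ cannot hold.
\item The constant $C$ in $k_u^2\log p\le Cj_1$ must be handled by the block argument above, which I expect to be the main point needing care.
\item I must check that $\xi_{j_1+1}$ is well defined; this is covered by the convention $\xi_{p+1}=0$.
\end{itemize}
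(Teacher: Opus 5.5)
Your proof is correct and follows the paper's argument. You take the lower bound $\nu_1/\sqrt n$ from \Cref{thm: hypothesis}, choose $m=j_1$ in \eqref{eq: upper bound} together with \Cref{prop: nu1-expression} (which gives the $\sqrt{\log p}$ loss), and use \Cref{prop: upper bound equivalent} for the comparison with $H(k_u^2\log p;\xi)$.

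For the log-free case, your block comparison $H(Cj_1;\xi)^2\le\lceil C\rceil H(j_1;\xi)^2$ is valid. Equivalently, you can stay with $m=j_1$ and note that $H(j_1;\xi)\ge\lambda\sqrt{j_1}\gtrsim\lambda k_u\sqrt{\log p}$, so the plug-in term is absorbed directly.

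One small correction: you should write $|\xi_{j_1+1}|\le\lambda$ rather than a strict inequality, since equality occurs when $\lambda=0$ and $j_1=p$. This does not affect the argument.
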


The closest existing result is \cite[Theorem~4]{cai2019optimal}, which gives a lower bound for the adaptive separation distance in the testing problem~\eqref{eq: hypothesis test}.
However, even in the ultra-sparse regime \(k_u \lesssim \sqrt n/\log p\),
they only establish the sharpness of their lower bound for a few specific loading classes considered there.
By contrast, the lower bound based on \(\nu_1\) applies to arbitrary loading vectors and is tight up to logarithmic factors throughout the ultra-sparse regime.

\medskip

\textit{Moderately sparse regime.}
In this regime, $\nu_1/\sqrt{n}$ can still be sharp in a subclass of loading vectors while the quantity \(\nu_2\) also becomes relevant and could be the dominant term when $\xi$ is light-tailed.
\begin{corollary}
Assume the conditions in \Cref{thm: hypothesis} hold.
Suppose that $k_u \gg \sqrt {n}/{\log p}$ and that the loading vector $\xi$ satisfies that
\begin{equation}\label{eq:sharp-cond-nu1-moderatesparse}
    \nu_1  \gtrsim  \frac{k_u\log p}{\sqrt{n}} H( n/\log p \,;\,\xi).
\end{equation}
Then for all $1\le k\le  k_u$,
$$
\tau_{\text{adap}}(k_u,k\,;\,\xi)\asymp \frac{\nu_1}{\sqrt n}.
$$
\end{corollary}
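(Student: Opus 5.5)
This corollary is a sandwich argument built from \Cref{thm: hypothesis} and \Cref{prop: upper bound equivalent}. The lower-bound half is immediate. By \eqref{eq: lower bound}, $\tau_{\text{adap}}(k_u,k;\xi)\gtrsim \nu_1/\sqrt n$ for every $1\le k\le k_u$, and no further work is needed there.

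For the upper bound, I would invoke \Cref{prop: upper bound equivalent} in the regime $k_u\gg \sqrt n/\log p$. It gives
\[
\tau_{\text{adap}}(k_u,k;\xi)\lesssim \frac{k_u\log p}{n}\,H(n/\log p;\xi)=\frac{1}{\sqrt n}\cdot\frac{k_u\log p}{\sqrt n}\,H(n/\log p;\xi).
\]
Assumption \eqref{eq:sharp-cond-nu1-moderatesparse} says exactly that the last factor is $\lesssim \nu_1$. Hence $\tau_{\text{adap}}\lesssim \nu_1/\sqrt n$, and combining this with the lower bound gives $\tau_{\text{adap}}(k_u,k;\xi)\asymp \nu_1/\sqrt n$.

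The only point needing care is that \Cref{prop: upper bound equivalent} be applied in a form free of hidden logarithmic slack. I would verify this directly from \eqref{eq: upper bound} by taking $m=p$ (with $\xi_{p+1}=0$). This gives $\tau_{\text{adap}}\lesssim \|\xi\|_2(1/\sqrt n + k_u\log p/n)$. That bound, however, involves $\|\xi\|_2$ rather than $H(n/\log p;\xi)$, so I would instead rely on the proposition's equivalence with $t = \sqrt{n/\log p}$.

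Concretely, write the minimized expression in \eqref{eq: upper bound} as
\[
\frac{k_u\log p}{n}\Bigl(H(m;\xi)\bigl(1+\tfrac{\sqrt n}{k_u\log p}\bigr)+|\xi_{m+1}|\sqrt{n/\log p}\Bigr).
\]
In this regime $\sqrt n/(k_u\log p)\ll 1$, so the expression is at most a constant times $\tfrac{k_u\log p}{n}\bigl(H(m;\xi)+|\xi_{m+1}|t\bigr)$ with $t=\sqrt{n/\log p}\ge1$. The proposition then bounds its minimum over $m$ by $H(t^2;\xi)=H(n/\log p;\xi)$ up to absolute constants, with no logarithmic loss. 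This confirms the $\asymp$ without log factors, which is the main (mild) obstacle.
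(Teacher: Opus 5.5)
Your proof is correct and matches the paper's intended (unwritten) argument. The lower bound is the $\nu_1/\sqrt n$ term of \Cref{thm: hypothesis}, and the upper bound is the moderately sparse branch of \Cref{prop: upper bound equivalent} with $t=\sqrt{n/\log p}\ge 1$, which loses only absolute constants, combined with \eqref{eq:sharp-cond-nu1-moderatesparse}; the aside about taking $m=p$ is unnecessary and can be dropped.
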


For regular loading vectors, Appendix~F shows that the dense side \(k_\xi\gtrsim k_u^2\) gives
\[
    \tau_{\text{adap}}(k_u,k\,;\,\xi)
    \asymp_{\log}
    \|\xi\|_\infty k_u\sqrt{\frac{\log p}{n}} .
\]
If, more strongly, \(k_\xi/k_u^2\ge p^c\) for some constant \(c>0\), then
\Cref{eq:sharp-cond-nu1-moderatesparse} holds and the logarithmic equivalence can be strengthened to
\[
    \tau_{\text{adap}}(k_u,k\,;\,\xi)
    \asymp
    \|\xi\|_\infty k_u\sqrt{\frac{\log p}{n}} .
\]

The quantity \(\nu_2\) in the lower bound of \Cref{eq: lower bound} is relevant in the moderately sparse regime.
As revealed by the proof, this term arises from the unknown covariance among covariates.
Furthermore, \Cref{thm: hypothesis design} shows that when the design covariance is known and diagonal, the adaptive separation distance reduces to $\nu_1/\sqrt{n}$ and $\nu_2$ no longer plays a role.

\Cref{cor: sharp-nu2} characterizes an important subclass of loading vectors for which the adaptive separation distance is determined by the $\nu_2$ quantity.

\begin{corollary}\label{cor: sharp-nu2}
Assume the conditions in \Cref{thm: hypothesis} hold.
Suppose that $k_u \gg \sqrt {n}/{\log p}$ and that the loading vector $\xi$ satisfies that
\begin{equation}\label{eq:sharp-cond-nu2-moderatesparse}
    H( k_u \,;\,\xi)
    \asymp
    H( n/\log p \,;\,\xi).
\end{equation}
Then, for any \(1\le k\le  k_u\),
\[
    \tau_{\text{adap}}(k_u,k\,;\,\xi)
    \asymp
    \frac{k_u\log p}{n}\nu_2.
\]
\end{corollary}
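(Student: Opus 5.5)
The plan is to sandwich $\tau_{\text{adap}}(k_u,k;\xi)$ between the two bounds of \Cref{thm: hypothesis}. The lower bound is the easy side. By \eqref{eq: lower bound},
\[
\tau_{\text{adap}}(k_u,k;\xi)\gtrsim \nu_2\,\frac{k_u\log p}{n},
\]
with $\nu_2=H(k_u;\xi)$ by \eqref{eq: nu2 definition}. This holds for every $1\le k\le k_u$ with no further argument. So all the work goes into showing that the upper bound is at most a constant times $\frac{k_u\log p}{n}H(k_u;\xi)$.

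For the upper bound I will not pass through \Cref{prop: upper bound equivalent}, since that version carries constants hidden inside $\lesssim$ and switches regimes. Instead I will plug a specific $m$ directly into \eqref{eq: upper bound}. Take $m=m_0:=\lceil n/\log p\rceil\wedge p$. Then:
\begin{enumerate}
\item The first term is $H(m_0;\xi)\bigl(n^{-1/2}+k_u\log p/n\bigr)$.
\item Since $k_u\gg\sqrt n/\log p$, we have $n^{-1/2}\ll k_u\log p/n$, so the first term is $\lesssim H(n/\log p;\xi)\,k_u\log p/n$.
\item The tail term is $|\xi_{m_0+1}|\,k_u\sqrt{\log p/n}$.
\end{enumerate}

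To control the tail term I use the decreasing ordering of $|\xi_j|$. Since $|\xi_{m_0+1}|\le|\xi_j|$ for all $j\le m_0$,
\[
m_0\,\xi_{m_0+1}^2\le H(m_0;\xi)^2, \qquad\text{so}\qquad |\xi_{m_0+1}|\le H(n/\log p;\xi)\sqrt{\log p/n}.
\]
Hence the tail term is $\le H(n/\log p;\xi)\,k_u\log p/n$. If $m_0=p$, the tail term vanishes because $\xi_{p+1}=0$. Combining the two terms gives
\[
\tau_{\text{adap}}\lesssim \frac{k_u\log p}{n}\,H(n/\log p;\xi).
\]
By assumption \eqref{eq:sharp-cond-nu2-moderatesparse}, $H(n/\log p;\xi)\asymp H(k_u;\xi)=\nu_2$, which yields the matching upper bound $\frac{k_u\log p}{n}\nu_2$.

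The only delicate points are bookkeeping. First, $H(t;\xi)$ uses $\lceil t\rceil$, and $m_0$ must be an integer in $[0,p]$; taking the ceiling and capping at $p$ handles both. Second, $n^{-1/2}$ must be dominated by $k_u\log p/n$ with an absolute constant, and the hypothesis $k_u\gg\sqrt n/\log p$ gives exactly this for large $n$. I expect no genuine obstacle: the main content sits in \Cref{thm: hypothesis}, and the corollary follows by choosing the truncation level $m=n/\log p$ and using monotonicity of the sorted loadings.
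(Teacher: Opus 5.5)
Your proof is correct and follows the paper's route. You sandwich $\tau_{\text{adap}}$ between the two bounds of \Cref{thm: hypothesis}, evaluate the upper bound at the paper's moderately-sparse cutoff $m_*=\lceil n/\log p\rceil\wedge p$, and use the hypothesis $H(k_u;\xi)\asymp H(n/\log p;\xi)$ to turn $H(n/\log p;\xi)$ into $\nu_2$. The only difference is that you inline the easy direction of \Cref{prop: upper bound equivalent} (bounding the tail term by monotonicity) instead of citing that proposition.
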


The condition in~\Cref{eq:sharp-cond-nu2-moderatesparse} means that the top-$k_u$ subvector of $\xi$ captures nearly as much energy as the enlarged top-$(n/\log p)$ counterpart.
This condition holds, for example, when the loading vector \(\xi\) is sufficiently sparse or when its coordinates decay sufficiently fast.
This includes the sparse-loading case \(k_\xi\lesssim k_u\) considered by \cite{cai2017confidence}. For such loading vectors, the lower bound involving \(\nu_2\) is sharp in the moderately sparse regime.

\medskip

To clarify the scope of our bounds, Appendix~F in the supplementary material provides details on several loading-profile examples.
Except for the regular loading example in Appendix~F.1, existing theory in \cite{cai2017confidence,cai2019optimal} does not apply the other examples.
Dense nonregular profiles in Appendix~F.2 show that the upper and lower bounds in \Cref{thm: hypothesis} can match even when $\xi$ is not regular, whereas the multiscale example in Appendix~F.3 illustrates that a gap can exist in the moderately sparse regime.
Appendix~F.4 considers loadings with random coordinates that have
sub-Weibull tails, including Gaussian and exponential distributions.

For general loading vectors, there may be a gap between the information-theoretic lower bound \eqref{eq: lower bound} and the computationally feasible upper bound \eqref{eq: upper bound} in the moderately sparse regime \( k_u \gg {\sqrt n}/{\log p}\).
\Cref{sec: computational barrier} examines this issue from a computational perspective.

\subsection{Phase diagram for regular loading vectors}\label{sec: examples}

To provide a concrete interpretation of our results, we consider the case in which the loading vector $\xi \in \mathbb{R}^p$ satisfies \eqref{eq: tony loading vector} for some constant $\bar{c}>0$ with sparsity level
$k_\xi = \|\xi\|_0$.
We compare the resulting adaptive separation distances with existing results in \cite{cai2017confidence,cai2019optimal}.

\begin{figure}[bt]
    \centering
    \definecolor{detectgreen}{RGB}{48,140,94}
    \definecolor{gapblue}{RGB}{54,100,177}
    \definecolor{impossiblegray}{RGB}{138,143,150}
    \definecolor{guidegray}{RGB}{70,70,70}
    \tikzset{
        axisline/.style={thick,->},
        outerbox/.style={thick},
        boundary/.style={very thick},
        guideheavy/.style={densely dashed, guidegray, line width=0.55pt},
        point/.style={circle, fill=black, inner sep=1.25pt},
        ticklabel/.style={font=\scriptsize},
        regionlabel/.style={font=\scriptsize\bfseries, align=center},
        smalllabel/.style={font=\tiny, align=center, fill=white, inner sep=1.2pt}
    }
    \begin{tikzpicture}[scale=3.0]
        \begin{scope}
            \fill[detectgreen!26] (0,0) rectangle (1.5,1);
            \fill[impossiblegray!78] (0,0) -- (1,0.5) -- (1.5,0.5) -- (1.5,0) -- cycle;
            \begin{scope}
                \clip (0,0) -- (1,0.5) -- (1.5,0.5) -- (1.5,0) -- cycle;
                \foreach \x in {0.12,0.28,...,1.44} {
                    \foreach \y in {0.08,0.20,0.32,0.44} {
                        \fill[guidegray!45] (\x,\y) circle (0.006);
                    }
                }
            \end{scope}
            \draw[guideheavy] (1,0) -- (1,0.5);
            \draw[guideheavy] (0,0.5) -- (1.5,0.5);
            \node[point] at (1,0.5) {};
            \draw[axisline] (0,0) -- (1.62,0) node[right, font=\scriptsize] {$\gamma_\xi$};
            \draw[axisline] (0,0) -- (0,1.1) node[above, font=\scriptsize] {$\gamma_\tau$};
            \draw[boundary] (0,0) -- (1,0.5) -- (1.5,0.5);
            \draw[outerbox] (0,0) rectangle (1.5,1);
            \node[ticklabel,below] at (0,0) {$0$};
            \node[ticklabel,below] at (1,0) {$2\gamma_u$};
            \node[ticklabel,below] at (1.5,0) {$1$};
            \node[ticklabel,left] at (0,0.5) {$\gamma_u$};
            \node[regionlabel] at (0.53,0.73) {easy};
            \node[regionlabel,text=white] at (1.08,0.18) {statistically\\impossible};
            \node[font=\scriptsize] at (0.75,-0.17) {(a) ultra-sparse};
        \end{scope}

        \begin{scope}[xshift=2.55cm]
            \fill[detectgreen!78] (0,0.125) -- (0.5,0.375) -- (0.75,0.5) -- (1.5,0.5) -- (1.5,1) -- (0,1) -- cycle;
            \fill[gapblue!78] (0.5,0.375) -- (0.75,0.5) -- (1,0.5) -- (0.75,0.375) -- cycle;
            \begin{scope}
                \clip (0.5,0.375) -- (0.75,0.5) -- (1,0.5) -- (0.75,0.375) -- cycle;
                \foreach \x in {0.44,0.52,...,1.00} {
                    \draw[white, line width=0.45pt] (\x,0.34) -- ++(0.36,0.26);
                }
            \end{scope}
            \fill[impossiblegray!80] (0,0) -- (0,0.125) -- (0.5,0.375) -- (0.75,0.375) -- (1,0.5) -- (1.5,0.5) -- (1.5,0) -- cycle;
            \begin{scope}
                \clip (0,0) -- (0,0.125) -- (0.5,0.375) -- (0.75,0.375) -- (1,0.5) -- (1.5,0.5) -- (1.5,0) -- cycle;
                \foreach \x in {0.10,0.26,...,1.42} {
                    \foreach \y in {0.07,0.19,0.31,0.43} {
                        \fill[guidegray!45] (\x,\y) circle (0.006);
                    }
                }
            \end{scope}
            \draw[guideheavy] (0,0.125) -- (0.18,0.125);
            \draw[guideheavy] (0.5,0) -- (0.5,0.375) -- (0,0.375);
            \draw[guideheavy] (1,0) -- (1,0.5);
            \node[point] at (0.5,0.375) {};
            \node[point] at (0.75,0.5) {};
            \node[point] at (1,0.5) {};
            \draw[axisline] (0,0) -- (1.62,0) node[right, font=\scriptsize] {$\gamma_\xi$};
            \draw[axisline] (0,0) -- (0,1.1) node[above, font=\scriptsize] {$\gamma_\tau$};
            \draw[boundary] (0,0.125) -- (0.5,0.375);
            \draw[boundary] (0.5,0.375) -- (0.75,0.5);
            \draw[boundary] (0.75,0.5) -- (1,0.5);
            \draw[boundary] (0.5,0.375) -- (0.75,0.375);
            \draw[boundary] (0.75,0.375) -- (1,0.5);
            \draw[boundary] (1,0.5) -- (1.5,0.5);
            \draw[outerbox] (0,0) rectangle (1.5,1);
            \node[ticklabel,below] at (0,0) {$0$};
            \node[ticklabel,below] at (0.5,0) {$\gamma_u$};
            \node[ticklabel,below] at (1,0) {$2\gamma_u$};
            \node[ticklabel,below] at (1.5,0) {$1$};
            \node[ticklabel,left] at (0,0.125) {$\gamma_u-\gamma_n/2$};
            \node[ticklabel,left] at (0,0.375) {$(3\gamma_u-\gamma_n)/2$};
            \node[ticklabel,left] at (0,1) {$1$};
            \node[regionlabel,text=white] at (0.58,0.78) {easy};
            \node[smalllabel,above left=1pt] at (0.75,0.5) {$(\gamma_n,\gamma_u)$};
            \node[regionlabel,text=white] at (0.77,0.17) {statistically\\impossible};
            \node[font=\scriptsize] at (0.75,-0.17) {(b) moderately sparse};
        \end{scope}
    \end{tikzpicture}
    \caption{Phase diagram for the loading vector in \eqref{eq: tony loading vector} with sparsity level $k_\xi$.
    We parameterize $k_u = p^{\gamma_u}$, $k_\xi = p^{\gamma_\xi}$, $n = p^{\gamma_n}$, and
    {
    the rescaled alternative shift $\sqrt{n}\tau=\|\xi\|_\infty p^{\gamma_\tau}$.
    }
    Left panel (a): Ultra-sparse regime, $k_u\lesssim \sqrt{n}/\log p$. Right panel (b): Moderately sparse regime, $\sqrt{n}/\log p \ll k_u \lesssim n/\log p$.}
    \label{fig: tony loading}
\end{figure}

Figure~\ref{fig: tony loading}
presents the upper and lower bounds for adaptive separation distance for testing $H_0: \xi^\top \beta = t_0$ versus $ H_1: |\xi^\top \beta - t_0| = \tau$ under various $k_\xi$.
We parametrize the problem as
\[
k_\xi = p^{\gamma_\xi}, \qquad
k_u = p^{\gamma_u}, \qquad
n = p^{\gamma_n},\text{ and }\tau \asymp\|\xi\|_\infty \, p^{\gamma_\tau - \gamma_n/2}.
\]
The parametrization of $\tau$ can be interpreted as a rescaled version of the
alternative shift, where the exponent $\gamma_\tau$ characterizes the signal strength after normalizing for the common loading magnitude $\|\xi\|_\infty$ and the classical parametric rate $n^{-1/2}$.

We first explain the left panel, corresponding to the \emph{ultra-sparse regime}
$k_u \lesssim \sqrt{n}/\log p$.
In this regime, the statistical boundary is sharp and completely characterizes the phase
transition between distinguishability and indistinguishability.
Specifically, the separation distance is given by
\begin{equation}\label{eq: separation under regular and ultrasparse}
    \tau_{\text{adap}}(k_u,k\,;\,\xi)
\asymp
\begin{cases}
\|\xi\|_2/\sqrt{n}, & \gamma_\xi \le 2\gamma_u,\\[0.4em]
\|\xi\|_\infty k_u \sqrt{\log p/n}, & \gamma_\xi > 2\gamma_u;
\end{cases}
\end{equation}
at the boundary $\gamma_{\xi}=2 \gamma_u$, the expression should be read up to logarithmic factors.

Notably, when $\gamma_\xi <  2\gamma_u$, $\tau_{\text{adap}}(k_u,k\,;\,\xi)$ does not depend on $k_u$,
indicating that full adaptivity to the unknown exact sparsity is achievable.
This is in agreement with the existing results by \cite{cai2019optimal}.

The right panel depicts the \emph{moderately sparse regime}
$\sqrt{n}/\log p \ll k_u \lesssim n/\log p$, where the geometry becomes more intricate.
In this regime, the phase diagram should be interpreted through two curves:
an information-theoretic lower-bound curve, below which detection is statistically impossible,
and a computationally feasible upper-bound curve, above which there exist computationally efficient procedures that succeed.
When the two curves coincide, their common value gives the adaptive separation distance.
When they differ, the region between them is not resolved by the statistical bounds alone.
Accordingly, there are three different regimes:
\begin{itemize}
    \item When the loading sparsity satisfies $\gamma_\xi \le \gamma_u$ or $\gamma_\xi \ge 2\gamma_u$, the two curves coincide, and the adaptive separation distance can be explicitly characterized as
\begin{equation}\label{eq: separation under regular and moderatesparse}
\tau_{\text{adap}}(k_u,k\,;\,\xi)
\asymp
\begin{cases}
\|\xi\|_2 \, k_u \log p / n, & \gamma_\xi \le \gamma_u,\\[0.4em]
\|\xi\|_\infty k_u \sqrt{\log p/n}, & \gamma_\xi \ge 2\gamma_u.
\end{cases}
\end{equation}
In these loading settings, the boundary is achievable by computationally efficient tests.

Comparing with \Cref{eq: separation under regular and ultrasparse}, the only difference lies in the sparse-loading regime $\gamma_\xi\le  \gamma_u$, where the rate is increased by a factor of $k_u\log p /\sqrt{n} \gg 1$. This is driven by the $\nu_2$ quantity resulting from the unknown design covariance.

Accordingly, the statistical boundary now has a nonzero intercept at
$(0,\gamma_u-\gamma_n/2)$.
This intercept corresponds to a separation distance of order $k_u\log p/n$ when the loading
vector $\xi$ has only a constant number of nonzero entries.
Thus, even for very sparse $\xi$, the parametric rate $n^{-1/2}$ is no longer attainable,
and adaptivity with respect to $k_u$ necessarily fails.

\item When the loading sparsity satisfies $\gamma_u < \gamma_\xi < 2\gamma_u$, the available information-theoretic lower bound and the computationally feasible upper
bound do not match.
More precisely,
\begin{equation}\label{eq: gap region}
\|\xi\|_\infty \left[\frac{k_u^{3/2}\log p}{n}+\sqrt{\frac{k_\xi}{n}}\right]
\;\lesssim\;
\tau_{\text{adap}}(k_u,k\,;\,\xi)
\;\lesssim\;
\|\xi\|_\infty
\sqrt{\frac{n}{\log p}\wedge k_\xi}\,
\frac{k_u\log p}{n}.
\end{equation}
The middle shaded region in the right panel corresponds to this non-negligible gap between these two curves.
\end{itemize}

The characterization summarized in \Cref{fig: tony loading} is consistent with the results of \cite{cai2017confidence}, while the intermediate loading-sparsity setting on the right panel remains unresolved in the existing literature.
In \Cref{sec: computational barrier}, we show that the upper curve is matched by a low-degree lower bound up to logarithmic factors.
Accordingly, under the low-degree heuristic, the shaded region is interpreted as a
computationally hard region.

\subsection{Computationally feasible upper bound via loading decomposition}\label{sec: upper}
The upper bound in \Cref{thm: hypothesis} is proven by inverting confidence intervals with desired lengths. Suppose that
\[
    \CI_{\alpha_1}(Z)
    =
    [\hat L(Z\,;\,\xi)-r(Z\,;\,\xi),\,\hat L(Z\,;\,\xi)+r(Z\,;\,\xi)]
\]
is a \((1-\alpha_1)\)-level confidence interval for
\(L(\beta\,;\,\xi)=\xi^\top\beta\), and that
$r(Z\,;\,\xi)\le  \tilde r(\xi)$
with probability at least \(1-\alpha_2\). Then the test
\begin{equation}\label{eq: inverted-ci-test}
    \psi(Z)
    =
    \ind\{|t_0-\hat L(Z\,;\,\xi)|>\tilde r(\xi)\}
\end{equation}
belongs to \(\Psi_{\alpha_1+\alpha_2}(\Theta(k_u;\xi,t_0))\) and has power at least \(1-(\alpha_1+\alpha_2)\) over \(\Theta_{\pm 2\tilde r(\xi)}(k;\xi,t_0)\).

Taking \(\alpha'=\min\{\alpha,\eta\}\), it is sufficient to construct a
\((1-\alpha'/2)\)-level confidence interval for \(L(\beta\,;\,\xi)=\xi^\top\beta\) whose radius is bounded by \(\tilde r(\xi)\) with probability at least \(1-\alpha'/2\). The resulting inverted test has Type I error at most \(\alpha\) and power at least \(1-\eta\) over alternatives separated from the null by \(2\tilde r(\xi)\).

To obtain a confidence interval that is effectively tailored to the particular loading profile of any $\xi$, we propose a mixed construction that generalizes two existing approaches:
\begin{itemize}
    \item \textit{Plug-in confidence interval.}
    By \Cref{cdt: sparsity assumption} and the discussion following \Cref{cdt: linear estimator,cdt: linear variance}, we can construct an interval centered at \(
    \hat L_{\mathrm{pi}}(Z\,;\,\xi)=\xi^\top\hat\beta\) with radius bounded with probability tending to 1 by
    \begin{equation}\label{eq: plug in length}
    \tilde r_{\mathrm{pi}}(\xi)
    \asymp
    \sigma \|\xi\|_\infty  k_u\sqrt{\frac{\log p}{n}} .
    \end{equation}

    \item \textit{Debiased confidence interval.}
The debiased interval is centered at
\begin{equation}\label{eq: debiased center}
    \hat L_{\mathrm{db}}(Z\,;\,\xi)
    =
    \xi^\top\hat\beta
    +
    \hat u^\top \frac{1}{n}X^\top(Y-X\hat\beta),
\end{equation}
where
\begin{equation}\label{eq: projection vector}
    \hat u
    =
    \argmin_{u\in\mathbb R^p}
    \left\{
        u^\top\hat\Sigma u:
        \|\hat\Sigma u-\xi\|_\infty
        \le
        C_\xi\|\xi\|_2\sqrt{\frac{\log p}{n}}
    \right\},
\end{equation}
\(\hat\Sigma=n^{-1}X^\top X\), and \(C_\xi>0\) is sufficiently large. If the feasible set in~\eqref{eq: projection vector} is empty, we set \(\hat u=0\); by \Cref{lem: feasibility of optimization}, this event has vanishing probability for some large enough $C_\xi$. The radius of the debiased interval admits the high-probability deterministic bound
\begin{equation}\label{eq: debiased length}
    \tilde r_{\mathrm{db}}(\xi)
    \asymp
   \sigma   \|\xi\|_2
    \left(
        \frac{1}{\sqrt n}
        +
        k_u\frac{\log p}{n}
    \right).
\end{equation}
\end{itemize}

Comparing \eqref{eq: plug in length} and \eqref{eq: debiased length}, the plug-in confidence interval is effective when \(\|\xi\|_\infty\) is small, whereas the debiased confidence interval is effective when \(\|\xi\|_2\) is small.
To combine these complementary advantages, we decompose the loading vector according to coordinate magnitude. For any \(0\le  m\le  p\), write \(\xi=\xi^{(1)}+\xi^{(2)}\), where
\[
    \xi^{(1)}
    =
    (\xi_1,\ldots,\xi_m,0,\ldots,0)^\top,
    \qquad
    \xi^{(2)}
    =
    (0,\ldots,0,\xi_{m+1},\ldots,\xi_p)^\top .
\]
At the level of \(1-\alpha'/4\), we construct the debiased confidence interval for \((\xi^{(1)})^\top\beta\), which corresponds to the \(m\) largest coordinates of \(\xi\) in absolute value, and the plug-in confidence interval for \((\xi^{(2)})^\top\beta\), which corresponds to the remaining coordinates.

We call the Minkowski sum of these two intervals the \textit{mixed confidence interval}, which has level \(1-\alpha'/2\), and the corresponding inverted test the \textit{mixed test}.

By \Cref{eq: plug in length,eq: debiased length}, the length of the mixed confidence interval is at the scale of
\begin{equation}\label{eq:upper-length-mixed}
        \sigma\sqrt{\sum_{j\le  m}\xi_j^2}
    \left(
        \frac{1}{\sqrt n}
        +
        k_u\frac{\log p}{n}
    \right)
    +
   \sigma |\xi_{m+1}|k_u\sqrt{\frac{\log p}{n}}.
\end{equation}

The choices \(m=0\) and \(m=p\) recover the plug-in confidence interval and the debiased confidence interval, respectively.
Optimizing \Cref{eq:upper-length-mixed} over \(m\) yields the upper bound in \Cref{thm: hypothesis}.
Moreover, \Cref{prop: upper bound equivalent} shows that the rate-optimal cutoff can be taken as
\begin{equation}\label{eq: optimal cutoff}
m_*
=
\begin{cases}
    \lceil k_u^2\log p\rceil ~ \wedge ~p ,
    & k_u\lesssim \sqrt n/\log p ,\\
    \lceil n/\log p\rceil ~ \wedge ~p ,
    & k_u\gg \sqrt n/\log p.
\end{cases}
\end{equation}

\subsection{Proof idea of lower bound}\label{sec: lower}

The lower bound in \Cref{thm: hypothesis} is obtained from two distinct
least-favorable prior constructions. Both constructions are governed by the magnitude profile of \(\xi\), but they exploit different components of the model to turn this profile into
a testing difficulty.
\begin{itemize}
    \item The first construction yields the term $\nu_1/\sqrt{n}$.
    This construction uses the full magnitude profile of $\xi$ to calibrate the local separation that remains hard even when the design covariance is fixed at $\Sigma=\mathbf{I}_p$.
\item
The second construction yields the term
\(\nu_2 k_u\log p/n\).
Unlike the first term, this term exploits the unknown-covariance component of the model, because the construction couples the leading coordinates of \(\xi\) with a sparse perturbation of \(\Sigma\).

\end{itemize}

Both constructions are analyzed through Le Cam's method \citep{tsybakov2009introduction}.
Since Type I error must be controlled over the enlarged null space \(\Theta(k_u;\xi,t_0)\), it is natural to use a mixture-over-null versus point-alternative comparison; see, for example, \cite{cai2019optimal}.
Concretely, we construct a prior \(\pi_1\) supported on \(\Theta(k_u;\xi,t_0)\) and compare the induced mixture distribution
\(
    \mathbb P^n_{\pi_1}
    =
    \int \mathbb P^n_\theta\,\pi_1({\rm d}\theta)
\)
with the distribution \(\mathbb P^n_{\theta_\star}\) generated by a fixed alternative point
\(\theta_\star \in \Theta_{\pm\tau}(k;\xi,t_0)\).
If their total variation
\(
    {\rm TV}\bigl(\mathbb P^n_{\pi_1},\mathbb P^n_{\theta_\star}\bigr)
\)
is small, then every test with uniform Type I error control over
$\Theta(k_u;\xi,t_0)$ has limited power at $\theta_\star$.

The main technical step is the construction of $\pi_1$ for arbitrary
loading vectors.
The prior must
simultaneously satisfy the sparsity constraint, the exact null constraint, and the eigenvalue restrictions on \(\Sigma\), while keeping ${\rm TV}\bigl(\mathbb P^n_{\pi_1},\mathbb P^n_{\theta_\star}\bigr)$ small.

Below we outline the key ideas behind our two constructions.
For exposition, we describe the constructions after a deterministic translation that cancels out $t_0$. Accordingly, the fixed alternative has
\(\beta_\star=0\) and the null prior is supported on the space with $L(\beta\,;\,\xi)=\tau$. This translation preserves the relevant total variation and \(\chi^2\)-divergence, so the argument applies to a general \(t_0\).

\medskip

\textit{Lower bound via the quantity \(\nu_1\).} To capture the term involving \(\nu_1\), we use a random-sparsity prior, following the ideas of \cite{chhor2024sparse,xie2025minimax}.
    Under such a prior, the coordinates are generated independently: the \(j\)th coordinate is assigned the value \(\gamma_j\) with probability \(q_j\), and is set to zero with probability \(1-q_j\). The sequences \(\{q_j\}\) and \(\{\gamma_j\}\) are calibrated according to the magnitude profile of the loading vector \(\xi\).

    This calibration is essential for general loading vectors. The classical least favorable prior construction used in \cite{cai2017confidence,cai2019optimal} selects a support uniformly and assigns equal magnitudes on the selected coordinates. It is sharp for sufficiently homogeneous loading vectors, such as those satisfying \eqref{eq: tony loading vector}, but can be suboptimal for heterogeneous \(\xi\).
    The random-sparsity prior instead adapts \(\{q_j\}\) and \(\{\gamma_j\}\) to the magnitude profile of \(\xi\), which yields the lower-bound term involving \(\nu_1\).

    A further adjustment is needed because the vanilla random-sparsity construction induces a random value of the target functional \(L(\beta\,;\,\xi)\).
    To obtain a valid null prior that satisfies \(L(\beta\,;\,\xi)=\tau\), we modify the construction through a scalar parameter to ensure that every draw satisfies the linear constraint exactly,
    while preserving the statistical closeness between the induced null and alternative distributions.
    This adjustment step is specific to the current testing problem and does not follow directly from the constructions in \cite{chhor2024sparse,xie2025minimax}.

\medskip
    \textit{Lower bound associated with \(\nu_2\).}
    To obtain the term involving \(\nu_2\), we exploit the uncertainty in the design covariance to construct random \(\Sigma\) in the null prior.
The idea is to balance two goals:
(1) align the directions of $\beta$ and $\xi$ to produce a large shift in $L(\beta\,;\,\xi)$;
(2) adjust $\Sigma$ accordingly so that the induced mixture distribution of $(Y,X)$ stays close to the alternative distribution with \(\Sigma=\mathbf I_p\) and $\beta=\mathbf{0}$.

Let \(p_1=\lfloor k_u/4\rfloor\).
The null prior constructs a block-structured covariance matrix
\begin{equation}\label{eq: covariance construction}
    \Sigma
    =
    \begin{pmatrix}
    \mathbf I_{p_1\times p_1}
    &
    \alphab_1\alphab_2^\top
    \\
    \alphab_2\alphab_1^\top
    &
    \mathbf I_{(p-p_1)\times(p-p_1)}
    \end{pmatrix},
\end{equation}
where the vector \(\alphab_1\in\mathbb R^{p_1}\) is deterministic with entries
\[
    (\alphab_1)_j
    =
    -\frac{\xi_j}{\sqrt{\sum_{i=1}^{p_1}\xi_i^2}},
    \qquad j\in[p_1],
\]
and \(\alphab_2\in\mathbb R^{p-p_1}\) is sampled with a uniform random support $S$ of size \(p_1\) and nonzero entries
\[
    (\alphab_2)_j
    \asymp
    \operatorname{sign}(\xi_{p_1+j})\sqrt{\frac{\log p}{n}},
    \qquad j\in S.
\]
The prior also chooses $\beta=\Sigma^{-1}(0, \kappa \alphab_2^\top)^\top$ where \(\kappa=\kappa(b)\in[0,1]\) is chosen to enforce the translated null constraint $L(\beta \,;\,\xi)=\tau$.
The leading term of $L(\beta \,;\,\xi)$ is of order
\[
    - \kappa \xi_{[p_1]}^\top \alphab_1\,\|\alphab_2\|_2^2
    = \kappa H(p_1;\xi)\|\alphab_2\|_2^2,
\]
whereas $\xi_{p_1+S}^\top \beta_{S}$ is nonnegative.
Since \(p_1\asymp k_u\), we have \(H(p_1;\xi)\asymp H(k_u;\xi)\) by the monotone ordering of the coordinates.
Furthermore, since \(\|\alphab_2\|_2^2\asymp k_u\log p/n\), the null constraint can be satisfied if $\tau$ is at the scale of
\[
    H(k_u;\xi)\frac{k_u\log p}{n}
    =
    \nu_2(k_u;\xi)\frac{k_u\log p}{n}.
\]
Meanwhile, the random support of \(\alphab_2\) keeps the mixture statistically close to the fixed alternative,  as the resulting \(\chi^2\)-divergence is controlled by a hypergeometric overlap bound.

The particular forms of $\Sigma$ and $\beta$ are crucial for this tractable analysis.
Furthermore, the control on the $\chi^2$-divergence reveals an interesting connection under this prior construction:
the resulting functional testing comparison is as hard as detecting the sparse rank-two covariance perturbation, because the $\chi^2$-divergence in the former problem can be bounded by the one associated with the latter problem.
This connection also foreshadows our computational-hardness argument in \Cref{sec: computational barrier}, where a closely related sparse covariance perturbation is used to connect the testing problem with computationally hard sparse covariance detection problems.

\section{Computational Barriers in the Moderately Sparse Regime}
\label{sec: computational barrier}

In this section, we study computational barriers for testing~\eqref{eq: hypothesis test} in the moderately sparse regime \({\sqrt n}/{\log p} \ll k_u \lesssim {n}/{\log p} \).
The discussion following \Cref{cor: sharp-nu2} shows that, in this regime, the upper and lower bounds in Theorem~\ref{thm: hypothesis} need not coincide for general loading vectors.
Our results provide two forms of computational lower-bound evidence that match the upper bound: a direct low-degree lower bound for general loadings and a reduction from sparse CCA for flat sparse loadings.

\subsection{Frameworks for computational-barrier evidence}
\label{sec: computational preliminaries}

We introduce two frameworks for establishing evidence for computational barriers.
The first is the low-degree polynomial method, which yields lower bounds against low-degree polynomial algorithms. The second is polynomial-time reduction, which transfers hardness from a conjecturally hard source problem to the testing problem~\eqref{eq: hypothesis test}.

\textit{Low-degree polynomial framework.}
The low-degree framework has been successfully applied to a wide range of high-dimensional testing problems, including sparse PCA, tensor PCA, sparse CCA, graphon estimation, and independent component analysis; see \cite{hopkins2018statistical,kunisky2019notes,laha2023support,luo2024computational,auddy2025large} and references therein.

Let \(\mathbb R[\mathcal Z]_{\le  D}\) denote the space of multivariate polynomials in the observed data \(\mathcal Z\) of degree at most \(D\). Following \cite[Definition~1.8]{bandeira2022franz}, we use the following notion of weak separation.

\begin{definition}\label{def: separation}
We say that \(f\in\mathbb R[\mathcal Z]_{\le  D}\) \emph{weakly separates} two distributions \(\mathbb Q_1\) and \(\mathbb Q_2\) if, as \(n\to\infty\),
\[
\sqrt{
\max\left\{
\Var_{\mathbb Q_1}(f(\mathcal Z)),
\Var_{\mathbb Q_2}(f(\mathcal Z))
\right\}
}
=
O\left(
\left|
\mathbb E_{\mathbb Q_1}f(\mathcal Z)
-
\mathbb E_{\mathbb Q_2}f(\mathcal Z)
\right|
\right).
\]
\end{definition}

Accordingly, no degree-\(D\) polynomial weakly separates two parameter spaces \(\Theta_1\) and \(\Theta_2\) if there exist priors \(\pi_1\) and \(\pi_2\), supported on \(\Theta_1\) and \(\Theta_2\), respectively, such that no \(f\in\mathbb R[\mathcal Z]_{\le  D}\) weakly separates the induced mixture distributions
\[
    \mathbb P_{\pi_1}^n
    =
    \int \mathbb P_\theta^n\,\pi_1({\rm d}\theta),
    \qquad
    \mathbb P_{\pi_2}^n
    =
    \int \mathbb P_\theta^n\,\pi_2({\rm d}\theta).
\]

The following standard criterion reduces low-degree hardness to bounding the low-degree likelihood-ratio norm.

\begin{proposition}[{\cite[Proposition~6.2]{bandeira2022franz}}]
\label{prop:low-degree-hardness}
Let \(\mathbb Q_1\) and \(\mathbb Q_2\) be probability measures with \(\mathbb Q_1\ll \mathbb Q_2\). Define
\[
    L=\frac{{\rm d}\mathbb Q_1}{{\rm d}\mathbb Q_2},
    \qquad
    \mathrm{LD}(D)=\|L^{\le  D}\|_{L^2(\mathbb Q_2)}^2,
\]
where \(L^{\le  D}\) is the orthogonal projection of \(L\) onto \(\mathbb R[\mathcal Z]_{\le  D}\) in \(L^2(\mathbb Q_2)\). If
\[
    \mathrm{LD}(D)=1+o(1),
\]
then no degree-\(D\) polynomial weakly separates \(\mathbb Q_1\) and \(\mathbb Q_2\).
\end{proposition}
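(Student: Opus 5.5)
The plan is to follow the standard Cauchy--Schwarz argument in the Hilbert space \(L^2(\mathbb Q_2)\). Write \(\langle f,g\rangle=\mathbb E_{\mathbb Q_2}[fg]\). For any \(f\in\mathbb R[\mathcal Z]_{\le D}\) we have \(\mathbb E_{\mathbb Q_1}f=\langle f,L\rangle=\langle f,L^{\le D}\rangle\), because \(L^{\le D}\) is the orthogonal projection and \(f\) lies in the projected subspace. The constant function \(1\) is a polynomial of degree zero, and \(\langle 1,L\rangle=1\). Hence the projection has the orthogonal decomposition \(L^{\le D}=1+(L^{\le D}-1)\) with \(\langle 1, L^{\le D}-1\rangle=0\). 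Consequently \(\|L^{\le D}-1\|^2=\mathrm{LD}(D)-1=o(1)\).

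Next I would bound the mean gap. Set \(\bar f=f-\mathbb E_{\mathbb Q_2}f\), which is still of degree at most \(D\). Then
\[
\mathbb E_{\mathbb Q_1}f-\mathbb E_{\mathbb Q_2}f=\langle \bar f,L^{\le D}\rangle=\langle \bar f,L^{\le D}-1\rangle ,
\]
since \(\bar f\) is centered under \(\mathbb Q_2\). Cauchy--Schwarz then gives
\[
\bigl|\mathbb E_{\mathbb Q_1}f-\mathbb E_{\mathbb Q_2}f\bigr|\le \sqrt{\Var_{\mathbb Q_2}(f)}\,\sqrt{\mathrm{LD}(D)-1}=o\bigl(\sqrt{\Var_{\mathbb Q_2}(f)}\bigr).
\]

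The conclusion follows by contradiction. Suppose some \(f\) weakly separates, so \(\sqrt{\max\{\Var_{\mathbb Q_1}f,\Var_{\mathbb Q_2}f\}}\le C\,|\mathbb E_{\mathbb Q_1}f-\mathbb E_{\mathbb Q_2}f|\). Combining with the display gives
\[
\sqrt{\Var_{\mathbb Q_2}f}\le C\,o(1)\sqrt{\Var_{\mathbb Q_2}f}.
\]
This forces \(\Var_{\mathbb Q_2}f=0\) for all large \(n\). In that case the display gives a mean gap of zero, and the weak-separation inequality then forces \(\Var_{\mathbb Q_1}f=0\) as well. Then \(f\) is a.s. constant under both measures with identical means, a degenerate case. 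This case is excluded either by the convention that separation requires a nonzero mean gap, or because the \(O(\cdot)\) relation with both sides zero is not meaningful separation.

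The main subtlety is not the inequality itself but ensuring the objects are well defined. We need \(L\in L^2(\mathbb Q_2)\), or at least that \(\langle f,L\rangle\) is finite for polynomials, so that the projection makes sense. This holds in our Gaussian setting, where polynomials have all moments; in general I would define \(L^{\le D}\) through the Riesz representation of \(f\mapsto \mathbb E_{\mathbb Q_1}f\) on the finite-dimensional-per-\(n\) space of degree-\(D\) polynomials. A second point to handle carefully is the degenerate-variance case, since \cite{bandeira2022franz} treats it by definition.
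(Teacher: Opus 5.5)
Your argument is correct and is essentially the standard proof behind the cited result \cite[Proposition~6.2]{bandeira2022franz}: you use the identity $\|L^{\le D}-1\|^2=\mathrm{LD}(D)-1$ and then apply Cauchy--Schwarz to the $\mathbb Q_2$-centered polynomial. The paper does not reprove the statement and only imports it by citation; your remark about the degenerate case is also right, since $\Var_{\mathbb Q_2}(f)=0$ makes $f$ a.s.\ constant under both measures (using $\mathbb Q_1\ll\mathbb Q_2$), so the statement implicitly needs the definition of weak separation to exclude a zero mean gap.
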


When \(D=\infty\), we have
\(
    \mathrm{LD}(\infty)
    =
    \mathbb E_{\mathbb Q_2}[L^2]
    =
    1+\chi^2(\mathbb Q_1\|\mathbb Q_2)
\).
Thus, \(\mathrm{LD}(D)=1+o(1)\) is a computational analogue of the usual \(\chi^2\)-based information-theoretic indistinguishability condition. In particular, it may hold even when the full \(\chi^2\)-divergence diverges. It has been conjectured that, for many average-case high-dimensional testing problems, low-degree polynomials of degree \(D=O(\log p)\) capture the limits of polynomial-time computation \cite{hopkins2018statistical,kunisky2019notes}. Although this conjecture is not universal \cite{buhai2025quasi,wein2025computational}, low-degree lower bounds provide strong evidence of computational hardness.

\textit{Reduction from sparse CCA.}
The second approach is based on polynomial-time reductions. In this section, we use sparse canonical correlation analysis as the source problem, since sparse CCA is widely conjectured to exhibit intrinsic computational barriers \cite{gao2015minimax,gao2017sparse,laha2023support}. The role of the reduction is to show that, for a special class of loading vectors, an efficient algorithm for~\eqref{eq: hypothesis test} would imply an efficient algorithm for an appropriately parameterized sparse CCA detection problem. The precise sparse CCA model and the reduction are given in \Cref{sec: reduction from scca}.

\subsection{Low-degree lower bound}
\label{sec: low-degree}

We first establish a direct low-degree lower bound for general loading vectors. The construction is motivated by the information-theoretic lower bound associated with \(\nu_2\), where the difficulty comes from uncertainty in the design covariance matrix \(\Sigma\). Here, the same covariance-perturbation mechanism in \eqref{eq: covariance construction} is calibrated to the polynomial degree \(D\), leading to the following low-degree lower bound.

\begin{theorem}\label{thm: computational lower bound}
Suppose \Cref{cdt: sparsity assumption} holds,
\({\sqrt n}/{\log p} \ll k_u \lesssim {n}/{\log p} \),
and \(D\lesssim p\).
Let
\[
k_{\mathrm{eff}}
=
\left\lfloor
\frac{n}{\log p}
\wedge
\frac{k_u^2}{D\log p}
\right\rfloor,
\qquad
\nu_3
=
H(k_{\mathrm{eff}};\xi).
\]
Then for any $1\le k\leq k_u$, no degree-\(D\) polynomial weakly separates
\(\Theta(k_u;\xi,t_0)\) and \(\Theta_{\pm\tau}(k;\xi,t_0)\) when
\[
    \tau
    =
    c\,\nu_3\,\frac{k_u\log p}{n},
\]
where \(c>0\) is a sufficiently small constant.
\end{theorem}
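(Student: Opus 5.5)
The plan is to run the covariance-perturbation construction of \Cref{sec: lower} (the one producing the $\nu_2$ term) again, with its scale calibrated to the degree $D$. I will take the fixed alternative to be $\theta_\star=(\mathbf 0,\mathbf I_p,\sigma_0)$, after the translation removing $t_0$. I will build a null prior $\pi_1$ on $\Theta(k_u;\xi,t_0)$ with $L(\beta;\xi)=\tau$ almost surely, and then apply \Cref{prop:low-degree-hardness} with $\mathbb Q_1=\mathbb P^n_{\pi_1}$ and $\mathbb Q_2=\mathbb P^n_{\theta_\star}$. Under $\mathbb Q_2$ the rows $Z_i=(Y_i,X_{i\cdot})$ are i.i.d.\ $\caN(0,\operatorname{diag}(\sigma_0^2,\mathbf I_p))$. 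Under each null draw they are Gaussian with covariance $\mathbf I+\Delta$, where $\Delta$ has low rank. So $\mathrm{LD}(D)$ can be expanded in Hermite polynomials in the standard way for spiked Wishart models:
\[
\mathrm{LD}(D)=\mathbb E_{\theta,\theta'\sim\pi_1^{\otimes 2}}\Bigl[\sum_{d\le D}\text{(degree-$d$ terms of }\det(\mathbf I-\Delta\Delta')^{-n/2})\Bigr].
\]
This reduces the problem to moments of overlaps between two independent draws.

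\emph{Construction.} There are two blocks. The first block $B$ consists of the top $k_{\mathrm{eff}}$ coordinates of $\xi$. The covariance perturbation is $\Sigma=\mathbf I+\alphab_1\alphab_2^\top+\alphab_2\alphab_1^\top$. Here $\alphab_1$ is supported on $B$ with entries proportional to $-\xi_j/H(k_{\mathrm{eff}};\xi)$. The vector $\alphab_2$ has a uniformly random support $S\subset B^c$ of size $s\asymp k_u$ and entries $\operatorname{sign}(\xi_j)\rho$. The response covariance is chosen so that $\operatorname{Cov}(X,Y)$ is supported on $S$ and is proportional to $\kappa\alphab_2$. As in \Cref{sec: lower}, the leading term of $\xi^\top\beta$ is then $\kappa H(k_{\mathrm{eff}};\xi)\|\alphab_2\|_2^2$. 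With $\rho^2\asymp\log p/n$ this gives $\tau\asymp\nu_3 k_u\log p/n$, and $\kappa\in[0,1]$ is tuned to make the constraint exact.

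Two feasibility checks follow. The eigenvalue condition holds because $\|\alphab_1\|_2\|\alphab_2\|_2\lesssim\sqrt{k_u\log p/n}\lesssim c$. Sparsity of $\beta$ needs care, because $\beta=\Sigma^{-1}\operatorname{Cov}(X,Y)$ leaks into $B$ through $\alphab_1$, and $|B|=k_{\mathrm{eff}}$ may exceed $k_u$. I would first try to absorb this by instead splitting $\alphab_1$ into a random subset of $B$ of size at most $k_u/4$ with rescaled weights. The second option is to choose the joint law of $(X,Y)$ directly so that $\beta$ lives on $S$ and the perturbation sits in $\Sigma$. I will check which one preserves the order $H(k_{\mathrm{eff}};\xi)$ of the leading term.

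\emph{Low-degree computation.} After expanding, each degree-$d$ term is a polynomial in the overlaps $\langle\alphab_1,\alphab_1'\rangle$ (equal to $1$ when $\alphab_1$ is deterministic) and $\langle\alphab_2,\alphab_2'\rangle=\rho^2|S\cap S'|$. The count $|S\cap S'|$ is hypergeometric. Following the sparse PCA/CCA low-degree analyses \cite{hopkins2018statistical,kunisky2019notes,laha2023support}, I will bound $\mathbb E[(n\rho^4|S\cap S'|^2)^{d/2}]$ summed over $d\le D$. There are two terms to control. The first is the chi-square-type term $n\rho^4|S\cap S'|^2$. The second is the combinatorial factor from moments of $|S\cap S'|$ up to order $D$, which is where $D$ enters and produces the cap $k_u^2/(D\log p)$ on $k_{\mathrm{eff}}$. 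The requirement $k_u\gg\sqrt n/\log p$ ensures the regime where this cap, rather than the information-theoretic one, is active. The cap $k_{\mathrm{eff}}\le n/\log p$ keeps $\|\alphab_1\|_2\|\alphab_2\|_2$ bounded. I expect that choosing $c$ small gives $\mathrm{LD}(D)=1+o(1)$.

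\emph{Main obstacle.} The hardest step is reconciling three things at once: the exact sparsity constraint on $\beta$ with a block of size $k_{\mathrm{eff}}$, the degree-$D$ overlap bound, and the requirement that the functional shift be of order $H(k_{\mathrm{eff}};\xi)$ rather than $H(k_u;\xi)$. I would attack it by making $\alphab_1$ random, sparse and $\xi$-weighted within $B$. The hope is that averaging over its support keeps the low-degree norm controlled, with its overlap entering only through bounded factors, while the shift still sees $\sum_{j\in B}\xi_j^2$ in expectation. The adjustment $\kappa$ then restores exactness draw by draw, as in the $\nu_1$ construction.
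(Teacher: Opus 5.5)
Your architecture matches the paper's: translate away $t_0$, take $\theta_\star=(\mathbf 0,\mathbf I_p,\sigma_*)$, put a rank-two cross perturbation in $\Sigma$ that couples the top-$k_{\mathrm{eff}}$ block with a random $k_u$-sparse vector whose entries have size $\sqrt{\log p/n}$, and bound $\mathrm{LD}(D)$ through overlaps. The fix in your last paragraph, a random, sparse, $\xi$-weighted vector on the top block, is essentially the paper's construction. (In the paper's notation the roles are swapped: $\alphab_1$ is your $\sqrt{\log p/n}$-scale vector on $S\subset B^c$, and $\alphab_2$ lives on the top block.) Your option (b) cannot work: if $\supp(\beta)\subseteq S\subset B^c$, then $\xi^\top\beta$ never sees $\xi_B$, and it vanishes when $\supp(\xi)\subseteq B$.

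The gap is that the proposal stops exactly where the real argument begins. Suppose the block vector $a$ is at most $k_u/4$-sparse. Then $|\xi^\top a|\le H(k_u;\xi)\|a\|_2$. The shift is of order $(k_u\log p/n)\,|\xi^\top a|$, so reaching $\nu_3k_u\log p/n$ forces $\|a\|_2\gtrsim H(k_{\mathrm{eff}};\xi)/H(k_u;\xi)$. That ratio is large in exactly the regime where the theorem says more than the $\nu_2$ bound.

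In the paper, $\alphab_2$ has entries $-\operatorname{sign}(\xi_j)\sqrt{p_5}/k_u$ on a Bernoulli support with $q_j\propto|\xi_j|k_u/\sqrt{p_5}$. It is placed on $[k_{\mathrm{eff}}]\setminus[k_u]$, after a WLOG reduction that keeps $q_j<1/8$. Then $\alphab_2^\top\tilde\alphab_2=(p_5/k_u^2)J$, where $J$ is a sum of independent Bernoullis with mean $\mu_1=k_u^2/(64p_5)$. This overlap is $O(1)$ only on average, and its upper tail can make the full $\chi^2$ diverge. Your hope that the block overlap ``enters only through bounded factors'' is precisely what fails, and handling that tail is the content of the theorem.

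Correspondingly, $D$ does not enter through moments of $|S\cap S'|$. That hypergeometric overlap does not involve $k_{\mathrm{eff}}$ at all, and \Cref{lem: hypergeometric} handles it at every degree, exactly as in the $\nu_2$ proof. The relevant per-row quantity is also not $n\rho^4|S\cap S'|^2$. It is $x=\langle r,\tilde r\rangle\langle c,\tilde c\rangle$ with $r=(\kappa/\sigma_*,\alphab_2)$ and $c=\alphab_1$, which is linear in each overlap.

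The paper splits $\mathrm{LD}(D)$ on $\mathcal C=\{\alphab_2^\top\tilde\alphab_2\le C_5\}$.
\begin{itemize}
\item On $\mathcal C$, \Cref{lem: low degree integral} combines the rank-one Hermite identity with the sign alignment of all draws and $\kappa>0$. This makes every Hermite coefficient product nonnegative, so $\langle L^{\le D}_\theta,L^{\le D}_{\tilde\theta}\rangle\le\langle L_\theta,L_{\tilde\theta}\rangle=(1-x)^{-n}$. The hypergeometric lemma then finishes, because $\langle r,\tilde r\rangle\le C_5+\sigma_*^{-2}$ there.
\item On $\mathcal C^c$, the paper multiplies the crude bound $\|L^{\le D}_\theta\|^2\le 9(6npD)^{4D}$ by the Chernoff tail $\exp\{-\mu_1\psi(64C_5)\}$. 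Since $\mu_1\ge k_u^2/(64k_{\mathrm{eff}})\gtrsim D\log p$, this term is $o(1)$. This is exactly where the cap $k_{\mathrm{eff}}\le k_u^2/(D\log p)$ comes from.
\end{itemize}

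Your expansion framework itself is fine. For this cross structure the degree-$\le D$ inner product is exactly the degree-$\lfloor D/2\rfloor$ Taylor truncation of $(1-x)^{-n}$, with $x\ge0$. The moments you must control up to order $D/2$ are those of $J$, not of $|S\cap S'|$.

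Two further steps are missing. You need Chebyshev concentration of $-\xi^\top\alphab_2$, using $k_{\mathrm{eff}}\lesssim k_u^2/\log p$, to get $\kappa\in(0,1]$ with high probability. You also need to restrict the prior to the validity event; this costs only a factor $(1-\varepsilon_n)^{-2}$, because all the integrands are nonnegative.
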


Theorem~\ref{thm: computational lower bound} can be viewed as the computational analogue of the lower bound associated with \(\nu_2\). Taking \(D=O(\log p)\), we have
\[
    k_{\mathrm{eff}}
    \asymp_{\log}
    \frac{n}{\log p}
    \wedge
    k_u^2.
\]
In the moderately sparse regime \(k_u\gg \sqrt n/\log p\), the lower bound in Theorem~\ref{thm: computational lower bound} matches the computationally feasible upper bound in~\eqref{eq: upper bound equivalent} up to logarithmic factors. Moreover, when \(k_u\gtrsim \sqrt{n\log p}\), the lower bound in Theorem~\ref{thm: computational lower bound} matches the upper bound in Theorem~\ref{thm: hypothesis} based on \Cref{prop: upper bound equivalent}.

To prove \Cref{thm: computational lower bound}, we construct a hidden covariance perturbation analogously to the one in \eqref{eq: covariance construction} for proving the information-theoretic lower bound based on \(\nu_2\).
We briefly outline the argument for bounding \(\mathrm{LD}(D)\), with full details deferred to \Cref{sec: proof computational lower bound}.
Let \(\pi_2\) be a point mass at a carefully chosen alternative \(\theta^*\), and let \(\pi_1\) be a prior supported on the null space with the covariance perturbation.
For
\(
    L_\theta
    =
    {{\rm d}\mathbb P_\theta^n}/{{\rm d}\mathbb P_{\theta^*}^n}
\),
linearity of the low-degree projection gives
\[
\mathrm{LD}(D)
=
\left\|
\left(\mathbb E_{\theta\sim\pi_1}L_\theta\right)^{\le  D}
\right\|_{L^2(\mathbb P_{\theta^*}^n)}^2
=
\mathbb E_{(\theta_1,\theta_2)\sim\pi_1^{\otimes2}}
\mathbb E_{\mathbb P_{\theta^*}^n}
\left[
    L_{\theta_1}^{\le  D}
    L_{\theta_2}^{\le  D}
\right].
\]

The key step is to decompose the last expectation according to a high-probability event \(A\) under \(\pi_1^{\otimes2}\).
On \(A\), the two hidden covariance perturbations have a tractable alignment structure, which allows us to extend the proof of Proposition~3.6 in \cite{bandeira2022franz} and to show that
\[
\mathbb E
\left[
    L_{\theta_1}^{\le  D}
    L_{\theta_2}^{\le  D}
    \mathbf 1_A
\right]
\le
\mathbb E
\left[
    L_{\theta_1}
    L_{\theta_2}
    \mathbf 1_A
\right];
\]
see \Cref{lem: low degree integral}.
On \(A^c\), we combine a uniform bound on
\(\|L_\theta^{\le  D}\|_{L^2(\mathbb P_{\theta^*}^n)}\) with the small probability of \(A^c\). These two bounds imply
\[
    \mathrm{LD}(D)=1+o(1),
\]
which proves Theorem~\ref{thm: computational lower bound} by Proposition~\ref{prop:low-degree-hardness}.

This argument is not a standard low-degree calculation. The null prior must satisfy both the sparsity constraint and the scalar constraint \(\xi^\top\beta=t_0\). Moreover, the proof establishes \(\mathrm{LD}(D)=1+o(1)\) even though the full \(\chi^2\)-divergence may diverge. The event decomposition isolates the covariance-perturbation pairs that contribute to the low-degree likelihood-ratio norm and is the key step behind the weak-separation lower bound.

\subsection{Reduction from sparse CCA}
\label{sec: reduction from scca}

We next give a complementary hardness result through a polynomial-time reduction. This result applies to a special class of loading vectors and gives additional evidence that the low-degree lower bound in \Cref{sec: low-degree} reflects a genuine computational limitation.

We restrict attention to loading vectors \(\xi\) with sparsity
\(k_\xi=\|\xi\|_0\) and unit nonzero entries. After relabeling coordinates, we may take \(\xi_j=1\) for \(1\le j\le k_\xi\) and \(\xi_j=0\) for \(j>k_\xi\).
Thus, \eqref{eq: hypothesis test} reduces to
\begin{equation}\label{eq: falt testing problem}
    H_0:\ \sum_{j=1}^{k_\xi}\beta_j=t_0.
\end{equation}
This restriction is natural for the reduction because sparse CCA hardness is typically formulated for equal-magnitude sparse vectors \cite{gao2017sparse,laha2023support}. It is also the setting where the remaining gap between the statistical lower and upper bounds is most transparent.

As shown in \Cref{sec: examples}, the adaptive separation distance is already characterized outside the intermediate regime
\(
    \sqrt{n}/\log p\ll k_u \ll k_\xi \ll k_u^2.
\)
For the equal-magnitude loading vectors considered here, the computational rate suggested by the feasible upper bound is
\begin{equation}\label{eq: computational upper tony}
    H(\frac{n}{\log p};\xi)\frac{k_u\log p}{n}=\sqrt{\frac{n}{\log p}\wedge k_\xi}
    \frac{k_u\log p}{n},
\end{equation}
which is increasing in \(k_\xi\) until it saturates at
\(k_\xi\asymp n/\log p\).
Here we focus on the range of $k_\xi$ before the saturation:
\[
   \sqrt n /\log p\ll k_u \ll k_\xi \lesssim {n}/{\log p}.
\]

In the following, we define the source and target problems used in the reduction. Let
\({\rm LT}(n,k_u,k,k_\xi,p,\tau)\) denote the linear testing problem
\[
    H_0:\theta\in\Theta(k_u;\xi,t_0)
    \qquad\text{vs.}\qquad
    H_1:\theta\in\Theta_{\pm\tau}(k;\xi,t_0),
\]
where \(\xi\) has \(k_\xi\) nonzero entries, all equal to one.
The total error probability of a test is defined as the sum of its size and its worst-case Type II error probability.
Let
\({\rm SCCA}(n,s,p_1,p_2,\lambda)\) denote the sparse CCA detection problem
\begin{equation}\label{eq: sparse cca detection}
H_0:
\begin{pmatrix}
U_1\\
U_2
\end{pmatrix}
\sim
\mathcal N(0,\mathbf I_{p_1+p_2})^{\otimes n}
\quad
\text{vs.}
\quad
H_1:
\begin{pmatrix}
U_1\\
U_2
\end{pmatrix}
\sim
\mathcal N\left(
0,
\begin{pmatrix}
\mathbf I_{p_1}
&
\lambda\alphab_1\alphab_2^\top
\\
\lambda\alphab_2\alphab_1^\top
&
\mathbf I_{p_2}
\end{pmatrix}
\right)^{\otimes n},
\end{equation}
where \(\alphab_1\) and \(\alphab_2\) are drawn uniformly from
\[
    \mathcal V_{s,p}
    =
    \left\{
    v\in\mathbb R^p:
    \|v\|_0=s,\
    v_i=s^{-1/2}\ \text{for all } i\in\supp(v)
    \right\}.
\]

\begin{theorem}\label{thm: reduction}
Suppose \Cref{cdt: sparsity assumption} holds, and
\(
    \sqrt{n}/\log p\ll k_u \ll k_\xi \lesssim {n}/{\log p}
\).
Assume there exists a polynomial-time algorithm that solves
\({\rm LT}(n,k_u,k,k_\xi,p,\tau)\) for some $1\le k\le k_u$ with
total error probability at most
\(\alpha+\eta\), where
\[
    \tau
    =
    c\,\rho_n^2\,\frac{k_u}{\sqrt{k_\xi}},
\]
for a sufficiently small constant \(c>0\) and some \(\rho_n<1/2\). Then there exists a polynomial-time algorithm that solves
\[
    {\rm SCCA}\left(
    2n,\,
    \lfloor k_u/4\rfloor,\,
    k_\xi,\,
    p-k_\xi,\,
    \rho_n
    \right)
\]
with total error probability at most \(\alpha+\eta\).
\end{theorem}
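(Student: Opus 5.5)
The plan is a direct sample-level reduction that transports the covariance-perturbation mechanism of \eqref{eq: covariance construction} into the regression model. Given $2n$ SCCA samples $(U_1^{(i)},U_2^{(i)})$, $i\le 2n$, with $U_1\in\mathbb R^{k_\xi}$ and $U_2\in\mathbb R^{p-k_\xi}$, I build $n$ regression observations $(Y_i,X_{i\cdot})$. The first $k_\xi$ coordinates of $X$ play the role of $\supp(\xi)$. Set $s=\lfloor k_u/4\rfloor$ and $w=k_\xi^{-1/2}\mathbf 1_{k_\xi}$, and put
\[
X_{i,[k_\xi]}=\tfrac{1}{\sqrt2}\bigl(U_1^{(i)}+U_1^{(n+i)}\bigr),\qquad X_{i,-[k_\xi]}=U_2^{(i)},
\]
\[
Y_i=-\tfrac{a}{\sqrt2}\,w^\top\bigl(U_1^{(i)}-U_1^{(n+i)}\bigr)+\sigma_0 g_i,
\]
where $g_i\sim\mathcal N(0,1)$ are fresh and $a,\sigma_0$ are constants. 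Flipping the sign of $U_1$ if needed gives the sign convention that makes the functional shift positive, as in the $\nu_2$ construction. The map is clearly polynomial time, and the rows are i.i.d.\ Gaussian.

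Next I compute the joint covariance in each case. Under SCCA $H_0$, $X\sim\mathcal N(0,\mathbf I_p)$ and $Y$ is independent of $X$ with variance $a^2+\sigma_0^2$. This is the regression model with $\beta=0$, $\Sigma=\mathbf I_p$ and $\sigma^2=a^2+\sigma_0^2\le M_2^2$, a single alternative point $\theta_\star$.

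Under SCCA $H_1$, write $\lambda=\rho_n$. Then $\mathrm{Cov}(X_{[k_\xi]},X_{-[k_\xi]})=\tfrac{\lambda}{\sqrt2}\alphab_1\alphab_2^\top$ and $\mathrm{Cov}(Y,X_{[k_\xi]})=-\tfrac{a}{2}w^\top(\mathbf I-\mathbf I)=0$. Also
\[
\mathrm{Cov}(Y,X_{-[k_\xi]})=-\tfrac{a\lambda}{\sqrt2}(w^\top\alphab_1)\alphab_2^\top=-\kappa\,\alphab_2^\top,\qquad \kappa:=\tfrac{a\lambda}{\sqrt2}\sqrt{s/k_\xi}.
\]
Hence $(Y,X)$ follows the linear model with $\Sigma$ of the form \eqref{eq: covariance construction} (entries $\lambda/\sqrt2$) and $\beta=-\Sigma^{-1}(0,\kappa\alphab_2^\top)^\top$. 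The residual variance $\sigma^2=\mathrm{Var}(Y)-\beta^\top\Sigma\beta$ is bounded below by $\sigma_0^2>0$. Since $\lambda<1/2$, the eigenvalues of $\Sigma$ lie in $[1/2,3/2]$, so this $\theta$ is in $\Theta(k_u)$ after fixing $M_1\ge2$.

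The key computation is then explicit via the Schur complement: $\Sigma^{-1}(0,v)$ has first block $-\tfrac{\lambda/\sqrt2}{1-\lambda^2/2}\,\alphab_1(\alphab_2^\top v)$ and second block proportional to $v$. So $\supp(\beta)\subseteq\supp(\alphab_1)\cup\supp(\alphab_2)$ has size at most $2s\le k_u$. Because $\|\alphab_2\|_2=1$ and $\mathbf 1^\top\alphab_1=\sqrt s$ deterministically, we get
\[
\xi^\top\beta=\mathbf 1^\top\beta_{[k_\xi]}=\frac{\kappa\lambda/\sqrt2}{1-\lambda^2/2}\sqrt s=:t_0,
\]
which is a fixed number for every draw of $(\alphab_1,\alphab_2)$. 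This exactness is the point I would check most carefully: it makes the SCCA alternative mixture a mixture over $\Theta(k_u;\xi,t_0)$ with no conditioning or truncation. Plugging in $\kappa$ gives $t_0\asymp a\lambda^2 s/\sqrt{k_\xi}\asymp\rho_n^2k_u/\sqrt{k_\xi}$. Choosing $a$ small makes $t_0\ge\tau=c\rho_n^2k_u/\sqrt{k_\xi}$, with $c$ tied to $a$ and chosen so that $t_0=\tau$ exactly. Then $\theta_\star$ with $\beta=0$ satisfies $|\xi^\top\beta-t_0|=\tau$, so it lies in $\Theta_{\pm\tau}(k;\xi,t_0)$ for every $k\ge1$.

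Finally, run the assumed LT algorithm on the constructed data with this $t_0$ and output its decision, reversed: LT-reject maps to SCCA-$H_0$. The SCCA Type I error equals the LT power deficit at $\theta_\star$, which is at most the worst-case Type II error. The SCCA Type II error equals the average over the prior of the LT rejection probability at null points, which is at most $\alpha$. Hence the total error is at most $\alpha+\eta$.

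The main obstacle is arranging $\mathrm{Cov}(Y,X)$ to have the required structure without knowing $\alphab_2$. That is resolved by the sum/difference splitting of the two sample halves, which makes $Y$ uncorrelated with $X_{[k_\xi]}$ while inheriting the $\alphab_2$-correlation with $X_{-[k_\xi]}$. The remaining checks of eigenvalue and noise-level constraints are routine.
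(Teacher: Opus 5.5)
Your reduction is correct in substance, and it decorrelates $Y$ from $X_{[k_\xi]}$ differently from the paper. The paper builds $Y$ and $X_{[k_\xi]}$ from different SCCA samples: $Y=\sigma_*\mathbf 1^\top U_1'/\sqrt{k_\xi}$ and $X_{[k_\xi]}=-c_{10}U_1+\sqrt{1-c_{10}^2}V_*$. So $\mathrm{Cov}(Y,X_{[k_\xi]})=0$ by independence, and $X_{-[k_\xi]}=(U_2+U_2')/\sqrt2$ is the shared block carrying both correlations. You feed the same $U_1^{(i)}$ into both $Y$ and $X_{[k_\xi]}$, cancel their correlation because the sum and difference of two i.i.d. copies are uncorrelated, and use only $U_2^{(i)}$ for $X_{-[k_\xi]}$.

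The following steps of yours are all correct:
\begin{itemize}
\item the covariance computations and the Schur-complement formula for $\beta$;
\item the observation that $\xi^\top\beta$ is deterministic because $\mathbf 1^\top\alphab_1=\sqrt s$ and $\|\alphab_2\|_2=1$;
\item the sparsity bound $2s$ and the bound $\sigma^2\ge\sigma_0^2$;
\item the reversed-decision error accounting.
\end{itemize}
Both routes give exact law matching with zero total-variation slack, and both lose the same factor $\sqrt2$ in the correlations. The difference between them is cosmetic.

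Two steps need repair, both easily.

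(i) $M_1>1$ is a given constant, so you cannot ``fix $M_1\ge 2$''. Your $\Sigma$ has eigenvalues $1\pm\rho_n/\sqrt2$. These leave $[1/M_1,M_1]$ when $M_1$ is close to $1$ and $\rho_n$ is close to $1/2$, and $\rho_n\asymp\sqrt{k_\xi\log p/n}$ can be of constant order. This is exactly what the paper's damping constant $c_{10}$ handles. In your construction, take $X_{i,[k_\xi]}=\frac{c'}{\sqrt2}(U_1^{(i)}+U_1^{(n+i)})+\sqrt{1-c'^2}\,V^{(i)}$ with fresh $V^{(i)}\sim\mathcal N(0,\mathbf I_{k_\xi})$ and a small constant $c'$. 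Then $\mathrm{Var}(X_{[k_\xi]})=\mathbf I$ and $\mathrm{Cov}(Y,X_{[k_\xi]})=0$ still hold, the eigenvalues become $1\pm c'\rho_n/\sqrt2$, and your functional value $t^{\rm c}$ changes only by a constant-order factor.

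(ii) The level $t_0$ belongs to the LT problem, so you cannot set it equal to $t^{\rm c}$. Apply the paper's translation $Y_i\mapsto Y_i+X_{i\cdot}\beta_0$ with $\beta_0=(t_0-t^{\rm c})e_1$. This leaves $\Sigma$ and $\sigma$ unchanged. It sends the SCCA alternative into $\Theta(2s+1;\xi,t_0)\subseteq\Theta(k_u;\xi,t_0)$, and it sends $\theta_\star$ to a $1$-sparse point with $|\xi^\top\beta-t_0|=t^{\rm c}$.

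Relatedly, you do not need $t^{\rm c}=\tau$ exactly; that would force $c$ to depend on $n$. Also, shrinking $a$ decreases $t^{\rm c}$ rather than helping. Instead, fix $a$ and $\sigma_0$ small enough for the noise-level constraint. Then choose the constant $c$ small enough that $\tau\le t^{\rm c}$, and use $\Theta_{\pm t^{\rm c}}(k;\xi,t_0)\subseteq\Theta_{\pm\tau}(k;\xi,t_0)$.
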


If we take
\(
    \rho_n
    \asymp
    \sqrt{{k_\xi\log p}/{n}},
\)
Theorem~\ref{thm: reduction} implies that under the conjectured polynomial-time hardness of the following asymmetric sparse CCA instance
\begin{equation}\label{eq: relevant SCCA}
        {\rm SCCA}\left(
    2n,\,
    \lfloor k_u/4\rfloor,\,
    k_\xi,\,
    p-k_\xi,\,
    \sqrt{\frac{k_\xi\log p}{n}}
    \right),
\end{equation}
\({\rm LT}(n,k_u,k,k_\xi,p,\tau)\) cannot be solved efficiently at the separation scale
\[
    \tau \asymp \frac{k_u\sqrt{k_\xi}\log p}{n}.
\]
This rate matches the computationally feasible upper bound in~\eqref{eq: computational upper tony} when \(k_\xi\lesssim n/\log p\).
Therefore, Theorem~\ref{thm: reduction} provides conditional evidence for the computational barrier in the flat sparse loading subclass.

\textit{Connection with known computational barriers for sparse CCA.}
The sparse CCA instance in \Cref{eq: relevant SCCA} is asymmetric, with dimensions \(k_\xi\) and \(p-k_\xi\). This asymmetry is important because many existing sparse CCA reductions are formulated for balanced dimensions and hence
do not directly apply to the instance induced by \Cref{thm: reduction}. The closest related result is \cite{laha2023support}, which establishes low-degree
barriers for asymmetric sparse CCA. However, a direct combination of \cite{laha2023support} with \Cref{thm: reduction} would only rule out strong separation for the linear testing problem, at the feasible upper bound in \eqref{eq: computational upper tony} up to logarithmic factors. By contrast, Theorem~\ref{thm: computational lower bound} rules out weak separation by low-degree polynomials. Thus, the direct low-degree analysis in \Cref{sec: low-degree} is not a straightforward consequence of known sparse CCA results.

\textit{Algorithmic evidence for computational barriers in sparse CCA.}
We also connect the above reduction to standard algorithmic evidence for computational barriers in sparse CCA detection. In the sparse CCA formulation, the empirical cross-covariance matrix contains the planted cross-correlation signal, so it is natural to compare procedures that threshold different functionals of this matrix. Exhaustive scan statistics can detect signals at the
information-theoretic boundary \(\rho_n\asymp \sqrt{k_u\log p/n}\) given in \cite{10.3150/12-BEJ470}, but they require a combinatorial search over
\(k_u\times k_u\) submatrices. By contrast, simple polynomial-time procedures, such as max-column statistics, require stronger signals, of order \(\rho_n\asymp \sqrt{k_\xi\log p/n}\) up to logarithmic factors, in the parameter regime used in \Cref{thm: reduction}. Thus, the algorithmic behavior of natural
sparse CCA tests is consistent with the computational barrier suggested by the reduction. A detailed comparison of these test statistics is deferred to \Cref{sec: test statistics}.

\section{Benchmarks with structured or known covariance}
\label{sec: extensions}

The lower bound in \Cref{sec: minimax rate} is sharp up to logarithmic factors in the ultra-sparse regime for arbitrary loading vectors and in the moderately sparse regime for certain loading subclasses.
For arbitrary loading vectors in the moderately sparse regime, it is of interest to assess the sharpness of the lower bound.
This section calibrates the scope of the lower-bound analysis through two covariance benchmarks.
In both benchmarks, we consider any sparsity level $k_u\lesssim n/\log p$ and any loading vector $\xi$.

First, when \(\Sigma\) is unknown but has a sparse signed-spiked structure, the adaptive testing boundary is shown to be $\nu_1/\sqrt n+\nu_2 k_u\log p/n$ up to logarithmic factors.
Thus the information-theoretic lower bound in \Cref{sec: minimax rate} is nearly attainable in this structured unknown-covariance model.
Meanwhile, as the computational results in \Cref{sec: computational barrier} continue to apply, we obtain evidence for a statistical--computational gap.

Second, when \(\Sigma\) is known, covariance uncertainty is removed from the testing problem and we obtain a sharper
adaptive upper bound than in the unknown-covariance setting.
When \(\Sigma_0\) is diagonal, this upper bound matches the lower bound $\nu_1/\sqrt{n}$ up to logarithmic factors.

\subsection{$\Sigma$ with sparse signed-spiked structures}\label{sec: np hard}
We consider a structured covariance class under which the lower bound in
Theorem~\ref{thm: hypothesis} is statistically attainable, up to logarithmic
factors, even in the moderately sparse regime
\(\sqrt{n}/\log p \ll k_u \lesssim n/\log p\). This structural assumption is
imposed to isolate an information-theoretic phenomenon: for sparse signed-spiked
covariance matrices, \(\Sigma\) can be estimated at a rate sufficiently fast to
attain the lower bound. The restriction is also natural from the perspective of
high-dimensional covariance modeling. Sparse signed-spiked covariance models are
standard in sparse PCA, principal subspace estimation, and principal component
regression; see, for example,
\cite{johnstone2009consistency,cai2015optimal,green2025high,wucalibrated}.

To formulate the sparse signed-spiked structure, we first introduce some notation.
For a matrix $V \in \mathbb{R}^{p \times r}$, let $V_{j*}$ denote its $j$th row.
The row support of $V$ is defined by
\begin{equation}\label{eq:suppV}
\supp(V)
=
\{ j \in [p] : V_{j*} \neq 0 \},
\end{equation}
and its cardinality is denoted by $|\supp(V)|$.
Let
\[
\mathbb{O}(p,r)
=
\{ V \in \mathbb{R}^{p \times r} : V^\top V = \mathbf{I}_r \}
\]
denote the set of $p \times r$ matrices with orthonormal columns.
For the covariance matrix $\Sigma$, we define the following sparse signed-spiked parameter space:
\begin{equation}\label{eq: sparse spiked}
\begin{aligned}
\Pi_0(k,p)
=
\Bigl\{
\Sigma = &V \operatorname{diag}(\lambda_1,\ldots,\lambda_r) V^\top + \mathbf{I}_p :
\text{ for some } r \le k, \\
&\frac{1}{M_1}-1 \le \lambda_r \le \cdots \le \lambda_1 \le M_1-1,\;
V \in \mathbb{O}(p,r),\;
|\operatorname{supp}(V)| \le k
\Bigr\}.
\end{aligned}
\end{equation}
It is immediate that any $\Sigma \in \Pi_0(k,p)$ satisfies the eigenvalue condition required in \eqref{eq: parameter space} that $1/M_1 \le \lambda_{\min}(\Sigma) \le \lambda_{\max}(\Sigma) \le M_1$.

We now define the null and alternative spaces, as well as the corresponding adaptive separation
distance, under the additional structural assumption $\Sigma \in \Pi_0(k,p)$.
Analogously to \eqref{eq: null space}, \eqref{eq: alternative space},
\eqref{eq: minimax separation}, and \eqref{eq: adaptive separation}, we define
\begin{equation*}
\left\{
\begin{aligned}
\Theta^{\text{spike}}(k;\xi,t_0)
&=
\Bigl\{
\theta=(\beta,\Sigma,\sigma)\in\Theta(k;\xi,t_0):
\Sigma\in \Pi_0(k,p)
\Bigr\},\\
\Theta^{\text{spike}}_{\pm \tau}(k;\xi,t_0)
&=
\Bigl\{
\theta=(\beta,\Sigma,\sigma)\in\Theta_{\pm \tau}(k;\xi,t_0):
\Sigma\in \Pi_0(k,p)
\Bigr\},\\
\tau^{\text{spike}}_{\text{adap}}(k_u,k\,;\,\xi)
&=
\inf\Bigl\{
\tau :
\sup_{\psi \in \Psi_\alpha(\Theta^{\text{spike}}(k_u;\xi,t_0))}
\inf_{\theta \in \Theta^{\text{spike}}_{\pm \tau}(k;\xi,t_0)}
\mathbb{E}_\theta \psi \ge 1 - \eta
\Bigr\}.
\end{aligned}
\right.
\end{equation*}
As before, the rank \(r\) and the true sparsity level \(k\) are not assumed to be known to the test.

\begin{theorem}\label{thm: sparse spiked}
Under \Cref{cdt: sparsity assumption}, there exists some constant $c>0$ such that if $n\ge c\,k_u\log p$, then for any $1\le k\le k_u$, we have
\[
\tau^{\text{spike}}_{\text{adap}}(k_u,k\,;\,\xi)
\;\asymp_{\log}\;
\frac{\nu_1}{\sqrt{n}} + \nu_2\,\frac{k_u \log p}{n}.
\]
\end{theorem}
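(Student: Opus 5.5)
The proof has two halves: a lower bound inherited from \Cref{thm: hypothesis}, and an upper bound from a new, computationally inefficient test that exploits the sparse signed-spiked structure.

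\emph{Lower bound.} We check that both least-favorable constructions behind \eqref{eq: lower bound} already live in the restricted class.

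The $\nu_1/\sqrt n$ construction fixes $\Sigma=\mathbf I_p$, which lies in $\Pi_0(k,p)$ with $r=0$.

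For the $\nu_2 k_u\log p/n$ construction, the covariance in \eqref{eq: covariance construction} is $\mathbf I_p+\alphab\alphab'^\top+\alphab'\alphab^\top$, where $\alphab=(\alphab_1,0)$ and $\alphab'=(0,\alphab_2)$. This perturbation has rank two, with eigenvalues $\pm\|\alphab_1\|\|\alphab_2\|$; its sign pattern is exactly why a \emph{signed} spike class is needed. Its row support has size $2p_1\le k_u/2$, and its eigenvalues lie in $[1/M_1-1,M_1-1]$ once $n\ge ck_u\log p$. So the null prior is supported on $\Theta^{\rm spike}(k_u;\xi,t_0)$, and the alternative point $(\mathbf 0,\mathbf I_p,\sigma)$ is in $\Theta^{\rm spike}_{\pm\tau}(k;\xi,t_0)$. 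The $\chi^2$ bounds from the proof of \Cref{thm: hypothesis} then apply verbatim. This gives $\tau^{\rm spike}_{\rm adap}\gtrsim \nu_1/\sqrt n\vee \nu_2k_u\log p/n$.

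\emph{Upper bound.} We modify the mixed test of \Cref{sec: upper}, splitting $\xi=\xi^{(1)}+\xi^{(2)}$ at the cutoff $m=j_1$ or $m=\lceil k_u^2\log p\rceil$. The plug-in part for $\xi^{(2)}$ is unchanged and costs $|\xi_{m+1}|k_u\sqrt{\log p/n}$. For the debiased part we replace $\hat u$ with $\tilde u=\hat\Sigma_{\rm sp}^{-1}\xi^{(1)}$, where $\hat\Sigma_{\rm sp}$ is a computationally inefficient estimator of $\Sigma$ over $\Pi_0(k_u,p)$. Concretely, we take $\hat\Sigma_{\rm sp}=\mathbf I_p+\hat V\hat\Lambda\hat V^\top$, where $\hat V$ is obtained by exhaustive search over supports of size $k_u$, i.e.\ best subset sparse PCA, possibly with sample splitting. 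The bias of the debiased estimator is $(\hat\Sigma\tilde u-\xi^{(1)})^\top(\beta-\hat\beta)$.

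The key claim is that the sparse structure gives the improved bound
\[
|(\hat\Sigma\tilde u-\xi^{(1)})^\top(\beta-\hat\beta)|\lesssim \sigma\Bigl(\|\xi^{(1)}\|_\infty k_u\sqrt{\tfrac{\log p}{n}}\wedge\|\xi^{(1)}\|_2\sqrt{\tfrac{\log p}{n}}\Bigr)\|\beta-\hat\beta\|_1/k_u+ \sigma H(k_u;\xi)\tfrac{k_u\log p}{n}.
\]
The reason is that $\Sigma^{-1}=\mathbf I_p-V D V^\top$ acts nontrivially only on a $k_u$-dimensional support. Its estimation error $\|\hat\Sigma_{\rm sp}^{-1}-\Sigma^{-1}\|_{\rm op}\lesssim\sqrt{k_u\log p/n}$ is confined to that support. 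When paired with $\xi^{(1)}$ and the $\ell_2$ error $\sqrt{k_u\log p/n}$ of $\hat\beta$, it picks up only the top-$k_u$ energy $H(k_u;\xi)$ of $\xi$, which yields the $\nu_2$ term. Off the spike support, $\tilde u=\xi^{(1)}$ coordinatewise, and the identity part reproduces the known-diagonal analysis, giving $H(m;\xi)/\sqrt n$. The variance term is $\sigma\sqrt{\tilde u^\top\hat\Sigma\tilde u/n}\lesssim\|\xi^{(1)}\|_2/\sqrt n$. Combined with \Cref{prop: upper bound equivalent} and \Cref{prop: nu1-expression}, this gives the bound $\nu_1\sqrt{\log p}/\sqrt n+\nu_2k_u\log p/n$ up to logarithmic factors.

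\emph{Main obstacle.} The hard step is the bias bound for the identity component in the moderately sparse regime. We need $(\hat\Sigma-\mathbf I)\xi^{(1)}$, restricted off the spike, to contribute only $H(k_u^2\log p;\xi)/\sqrt n$, not $\|\xi^{(1)}\|_2k_u\log p/n$. The naive route via $\|\hat\Sigma\tilde u-\xi\|_\infty\|\hat\beta-\beta\|_1$ fails. Instead I would use sample splitting, with $\hat\beta$ computed on an independent half, so that $(\hat\Sigma-\Sigma)\tilde u$ paired with $\beta-\hat\beta$ is conditionally a mean-zero average with variance $\|\tilde u\|_2^2\|\beta-\hat\beta\|_2^2/n$. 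That bounds the term by $\|\xi^{(1)}\|_2\sqrt{k_u\log p}/n\le\|\xi^{(1)}\|_2/\sqrt n$, which is enough. A second technical point is uniform consistency of the sparse PCA estimator when the eigengap is not bounded below. Here I would bound $\|\hat\Sigma_{\rm sp}-\Sigma\|$ in operator norm on $2k_u$-sparse supports directly, without requiring eigenvector recovery.
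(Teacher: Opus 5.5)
Your lower bound is the paper's argument: both priors use either $\Sigma=\mathbf{I}_p$ or the rank-two signed perturbation in \eqref{eq: covariance construction}, and both lie in $\Pi_0$. Your upper bound also follows the paper's structure. You compute $\hat\beta$, $\hat\sigma$ and a support-restricted $\hat\Omega$ on one half and debias along $\hat\Omega\xi^{(1)}$ on the other half. The $\nu_2$ term comes from $\xi^{(1)\top}(\hat\Omega-\Omega)\Sigma(\beta-\hat\beta)$, because $\hat\Omega-\Omega$ lives on a block of size at most $2k_u$, so only $H(2k_u;\xi)\le\sqrt2\,H(k_u;\xi)$ enters. The conditionally mean-zero remainder is of order $\|\xi^{(1)}\|_2\sqrt{k_u\log p}/n$.

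What is missing is the covariance estimator itself, and as described it would not work. ``Best subset sparse PCA'' needs a rule for selecting the support. Variance-maximizing rules cannot see negative spikes, which the signed class allows and which the $\nu_2$ prior itself uses. Extracting $\hat V$ also needs an eigengap that you do not have. You flag the eigengap, but a ``direct'' operator-norm bound on $2k_u$-sparse supports is only available for an estimator designed to deliver it.

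The paper uses a candidate-support class for this. It takes any $\hat B$ with $|\hat B|\le k_u$ such that, for every $D\subseteq\hat B^c$ with $|D|\le k_u$, $\hat\Sigma^{(1)}_{DD}$ is within Wishart fluctuation of the identity and $\|\hat\Sigma^{(1)}_{D\hat B}\|_2$ is small. The true support belongs to this class with high probability. The estimator $\Gamma_{\hat B}(\hat\Sigma^{(1)})$ has error $\lesssim\sqrt{k_u\log p/n}$, shown by splitting coordinates into $B_*\cap\hat B$, $B_*\setminus\hat B$ and $\hat B\setminus B_*$. Neither eigenvectors nor the signs of the $\lambda_i$ enter.

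A shorter estimator that realizes your stated intention is the minimum-distance projection
\[
\hat\Sigma_{\rm sp}\in\argmin_{\Sigma'\in\Pi_0(k_u,p)}\ \sup_{|S|\le 2k_u}\bigl\|(\hat\Sigma^{(1)}-\Sigma')_{SS}\bigr\|_2 .
\]
Both $\hat\Sigma_{\rm sp}$ and $\Sigma$ equal $\mathbf{I}_p$ off a $k_u$-set, so $\hat\Sigma_{\rm sp}-\Sigma$ lives on a block of size at most $2k_u$. The triangle inequality and minimality then give $\|\hat\Sigma_{\rm sp}-\Sigma\|_2\le 2\sup_{|S|\le 2k_u}\|(\hat\Sigma^{(1)}-\Sigma)_{SS}\|_2\lesssim\sqrt{k_u\log p/n}$. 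The last step uses Gaussian covariance concentration and a union bound over at most $p^{2k_u+1}$ supports. In either construction, $\hat\Omega$ is the identity off a $k_u$-set and has bounded norm, which is all your argument uses.

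Two smaller points. First, the sample split for $\hat\Sigma_{\rm sp}$ is required, not optional: $\tilde u$ must be independent of the Gram matrix used in the debiasing step, or the conditional mean-zero argument breaks. Second, the displayed ``key claim'' should be replaced. Nothing in your argument produces its factor $1/k_u$ relative to the naive bound $\|\hat\Sigma\tilde u-\xi^{(1)}\|_\infty\|\beta-\hat\beta\|_1$. What your main-obstacle paragraph actually proves is
\[
\sigma\bigl\{\|\xi^{(1)}\|_2/\sqrt n+H(k_u;\xi)\,k_u\log p/n\bigr\},
\]
and that is the bound you need.
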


\Cref{thm: sparse spiked} gives the optimal adaptive separation rate under the sparse signed-spiked covariance assumption $\Sigma \in \Pi_0(k,p)$, up to logarithmic factors.
The lower-bound part follows directly from the constructions used for $\nu_1$
and $\nu_2$: in those constructions, $\Sigma$ is either the identity matrix or
has the form in \eqref{eq: covariance construction}, and therefore belongs to
the sparse signed-spiked class in \eqref{eq: sparse spiked}.

The upper bound is more delicate and differs substantially from the upper-bound argument in \Cref{thm: hypothesis}. It is attained by a statistically optimal but computationally inefficient procedure. The key ingredient of this procedure is covariance estimation over the parameter space $\Pi_0(k_u,p)$.
In the proof, we construct an estimator of \(\Sigma\) based on exhaustive search over sparse supports.
This estimator is specific to the sparse signed-spiked class considered here, and its operator-norm error is of order $\sqrt{(k_u\log p)/n}$, which is minimax rate-optimal because the standard sparse-spiked submodel yields a matching lower bound \citep{cai2015optimal}.
Consequently, under the sparse signed-spiked structure, the information-theoretic limits governed by $\nu_1$ and $\nu_2$ are statistically
attainable up to logarithmic factors, although the resulting procedure is not computationally efficient.

\textit{Computational barrier.}
The computational evidence in \Cref{sec: computational barrier} is compatible with the sparse signed-spiked covariance class in \eqref{eq: sparse spiked}.
Since all covariance constructions used in the proofs of
\Cref{thm: computational lower bound,thm: reduction} are of the form \eqref{eq: covariance construction}, the proofs of
\Cref{thm: computational lower bound,thm: reduction} remain valid when we replace the used parameter spaces by their sparse signed-spiked counterparts.
Therefore, the low-degree lower bounds for general loadings and the
sparse CCA reduction for flat sparse loadings continue to apply without change under the sparse signed-spiked
covariance restriction.

Accordingly, \Cref{thm: sparse spiked} should be interpreted together with the evidence for computational barriers in the moderately sparse regime $\sqrt{n}/\log p\ll k_u \lesssim n/\log p$:
although the sparse signed-spiked adaptive separation distance is
statistically attainable up to logarithmic factors, achieving this rate appears to require computationally intractable procedures.
In particular, when the low-degree lower bound \( H(n/\log p; \xi) k_u\log p/n\) is much larger than $\nu_1/\sqrt{n}+\nu_2 k_u \log p /n$,
the testing problem is predicted to exhibit a statistical--computational gap under the standard low-degree heuristic.

\subsection{$\Sigma$ known}\label{sec: known design}
Thus far, our analysis has focused on the practically relevant setting in which the covariance matrix $\Sigma$ is unknown. We now consider the idealized setting in which \(\Sigma=\Sigma_0\) is known a priori. Although such knowledge is rarely available in practice, as noted by \cite{cai2020semi,cai2023statistical}, this setting can be viewed as an extreme case of the semi-supervised framework, corresponding to an infinite amount of unlabeled data for estimating \(\Sigma\). Comparing this benchmark with the unknown-design case helps isolate how uncertainty in the covariance matrix \(\Sigma\) affects the testing
problem~\eqref{eq: hypothesis test}.

The following theorem gives a general upper bound for arbitrary known \(\Sigma_0\) satisfying the eigenvalue condition.
When \(\Sigma_0\) is diagonal, including the identity covariance as a special case, the theorem further characterizes the minimax adaptive separation distance up to logarithmic factors.

\begin{theorem}\label{thm: hypothesis design}
Under \Cref{cdt: sparsity assumption}, there exists some constant $c>0$ such that if $n\ge c\,k_u\log p$, then
for any \(1\le k\le k_u\) and any \(\Sigma_0\in\mathbb R^{p\times p}\) satisfying
$1/M_1 \le \lambda_{\min}(\Sigma_0) \le \lambda_{\max}(\Sigma_0) \le M_1$, we have
\[
\tau_{\text{adap}}(k_u,k,\Sigma_0\,;\,\xi)
\;\lesssim\;
\min_{0\le m\le p}
\left(
H(m;\xi)\,\frac{1}{\sqrt{n}}
+
|\xi_{m+1}|\,k_u\sqrt{\frac{\log p}{n}}
\right).
\]
Furthermore, when $\Sigma_0$ is diagonal, we have
\[
\tau_{\text{adap}}(k_u,k,\Sigma_0\,;\,\xi)
\;\asymp_{\log}\;
\frac{\nu_1}{\sqrt{n}}.
\]
\end{theorem}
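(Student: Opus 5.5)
The proof splits into the general upper bound for arbitrary known $\Sigma_0$ and, for diagonal $\Sigma_0$, a matching lower bound.

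\emph{Upper bound.} I would reuse the mixed construction of \Cref{sec: upper}, but replace the data-driven projection \eqref{eq: projection vector} by the oracle direction $u^{(1)}=\Sigma_0^{-1}\xi^{(1)}$, which is available because $\Sigma_0$ is known. The center for $(\xi^{(1)})^\top\beta$ is
\[
\hat L_{\mathrm{db}}=(\xi^{(1)})^\top\hat\beta+(u^{(1)})^\top\tfrac1n X^\top(Y-X\hat\beta).
\]
Its error decomposes as
\[
\hat L_{\mathrm{db}}-(\xi^{(1)})^\top\beta=(u^{(1)})^\top\tfrac1nX^\top\varepsilon+(u^{(1)})^\top(\Sigma_0-\hat\Sigma)(\hat\beta-\beta).
\]
The first term is Gaussian conditionally on $X$, with standard deviation $\asymp\sigma\|\xi^{(1)}\|_2/\sqrt n$.

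The bias term is the crux. With the constrained $\hat u$ of \eqref{eq: projection vector}, H\"older gives only an $\ell_\infty\times\ell_1$ bound, of order $\|\xi\|_2 k_u\log p/n$. To avoid it I would use sample splitting. Compute $\hat\beta$ on the first half of the data and evaluate the correction on the second half. Conditionally on $\hat\beta$, the vector $X(\hat\beta-\beta)$ and $Xu^{(1)}$ are jointly Gaussian. Hence
\[
(u^{(1)})^\top(\hat\Sigma-\Sigma_0)(\hat\beta-\beta)=O_p\bigl(\|\Sigma_0^{1/2}u^{(1)}\|_2\,\|\Sigma_0^{1/2}(\hat\beta-\beta)\|_2/\sqrt n\bigr)=O_p\bigl(\|\xi^{(1)}\|_2\sqrt{k_u\log p}/n\bigr).
\]
This is $\lesssim\|\xi^{(1)}\|_2/\sqrt n$ because $n\gtrsim k_u\log p$.

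For the residual part $(\xi^{(2)})^\top\beta$, I keep the plug-in interval \eqref{eq: plug in length}. Its radius is bounded by $\|\xi^{(2)}\|_\infty\|\hat\beta-\beta\|_1\lesssim|\xi_{m+1}|k_u\sqrt{\log p/n}$ via \Cref{cdt: linear estimator}. The noise scale is estimated consistently by $\hat\sigma$ from \Cref{cdt: linear variance}. Taking the Minkowski sum as before, inverting via \eqref{eq: inverted-ci-test}, and minimizing over $m$ gives the displayed upper bound.

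\emph{Diagonal case, upper side.} By \Cref{prop: upper bound equivalent} with $t=k_u\sqrt{\log p}$, the upper bound is $\asymp H(k_u^2\log p;\xi)/\sqrt n$ in every sparsity regime. Taking $m=j_1$ and using \Cref{prop: nu1-expression}, it is at most $\nu_1\sqrt{\log p/n}$ up to constants.

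\emph{Diagonal case, lower side.} I would rerun the $\nu_1$ random-sparsity construction from the proof of \Cref{thm: hypothesis}. That construction fixes $\Sigma=\mathbf I_p$ and includes the scalar adjustment that enforces the null constraint; under the paper's description it needs no constraint such as $k_u\lesssim\sqrt n/\log p$. For a general diagonal $\Sigma_0=D$, I reparametrize $\tilde X=XD^{-1/2}$, $\tilde\beta=D^{1/2}\beta$, $\tilde\xi=D^{-1/2}\xi$. The entries of $D$ lie in $[1/M_1,M_1]$, so $\tilde\xi$ has a magnitude profile within constant factors of that of $\xi$, and the sparsity of $\beta$ is preserved. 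The resulting bound $\nu_1(k_u;\tilde\xi)\asymp\nu_1(k_u;\xi)$ would follow from monotonicity of the defining equation \eqref{eq: equation sol} under coordinatewise rescaling, or more simply from the equivalent form in \Cref{prop: nu1-expression} up to log factors.

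\emph{Main obstacle.} The crux is the upper-bound bias term. One must check that sample splitting, or some other conditioning device that yields a $\sqrt{k_u\log p}/n$ rather than $k_u\log p/n$ bias, really removes the $k_u\log p/n$ factor uniformly over $\Theta(k_u,\Sigma_0)$. The lower side reduces to existing constructions.
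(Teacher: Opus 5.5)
Your proposal is correct and follows the paper's proof essentially step for step. The paper uses the same oracle direction $\Sigma_0^{-1}\xi^{(1)}$ with sample splitting, so that the bias term is a centered average of sub-exponential products of size $\sigma\|\xi\|_2\sqrt{k_u\log p}/n\lesssim\sigma\|\xi\|_2/\sqrt n$, combined with the plug-in interval through the mixed-interval construction; for diagonal $\Sigma_0$ it likewise rescales to $\Sigma=\mathbf I_p$ and uses the random-sparsity $\nu_1$ prior, whose $X$-covariance is exactly $\mathbf I_p$.

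The one step that you and the paper both leave informal is $\nu_1(k_u;\tilde\xi)\asymp_{\log}\nu_1(k_u;\xi)$. \Cref{prop: nu1-expression} alone does not give it, because $j_1$ and $\lambda$ change under rescaling. It does follow from $\nu_1(k_u;\cdot)\lesssim H(k_u^2\log p;\cdot)\lesssim\sqrt{\log p}\,\nu_1(k_u;\cdot)$, where the first inequality compares the identity-case lower bound with this theorem's upper bound and the second comes from the choice $m=j_1$. Combine this with the fact that $H(t;\cdot)$ changes by at most a constant factor under coordinatewise rescaling by factors in $[M_1^{-1/2},M_1^{1/2}]$.
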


The proof of \Cref{thm: hypothesis design} follows the same
plug-in $+$ debiasing decomposition as in \Cref{sec: upper}, but with an oracle debiasing direction. For the coordinates of \(\xi\) with small magnitudes, the plug-in construction is unchanged and contributes the term
\(
    |\xi_{m+1}|\,k_u\sqrt{{\log p}/{n}} .
\)
For the leading coordinates, knowing \(\Sigma_0\) allows us to use the oracle population direction
\(
    \Omega_0\xi^{(1)}=\Sigma_0^{-1}\xi^{(1)}
\)
directly in the debiased estimator. This removes the error introduced by estimating the bias-correction direction from the sample covariance. Combining the oracle debiased interval for the leading coordinates with the plug-in interval for the remaining coordinates yields the displayed upper bound after optimizing over \(m\). For brevity, we defer the proof to \Cref{sec: proof upper design}.

This explains the improvement over the unknown-design upper bound in \Cref{thm: hypothesis}. In the moderately sparse regime
\( \sqrt{n}/\log p \ll k_u \lesssim n/\log p\), uncertainty in \(\Sigma\) creates an additional cost through the construction of the bias-correction direction. When \(\Sigma_0\) is known, this cost disappears,
and the plug-in $+$ debiasing construction attains a sharper rate. In particular, when \(\Sigma_0\) is diagonal, the minimax adaptive separation
distance is \(\nu_1/\sqrt n\), up to logarithmic factors.

The known-\(\Sigma\) benchmark also has a broader semi-supervised
interpretation: exact knowledge of \(\Sigma\) is stronger than necessary. It is enough to construct an estimator \(\widehat{\Omega}\) of
\(\Omega_0\) satisfying
\[
    \|\widehat{\Omega}-\Omega_0\|_{2\to\infty}
    \;\lesssim\;
    \frac{1}{k_u\sqrt{\log p}} .
\]
Here, for a matrix \(A\in\mathbb R^{p\times p}\),
\(\|A\|_{2\to\infty}:=\max_{1\le j\le p}\|A_{j\cdot}\|_2\) denotes the maximum
row \(\ell_2\)-norm. Such an estimator may be available, for example, when
\(\Sigma\) has additional structure or when sufficiently many unlabeled samples
of \(X\) are available. Under this condition, the same debiasing argument
applies with \(\widehat{\Omega}\) in place of \(\Omega\), and the induced error
is negligible. Consequently, the adaptive rate involving \(\nu_1\) remains
attainable up to logarithmic factors.

Related phenomena have been observed in \cite{cai2017confidence,javanmard2018debiasing}. In particular, \cite{javanmard2018debiasing} construct asymptotically normal debiased estimators for individual regression coefficients, while \cite{cai2017confidence} use data splitting to obtain confidence intervals for \(\xi^\top\beta\) with optimal length for certain structured loading vectors. However, their results do not directly imply the adaptive upper bound
above for general loading vectors \(\xi\), and our plug-in $+$ debiasing
construction is not a direct consequence of these existing methods.

\section{Discussion}\label{sec: discussion}

We investigated the minimax and adaptive separation distances for testing a
general linear functional \(\xi^\intercal\beta\) in high-dimensional linear
regression with Gaussian random design. Our results reveal a transition in both
statistical and computational behavior as the sparsity upper bound \(k_u\) moves
from the ultra-sparse regime \(k_u \lesssim \sqrt{n}/\log p\) to the moderately
sparse regime \(\sqrt{n}/\log p \ll k_u \lesssim n/\log p\).

In the ultra-sparse regime, the testing problem admits a
statistical characterization, $\nu_1(k_u;\xi)/\sqrt{n}$, that is precise up to logarithmic factors.
By contrast, the moderately
sparse regime exhibits qualitatively different behavior.
In this regime, another lower-bound component $\nu_2(k_u;\xi)k_u\log p/n$ becomes relevant.
This lower-bound construction is based on coupling the leading coordinates of $\xi$ with a hidden sparse perturbation in the design covariance.
A closely related perturbation mechanism is used in our low-degree lower bound for general loadings and our sparse CCA reduction for flat sparse loadings.

Several directions remain open. First, closing the remaining gaps in both the ultra-sparse and moderately sparse regimes remains an important direction for future work. Our results partially
address this question by identifying structured classes of loading vectors for
which the upper and lower bounds match sharply. Second, it would be of interest
to extend the theory beyond \(\ell_0\)-sparsity to more general structured
sparsity classes, such as \(\ell_q\)-sparsity \citep{raskutti2011minimax} and
sparse-group structures \citep{cai2022sparse}. Third, one may replace the sparse signed-spiked covariance structure with a sparse
precision matrix structure, under which each row of
\(\Omega=\Sigma^{-1}\) has only a small number of nonzero entries
\citep{vandeGeer2014optimal}. Since this class is generally larger than the sparse signed-spiked covariance class considered in \Cref{sec: np hard}, an important
question is whether a comparable minimax upper bound can still be attained by
statistically optimal (but possibly computationally inefficient) procedures. Finally, developing
computational lower bounds for sparse CCA with more general latent vectors would
strengthen the theoretical basis for the computational barrier identified here.
It also remains open whether analogous statistical--computational phase transitions
arise for quadratic or more general nonlinear functionals.

\bibliographystyle{amsplain}
\bibliography{ref}

\newpage
\appendix
\setcounter{equation}{0}
\renewcommand{\theequation}{S.\arabic{equation}}
\renewcommand{\theHequation}{S.\arabic{equation}}
\numberwithin{theorem}{section}
\numberwithin{lemma}{section}
\numberwithin{proposition}{section}
\numberwithin{corollary}{section}
\numberwithin{assumption}{section}
\numberwithin{condition}{section}
\numberwithin{definition}{section}
\numberwithin{remark}{section}
\numberwithin{example}{section}
\renewcommand{\theHtheorem}{\thesection.\arabic{theorem}}
\renewcommand{\theHlemma}{\thesection.\arabic{lemma}}
\renewcommand{\theHproposition}{\thesection.\arabic{proposition}}
\renewcommand{\theHcorollary}{\thesection.\arabic{corollary}}
\renewcommand{\theHassumption}{\thesection.\arabic{assumption}}
\renewcommand{\theHcondition}{\thesection.\arabic{condition}}
\renewcommand{\theHdefinition}{\thesection.\arabic{definition}}
\renewcommand{\theHremark}{\thesection.\arabic{remark}}
\renewcommand{\theHexample}{\thesection.\arabic{example}}

\bigskip
\bigskip
\begin{center}
\textbf{\Large Supplement to ``\THETITLE''}
\end{center}
This supplement collects proofs and auxiliary results deferred from the main text. We structure the material into three parts: \textbf{(i)} upper bounds, \textbf{(ii)} information-theoretic lower bounds, and \textbf{(iii)} additional technical lemmas.

\Cref{sec: proof upper} establishes the upper bounds in \Cref{thm: hypothesis,thm: hypothesis design} (as well as the related upper bound in \Cref{thm: sparse spiked}). We first compile concentration tools in \Cref{sec: aux upper}. We then derive confidence-interval constructions under unknown design in \Cref{sec: proof upper unknown design}, and under known design via data splitting in \Cref{sec: proof upper design}. Finally, \Cref{sec: proof upper np hard} adapts these arguments to the setting of \Cref{thm: sparse spiked}.

\Cref{sec: proof lower} proves the lower bounds in \Cref{thm: hypothesis,thm: computational lower bound,thm: reduction,thm: hypothesis design} via the standard ``least favorable prior + \(\chi^2\)-divergence'' method. Specifically, we construct priors supported on (or predominantly supported on) the null parameter sets and bound the resulting \(\chi^2\)-divergence using explicit Gaussian calculations; these bounds imply power limitations through \Cref{lem: power inequality}.

The technical ingredients required for the upper-bound arguments (in particular, feasibility of the projection step) are proved in \Cref{sec: proof lemma upper}. Additional tools for the lower bounds and checks of prior validity (e.g., eigenvalue/sparsity control and \(\chi^2\) integral identities) are collected in \Cref{sec: proof lemma lower}. Finally, \Cref{sec: additional results} contains complementary discussions, and \Cref{sec:loading-examples} contains selected loading-profile examples, including one random-predictor example.

Table~\ref{tab:proof-map} gives a cross-reference from each main result to its proof and selected lemmas. For \Cref{thm: sparse spiked}, the table lists the separate upper-bound proof; its lower-bound proof follows from the same lower-bound argument used for \Cref{thm: hypothesis}.

\begin{table}[ht]
\centering
\caption{Proof map: where each main result is proved and the key auxiliary lemmas it uses. Parentheses after a lemma indicate where it is proved.}
\label{tab:proof-map}
\small
\begin{tabular}{>{\centering}p{0.2\textwidth} >{\centering}p{0.15\textwidth} >{\centering\arraybackslash}p{0.6\textwidth}}
\hline
Result & Location & Key ingredients (selected) \\
\hline
\Cref{thm: hypothesis} (upper) &
\Cref{sec: proof upper unknown design} &
\Cref{lem: feasibility of optimization} (\Cref{sec: proof feasibility of optimization}). \\

\Cref{thm: hypothesis design} (upper) &
\Cref{sec: proof upper design} &
Debiased estimator under known design \eqref{eq: bias estimator known design}. \\

\Cref{thm: sparse spiked} (upper) &
\Cref{sec: proof upper np hard} &
Precision matrix estimation \Cref{lem: sparse spiked} (\Cref{sec: proof sparse spiked}) and the corresponding debiased estimator \eqref{eq: debiased center np hard}. \\

\Cref{thm: hypothesis} (lower) &
\Cref{sec: proof lower ultra sparse} &
Prior validity \Cref{lem: valid covariance} (\Cref{sec: proof valid covariance}); \(\chi^2\) integral bound \Cref{lem: chi square integral 1} (\Cref{sec: proof chi square integral}). \\

\Cref{thm: hypothesis design} (lower) &
\Cref{sec: proof design lower} &
Prior validity \Cref{lem: valid covariance 2} (\Cref{sec: proof valid covariance 2}); \(\chi^2\) integral bound \Cref{lem: chi square integral 2}(\Cref{sec: proof chi square integral 2}). \\

\Cref{thm: computational lower bound} &
\Cref{sec: proof computational lower bound} &
Prior validity \Cref{lem: valid covariance 3} (\Cref{sec: proof valid covariance 3}); Low-degree quantity control \Cref{lem: low degree integral} (\Cref{sec: proof low degree integral}). \\

\Cref{thm: reduction} &
\Cref{sec: proof reduction} &
Polynomial-time reduction \Cref{lem: reduction}. \\
\hline
\end{tabular}
\end{table}

\section{Proof of the upper bounds}\label{sec: proof upper}
In this section, we prove the upper bounds in \Cref{thm: hypothesis} and \Cref{thm: hypothesis design}.
Define $\mathcal{A}_0$ to be the event such that the following holds:
\begin{equation}\label{eq: event A0}
1.1\hat{\sigma}>\sigma>0.9\hat{\sigma},\|\hat{\beta}-\beta\|_1\le  c_\beta \sigma k_u\sqrt{\frac{\log p}{n}},\|\hat{\beta}-\beta\|_2\le  C_\beta\sigma \sqrt{\frac{k_u\log p}{n}}.
\end{equation}
For any prescribed error probability
$\bar\alpha\in(0,1)$, Conditions~\ref{cdt: linear estimator} and
\ref{cdt: linear variance} allow $n$ to be taken large enough so that
\[
\inf_{\theta\in\Theta(k_u)}
\mathbb P_\theta\{\mathcal A_0\}
\ge 1-\bar\alpha.
\]

Throughout this supplement, we write \(\Omega=\Sigma^{-1}\) for the precision matrix and \(z_\alpha\) for the \(\alpha\)-quantile of the standard normal distribution.
We use the convention
\(\operatorname{sign}(x)=1\) for \(x\ge0\) and
\(\operatorname{sign}(x)=-1\) for \(x<0\).

\subsection{Auxiliary lemmas for the proof of upper bounds}\label{sec: aux upper}

\begin{lemma}[{\cite[Lemma 1]{laurent2000adaptive}}]\label{lem: chi square tail}
    Let $Z\sim \chi^2(n)$. For any $x>0$, we have
    $$\bbP\left(Z-n\ge  2\sqrt{nx}+2x\right)\le  e^{-x},$$
    and
    $$\bbP\left(Z-n\le  -2\sqrt{nx}\right)\le  e^{-x}.$$
\end{lemma}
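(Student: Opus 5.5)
The plan is the Chernoff (Cramér–Chernoff) method applied to the centered variable $Z-n$, where $Z=\sum_{i=1}^n g_i^2$ with $g_i$ i.i.d.\ $\caN(0,1)$. Everything rests on the explicit moment generating function $\bbE e^{s g_i^2}=(1-2s)^{-1/2}$ for $s<1/2$. This gives, for the centered log-MGF of one coordinate,
\[
\psi(s):=\log \bbE e^{s(g_i^2-1)}=-s-\tfrac12\log(1-2s),\qquad s<\tfrac12 .
\]

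For the \emph{upper tail} I would first prove the elementary bound $\psi(s)\le s^2/(1-2s)$ for $0\le s<1/2$. This follows from the series $-\tfrac12\log(1-2s)-s=\sum_{k\ge2}(2s)^k/(2k)\le s^2\sum_{j\ge0}(2s)^j$. Markov's inequality then gives, for any $u>0$,
\[
\bbP(Z-n\ge u)\le \exp\Bigl(-\sup_{0<s<1/2}\Bigl\{su-\tfrac{ns^2}{1-2s}\Bigr\}\Bigr).
\]
The key step is to evaluate the supremum at $u=2\sqrt{nx}+2x$ and show it is at least $x$. Rather than optimizing in $s$ directly, I would use the known Legendre dual of the sub-gamma function $s\mapsto vs^2/(2(1-cs))$ with $v=2n$, $c=2$. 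Its conjugate is $h^{-1}$-type, namely $\sup_s\{su-\tfrac{vs^2}{2(1-cs)}\}\ge x$ whenever $u\ge\sqrt{2vx}+cx$. Plugging in $v=2n$, $c=2$ gives exactly $u\ge 2\sqrt{nx}+2x$. Concretely, one can take the explicit choice
\[
s=\frac{\sqrt{x}}{\sqrt{n}+2\sqrt{x}}\in\bigl(0,\tfrac12\bigr),
\]
for which $1-2s=\sqrt n/(\sqrt n+2\sqrt x)$. Substituting shows $su-ns^2/(1-2s)=x$ by direct algebra.

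For the \emph{lower tail}, I apply Chernoff to $-(Z-n)$ with $s>0$:
\[
\log\bbE e^{-s(g_i^2-1)}=s-\tfrac12\log(1+2s)\le s^2,
\]
using $\log(1+y)\ge y-y^2/2$ for $y\ge0$. Hence $\bbP(Z-n\le -u)\le \exp(-\sup_{s>0}\{su-ns^2\})=\exp(-u^2/(4n))$. With $u=2\sqrt{nx}$ this equals $e^{-x}$.

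The only step requiring care is the upper-tail optimization, i.e.\ verifying that the explicit $s$ yields exponent exactly $x$. The rest consists of the two scalar inequalities for $\log(1\mp 2s)$.
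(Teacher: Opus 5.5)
Your Chernoff argument is correct: the bound $\psi(s)\le s^2/(1-2s)$, the explicit choice $s=\sqrt{x}/(\sqrt{n}+2\sqrt{x})$ (which does give exponent exactly $x$ at $u=2\sqrt{nx}+2x$), and the lower-tail bound $s-\tfrac12\log(1+2s)\le s^2$ all check out. The paper gives no proof of its own and only cites Laurent--Massart, and your argument is their Cram\'er--Chernoff computation specialized to equal weights; your explicit $s$ stands in for the sub-gamma inversion lemma they invoke.
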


We introduce the following definitions.
The sub-Gaussian norm of a random variable \(U\) is defined as
\[
\|U\|_{\psi_2}
    := \sup_{q \ge 1} \frac{1}{\sqrt{q}}
        \bigl( \mathbb{E}|U|^{q} \bigr)^{1/q},
\]
and the sub-Gaussian norm of a random vector \(U \in \mathbb{R}^p\) is defined as
\[
\|U\|_{\psi_2}
    := \sup_{v \in S^{p-1}} \|\langle v, U \rangle\|_{\psi_2},
\]
where \(S^{p-1}\) denotes the unit sphere in \(\mathbb{R}^p\).

Similarly, the sub-exponential norm of a random variable \(U\) is defined as
\[
\|U\|_{\psi_1}
    := \sup_{q \ge 1} \frac{1}{q}
        \bigl( \mathbb{E}|U|^{q} \bigr)^{1/q},
\]
and the sub-exponential norm of a random vector \(U \in \mathbb{R}^p\) is defined as
\[
\|U\|_{\psi_1}
    := \sup_{v \in S^{p-1}} \|\langle v, U \rangle\|_{\psi_1}.
\]

\begin{lemma}[{\cite[Remark 5.18]{vershynin2010introduction}}]\label{lem: sub exp center}
    For a sub-exponential random variable \(U\), we have
    $$\|U-\bbE U\|_{\psi_1} \le  2 \|U\|_{\psi_1}.$$
\end{lemma}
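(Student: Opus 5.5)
The plan is to use the triangle inequality for $\|\cdot\|_{\psi_1}$ together with a direct bound on the $\psi_1$-norm of the constant $\mathbb{E}U$. Nothing beyond the definition of $\|\cdot\|_{\psi_1}$ and Minkowski's inequality is needed.

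\emph{Step 1: $\|\cdot\|_{\psi_1}$ is a seminorm.} For each fixed $q\ge1$, the map $U\mapsto q^{-1}(\mathbb{E}|U|^q)^{1/q}$ is a positive multiple of the $L^q$ norm. By Minkowski's inequality it is subadditive and absolutely homogeneous. A pointwise supremum over $q\ge1$ of such functionals keeps both properties. Hence
\[
\|U+V\|_{\psi_1}\le \|U\|_{\psi_1}+\|V\|_{\psi_1}
\]
whenever the right-hand side is finite. Applying this with $V=-\mathbb{E}U$ gives
\[
\|U-\mathbb{E}U\|_{\psi_1}\le \|U\|_{\psi_1}+\|\mathbb{E}U\|_{\psi_1}.
\]

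\emph{Step 2: bound the constant.} For the deterministic variable $c=\mathbb{E}U$ we have $(\mathbb{E}|c|^q)^{1/q}=|c|$. Therefore
\[
\|c\|_{\psi_1}=\sup_{q\ge1}\frac{|c|}{q}=|c|,
\]
since the supremum is attained at $q=1$. Jensen's inequality gives $|\mathbb{E}U|\le \mathbb{E}|U|$. The quantity $\mathbb{E}|U|$ is exactly the $q=1$ term in the supremum defining $\|U\|_{\psi_1}$, so it is at most $\|U\|_{\psi_1}$.

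\emph{Step 3: combine.} Steps 1 and 2 together give $\|U-\mathbb{E}U\|_{\psi_1}\le 2\|U\|_{\psi_1}$.

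There is no real obstacle here. The only point requiring care is that $\mathbb{E}U$ is well defined, which follows because a finite $\psi_1$-norm forces $\mathbb{E}|U|<\infty$.
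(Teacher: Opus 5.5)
Your proof is correct and is essentially the standard argument behind the cited Remark~5.18 of Vershynin; the paper gives no proof of its own beyond that citation. The argument is the same in both: Minkowski's inequality makes $\|\cdot\|_{\psi_1}$ subadditive, and the constant satisfies $\|\mathbb{E}U\|_{\psi_1}=|\mathbb{E}U|\le\mathbb{E}|U|\le\|U\|_{\psi_1}$, where the last step is the $q=1$ term of the supremum.
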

\begin{lemma}[{\cite[Proposition 5.16]{vershynin2010introduction}}]\label{lem: sub exp sum}
    Let $X_1,\ldots,X_N$ be independent centered sub-exponential random variables, and
\[
    K = \max_{i} \|X_i\|_{\psi_1}.
\]
Then for every $a = (a_1,\ldots,a_N) \in \mathbb{R}^N$ and every $t \ge 0$, we have
\[
\mathbb{P}\!\left( \left| \sum_{i=1}^N a_i X_i \right| \ge t \right)
    \le 2 \exp\!\left[ -c_0 \min\!\left( \frac{t^2}{K^2 \|a\|_2^2},\; \frac{t}{K \|a\|_\infty} \right) \right],
\]
where $c_0 > 0$ is an absolute constant.

\end{lemma}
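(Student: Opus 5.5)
The plan is the standard Chernoff (Bernstein-type) argument: bound the moment generating function of each summand on a neighbourhood of zero, multiply these bounds using independence, and optimize the exponent in two regimes.

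\emph{Step 1 (MGF bound).} I would show that every centered $U$ with $\|U\|_{\psi_1}\le K$ satisfies
\[
\mathbb E\, e^{\lambda U}\le \exp\bigl(C_1\lambda^2K^2\bigr)\qquad\text{for all } |\lambda|\le \frac{c_1}{K},
\]
with absolute constants $c_1,C_1>0$.
\begin{itemize}
\item Expand $e^{\lambda U}=1+\lambda U+\sum_{q\ge2}\lambda^q U^q/q!$. The linear term has mean zero because $\mathbb E U=0$.
\item The definition of the $\psi_1$ norm gives $\mathbb E|U|^q\le (Kq)^q$. Together with $q!\ge (q/e)^q$ this yields
\[
\mathbb E\, e^{\lambda U}\le 1+\sum_{q\ge2}(e|\lambda|K)^q .
\]
\item If $e|\lambda|K\le 1/2$, the geometric series is at most $2(e\lambda K)^2$. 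Using $1+x\le e^x$ then gives the claim.
\end{itemize}
I expect this step to be the only real obstacle. The difficulty is keeping the constants absolute, and recognizing that the mean-zero assumption is exactly what removes the first-order term.

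\emph{Step 2 (Chernoff bound).} Let $S=\sum_i a_iX_i$. Independence and Step 1 give, for $0<\lambda\le c_1/(K\|a\|_\infty)$,
\[
\mathbb P(S\ge t)\le e^{-\lambda t}\prod_i \mathbb E\, e^{\lambda a_iX_i}\le \exp\bigl(-\lambda t+C_1\lambda^2K^2\|a\|_2^2\bigr).
\]
The range of $\lambda$ is chosen so that $|\lambda a_i|\le c_1/K$ for every $i$, which is what Step 1 requires.

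\emph{Step 3 (optimizing $\lambda$).} Choose
\[
\lambda=\min\Bigl\{\frac{t}{2C_1K^2\|a\|_2^2},\ \frac{c_1}{K\|a\|_\infty}\Bigr\}.
\]
There are two cases.
\begin{itemize}
\item If the first term is the minimum, the exponent equals $-t^2/(4C_1K^2\|a\|_2^2)$.
\item If the second term is the minimum, then $C_1\lambda^2K^2\|a\|_2^2\le \lambda t/2$, so the exponent is at most $-\lambda t/2=-c_1t/(2K\|a\|_\infty)$.
\end{itemize}
In both cases the exponent is bounded by $-c_0\min\bigl\{t^2/(K^2\|a\|_2^2),\ t/(K\|a\|_\infty)\bigr\}$ for a suitable absolute constant $c_0$.

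\emph{Step 4 (two-sided bound).} Apply the same argument to $-S$, whose summands $-a_iX_i$ are also independent, centered and sub-exponential with the same norm bound. Summing the upper and lower tail bounds produces the factor $2$ in the statement.
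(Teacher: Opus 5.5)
Your proof is correct and is essentially the argument behind the cited result, which the paper invokes by citation rather than reproving: Vershynin's MGF bound for centered sub-exponential variables on $|\lambda|\lesssim 1/K$, followed by a Chernoff bound optimized over $\lambda$ in two regimes. Your constants, e.g.\ $C_1=2e^2$, $c_1=1/(2e)$, $c_0=\min\{1/(4C_1),\,c_1/2\}$, are absolute. The one unstated detail is interchanging expectation with the power series in Step 1, which is routine because Tonelli applies once $e|\lambda|K<1$.
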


\begin{lemma}\label{lem: product of subGaussian}
Let $X$ and $Y$ be sub-Gaussian random variables. Then $XY$ is sub-exponential. Moreover,
$$
\|X Y\|_{\psi_1} \le  2\|X\|_{\psi_2}\|Y\|_{\psi_2}.
$$
\end{lemma}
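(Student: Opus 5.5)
We prove the lemma by working with the centered moments directly, rather than passing through moment generating functions. The definition of $\|\cdot\|_{\psi_1}$ in terms of $L^q$ norms makes this efficient. Write $K_X=\|X\|_{\psi_2}$ and $K_Y=\|Y\|_{\psi_2}$. If either norm is infinite, or either is zero (so one variable is almost surely zero and $XY=0$), the claim is trivial. We therefore assume $0<K_X,K_Y<\infty$.

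The key step is the Cauchy--Schwarz inequality applied to $|XY|^q=|X|^q|Y|^q$. For every $q\ge1$,
\[
\bigl(\bbE|XY|^q\bigr)^{1/q}\le \bigl(\bbE|X|^{2q}\bigr)^{1/(2q)}\bigl(\bbE|Y|^{2q}\bigr)^{1/(2q)}.
\]
Since $2q\ge1$, the definition of the sub-Gaussian norm gives $(\bbE|X|^{2q})^{1/(2q)}\le \sqrt{2q}\,K_X$, and the same bound holds for $Y$ with $K_Y$. Multiplying the two,
\[
\bigl(\bbE|XY|^q\bigr)^{1/q}\le 2q\,K_XK_Y .
\]
Dividing by $q$ and taking the supremum over $q\ge1$ yields $\|XY\|_{\psi_1}\le 2K_XK_Y$, which is exactly the stated constant. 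The same bound also shows that all moments of $XY$ are finite with linear growth in $q$, so $XY$ is sub-exponential.

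The only point needing care is the range of $q$ in the definition. Cauchy--Schwarz requires moments of order $2q$, and these must be covered by the sub-Gaussian definition. This holds because $2q\ge 2\ge1$, so the supremum in $\|\cdot\|_{\psi_2}$ controls them. Since $\sqrt{2q}\cdot\sqrt{2q}=2q$ exactly, no absolute constant is lost, and the factor $2$ in the statement comes out precisely. The argument does not use independence of $X$ and $Y$, nor any centering, which is consistent with the statement.
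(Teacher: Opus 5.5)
Your proof is correct and matches the paper's argument essentially line for line: Cauchy--Schwarz (H\"older) on $|X|^q|Y|^q$, the sub-Gaussian moment bound at order $2q$ giving $2q\|X\|_{\psi_2}\|Y\|_{\psi_2}$, then dividing by $q$ and taking the supremum over $q\ge 1$. One small wording slip: the moments you work with are absolute moments, not centered ones, though this does not affect the argument.
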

\begin{proof}
Fix any $q\ge 1$. By H\"older's inequality,
$$
\begin{aligned}
\left(\mathbb{E}|XY|^q\right)^{1/q}
&\le
\left(\mathbb{E}|X|^{2q}\right)^{1/(2q)}
\left(\mathbb{E}|Y|^{2q}\right)^{1/(2q)}\\
&\le
\bigl(\|X\|_{\psi_2}\sqrt{2q}\bigr)\bigl(\|Y\|_{\psi_2}\sqrt{2q}\bigr)
=
2q\,\|X\|_{\psi_2}\,\|Y\|_{\psi_2}.
\end{aligned}
$$
Dividing by $q$ and taking the supremum over $q\ge 1$ yields the desired inequality.
\end{proof}

\begin{lemma}[Mixed confidence interval]\label{lem:generic-mixed-ci}
Let $\mathcal P\subseteq \Theta(k_u)$ and let
$\xi\in\mathbb R^p$ satisfy
$|\xi_1|\ge \cdots \ge |\xi_p|$.
Let $\alpha_1,\alpha_2\in(0,1)$ satisfy $\alpha_1+\alpha_2<1$.
Assume that, for every fixed $v\in\mathbb R^p$, we have the following two intervals with nonnegative random radii
\[
\begin{aligned}
    \CI_{\rm db}(v)
    &=
    [\hat h_{\rm db}(v)-r_{\rm db}(v),\,
      \hat h_{\rm db}(v)+r_{\rm db}(v)],\\
    \CI_{\rm pi}(v)
    &=
    [v^\top\hat\beta-r_{\rm pi}(v),\,
      v^\top\hat\beta+r_{\rm pi}(v)],
\end{aligned}
\]
and that they satisfy the following high-probability coverage and radius bounds uniformly over $\mathcal P$:
\[
    \inf_{\theta\in\mathcal P}
    \mathbb P_\theta
    \left(
        v^\top\beta\in\CI_{\rm db}(v),\
        r_{\rm db}(v)\le R_{\rm db}(v)
    \right)
    \ge 1-\alpha_1
\]
and
\[
    \inf_{\theta\in\mathcal P}
    \mathbb P_\theta
    \left(
        v^\top\beta\in\CI_{\rm pi}(v),\
        r_{\rm pi}(v)\le R_{\rm pi}(v)
    \right)
    \ge 1-\alpha_2,
\]
where $R_{\rm db}(v)$ and $R_{\rm pi}(v)$ are nonnegative non-random radius
envelopes.  The debiased envelope $R_{\rm db}(v)$ may depend on $v$, $\sigma$,
and fixed model constants, but not on the data or on the parameter $\theta\in\mathcal P$.
The plug-in envelope is given by
$$\qquad
    R_{\rm pi}(v)
    =
    C_{\rm pi}^+\sigma\|v\|_\infty k_u\sqrt{\frac{\log p}{n}},
$$
where $C_{\rm pi}^+>0$ is a fixed constant that may depend on fixed estimator constants such as $c_{\beta}$ but not on $n,p,k_u,v$, or on the parameter $\theta\in\mathcal P$.

For a fixed deterministic $0\le m\le p$, define
\[
    \xi^{(1)}_m=(\xi_1,\ldots,\xi_m,0,\ldots,0)^\top,
    \qquad
    \xi^{(2)}_m=\xi-\xi^{(1)}_m ,
\]
with the convention $\xi_{p+1}=0$, and set
\[
    \CI_m(\xi)
    =
    \CI_{\rm db}(\xi^{(1)}_m)
    +
    \CI_{\rm pi}(\xi^{(2)}_m),
\]
where the sum denotes the Minkowski sum of two intervals.  Then
$\CI_m(\xi)$ is a $(1-\alpha_1-\alpha_2)$-level confidence interval for
$\xi^\top\beta$ over $\mathcal P$.  Moreover, uniformly over
$\mathcal P$, with probability at least $1-\alpha_1-\alpha_2$ its radius is
bounded by
\[
    R_m(\xi)
    =
    R_{\rm db}(\xi^{(1)}_m)
    +
    C_{\rm pi}^+\sigma |\xi_{m+1}| k_u\sqrt{\frac{\log p}{n}} .
\]
Consequently, the test
\[
    \psi_m
    =
    \ind\{t_0\notin \CI_m(\xi)\}
\]
has Type-I error at most $\alpha_1+\alpha_2$ over
$\mathcal P\cap\Theta(k_u;\xi,t_0)$ and has power at least
$1-\alpha_1-\alpha_2$ at every $\theta\in\mathcal P$ satisfying
$|\xi^\top\beta-t_0|>2R_m(\xi)$.
\end{lemma}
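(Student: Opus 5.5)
The argument is a union bound followed by elementary interval arithmetic. Since $0\le m\le p$ is deterministic, the vectors $\xi^{(1)}_m$ and $\xi^{(2)}_m$ are fixed, and the hypotheses apply to each of them.

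Define the events
\[
E_1=\{(\xi^{(1)}_m)^\top\beta\in\CI_{\rm db}(\xi^{(1)}_m),\ r_{\rm db}(\xi^{(1)}_m)\le R_{\rm db}(\xi^{(1)}_m)\}
\]
and
\[
E_2=\{(\xi^{(2)}_m)^\top\beta\in\CI_{\rm pi}(\xi^{(2)}_m),\ r_{\rm pi}(\xi^{(2)}_m)\le R_{\rm pi}(\xi^{(2)}_m)\}.
\]
By assumption, $\mathbb P_\theta(E_1^c)\le\alpha_1$ and $\mathbb P_\theta(E_2^c)\le\alpha_2$ for every $\theta\in\mathcal P$. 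The union bound then gives $\mathbb P_\theta(E_1\cap E_2)\ge 1-\alpha_1-\alpha_2$, uniformly over $\mathcal P$.

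On $E_1\cap E_2$ we use two facts.

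\emph{Coverage.} The Minkowski sum of $[a-r,a+r]$ and $[b-s,b+s]$ is $[a+b-(r+s),\,a+b+(r+s)]$. So $\CI_m(\xi)$ is centered at $\hat h_{\rm db}(\xi^{(1)}_m)+(\xi^{(2)}_m)^\top\hat\beta$ with radius $r_{\rm db}+r_{\rm pi}$. It contains $(\xi^{(1)}_m)^\top\beta+(\xi^{(2)}_m)^\top\beta=\xi^\top\beta$.

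\emph{Radius.} We have $\|\xi^{(2)}_m\|_\infty=|\xi_{m+1}|$. This follows from the decreasing ordering of $|\xi_j|$, with the convention $\xi_{p+1}=0$ covering $m=p$. Hence $R_{\rm pi}(\xi^{(2)}_m)=C_{\rm pi}^+\sigma|\xi_{m+1}|k_u\sqrt{\log p/n}$, and the radius is at most $R_m(\xi)$.

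Both facts hold simultaneously on an event of probability at least $1-\alpha_1-\alpha_2$. This gives the coverage claim and the radius claim.

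\emph{Type I error.} Take $\theta\in\mathcal P\cap\Theta(k_u;\xi,t_0)$, so that $t_0=\xi^\top\beta$. Then $\psi_m=1$ requires $\xi^\top\beta\notin\CI_m(\xi)$, which lies inside $(E_1\cap E_2)^c$. Therefore $\mathbb E_\theta\psi_m\le\alpha_1+\alpha_2$.

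\emph{Power.} Suppose $|\xi^\top\beta-t_0|>2R_m(\xi)$, and work on $E_1\cap E_2$. Every point $x\in\CI_m(\xi)$ satisfies $|x-\xi^\top\beta|\le 2\,(\text{radius})\le 2R_m(\xi)$, because both $x$ and $\xi^\top\beta$ lie in an interval of length $2\cdot\text{radius}$. Hence $t_0\notin\CI_m(\xi)$ and $\psi_m=1$. So the power is at least $1-\alpha_1-\alpha_2$.

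The only delicate points are bookkeeping: checking that $m$ is deterministic, so that the per-$v$ hypotheses legitimately apply to $\xi^{(1)}_m$ and $\xi^{(2)}_m$, and checking that $R_{\rm db}$ and $R_{\rm pi}$ are non-random envelopes, so that $R_m(\xi)$ can serve as a fixed threshold. No genuine obstacle is expected.
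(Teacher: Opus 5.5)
Your proposal is correct and follows the paper's proof essentially step for step: intersect the two per-$v$ events at $v=\xi^{(1)}_m$ and $v=\xi^{(2)}_m$ and apply the union bound. On that event you use the Minkowski-sum structure for coverage, $\|\xi^{(2)}_m\|_\infty=|\xi_{m+1}|$ for the radius bound $R_m(\xi)$, and the resulting coverage-plus-radius bound for the Type~I error and power claims. Your remark that two points of an interval of radius at most $R_m(\xi)$ are within $2R_m(\xi)$ of each other simply spells out the power step that the paper states in one line.
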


\begin{proof}
Let $E_m$ be the intersection of the two events  in the two assumed bounds, with
$v=\xi_m^{(1)}$ in the debiased bound and $v=\xi_m^{(2)}$ in the plug-in bound.
In other words, $E_m$ is the event that
\[
        (\xi_m^{(1)})^\top\beta\in\CI_{\rm db}(\xi_m^{(1)}),\
        r_{\rm db}(\xi_m^{(1)})\le R_{\rm db}(\xi_m^{(1)}),
        (\xi_m^{(2)})^\top\beta\in\CI_{\rm pi}(\xi_m^{(2)}),\
        r_{\rm pi}(\xi_m^{(2)})\le R_{\rm pi}(\xi_m^{(2)}).
\]
The union bound gives $\mathbb P_\theta(E_m)\ge 1-\alpha_1-\alpha_2$ uniformly
over $\mathcal P$.  On $E_m$,
\[
    \xi^\top\beta
    =
    (\xi_m^{(1)})^\top\beta+(\xi_m^{(2)})^\top\beta
    \in
    \CI_{\rm db}(\xi_m^{(1)})
    +
    \CI_{\rm pi}(\xi_m^{(2)})
\]
and both radius bounds also hold.  Therefore, the radius of the interval sum is
at most
\[
    R_{\rm db}(\xi^{(1)}_m)
    +
    R_{\rm pi}(\xi^{(2)}_m)
    =
    R_{\rm db}(\xi^{(1)}_m)
    +
    C_{\rm pi}^+\sigma\|\xi^{(2)}_m\|_\infty k_u\sqrt{\frac{\log p}{n}},
\]
which equals $R_m(\xi)$ because
$\|\xi^{(2)}_m\|_\infty=|\xi_{m+1}|$.
If $\xi^\top\beta=t_0$, coverage implies $t_0\in\CI_m(\xi)$; hence the
Type-I error is at most $\alpha_1+\alpha_2$.
If $|\xi^\top\beta-t_0|>2R_m(\xi)$, then under $E_m$, the coverage and radius bound both
hold and they imply that $t_0\notin \CI_m(\xi)$. Therefore, the power is at least $\mathbb P_\theta(E_m)\ge 1-\alpha_1-\alpha_2$.
\end{proof}

\begin{lemma}[Plug-in confidence interval]\label{lem:generic-plugin-ci}
Let $\mathcal P\subseteq\Theta(k_u)$ and fix $v\in\mathbb R^p$.  Suppose that
$(\hat\beta,\hat\sigma)$ is computed from $N$ observations.
Assume that, for a
prescribed $\bar\alpha\in(0,1)$, there exists an event $\mathcal E_{\rm pi}$ such
that
\[
\inf_{\theta\in\mathcal P}
\mathbb P_\theta(\mathcal E_{\rm pi})
\ge 1-\bar\alpha,
\]
and on $\mathcal E_{\rm pi}$, the following bounds hold:
\[
1.1\hat\sigma>\sigma>0.9\hat\sigma,
\qquad
\|\hat\beta-\beta\|_1
\le
c_\beta\sigma k_u\sqrt{\frac{\log p}{N}} .
\]
Choose constants $C_{\rm pi}\ge 1.1c_\beta$ and
$C_{\rm pi}^+\ge C_{\rm pi}/0.9$, and define
\[
\CI_{\rm pi,N}(v)
=
[v^\top\hat\beta-r_{\rm pi,N}(v),\,
  v^\top\hat\beta+r_{\rm pi,N}(v)],
\qquad
r_{\rm pi,N}(v)
=
C_{\rm pi}\hat\sigma\|v\|_\infty k_u\sqrt{\frac{\log p}{N}} .
\]
Then
\[
\inf_{\theta\in\mathcal P}
\mathbb P_\theta
\left(
v^\top\beta\in\CI_{\rm pi,N}(v),\
r_{\rm pi,N}(v)
\le
C_{\rm pi}^+\sigma\|v\|_\infty k_u\sqrt{\frac{\log p}{N}}
\right)
\ge 1-\bar\alpha .
\]
In particular, if $N\asymp n$, the same statement gives the plug-in
assumption in \Cref{lem:generic-mixed-ci} after enlarging $C_{\rm pi}^+$ by a
fixed factor.
\end{lemma}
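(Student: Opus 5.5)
The argument is elementary and works entirely on the event $\mathcal E_{\rm pi}$. Fix $\theta\in\mathcal P$ and suppose $\mathcal E_{\rm pi}$ holds. We check that both the coverage statement and the radius statement hold deterministically on this event. The desired probability is then at least $\mathbb P_\theta(\mathcal E_{\rm pi})\ge 1-\bar\alpha$, and taking the infimum over $\mathcal P$ finishes the proof.

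\emph{Coverage.} By H\"older's inequality,
\[
|v^\top\hat\beta-v^\top\beta|\le \|v\|_\infty\|\hat\beta-\beta\|_1\le c_\beta\sigma\|v\|_\infty k_u\sqrt{\log p/N}.
\]
On $\mathcal E_{\rm pi}$ we have $\sigma<1.1\hat\sigma$. Together with $C_{\rm pi}\ge 1.1c_\beta$, this gives
\[
c_\beta\sigma\le 1.1c_\beta\hat\sigma\le C_{\rm pi}\hat\sigma .
\]
Hence $|v^\top\hat\beta-v^\top\beta|\le r_{\rm pi,N}(v)$, that is, $v^\top\beta\in\CI_{\rm pi,N}(v)$. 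If $v=0$, both sides vanish and the claim is trivial.

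\emph{Radius.} On $\mathcal E_{\rm pi}$ we also have $\hat\sigma<\sigma/0.9$. Therefore
\[
r_{\rm pi,N}(v)\le (C_{\rm pi}/0.9)\,\sigma\|v\|_\infty k_u\sqrt{\log p/N}\le C_{\rm pi}^+\sigma\|v\|_\infty k_u\sqrt{\log p/N},
\]
where the last step uses $C_{\rm pi}^+\ge C_{\rm pi}/0.9$.

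\emph{The case $N\asymp n$.} Suppose $c_1 n\le N\le c_2 n$. Then $\sqrt{\log p/N}\le c_1^{-1/2}\sqrt{\log p/n}$. Replacing $C_{\rm pi}^+$ by $C_{\rm pi}^+c_1^{-1/2}$ therefore yields the envelope $R_{\rm pi}(v)$ of \Cref{lem:generic-mixed-ci}, with $\alpha_2=\bar\alpha$.

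No step here is a real obstacle. The only points needing care are matching the constants ($1.1$ for coverage, $1/0.9$ for the radius) and noting that the envelope is non-random, since it depends only on $\sigma$ and not on $\hat\sigma$. This is exactly what \Cref{lem:generic-mixed-ci} requires. Whether $\mathcal E_{\rm pi}$ has the stated probability is an assumption of the lemma; in practice it is supplied by $\mathcal A_0$ through \Cref{cdt: linear estimator,cdt: linear variance}.
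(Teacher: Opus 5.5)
Your proposal is correct and matches the paper's proof: H\"older's inequality with $\sigma<1.1\hat\sigma$ and $C_{\rm pi}\ge 1.1c_\beta$ gives coverage on $\mathcal E_{\rm pi}$, and $\hat\sigma<\sigma/0.9$ with $C_{\rm pi}^+\ge C_{\rm pi}/0.9$ gives the radius bound. Your explicit handling of the $N\asymp n$ rescaling, via the factor $c_1^{-1/2}$, fills in a step the paper only asserts.
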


\begin{proof}
On $\mathcal E_{\rm pi}$,
\[
|v^\top\hat\beta-v^\top\beta|
\le
\|v\|_\infty\|\hat\beta-\beta\|_1
\le
c_\beta\sigma\|v\|_\infty k_u\sqrt{\frac{\log p}{N}}
\le
C_{\rm pi}\hat\sigma\|v\|_\infty k_u\sqrt{\frac{\log p}{N}},
\]
where the last inequality uses $\sigma<1.1\hat\sigma$ and
$C_{\rm pi}\ge1.1c_\beta$.  Thus
$v^\top\beta\in\CI_{\rm pi,N}(v)$ on $\mathcal E_{\rm pi}$.  The same
event also gives $\hat\sigma<\sigma/0.9$, and therefore
\[
r_{\rm pi,N}(v)
\le
\frac{C_{\rm pi}}{0.9}\sigma\|v\|_\infty k_u\sqrt{\frac{\log p}{N}}
\le
C_{\rm pi}^+\sigma\|v\|_\infty k_u\sqrt{\frac{\log p}{N}}.
\]
Taking probabilities and using the assumed bound for $\mathbb{P}_\theta(\mathcal E_{\rm pi})$
proves the claim.
\end{proof}

\subsection{Proof of the upper bound in Theorem~\ref{thm: hypothesis}}\label{sec: proof upper unknown design}

\begin{proof}
Let $\alpha_\star=\min\{\alpha,\eta\}$.
As discussed in \Cref{sec: upper}, to establish the upper bound in
\Cref{thm: hypothesis} it suffices to show that for any loading vector
$\xi \in \mathbb{R}^p$ one can construct:

\begin{itemize}
    \item a $(1-\alpha_\star/2)$-level \emph{plug-in} confidence interval with length
    \[
        O\!\left(
        \sigma \|\xi\|_\infty\, k_u \sqrt{\frac{\log p}{n}}
        \right),
    \]
    \item a $(1-\alpha_\star/2)$-level \emph{debiased} confidence interval with length
    \[
        O\!\left(
        \sigma \|\xi\|_2
        \left(
        \frac{1}{\sqrt{n}} + \frac{k_u \log p}{n}
        \right)
        \right).
    \]
\end{itemize}

Indeed, once these two ingredients are available, we decompose $\xi$ at a cutoff
$m \in [p]$ as $\xi=\xi^{(1)}+\xi^{(2)}$, where
\[
\xi^{(1)}=(\xi_1,\ldots,\xi_m,0,\ldots,0),
\qquad
\xi^{(2)}=(0,\ldots,0,\xi_{m+1},\ldots,\xi_p).
\]
We then construct a $(1-\alpha_\star/2)$-level debiased confidence interval for
$(\xi^{(1)})^\top\beta$ and a $(1-\alpha_\star/2)$-level plug-in confidence interval
for $(\xi^{(2)})^\top\beta$, and combine them to obtain a valid
$(1-\alpha_\star)$-level confidence interval for $\xi^\top\beta$.
The resulting interval length is of order
\[
\sigma\left(
\|\xi^{(1)}\|_2\Bigl(\frac{1}{\sqrt{n}}+\frac{k_u\log p}{n}\Bigr)
+
\|\xi^{(2)}\|_\infty\, k_u\sqrt{\frac{\log p}{n}}
\right).
\]
Using the condition $\sigma\le  M_2$ in \Cref{eq: parameter space}, we obtain an upper bound on the adaptive separation distance for any given $m$.
Since such an upper bound holds for all $m\in [p]$, minimizing over $m$ yields the desired upper bound in \Cref{thm: hypothesis}.

\bigskip

The construction of the plug-in confidence interval is an application of \Cref{lem:generic-plugin-ci} with
$N=n$, $\mathcal P=\Theta(k_u)$, $v=\xi$, and
$\mathcal E_{\rm pi}=\mathcal A_0$.
Therefore, we focus on the construction of the debiased confidence interval.

Let $\widetilde{\mathcal{A}}$ be the event that the vector
$u=\Omega\xi$ is a feasible point for the optimization problem
\eqref{eq: projection vector}. The following lemma guarantees that $\widetilde{\mathcal{A}}$ happens with probability close to 1.

\begin{lemma}\label{lem: feasibility of optimization}
For any $\alpha\in(0,1)$, there exists a constant $C_\xi>0$ such that
$\widetilde{\mathcal{A}}$ holds with probability at least $1-\alpha/24$ for all
sufficiently large $n$.
\end{lemma}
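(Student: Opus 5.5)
We need to show that $\|\hat\Sigma\Omega\xi-\xi\|_\infty\le C_\xi\|\xi\|_2\sqrt{\log p/n}$ with probability at least $1-\alpha/24$. The plan is a coordinatewise sub-exponential concentration bound followed by a union bound over the $p$ coordinates.

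Write $\hat\Sigma\Omega\xi-\xi=\frac1n\sum_{i=1}^n\bigl(X_{i\cdot}X_{i\cdot}^\top\Omega\xi-\xi\bigr)$. Its $j$th coordinate is
\[
\frac1n\sum_{i=1}^n \bigl(W_{ij}-\bbE W_{ij}\bigr),\qquad W_{ij}=X_{ij}\,(X_{i\cdot}^\top\Omega\xi).
\]
Here $\bbE W_{ij}=(\Sigma\Omega\xi)_j=\xi_j$, so each summand is centered and the summands are i.i.d.\ across $i$.

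Next we bound the $\psi_1$ norm of each summand. Since $X_{i\cdot}\sim\caN(0,\Sigma)$, we have $X_{ij}\sim\caN(0,\Sigma_{jj})$ with $\Sigma_{jj}\le M_1$, so $\|X_{ij}\|_{\psi_2}\lesssim\sqrt{M_1}$. Similarly, $X_{i\cdot}^\top\Omega\xi\sim\caN(0,\xi^\top\Omega\xi)$ with $\xi^\top\Omega\xi\le M_1\|\xi\|_2^2$, since $\lambda_{\max}(\Omega)=1/\lambda_{\min}(\Sigma)\le M_1$. Hence $\|X_{i\cdot}^\top\Omega\xi\|_{\psi_2}\lesssim\sqrt{M_1}\|\xi\|_2$. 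Combining these, \Cref{lem: product of subGaussian} gives $\|W_{ij}\|_{\psi_1}\lesssim M_1\|\xi\|_2$, and \Cref{lem: sub exp center} gives the same bound, up to a factor $2$, for $W_{ij}-\bbE W_{ij}$. Call this common bound $K\asymp M_1\|\xi\|_2$.

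Now apply \Cref{lem: sub exp sum} with $a_i=1/n$, so that $\|a\|_2=n^{-1/2}$ and $\|a\|_\infty=n^{-1}$, and take $t=C_\xi\|\xi\|_2\sqrt{\log p/n}$. This gives
\[
\bbP\Bigl(|(\hat\Sigma\Omega\xi-\xi)_j|\ge t\Bigr)\le 2\exp\Bigl[-c_0\min\Bigl(\tfrac{C_\xi^2\log p}{CM_1^2},\ \tfrac{C_\xi\sqrt{n\log p}}{CM_1}\Bigr)\Bigr].
\]
The only point needing care is that the sub-exponential (linear) branch must not dominate. Since $n\ge c\,k_u\log p\ge c\log p$, we have $\sqrt{n\log p}\ge \sqrt{c}\log p$, so the minimum is at least $c'\min(C_\xi^2,C_\xi)\log p/M_1^2$.

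Finally, a union bound over $j\in[p]$ bounds the failure probability by $2p^{1-c'\min(C_\xi^2,C_\xi)/M_1^2}$. Choosing $C_\xi$ large enough, depending only on $M_1$, that the exponent is at most $-1$ makes this at most $2/p$, which is below $\alpha/24$ for all sufficiently large $p$, and hence for all sufficiently large $n$ in the asymptotic regime considered. On this event $u=\Omega\xi$ is feasible for \eqref{eq: projection vector}, so $\widetilde{\mathcal A}$ holds.
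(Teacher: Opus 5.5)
Your argument is correct and is essentially the paper's proof: Bernstein's inequality (\Cref{lem: sub exp sum}) applied coordinatewise to the centered i.i.d.\ products $X_{ij}X_{i\cdot}^\top\Omega\xi-\xi_j$, whose $\psi_1$-norms are $O(\|\xi\|_2)$ by \Cref{lem: product of subGaussian,lem: sub exp center}, followed by a union bound over $j\in[p]$. The only cosmetic difference is in how the constants are chosen. The paper lets $C_\xi$ depend on $\alpha$, so that $2p^{1-c^2c_0}\le 2^{2-c^2c_0}\le\alpha/24$ for every $p\ge 2$, using $n>c^2\log p$ to control the linear branch. You instead take $C_\xi$ depending only on $M_1$ and rely on $p\to\infty$ to push $2/p$ below $\alpha/24$; both are valid in the paper's asymptotic regime.
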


Recall the debiased estimator $\hat L_{\mathrm{db}}(Z;\xi)$ defined in
\eqref{eq: debiased center}. Its estimation error admits the decomposition
\[
\hat L_{\mathrm{db}}(Z;\xi)-\xi^\top\beta
=
\underbrace{\frac{1}{n}\hat{u}^\top X^\top\varepsilon}_{\mathrm{I}}
+
\underbrace{(\xi-\hat{\Sigma}\hat{u})^\top(\hat{\beta}-\beta)}_{\mathrm{II}},
\]
where $\hat{\Sigma}=n^{-1}X^\top X$ and
$\varepsilon=Y-X\beta\sim\mathcal{N}(0,\sigma^2\bfI_n)$ is independent of $X$ and
$\hat{u}$.

\textit{Bounding $\mathrm{I}$.}
Define the event
\[
\mathcal{A}_1
=
\left\{
\left|\frac{1}{n}\hat{u}^\top X^\top\varepsilon\right|
\le
\frac{\sigma}{\sqrt{n}}
\sqrt{\hat{u}^\top\hat{\Sigma}\hat{u}}\,
z_{1-\alpha/8}
\right\}.
\]
Conditional on $X$ and $\hat{u}$,
\[
\frac{1}{n}\hat{u}^\top X^\top\varepsilon
\;\Big|\; X
\sim
\mathcal{N}\!\left(0,
\frac{\sigma^2}{n}\hat{u}^\top\hat{\Sigma}\hat{u}
\right),
\]
so $\mathcal{A}_1$ holds with probability at least $1-\alpha/4$.

We next control $\hat{u}^\top\hat{\Sigma}\hat{u}$.
Let
\[
\mathcal{A}_2
=
\left\{
u^\top\hat{\Sigma}u
\le
1.1\,M_1^2\|\xi\|_2^2
\right\}
\cap \widetilde{\mathcal{A}}.
\]
For $u=\Omega\xi$, we have
\[
\frac{n}{\xi^\top\Omega\xi}\,u^\top\hat{\Sigma}u
\stackrel{d}{=}\chi^2(n).
\]
By Lemma~\ref{lem: chi square tail},
there exists $n_1$ such that for all $n\ge  n_1$,
the following holds with probability at least $1-\alpha/24$:
\begin{equation}\label{eq: bound oracle quadform revised}
u^\top\hat{\Sigma}u
\le
1.1\,\xi^\top\Omega\xi
\le
1.1\,M_1^2\|\xi\|_2^2,
\end{equation}
where the second inequality follows from the eigenvalue bounds in
\Cref{eq: parameter space}.
By \Cref{lem: feasibility of optimization}, there exists $n_2$ such that for $n\ge n_2$, $\widetilde{\mathcal A}$ holds with probability at least $1-\al/24$.
Therefore, $\mathcal{A}_2$ holds with probability at least $1-\alpha/12$.

Since $u$ is feasible on $\widetilde{\mathcal{A}}$, the definition of $\hat{u}$ implies that on the event $\mathcal{A}_2$, we have
\[
\hat{u}^\top\hat{\Sigma}\hat{u}
\le
u^\top\hat{\Sigma}u.
\]
Therefore,
\[
\mathcal{A}_2
\subseteq
\left\{
\hat{u}^\top\hat{\Sigma}\hat{u}
\le
1.1\,M_1^2\|\xi\|_2^2
\right\}
\cap \widetilde{\mathcal{A}}.
\]

\textit{Bounding $\mathrm{II}$.}
Recall the event $\mathcal{A}_0$ in \Cref{eq: event A0}, which holds with probability at least $1-\al/6$.
On $\mathcal{A}_0\cap\widetilde{\mathcal{A}}$, by the definition of $\hat{u}$,
\[
|(\xi-\hat{\Sigma}\hat{u})^\top(\hat{\beta}-\beta)|
\le
\|\hat{\Sigma}\hat{u}-\xi\|_\infty\|\hat{\beta}-\beta\|_1
\le
\sigma c_\beta C_\xi\|\xi\|_2\frac{k_u\log p}{n}.
\]

\bigskip
\textbf{Synthesis.}
Combining the above bounds, for sufficiently large $n$,
the event $\mathcal{A}_0\cap\mathcal{A}_1\cap\mathcal{A}_2$ holds with probability
at least $1-\alpha/2$, and on this event,
\begin{align*}
\left| \hat L_{\mathrm{db}}(Z;\xi)-\xi^\top\beta \right|
&\le
\frac{\sigma}{\sqrt{n}}
\sqrt{\hat{u}^\top\hat{\Sigma}\hat{u}}\,
z_{1-\alpha/8}
+
\sigma c_\beta C_\xi\|\xi\|_2\frac{k_u\log p}{n} \\
&\lesssim
\sigma\|\xi\|_2
\left(
\frac{1}{\sqrt{n}}+\frac{k_u\log p}{n}
\right).
\end{align*}
Replacing $\sigma$ by $1.1\hat{\sigma}$ yields a valid confidence interval with
the stated length, completing the proof.
More explicitly, for any fixed loading vector $v$, let $\hat u_v$ and
$\hat L_{\rm db}(Z;v)$ denote the quantities in
\eqref{eq: projection vector} and \eqref{eq: debiased center} with $\xi$
replaced by $v$.  For any prescribed component error probability
$\bar\alpha\in(0,1)$, define
\[
\CI_{\rm db}(v)
=
[\hat L_{\rm db}(Z;v)-r_{\rm db}(v),\,
 \hat L_{\rm db}(Z;v)+r_{\rm db}(v)]
\]
with
\[
r_{\rm db}(v)
=
1.1\hat\sigma
\left\{
\frac{\sqrt{\hat u_v^\top\hat\Sigma\hat u_v}}{\sqrt n}
z_{1-\bar\alpha/8}
+
c_\beta C_\xi\|v\|_2\frac{k_u\log p}{n}
\right\}.
\]
The preceding argument, applied with $\alpha=\bar\alpha$, gives
\[
\inf_{\theta\in\Theta(k_u)}
\mathbb P_\theta
\left\{
v^\top\beta\in\CI_{\rm db}(v),\
r_{\rm db}(v)\le R_{\rm db}(v)
\right\}
\ge 1-\bar\alpha ,
\]
where one may take
\[
R_{\rm db}(v)
=
C_{\rm db}\sigma\|v\|_2
\left(
\frac{1}{\sqrt n}
+
\frac{k_u\log p}{n}
\right)
\]
for a fixed constant $C_{\rm db}>0$ depending only on the fixed model constants
and on $\bar\alpha$.  This verifies the debiased assumption required by
\Cref{lem:generic-mixed-ci}.
\end{proof}

\subsection{Proof of the upper bound in Theorem~\ref{thm: hypothesis design}}\label{sec: proof upper design}
\begin{proof}
The argument follows the same general strategy as the proof of the upper bound under unknown design in \Cref{sec: proof upper unknown design}. The key difference is that, when the design covariance matrix $\Sigma=\Sigma_0$ is known, one can construct a debiased confidence interval for an arbitrary loading vector $\xi\in\bbR^p$ with confidence level $1-\alpha/2$ and length of order $\sigma\|\xi\|_2/\sqrt{n}$.

Following \cite{cai2017confidence}, we employ a data-splitting strategy. Randomly split the sample into two independent halves $Z^{(1)}=(X^{(1)},Y^{(1)})$ and $Z^{(2)}=(X^{(2)},Y^{(2)})$ of sizes $n_1$ and $n_2$, respectively. Without loss of generality, assume $n$ is even and $n_1=n_2=n/2$.

Using the first half of the data $Z^{(1)}$, we compute the lasso estimator $\hat{\beta}$ and the noise level estimator $\hat{\sigma}$ in \Cref{cdt: linear estimator,cdt: linear variance}.
Consequently, the event
    \[
\widetilde{\mathcal A}_0
=
\left\{
1.1\hat\sigma>\sigma>0.9\hat\sigma,\;
\|\hat\beta-\beta\|_1
\le c_\beta\sigma k_u\sqrt{\frac{\log p}{n_1}},\;
\|\hat\beta-\beta\|_2
\le C_\beta\sigma\sqrt{\frac{k_u\log p}{n_1}}
\right\}
\]
holds with probability approaching 1, because $n_1=n/2$ and Conditions~\ref{cdt: linear estimator} and
\ref{cdt: linear variance} give the same high-probability guarantee, after changing only fixed constants.
The result for plug-in intervals in
\Cref{lem:generic-plugin-ci} is then applied with
$\mathcal E_{\rm pi}=\widetilde{\mathcal A}_0$ and $N=n_1$.
Since the estimators are based on the first half of the data $Z^{(1)}$, the event $\widetilde{\mathcal A}_0$ is independent of the second half of data $Z^{(2)}$.

In the following, we condition on $Z^{(1)}$ and assume
$\widetilde{\mathcal A}_0$ happens.

We construct the debiased estimator as follows:
\begin{equation}\label{eq: bias estimator known design}
    \hat L_{0}(Z;\xi)=\xi^\top \hat{\beta}+\frac{1}{n_2}\xi^\top \Sigma_0^{-1}\left(X^{(2)}\right)^\top \left(Y^{(2)} - X^{(2)} \hat{\beta}\right)
\end{equation}
Its estimation error admits the decomposition
    \begin{equation}\label{eq: error decomp known design}
        \hat L_{0}(Z;\xi)-\xi^\top \beta=
        \underbrace{\left(\xi-\frac{1}{n_2}\left(X^{(2)}\right)^\top X^{(2)} \Sigma_0^{-1}\xi\right)^\top (\hat{\beta}-\beta)}_{\mathrm{I}} + \underbrace{\frac{1}{n_2}\xi^\top \Sigma_0^{-1}\left(X^{(2)}\right)^{\top} \varepsilon^{(2)}}_{\mathrm{II}},
    \end{equation}
    where $\varepsilon^{(2)}=Y^{(2)}-X^{(2)}\beta\sim \mathcal{N}(0, \sigma^2 \bfI_{n_2})$.

\textbf{Bounding $\mathrm{I}$ in \eqref{eq: error decomp known design}}: Let
    $$q_{i}^\prime=\xi^\top \Sigma_0^{-1}X_{i\cdot}^{(2)}\left(X_{i\cdot}^{(2)}\right)^\top (\hat{\beta}-\beta).$$
By \Cref{lem: product of subGaussian},
$$
\begin{aligned}
 \|q_{i}^\prime\|_{\psi_1} & \le  2 \left\|\xi^{\top} \Sigma_0^{-1} X_{i \cdot}^{(2)}\right\|_{\psi_2}
 \left\|\left(X_{i \cdot}^{(2)}\right)^{\top}(\hat{\beta}-\beta)\right\|_{\psi_2} \\
 & \le  2\| \Sigma_0^{-1}\xi\| \|(\hat{\beta}-\beta)\| \left\| X_{i \cdot}^{(2)}\right\|_{\psi_2}^2
 \le  2 M_1^2 \|\xi\|_2\|\hat{\beta}-\beta\|_2,
\end{aligned}
$$
which suggests $q_{i}^\prime$ is sub-exponential.
Consequently, there is a constant $C_1>0$ such that conditional on $\widetilde{\mathcal A}_0$, it holds that
    $$\|q_{i}^\prime -\mathbb{E}q_{i}^\prime\|_{\psi_1} \le  C_1\sigma \|\xi\|_2\sqrt{\frac{k_u\log p}{n_1}}.$$
Furthermore, it holds that $\bbE q_{i}^\prime=\xi^\top (\hat{\beta}-\beta)$.

For any $c>0$, applying Lemma~\ref{lem: sub exp sum} to $n_2^{-1}\sum_{i}(q_{i}^\prime-\mathbb{E}q_{i}^\prime)$  with $t=c\sigma\|\xi\|_2\sqrt{\frac{k_u\log p}{ n_1 n_2 }}>0$, we have
    \begin{align*}
        \bbP\left(\bigg|\left(\xi-\frac{1}{n_2}\left(X^{(2)}\right)^\top X^{(2)} \Sigma_0^{-1} \xi\right)^\top (\hat{\beta}-\beta)\bigg|\ge  t\right)
        \le  2\exp\left(-c_0 \min\left(\frac{c^2}{C_1^2}, \frac{c \sqrt{n_2}}{C_1}\right)\right).
    \end{align*}
We fix a value of $c$ large enough such that $2\exp(-c_0 c^2/C_1^2)\le  \al/12$.

Since $n_1 = n_2 = n/2$ under the data splitting and $n \gtrsim k_u\log p$, we have $n_1 \gtrsim k_u\log p$. Hence,
\[
t=c\sigma\|\xi\|_2\sqrt{\frac{k_u\log p}{n_1n_2}}
=\frac{c\sigma\|\xi\|_2}{\sqrt{n_2}}\sqrt{\frac{k_u\log p}{n_1}}
\le  \frac{C_2\sigma\|\xi\|_2}{\sqrt{n_2}},
\]
where $C_2:=c\sup_n\sqrt{\frac{k_u\log p}{n_1}}$ is a finite constant.
Therefore, the event
\begin{align*}
\mathcal{A}_3
=\Bigg\{
\bigg|\Big(\xi-\frac{1}{n_2}(X^{(2)})^\top X^{(2)} \Sigma_0^{-1}\xi\Big)^\top (\hat{\beta}-\beta)\bigg|
\le  C_2\|\xi\|_2\frac{\sigma}{\sqrt{n_2}}
\Bigg\}
\end{align*}
holds with probability at least $1-\alpha/12$ when $n_2\ge  c^2/C_1^2$.

\textbf{Bounding $\mathrm{II}$ in \eqref{eq: error decomp known design}}: Since $\xi^\top \Sigma_0^{-1}X_{i\cdot}^{(2)}\mid \varepsilon^{(2)}\stackrel{{\rm i.i.d.}}{\sim }{\mathcal{N}}(0,\xi^\top \Sigma_0^{-1}\xi),i=1,\cdots,n_2$, we have
    $$\frac{1}{n_2}\xi^\top \Sigma_0^{-1}\left(X^{(2)}\right)^{\top} \varepsilon^{(2)}\mid \varepsilon^{(2)} \stackrel{{\rm i.i.d.}}{\sim} \mathcal{N}\left(0, \frac{\xi^\top \Sigma_0^{-1}\xi}{n_2^2}\|\varepsilon^{(2)}\|_2^2 \right).$$
    Note that $\xi^\top \Sigma_0^{-1}\xi\le  M_1\|\xi\|_2^2$ by the assumption on the eigenvalues of $\Sigma_0$ and $\|\varepsilon^{(2)}\|_2^2/\sigma^2\sim \chi^2(n_2)$. By Lemma~\ref{lem: chi square tail}, we have, with probability at least $1-\al/12$, that
    $$\mathcal{A}_4=\left\{\bigg|\frac{1}{n_2}\xi^\top \Sigma_0^{-1}\left(X^{(2)}\right)^{\top} \varepsilon^{(2)}\bigg|\le  C_3\|\xi\|_2\frac{\sigma}{\sqrt{n_2}}\right\}$$
    holds for some constant $C_3>0$ and $n$ sufficiently large.

\textbf{Synthesis.}
The event $\widetilde{\mathcal A}_0\cap\mathcal{A}_3\cap\mathcal{A}_4$ holds with probability at least
$1-\alpha/2$, and on this event,
\[
|\hat L_{0}(Z;\xi)-\xi^\top\beta|
\le
(C_2+C_3)\|\xi\|_2\frac{\sigma}{\sqrt{n_2}}
\le
1.1(C_2+C_3)\|\xi\|_2\frac{\hat{\sigma}}{\sqrt{n_2}}.
\]
This yields a valid $(1-\alpha/2)$-level debiased confidence interval of length
$O(\sigma\|\xi\|_2/\sqrt{n})$, completing the proof for the debiased confidence interval.
The final step makes use of \Cref{lem:generic-mixed-ci} in the same way as in \Cref{sec: proof upper unknown design}, so we omit the details.

\end{proof}

\subsection{Proof of the upper bound in Theorem~\ref{thm: sparse spiked}}\label{sec: proof upper np hard}

\begin{proof}
We follow the argument at the beginning of \Cref{sec: proof upper design} using the same balanced data split with $n_1=n_2=n/2$ and the same plug-in interval result.
To apply \Cref{lem:generic-mixed-ci}, the only difference is a new debiased confidence interval.

In particular, we need to prove that for a general
loading vector $\xi\in\mathbb{R}^p$, we can construct a $(1-\alpha/2)$-level debiased confidence interval with length of order
\begin{equation}\label{eq:upper-spike-debias-rate}
    \sigma\left(
\frac{\|\xi\|_2}{\sqrt{n}}
+
\sqrt{\sum_{j\le k_u}\xi_j^2}\,
\frac{k_u\log p}{n}
\right).
\end{equation}

Using the first half of the sample $Z^{(1)}$, we compute the lasso estimator
$\hat{\beta}$, the noise level estimator $\hat{\sigma}$, and an estimator
$\hat{\Omega}$ of the precision matrix $\Omega=\Sigma^{-1}$ such that the event
\[
\begin{aligned}
\widetilde{\mathcal A}_0
=
\Big\{&
1.1\hat{\sigma}>\sigma>0.9\hat{\sigma},\;
\|\hat{\beta}-\beta\|_1
\le c_\beta\sigma k_u\sqrt{\tfrac{\log p}{n_1}},\;
\|\hat{\beta}-\beta\|_2
\le C_\beta\sigma\sqrt{\tfrac{k_u\log p}{n_1}},\\
&
\|\hat{\Omega}-\Omega\|_2
\le C_\Omega\sqrt{\tfrac{k_u\log p}{n_1}},\;
\hat{\Omega}\in\Pi_0(k_u,p)
\Big\}
\end{aligned}
\]
holds with probability tending to one.

\begin{lemma}\label{lem: sparse spiked}
If $\Sigma\in\Pi_0(k_u,p)$ and $n\ge c\,k_u\log p$ for a sufficiently large constant $c>0$,
then there exists an estimator $\hat{\Omega}$ such that
\[
\|\hat{\Omega}-\Omega\|_2
\le
C_\Omega\sqrt{\frac{k_u\log p}{n}},
\]
with probability tending to one, where $C_\Omega$ is a constant.
Moreover, $\hat{\Omega}$ coincides with $\bfI_p$
outside an index set of cardinality at most $k_u$.
\end{lemma}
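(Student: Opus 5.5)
The plan is to estimate $\Sigma$ by exhaustive search over supports, exploiting that $\Sigma-\mathbf I_p$ is supported on a $k_u\times k_u$ principal block, and then to invert. Write $\Sigma=\mathbf I_p+VDV^\top$ with $T=\supp(V)$, $|T|\le k_u$. Then $\Sigma_{TT}=\mathbf I+ V_{T*}DV_{T*}^\top$, $\Sigma_{TT^c}=0$ and $\Sigma_{T^cT^c}=\mathbf I$. Consequently $\Omega$ also equals $\mathbf I_p$ outside the block $T\times T$, with $\Omega_{TT}=\Sigma_{TT}^{-1}$. It therefore suffices to estimate $T$ (or a superset of size $O(k_u)$) together with the block $\Sigma_{TT}$ in operator norm at rate $\sqrt{k_u\log p/n}$. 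The eigenvalue bounds then transfer the rate to the inverse, since $\|\hat\Sigma_{TT}^{-1}-\Sigma_{TT}^{-1}\|\le \|\hat\Sigma_{TT}^{-1}\|\,\|\Sigma_{TT}^{-1}\|\,\|\hat\Sigma_{TT}-\Sigma_{TT}\|$.

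The first step is a uniform deviation bound. Let $S_n=n^{-1}X^\top X$. A standard $\epsilon$-net argument combined with a union bound over the $\binom{p}{2k_u}\le p^{2k_u}$ supports gives, with probability $1-o(1)$,
\[
\sup_{|A|\le 2k_u}\|(S_n-\Sigma)_{AA}\|_2\le C\sqrt{k_u\log p/n}=:\delta_n .
\]
Here $n\ge ck_u\log p$ ensures $\delta_n$ is small, and Gaussian rows with bounded spectrum give the sub-exponential tails needed.

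The second step is the estimator. Over all $A\subseteq[p]$ with $|A|\le k_u$, consider the candidate $\tilde\Sigma^{A}$ that equals $(S_n)_{AA}$ on $A\times A$ (projected to have spectrum in $[1/M_1,M_1]$) and $\mathbf I$ elsewhere. Select $\hat A$ minimizing the $k_u$-sparse operator discrepancy
\[
\sup_{|B|\le 2k_u}\|(S_n-\tilde\Sigma^{A})_{BB}\|_2 .
\]
The true $T$ is a candidate with discrepancy at most $2\delta_n$ on the good event, so the minimizer satisfies $\sup_{|B|\le 2k_u}\|(\tilde\Sigma^{\hat A}-\Sigma)_{BB}\|\le 4\delta_n$. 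Both $\tilde\Sigma^{\hat A}-\mathbf I$ and $\Sigma-\mathbf I$ are supported on $\hat A\cup T$, which has size at most $2k_u$. Hence the $B=\hat A\cup T$ term already controls the full operator norm, giving $\|\tilde\Sigma^{\hat A}-\Sigma\|_2\le 4\delta_n$. Set $\hat\Omega=(\tilde\Sigma^{\hat A})^{-1}$. By the block structure it equals $\mathbf I_p$ outside $\hat A\times\hat A$, and the perturbation bound gives $\|\hat\Omega-\Omega\|_2\le M_1^2\cdot 4\delta_n$ (after adjusting constants for the spectral projection). This yields the claimed rate.

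The main obstacle is the projection step. Clipping $(S_n)_{AA}$ to spectrum $[1/M_1,M_1]$ must not destroy the oracle comparison. I would handle it by noting that on the good event $(S_n)_{TT}$ already has spectrum within $\delta_n$ of $[1/M_1,M_1]$, so clipping moves it by at most $\delta_n$. The statement also requires $\hat\Omega\in\Pi_0(k_u,p)$. This follows because $\hat\Omega-\mathbf I$ is supported on a block of size at most $k_u$: its eigen-decomposition gives $V$ with $|\supp(V)|\le k_u$, rank at most $k_u$, and eigenvalues of $\hat\Omega$ lying in $[1/M_1,M_1]$.
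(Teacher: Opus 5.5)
Your proposal is correct, but it reaches the bound by a different route than the paper.

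\emph{The paper's route.} It follows Cai, Ma and Wu. It defines a class $\mathbb{B}_{k_u}$ of candidate supports $B$ that pass two explicit threshold tests: for every $D\subseteq B^c$ with $|D|\le k_u$, $\hat\Sigma^{(1)}_{DD}$ is close to $\mathbf I$ and the cross block $\hat\Sigma^{(1)}_{DB}$ is small. It shows $B_*\in\mathbb{B}_{k_u}$ with high probability, using $\Sigma_{DD}=\mathbf I$ and the independence of $X_{\cdot D}$ and $X_{\cdot B_*}$. It then takes an arbitrary member $\hat B$ and controls $\|\Gamma_{\hat B}(\hat\Sigma^{(1)})-\Gamma_{B_*}(\hat\Sigma^{(1)})\|_2$ by splitting the indices into correctly identified, missing and spurious coordinates $G,M,O$, bounding four blocks separately.

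\emph{Your route.} You select $\hat A$ by minimizing the $2k_u$-restricted operator-norm discrepancy to $S_n$. The only probabilistic input is then one uniform bound on $\|(S_n-\Sigma)_{AA}\|_2$ over $|A|\le 2k_u$. The observation that $\tilde\Sigma^{\hat A}-\Sigma$ is supported on the principal block $(\hat A\cup T)\times(\hat A\cup T)$, of size at most $2k_u$, turns the restricted bound into a full operator-norm bound with no case analysis.

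\emph{Comparison.} Your argument is shorter and uses less structure: there is no cross-block concentration and no conditioning argument. The paper's version has the merit of being a pure feasibility check with explicit thresholds inherited from the sparse PCA literature. Both are exhaustive searches, so neither is computationally efficient.

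\emph{Two small points.} First, with clipping the oracle candidate $T$ has discrepancy at most $3\delta_n$ rather than $2\delta_n$, giving $4\delta_n$ at the end; only constants change. Second, clipping is not needed for invertibility. For $n\ge c\,k_u\log p$ you already have $\|\tilde\Sigma^{\hat A}-\Sigma\|_2\le (2M_1)^{-1}$, and Weyl's inequality then gives $\lambda_{\min}(\tilde\Sigma^{\hat A})\ge 1/(2M_1)$, which is exactly how the paper finishes. What clipping does buy is exact membership $\hat\Omega\in\Pi_0(k_u,p)$. The lemma itself does not claim this, and the downstream debiasing argument uses only the support property and the spectral-norm bound.
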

\Cref{lem: sparse spiked} is proven in \Cref{sec: proof sparse spiked}.

Using $\hat{\Omega}$ and the second half of the sample $Z^{(2)}$, define
\begin{equation}\label{eq: debiased center np hard}
\hat L_0(Z;\xi)
=
\xi^\top\hat{\beta}
+
\frac{1}{n_2}\xi^\top\hat{\Omega}
(X^{(2)})^\top
\bigl(Y^{(2)}-X^{(2)}\hat{\beta}\bigr).
\end{equation}
Its error admits the decomposition
\begin{equation}\label{eq: error decomp np hard}
\begin{aligned}
\hat L_0(Z;\xi)-\xi^\top\beta
=&\;
\underbrace{
\Big(\xi-\tfrac{1}{n_2}(X^{(2)})^\top X^{(2)}\Omega\xi\Big)^\top
(\hat{\beta}-\beta)
}_{\mathrm{I}}
+
\underbrace{
\tfrac{1}{n_2}\xi^\top\hat{\Omega}(X^{(2)})^\top\varepsilon^{(2)}
}_{\mathrm{II}}\\
&-
\underbrace{
\tfrac{1}{n_2}\xi^\top(\hat{\Omega}-\Omega)
(X^{(2)})^\top X^{(2)}(\hat{\beta}-\beta)
}_{\mathrm{III}},
\end{aligned}
\end{equation}
where $\varepsilon^{(2)}=Y^{(2)}-X^{(2)}\beta\sim\mathcal{N}(0,\sigma^2 I_{n_2})$.

Terms $\mathrm{I}$ and $\mathrm{II}$ are controlled as in
\Cref{sec: proof upper design}, with high probability. The only difference for
term $\mathrm{II}$ is that \(\Omega=\Sigma_0^{-1}\) is replaced by
\(\widehat\Omega\). This replacement does not affect the argument, since the
analysis only requires spectral-norm control of the precision matrix. Indeed, on
the event \(\widetilde{\mathcal A}_0\), we have
\[
    \|\widehat\Omega-\Omega\|_2
    \lesssim \sqrt{\frac{k_u\log p}{n}},
\]
which implies that \(\|\widehat\Omega\|_2\) is uniformly controlled. Hence the
same proof as in \Cref{sec: proof upper design} applies, yielding a bound of
order
\(
    {\sigma\|\xi\|_2}/{\sqrt n}.
\)

For term $\mathrm{III}$, write
\[
q_i^{\prime\prime}
=
\xi^\top(\hat{\Omega}-\Omega)
X_{i\cdot}^{(2)}(X_{i\cdot}^{(2)})^\top(\hat{\beta}-\beta),
\qquad i=1,\ldots,n_2.
\]
Then
\[
\mathbb E q_i^{\prime\prime}
=
\xi^\top(\hat{\Omega}-\Omega)\Sigma(\hat{\beta}-\beta)
\le
\|\xi^\top(\hat{\Omega}-\Omega)\|_2\,
\|\hat{\beta}-\beta\|_2.
\]
Moreover,
\[
\|q_i^{\prime\prime}\|_{\psi_1}
\le
2\|\xi^\top(\hat{\Omega}-\Omega)X_{i\cdot}^{(2)}\|_{\psi_2}
\|(X_{i\cdot}^{(2)})^\top(\hat{\beta}-\beta)\|_{\psi_2}
\lesssim
\|\xi^\top(\hat{\Omega}-\Omega)\|_2
\|\hat{\beta}-\beta\|_2.
\]

On $\widetilde{\mathcal A}_0$,
$\|\hat{\beta}-\beta\|_2\lesssim
\sigma\sqrt{\tfrac{k_u\log p}{n}}$.
Since $\hat{\Omega}-\Omega$ is supported on at most $2k_u$ rows and columns,
\[
\|\xi^\top(\hat{\Omega}-\Omega)\|_2
\le
\sqrt{\sum_{j\le 2k_u}\xi_j^2}\,
\|\hat{\Omega}-\Omega\|_2
\lesssim
\sqrt{\sum_{j\le k_u}\xi_j^2}
\sqrt{\tfrac{k_u\log p}{n}}.
\]
Hence
\[
\|q_i^{\prime\prime}\|_{\psi_1}
\lesssim
\sigma\sqrt{\sum_{j\le k_u}\xi_j^2}\,
\frac{k_u\log p}{n}.
\]

Applying Lemma~\ref{lem: sub exp sum} to
$n_2^{-1}\sum_i(q_i^{\prime\prime}-\mathbb E q_i^{\prime\prime})$ with
$t=c\sigma\sqrt{\sum_{j\le k_u}\xi_j^2}\frac{k_u\log p}{n}$ for sufficiently
large $c$, we obtain
\[
\left|
\frac{1}{n_2}\xi^\top(\hat{\Omega}-\Omega)
(X^{(2)})^\top X^{(2)}(\hat{\beta}-\beta)
\right|
\lesssim
\sigma\sqrt{\sum_{j\le k_u}\xi_j^2}\,
\frac{k_u\log p}{n}
\]
with high probability.

Combining the bounds for $\mathrm{I}$-$\mathrm{III}$ yields the claimed rate in \Cref{eq:upper-spike-debias-rate}.

\bigskip
We now complete the proof of the upper bound claimed in \Cref{thm: sparse spiked} by applying \Cref{lem:generic-mixed-ci} with
\[
\mathcal P_u
:=
\Bigl\{
\theta=(\beta,\Sigma,\sigma)\in\Theta(k_u):
\Sigma\in\Pi_0(k_u,p)
\Bigr\}.
\]
The condition on the plug-in interval follows from
\Cref{lem:generic-plugin-ci} with \(N=n_1\).
The preceding debiased construction, with \(v\) in place of \(\xi\), verifies the condition on
the debiased interval over \(\mathcal P_u\).
For \(v=\xi_m^{(1)}\), the debiased
radius is bounded by
\[
R_{\rm db}^{\rm spike}(\xi_m^{(1)})
\lesssim
\sigma\left\{
\frac{(\sum_{j\le m}\xi_j^2)^{1/2}}{\sqrt n}
+
\nu_2\frac{k_u\log p}{n}
\right\},
\]
since
\(\sum_{j\le k_u}(\xi_m^{(1)})_j^2\le \sum_{j\le k_u}\xi_j^2=\nu_2^2\).
Moreover,
\[
\mathcal P_u\cap\Theta(k_u;\xi,t_0)
=
\Theta^{\text{spike}}(k_u;\xi,t_0),
\qquad
\Theta_{\pm\tau}^{\text{spike}}(k;\xi,t_0)\subseteq \mathcal P_u
\]
for \(1\le k\le k_u\). Thus \Cref{lem:generic-mixed-ci} gives, for every fixed
deterministic \(m\),

\[
\tau_{\rm adap}^{\rm spike}(k_u,k;\xi)
\lesssim
\frac{\left(\sum_{j\le m}\xi_j^2\right)^{1/2}}{\sqrt n}
+
|\xi_{m+1}|k_u\sqrt{\frac{\log p}{n}}
+
\nu_2\frac{k_u\log p}{n},
\]
where we have used $\sigma\le  M_2$ for $\theta\in \Theta(k)$.
Optimizing over $m$ and using \Cref{prop: nu1-expression} gives
\[
\inf_{0\le m\le p}
\left\{
    \frac{\left(\sum_{j\le m}\xi_j^2\right)^{1/2}}{\sqrt n}
    +
    |\xi_{m+1}|k_u\sqrt{\frac{\log p}{n}}
\right\}
\lesssim_{\log}
\frac{\nu_1}{\sqrt n}.
\]
Thus, we have
\[
    \tau_{\rm adap}^{\rm spike}(k_u,k;\xi)
    \lesssim_{\log}
     \frac{\nu_1}{\sqrt n}
    +
     \nu_2\frac{k_u\log p}{n}.
\]
This completes the proof.
\end{proof}

\section{Proof of the lower bounds}\label{sec: proof lower}
In this section, we present the proofs of the lower bounds stated in \Cref{thm: hypothesis,thm: computational lower bound,thm: reduction,thm: hypothesis design}.
Our proof strategy largely follows the framework developed in \cite{cai2019optimal,bradic2022testability}. Specifically, for a given candidate parameter under the alternative hypothesis, we construct a prior over the null space and show that the corresponding $\chi^2$-divergences between their induced mixture distribution and that of the alternative parameter are sufficiently small. This implies that the power of any valid test must necessarily be limited, thereby establishing the desired lower bound.

We summarize below several key technical tools used in the proof. For a probability measure $\pi$ over the parameter space $\Theta(k_u)$, we denote
$$\bbP^n_\pi=\int \bbP^n_\theta\rmd \pi(\theta).$$

For two probability measures \(P, Q\) defined on the same measurable space \((\mathcal{X}, \mathcal{U})\), we define:
\[
\mathrm{TV}(P,Q) \;=\; \sup_{B \in \mathcal{U}} \big| P(B) - Q(B) \big| ,
\]
the \emph{total variation distance} and
\[
\chi^2(P \,\|\, Q) \;=\; \int \!\left( \frac{dP}{dQ} - 1 \right)^{\!2} dQ ,
\]
the \emph{chi-square divergence}.
\begin{lemma}\label{lem: power inequality}
    For any test $\psi_*$ and a probability measure $\pi$ over the parameter space $\Theta(k_u)$, we have
    $$\left|\bbE_{\pi}\psi_* -\bbE_{\theta_*}\psi_*\right|\le  {\rm TV}(\bbP^n_\pi, \bbP^n_{\theta_*})\le  \frac{1}{2}\sqrt{\chi^2(\bbP^n_\pi \,\|\, \bbP^n_{\theta_*})}.$$
\end{lemma}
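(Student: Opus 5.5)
The first inequality follows from the definition of total variation, and the second from the Cauchy--Schwarz inequality.

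\emph{First inequality.} Since $\psi_*$ takes values in $[0,1]$, I would write both expectations as integrals against the data measures:
\[
\bbE_{\pi}\psi_*=\int\psi_*\,{\rm d}\bbP^n_\pi ,
\qquad
\bbE_{\theta_*}\psi_*=\int\psi_*\,{\rm d}\bbP^n_{\theta_*}.
\]
The first identity uses Fubini's theorem applied to $\bbP^n_\pi=\int\bbP^n_\theta\,{\rm d}\pi(\theta)$. The difference of the two expectations is then $\int\psi_*\,{\rm d}(\bbP^n_\pi-\bbP^n_{\theta_*})$. For measurable $0\le f\le 1$, the standard identity gives
\[
\sup_f\int f\,{\rm d}(P-Q)={\rm TV}(P,Q).
\]
One can see this by splitting along the set $\{{\rm d}P>{\rm d}Q\}$ with respect to a dominating measure; for randomized tests it follows by the layer-cake representation $f=\int_0^1\ind\{f>s\}\,{\rm d}s$. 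Applying the same argument to $1-\psi_*$ handles the absolute value.

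\emph{Second inequality.} If $\bbP^n_\pi\not\ll\bbP^n_{\theta_*}$, then $\chi^2=\infty$ and there is nothing to prove. Otherwise, let $L={\rm d}\bbP^n_\pi/{\rm d}\bbP^n_{\theta_*}$. Then
\[
{\rm TV}=\tfrac12\int|L-1|\,{\rm d}\bbP^n_{\theta_*}
\le\tfrac12\Bigl(\int(L-1)^2\,{\rm d}\bbP^n_{\theta_*}\Bigr)^{1/2}
=\tfrac12\sqrt{\chi^2(\bbP^n_\pi\,\|\,\bbP^n_{\theta_*})},
\]
where the inequality is Cauchy--Schwarz against the probability measure $\bbP^n_{\theta_*}$.

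The only point needing care is the identification ${\rm TV}=\tfrac12\|P-Q\|_1$ with the supremum definition used in the paper. This is routine: the supremum over sets $B$ is attained at $B=\{L>1\}$, and $\int(L-1)\,{\rm d}Q=0$.
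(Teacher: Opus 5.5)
Your argument is correct and is the standard one. Fubini identifies $\bbE_\pi\psi_*$ with $\int\psi_*\,{\rm d}\bbP^n_\pi$, and the variational identity $\sup_{0\le f\le 1}\int f\,{\rm d}(P-Q)={\rm TV}(P,Q)$, applied to both $\psi_*$ and $1-\psi_*$, gives the first inequality. Cauchy--Schwarz applied to ${\rm TV}=\tfrac12\int|L-1|\,{\rm d}\bbP^n_{\theta_*}$ gives the second, with the $\chi^2=\infty$ convention covering the non-absolutely-continuous case. The paper states this lemma without proof as a standard tool, so there is no separate argument to compare against.
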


\subsection{Proof of the lower bound in Theorem \ref{thm: hypothesis}}\label{sec: proof lower ultra sparse}
There are two quantities in the lower bound of Theorem \ref{thm: hypothesis}. The quantity involving $\nu_1$ appears in the lower bound of \Cref{thm: hypothesis design} and will be proved in \Cref{sec: proof design lower}. In this section, we focus on proving the lower bound involving $\nu_2$.
\begin{proof}
Since \(k_u\to\infty\), we assume $k_u\ge  4$.

Following the definition of $\tau_{\mathrm{adap}}(k_u, k; \xi, t_0)$ in \eqref{eq: adaptive separation}, we aim to show that for any constant $c > 0$, there exists a constant $c' > 0$ such that for $\tau = c' \nu_2 k_u \tfrac{\log p}{n}$ and for any test $\psi$, it holds that
\begin{equation}\label{eq: lower dense lower bound}
    \bbE_{\theta_*}\psi
    \le
    \sup_{\theta \in \Theta(k_u; \xi, t_0)} \bbE_{\theta}\psi + c,
\end{equation}
where $\theta_* = (\beta_*, \Sigma_*, \sigma_*) \in \Theta_{\pm\tau}(k; \xi, t_0)$ is a fixed alternative point to be specified later.

According to \Cref{lem: power inequality}, it suffices to construct a point $\theta_* = (\beta_*, \Sigma_*, \sigma_*) \in \Theta_{\pm\tau}(k; \xi, t_0)$ and a prior distribution $\pi$ over $\Theta(k_u; \xi, t_0)$ such that
\begin{equation}\label{eq: simpe target for lower bound}
\chi^2(\bbP^n_\pi \,\|\, \bbP^n_{\theta_*}) \le  4c^2 .
\end{equation}

To facilitate the construction, we introduce a transformation of the parameter space.
For any $k \ge 1$ and any $t_0,\tau$, choose an index
$j_0\in \operatorname{supp}(\xi)$ and set
\[
\beta_0 = \frac{t_0-\tau}{\xi_{j_0}} e_{j_0} \in \mathbb{R}^p,
\]
with the convention \(\beta_0=0\) when \(t_0-\tau=0\) and $e_{j_0}$ is the $j_0$-th standard basis.
Then
\(\|\beta_0\|_0\le 1\le k\) and \(\xi^\top\beta_0=t_0-\tau\).
Define the transition mapping $\varphi$ on the parameter space as
\begin{equation}\label{eq: lower bound transform on theta}
    \varphi(\beta, \Sigma, \sigma) = (\beta - \beta_0, \Sigma, \sigma).
\end{equation}
Its inverse is given by $\varphi^{-1}(\tilde{\beta}, \Sigma, \sigma) = (\tilde{\beta} + \beta_0, \Sigma, \sigma).$
We have the following properties:
\begin{itemize}
\item
When changing the parameter from $\theta$ to
$\varphi(\theta)$,
the linear functional changes from $L(\beta;\xi)$ to $L(\beta;\xi)-t_0+\tau$.

\item When changing the parameter from $\theta$ to
$\varphi(\theta)$,
the induced transformation on the data and noise is
\[
(Y,X,\varepsilon)\mapsto (Y-X\beta_0,X,\varepsilon),
\]
which is bijective.
Therefore $\varphi$ preserves the total variation and $\chi^2$-divergence between the induced distributions.
\item
The pre-image of the point \((0,\Sigma,\sigma)\) is  \((\beta_0,\Sigma,\sigma)\) and belongs to
\(\Theta_{\pm\tau}(k;\xi,t_0)\).
\item Since $k_u\ge  4$, the pre-image of \(\Theta(k_u/2;\xi,\tau)\) is contained in
\(\Theta(k_u;\xi,t_0)\).
\end{itemize}

Therefore, it suffices to establish \Cref{eq: simpe target for lower bound} in the
$\varphi$-transformed space with $\beta_* = 0$ and $\pi$ supported on
$\Theta(k_u/2; \xi, \tau)$. We can then apply the inverse transformation $\varphi^{-1}$
to obtain the desired construction in the pre-transformed parameter space.

Let $p_1 = \lfloor k_u/4\rfloor$ and $p_2=p-\lfloor k_u/4\rfloor$.
Let $S_1 = [p_1] $ and $S_2 = [p] \setminus S_1$; $p_1$ and $p_2$ are the sizes of $S_1$ and $S_2$, respectively.
Under the Gaussian design model, $Z_i=(Y_i,X_{i\cdot})\in \bbR^{p+1}$ follows a joint Gaussian distribution with mean $0$. Let $\Sigma^z$ denote the covariance of $Z_i$. Decompose $\Sigma^{z}$ into blocks $\begin{pmatrix} \Sigma_{yy}^{z}& \left(\Sigma_{xy}^{z}\right)^{\top}\\ \Sigma_{xy}^{z}& \Sigma_{xx}^{z} \end{pmatrix},$ where $\Sigma_{yy}^{z}$, $\Sigma_{xx}^{z}$ and $\Sigma_{xy}^{z}$ denote the variance of $y_i$, the variance of $X_{i\cdot}$ and the covariance of $y_i$ and $X_{i\cdot}$, respectively.
We define the function $h : \Sigma^z \rightarrow \theta=\left(\beta,\Sigma,\sigma\right)$  as
\[
h(\Sigma^z)=\left(
\left(\Sigma_{xx}^{z}\right)^{-1}\Sigma_{xy}^{z},
\Sigma_{xx}^{z},
\sqrt{\Sigma_{yy}^{z}-\left(\Sigma_{xy}^{z}\right)^{\top}\left(\Sigma_{xx}^{z}\right)^{-1}\Sigma_{xy}^{z}}
\right).
\]
The function $h$ is bijective and its inverse mapping $h^{-1}: \theta=\left(\beta,\Sigma,\sigma\right) \rightarrow \Sigma^z$ is $$h^{-1}\left(\left(\beta,\Sigma,\sigma\right)\right)=\begin{pmatrix} \beta^{\top}\Sigma\beta+\sigma^2& \beta^{\top}\Sigma\\ \Sigma\beta& \Sigma\end{pmatrix}.$$

\textit{Step 1: Construct the least favorable prior.}\quad
We set the alternative point as
$$\theta_*=(\beta_*=\bfz_{p}, \Sigma_*=\bfI_{p\times p}, \sigma_*=M_2/2)$$ and we have
\begin{equation}\label{eq: alter matrix}
    \Sigma_*^z=h^{-1}(\theta_*)=\left(\begin{array}{c|c|c}
       \sigma_*^2&\bfz_{1\times p_1}& {\bfz}_{1\times p_2}\\ \hline
       \bfz_{p_1\times 1}&\bfI_{p_1\times p_1}&\bfz_{p_1\times p_2}\\ \hline
       \bfz_{p_2\times 1}&\bfz_{p_2\times p_1}&\bfI_{p_2\times p_2}
    \end{array}\right)
\end{equation}
Let $I$ be chosen randomly and uniformly from all subsets of $[p_2]$ with size $p_1$. We consider the random vectors $\alphab_2\in \bbR^{p_2}$ defined coordinate-wise as
\begin{equation}\label{eq: moderate sparse delta}
    (\alphab_2)_j=c_1\sign(\xi_{p_1+j})\sqrt{\frac{\log p}{n}}\ind\left\{j\in I\right\},\quad \forall j\in [p_2],
\end{equation}
where $c_1>0$ is a small positive constant to be specified later.
We further define a fixed vector $\alphab_1\in \bbR^{p_1}$ by
$$(\alphab_1)_j=-\frac{\xi_j}{\sqrt{\sum_{i \in S_1} \xi_i^2}},\quad\forall j\in S_1.$$

Given $\alphab_2$, we can construct the following corresponding covariance matrix:
\begin{equation}\label{eq: null point mat}
    \Sigma^z=
    \left(
    \begin{array}{c|c|c}
        \sigma_*^2 & \bfz_{1\times p_1}&\kappa\alphab_2^\top \\ \hline
        \bfz_{p_1\times 1}&
        \bfI_{p_1\times p_1} & \alphab_1\alphab_2^\top \\
            \hline
        \kappa\alphab_2&  \alphab_2\alphab_1^\top & \bfI_{p_2\times p_2}
    \end{array}
    \right)\stackrel{\triangle}{=}g_1(\alphab_2),
\end{equation}
where $\kappa = \kappa_1(\alphab_2) \in \mathbb{R}$ is a function of $\alphab_2$ defined such that $\Sigma^z$ corresponds to a point in the transformed null space, i.e., $h(\Sigma^z) \in \Theta(k_u/2; \xi, \tau)$.
We will construct the prior in a way such that this $\kappa$ exists a.s.
Furthermore, since the null space imposes a linear constraint on $\beta$, whenever such a $\kappa$ exists, it is uniquely determined since $\tau>0$.

Denote by $\pi_1$ the distribution of $\alphab_2$ defined in \eqref{eq: moderate sparse delta} and $\pi_{}$ the induced prior of $\pi_1$ under the mapping $h\circ g_1$.
The following lemma shows that $\pi_{}$ is supported on $\Theta(k_u/2; \xi, \tau)$, which is the least favorable prior we aim to construct.

\begin{lemma}\label{lem: valid covariance}
If we choose $c_1$ sufficiently small, then there exists a constant $c_2>0$ such that the induced prior $\pi_{}$ is supported on $\Theta(k_u/2;\xi,\tau)$, where $\tau = c_2 \nu_2 \frac{k_u \log p}{n}$, and moreover the associated coefficient $\kappa=\kappa_1(\alphab_2)$ satisfies $0<\kappa\le 1$ $\pi_1$-almost surely.
\end{lemma}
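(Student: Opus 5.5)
The plan is to verify every requirement of $\Theta(k_u/2;\xi,\tau)$ by computing $h(g_1(\alphab_2))$ explicitly. Everything hinges on the scalar
\[
a:=\|\alphab_2\|_2^2=c_1^2p_1\frac{\log p}{n}\le \frac{c_1^2}{4}\,\frac{k_u\log p}{n},
\]
which is deterministic because $|I|=p_1$ always. Since $n\ge c\,k_u\log p$, we get $a\le c_1^2/(4c)$, so taking $c_1$ small makes $a$ as small as we like, say $a\le 1/4$. Also $\|\alphab_1\|_2=1$. Here $\sum_{i\in S_1}\xi_i^2>0$, since $p_1\ge1$ and $\xi_1\neq0$.

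\textbf{Step 1: the design covariance.} The $X$-block is
\[
\Sigma=\begin{pmatrix}\mathbf I&\alphab_1\alphab_2^\top\\ \alphab_2\alphab_1^\top&\mathbf I\end{pmatrix}.
\]
Its eigenvalues are $1\pm\|\alphab_1\|_2\|\alphab_2\|_2=1\pm\sqrt a$, together with $1$. For $a$ small these lie in $[1/M_1,M_1]$.

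\textbf{Step 2: the regression coefficient.} Writing $B=\alphab_1\alphab_2^\top$, block inversion gives
\[
\Omega=\begin{pmatrix}\mathbf I+\tfrac{a}{1-a}\alphab_1\alphab_1^\top & -\tfrac{1}{1-a}\alphab_1\alphab_2^\top\\[2pt] -\tfrac{1}{1-a}\alphab_2\alphab_1^\top & \mathbf I+\tfrac{1}{1-a}\alphab_2\alphab_2^\top\end{pmatrix}.
\]
Hence
\[
\beta=\Omega\begin{pmatrix}\bfz\\ \kappa\alphab_2\end{pmatrix}=\frac{\kappa}{1-a}\begin{pmatrix}-a\,\alphab_1\\ \alphab_2\end{pmatrix}.
\]
Thus $\supp(\beta)\subseteq S_1\cup(p_1+I)$, so $\|\beta\|_0\le 2p_1\le k_u/2$.

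Substituting the definitions of $\alphab_1$ and $\alphab_2$,
\[
\xi^\top\beta=\frac{\kappa}{1-a}\Big(a\,H(p_1;\xi)+c_1\sqrt{\tfrac{\log p}{n}}\sum_{j\in I}|\xi_{p_1+j}|\Big)=:\kappa F(\alphab_2).
\]
We have used $-\xi_{S_1}^\top\alphab_1=H(p_1;\xi)$ and $\xi_{p_1+j}\sign(\xi_{p_1+j})=|\xi_{p_1+j}|$. Since $F\ge aH(p_1;\xi)>0$, the null constraint $\xi^\top\beta=\tau$ is solved uniquely by $\kappa=\tau/F(\alphab_2)>0$. This $\kappa$ is well defined for every draw of $I$.

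\textbf{Step 3: the bound $\kappa\le1$, which is the key calibration.} We need $\tau\le aH(p_1;\xi)$. By the decreasing ordering, the average of $\xi_j^2$ over the top $p_1$ coordinates is at least the average over the top $\lceil k_u\rceil$. Therefore
\[
H(p_1;\xi)\ge\sqrt{p_1/k_u}\,H(k_u;\xi)\ge H(k_u;\xi)/3 \qquad (k_u\ge4).
\]
Together with $p_1\ge k_u/8$, this gives
\[
aH(p_1;\xi)\ge \frac{c_1^2}{24}\,\nu_2\,\frac{k_u\log p}{n}.
\]
So choosing $c_2\le c_1^2/24$ yields $0<\kappa\le1$ almost surely.

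\textbf{Step 4: the noise level and positive definiteness.} We have
\[
\sigma^2=\Sigma^z_{yy}-(\Sigma^z_{xy})^\top\beta=\sigma_*^2-\kappa\,\alphab_2^\top\beta_{S_2}=\frac{M_2^2}{4}-\frac{\kappa^2a}{1-a}.
\]
With $\kappa\le1$ and $a$ small, this lies in $(0,M_2^2]$. In particular the Schur complement is positive, so $\Sigma^z$ is positive definite and $h(\Sigma^z)$ is a genuine parameter. Collecting Steps 1 to 4 shows that $\pi$ is supported on $\Theta(k_u/2;\xi,\tau)$.

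The main obstacle is ordering the constants correctly. $c_1$ is fixed first, small enough that $a$ is small given the constant $c$ in $n\ge c\,k_u\log p$ (or, equivalently, with $c$ taken large relative to $c_1^{-2}$). Only then is $c_2$ chosen in terms of $c_1$. One must also check that the ceiling in $H(\cdot;\xi)$ does not break the comparison between $H(p_1;\xi)$ and $H(k_u;\xi)$.
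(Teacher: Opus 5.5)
Your proof is correct and follows essentially the same route as the paper: explicit eigenvalues $1\pm\|\alphab_1\|_2\|\alphab_2\|_2$, block inversion giving $\beta_{S_1}=-\frac{\kappa a}{1-a}\alphab_1$ and $\beta_{S_2}=\frac{\kappa}{1-a}\alphab_2$, a positive coefficient of $\kappa$ in the null constraint, an average-energy comparison of $H(p_1;\xi)$ with $H(k_u;\xi)$ giving $\kappa\le 1$ once $c_2\lesssim c_1^2$, and the Schur-complement formula for $\sigma^2$. Two details of your version are slightly cleaner than the paper's write-up: you bound $\kappa$ before $\sigma$, and you use $\frac{a}{1-a}\ge a$; the ceiling in $H(k_u;\xi)$ only changes your constant $1/3$ to something like $1/\sqrt{10}$.
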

The proof is given in \Cref{sec: proof valid covariance}.

\medskip

\textit{Step 2: Control the $\chi^2$-divergence.}
\quad
For the $\chi^2$-divergence, by Fubini's theorem, we have
\begin{equation}\label{eq: chi square decomp}
\begin{aligned}
    \bbE_{\theta_*}\left(\frac{\rmd \bbP^n_{\pi_{}}}{\rmd \bbP^n_{\theta_*}}-1\right)^2&=\bbE_{\theta_*}\left[\left(\frac{\rmd \bbP^n_{\pi_{}}}{\rmd \bbP^n_{\theta_*}}\right)^2\right]-1
    =\bbE_{(\theta,\tilde{\theta})\sim \pi_{}\otimes \pi_{}}\int_{\bbR^n}\frac{\rmd \bbP^n_{\theta}\rmd \bbP^n_{\tilde{\theta}}}{\rmd \bbP^n_{\theta_*}}-1.
\end{aligned}
\end{equation}

The following lemma provides an upper bound for the integral in \eqref{eq: chi square decomp}, and
its proof is given in \Cref{sec: proof chi square integral}.
\begin{lemma}\label{lem: chi square integral 1}
For any pair of parameters $(\alphab_2,\tilde{\alphab}_2)$ over the support of ${\pi}_1$ constructed in \Cref{lem: valid covariance}, let $\theta = h(g_1( \alphab_2))$ and $\tilde{\theta} = h(g_1( \tilde{\alphab}_2))$.
If $c_1$ is chosen sufficiently small, there exists some constants $c_3>0$ only depending on $\sigma_*$ such that
    \begin{equation*}
        \bbE_{\theta_*}\left(\frac{\rmd \bbP^n_{\pi_{}}}{\rmd \bbP^n_{\theta_*}}-1\right)^2
    \le  \bbE_{(\alphab_2,\tilde{\alphab}_2)\sim \pi_1\otimes \pi_1} \exp\left(c_3 n\alphab_2^\top \tilde{\alphab}_2\right)-1.
    \end{equation*}
\end{lemma}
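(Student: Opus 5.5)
The plan is to compute the Gaussian integral $\int \rmd\bbP_\theta\,\rmd\bbP_{\tilde\theta}/\rmd\bbP_{\theta_*}$ exactly and then bound the resulting determinant. Since $\bbP^n_\theta=\mathcal N(0,\Sigma^z)^{\otimes n}$ with $\Sigma^z=g_1(\alphab_2)$, and $\theta_*$ corresponds to $\Sigma^z_*$ in \eqref{eq: alter matrix}, the standard identity for centered Gaussians gives
\[
\int\frac{\rmd\bbP_\theta\,\rmd\bbP_{\tilde\theta}}{\rmd\bbP_{\theta_*}}
=\det\bigl(\bfI-(\Sigma_*^z)^{-1/2}\Delta(\Sigma_*^z)^{-1}\tilde\Delta(\Sigma_*^z)^{-1/2}\bigr)^{-1/2},
\]
per observation. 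Here $\Delta=\Sigma^z-\Sigma^z_*$ and $\tilde\Delta=\tilde\Sigma^z-\Sigma^z_*$. The formula is valid provided the relevant matrix is positive definite, which holds when $c_1$ is small because the perturbations have small operator norm. Raising this to the $n$-th power and inserting it into \eqref{eq: chi square decomp} reduces the lemma to the bound
\[
-\tfrac n2\log\det(\bfI-A)\le c_3 n\,\alphab_2^\top\tilde\alphab_2,
\qquad A=D\Delta D^{2}\tilde\Delta D,
\]
with $D=(\Sigma_*^z)^{-1/2}=\mathrm{diag}(\sigma_*^{-1},\bfI_p)$.

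The next step is to exploit the structure of $\Delta$. By \eqref{eq: null point mat}, $\Delta$ is nonzero only in the off-diagonal blocks, and it has the form $u w^\top+w u^\top$ with $w=(0,0,\alphab_2)$ and $u=(\kappa,\alphab_1,0)$. Rescaling by $D$ gives $u'=(\kappa/\sigma_*,\alphab_1,0)$, so $D\Delta D=u'w^\top+wu'^\top$, and similarly for $\tilde\Delta$ with $\tilde u'=(\tilde\kappa/\sigma_*,\alphab_1,0)$ and $\tilde w$. Since $u',\tilde u'$ are supported on the first $1+p_1$ coordinates and $w,\tilde w$ on the last $p_2$, we have $u'^\top w=0$, and likewise for the tilde vectors and the cross pairs. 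Hence $A$ has rank at most four and acts on $\mathrm{span}\{u',\tilde u',w,\tilde w\}$.

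Expanding the product gives
\[
A=(u'w^\top+wu'^\top)(\tilde u'\tilde w^\top+\tilde w\tilde u'^\top)
=(w^\top\tilde w)\,u'\tilde u'^\top+(u'^\top\tilde u')\,w\tilde w^\top .
\]
The two rank-one pieces act on orthogonal subspaces, so
\[
\det(\bfI-A)=\bigl(1-(w^\top\tilde w)(\tilde u'^\top u')\bigr)^2 .
\]
Now $u'^\top\tilde u'=\|\alphab_1\|_2^2+\kappa\tilde\kappa/\sigma_*^2=1+\kappa\tilde\kappa/\sigma_*^2\le 1+\sigma_*^{-2}$, using $0<\kappa,\tilde\kappa\le1$ from \Cref{lem: valid covariance}. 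Also $w^\top\tilde w=\alphab_2^\top\tilde\alphab_2$, which is nonnegative and at most $c_1^2 k_u\log p/n$; this is small since $n\ge c k_u\log p$. With $x=(1+\sigma_*^{-2})\alphab_2^\top\tilde\alphab_2\le 1/2$, the inequality $-\log(1-x)\le 2x$ gives $-\tfrac n2\log\det(\bfI-A)\le 2n(1+\sigma_*^{-2})\alphab_2^\top\tilde\alphab_2$. This is the claim with $c_3=2(1+\sigma_*^{-2})$.

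The main obstacle is getting the exact Gaussian identity and its validity condition right. The determinant formula requires $(\Sigma^z)^{-1}+(\tilde\Sigma^z)^{-1}-(\Sigma^z_*)^{-1}\succ0$, and I would verify this through the same operator-norm smallness of $D\Delta D$, whose operator norm is at most $\|u'\|\,\|w\|\lesssim c_1\sqrt{k_u\log p/n}$. I would derive the identity directly by completing the square rather than citing it, so that the normalization by $\Sigma_*^z$ is handled correctly. I would also keep the $\sigma_*$ entry, noting that the $\kappa$ row only affects the constant.
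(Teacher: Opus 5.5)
Your proposal is correct and follows essentially the same route as the paper. Both apply the Gaussian identity $\int g_1g_2/g_0=\det(\bfI-\Sigma_0^{-1}(\Sigma_1-\Sigma_0)\Sigma_0^{-1}(\Sigma_2-\Sigma_0))^{-1/2}$, show that the product matrix splits into two rank-one blocks each with nonzero eigenvalue $(1+\kappa\tilde\kappa/\sigma_*^2)\alphab_2^\top\tilde\alphab_2$, and finish with $(1-x)^{-1}\le e^{2x}$ on $[0,1/2]$ to obtain $c_3=2(1+\sigma_*^{-2})$. Your $u'w^\top+wu'^\top$ factorization is a tidier way to read off those eigenvalues, and your explicit check that $(\Sigma^z)^{-1}+(\tilde\Sigma^z)^{-1}-(\Sigma^z_*)^{-1}\succ 0$ fills in a validity condition the paper leaves implicit.
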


The following lemma is useful in controlling the right-hand side of the above equation in \Cref{lem: chi square integral 1}, and its proof is given in \Cref{sec: proof hypergeometric}.
\begin{lemma}\label{lem: hypergeometric}
    Let $J$ be a $\mathrm{Hypergeometric}(p, k, k)$ variable with
\[
\mathbb{P}(J = j) = \frac{\binom{k}{j} \binom{p - k}{k - j}}{\binom{p}{k}}.
\]
If $k\le  p^\gamma$ for some constant $\gamma\in [0,1/2)$, then for constant $c\in (0,1-2\gamma)$, we have
\begin{equation}\label{eq: hypergeo bound}
    \lim_{p\to\infty}\mathbb{E}\big[\exp(c\log p \cdot J)\big]=1.
\end{equation}
\end{lemma}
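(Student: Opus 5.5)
We need to prove that $\mathbb{E}[\exp(c\log p\cdot J)]\to1$ for $J\sim\mathrm{Hypergeometric}(p,k,k)$ with $k\le p^\gamma$ and $c<1-2\gamma$. The plan is to compare $J$ with a binomial and sum the resulting bound term by term. Write $\mathbb{E}[p^{cJ}]=\sum_{j=0}^{k}\mathbb{P}(J=j)\,p^{cj}$. Since $\mathbb{P}(J=0)\le 1$, it suffices to show two things: $\mathbb{P}(J=0)\to1$, and $\sum_{j\ge1}\mathbb{P}(J=j)p^{cj}\to0$.

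The key step is a tail bound for the hypergeometric probabilities. Think of $J$ as counting how many of $k$ draws without replacement, from $p$ items of which $k$ are marked, land on marked items. For $j\le k$,
\[
\mathbb{P}(J=j)=\binom{k}{j}\frac{\binom{p-k}{k-j}}{\binom{p}{k}}\le \binom{k}{j}\Bigl(\frac{k}{p-k+1}\Bigr)^{j}.
\]
To see this, fix any $j$ of the draws. The probability that all of them land on marked items is $\prod_{i=0}^{j-1}\frac{k-i}{p-i}\le (k/(p-k+1))^j$. A union bound over the $\binom kj$ choices of draws then gives the display. Using $\binom kj\le k^j$, we get
\[
\mathbb{P}(J=j)\le \Bigl(\frac{k^2}{p-k+1}\Bigr)^j\le (2k^2/p)^j\le (2p^{2\gamma-1})^j
\]
for large $p$, since $k\le p^\gamma\le p/2$.

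The summation is then a geometric series:
\[
\sum_{j\ge1}\mathbb{P}(J=j)p^{cj}\le\sum_{j\ge1}\bigl(2p^{2\gamma-1+c}\bigr)^j.
\]
The ratio $r=2p^{-(1-2\gamma-c)}$ tends to $0$ because $c<1-2\gamma$, so the sum is at most $r/(1-r)\to0$. The same bound with $c=0$ gives $\mathbb{P}(J\ge1)\to0$, hence $\mathbb{P}(J=0)\to1$. Combining the two pieces yields the limit $1$ in \eqref{eq: hypergeo bound}.

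The only delicate point is justifying the falling-factorial comparison in the union-bound step; everything else is elementary. The bound holds uniformly in $k\le p^\gamma$, so there is no issue when $k$ is bounded or $\gamma=0$.
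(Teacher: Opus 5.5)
Your proof is correct. It reaches the same geometric-series domination as the paper, but through a different per-term bound.

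\textbf{The paper's route.} It sets $A_m=p^{cm}\mathbb{P}(J=m)$ and computes the ratio of consecutive terms from the binomial coefficients:
$A_{m+1}/A_m=p^c(k-m)^2/\{(m+1)(p+m+1-2k)\}\le p^{c+2\gamma-1}/(1-2p^{\gamma-1})=:r_p\to 0$.
This gives $\sum_{m\ge 1}A_m\le 2A_1\le 2r_pA_0$. It then shows separately, by an explicit product bound, that $A_0=\binom{p-k}{k}/\binom{p}{k}\to 1$.

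\textbf{Your route.} You bound each term directly, using exchangeability and a union bound over $j$-subsets of draws. This really controls $\mathbb{P}(J\ge j)$, and hence $\mathbb{P}(J=j)$. You then get $\mathbb{P}(J=0)\to 1$ for free by rerunning the same bound with $c=0$, instead of computing $A_0$.

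The step you flag as delicate is standard. Any $j$ fixed draws without replacement form a uniformly random ordered $j$-tuple of distinct items, so the probability that all are marked is $\prod_{i<j}(k-i)/(p-i)$. In fact each factor is at most $k/p$ because $k\le p$, which gives the slightly cleaner bound $\mathbb{P}(J\ge j)\le (k^2/p)^j$.

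\textbf{Comparison.} Your argument is a bit shorter and yields a reusable tail bound. The paper's is purely algebraic manipulation of the pmf and needs no probabilistic interpretation. Both end with a geometric ratio of order $p^{c+2\gamma-1}$, which is exactly where the hypothesis $c<1-2\gamma$ is used.
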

Let $I$ and $\tilde{I}$ denote the supports of $\alphab_2$ and $\tilde{\alphab}_2$, respectively.
By the construction of $\alphab_2$ in \eqref{eq: moderate sparse delta}, both $I$ and $\tilde{I}$ are independently and uniformly sampled from all subsets of $[p_2]$ of size $p_1$.
Consequently, the intersection size $|I \cap \tilde{I}|$ follows a $\mathrm{Hypergeometric}(p_2, p_1, p_1)$ distribution.
Since
\[
\alphab_2^\top \tilde{\alphab}_2
= c_1^{2} \frac{\log p}{n} \, |I \cap \tilde{I}|,
\]
we may apply \Cref{lem: hypergeometric}.
Recall the sparsity condition $p_1 \lesssim k_u \le p_2^\gamma$ for some $\gamma \in (0, 1/2)$ from \Cref{cdt: sparsity assumption}.
\Cref{lem: chi square integral 1,lem: hypergeometric} together imply that
\begin{align*}
    \bbE_{\theta_*}\!\left(\frac{\rmd \bbP_{\pi_{}}^n}{\rmd \bbP_{\theta_*}^n} - 1\right)^{2}
    &\le
        \bbE \exp\!\left( c_3 c_1^{2} \log p \, |I \cap \tilde{I}| \right) - 1
      = o(1),
\end{align*}
where $c_3$ is the constant from \Cref{lem: chi square integral 1} and $c_1$ is chosen sufficiently small such that $c_3c_1^2<1-2\gamma$.
\end{proof}

\subsection{Proof of the lower bound in Theorem \ref{thm: hypothesis design}}\label{sec: proof design lower}
\begin{proof}

It suffices to prove the lower bound for the identity covariance case. Indeed,
if \(\Sigma_0\) is diagonal and satisfies the eigenvalue condition in
\eqref{eq: parameter space}, then a coordinatewise rescaling transforms the
design covariance to \(\mathbf I_p\). Specifically, with
\(\widetilde X=X\Sigma_0^{-1/2}\), \(\widetilde\beta=\Sigma_0^{1/2}\beta\), and
\(\widetilde\xi=\Sigma_0^{-1/2}\xi\), we have
\[
    \xi^\top\beta=\widetilde\xi^\top\widetilde\beta,
    \qquad
    \|\widetilde\beta\|_0=\|\beta\|_0,
\]
and the bounded eigenvalues of \(\Sigma_0\) imply that the loading-profile
quantities for \(\widetilde\xi\) and \(\xi\) are equivalent up to constants.
Therefore, the diagonal known-covariance case reduces to the identity
covariance case up to constant factors. We focus below on \(\Sigma_0=\mathbf I_p\).

We follow the same argument in \Cref{sec: proof lower ultra sparse} and consider the transition mapping $\varphi$ defined in \Cref{eq: lower bound transform on theta}.
In the transformed alternative space, we fix
$\theta_* = (\mathbf{0}_{p\times 1}, \mathbf{I}_{p\times p}, \sigma_*)$ with the
corresponding matrix $\Sigma_*^{z}$ defined in \eqref{eq: alter matrix}.
Suppose that we can construct a prior $\pi$ satisfying
\[
\pi\!\left( \Theta(k_u/2; \xi, \tau) \right) \ge 1 - \frac{c}{2},
\qquad\text{and}\qquad
\chi^{2}\!\left( \mathbb{P}_{\pi}^{n} \,\|\, \mathbb{P}_{\theta_*}^{n} \right)
\le c^{2},
\]
where $\tau=c\nu_1/\sqrt{n}$ for some constant $c > 0$ to be specified later.
Define $\tilde{\pi}$ as the restriction of $\pi$ to a subset of $\Theta(k_u/2;\xi,\tau)$.
Then, we have
\begin{equation}\label{eq: restriction TV}
\begin{aligned}
    \mathrm{TV}\!\left(\mathbb{P}_{\tilde{\pi}}^{n},\, \mathbb{P}_{\theta_*}^{n}\right)
    &\le
    \mathrm{TV}\!\left(\mathbb{P}_{\pi}^{n},\, \mathbb{P}_{\theta_*}^{n}\right)
    +
    \mathrm{TV}\!\left(\mathbb{P}_{\pi}^{n},\, \mathbb{P}_{\tilde{\pi}}^{n}\right) \\
    &\le
    \mathrm{TV}(\tilde{\pi}, \pi)
    +
    \frac{1}{2}\sqrt{
        \chi^{2}\!\left( \mathbb{P}_{\pi}^{n} \,\|\, \mathbb{P}_{\theta_*}^{n} \right)
    } \\
    &\le c,
\end{aligned}
\end{equation}
and the lower bound can be established by \Cref{lem: power inequality} and the properties of $\varphi$.

In the following, we focus on the construction of $\pi$.
We first assume that
$\nu_1 \ge C_4 |\xi_1|$ for some sufficiently large constant $C_4>0$.
The complementary case $\nu_1 < C_4 |\xi_1|$ is technically simpler;
we therefore defer its analysis to the end of this section.

\textit{Step 1: Construct the least favorable prior.}\quad
We define \(\pi_3\) to be the probability measure of $\alphab \in \bbR^p$ constructed as follows.
Let $k_\xi=\|\xi\|_0$ and let $\alphab_j = 0$ for all $j \notin [k_\xi]$.
For each $j \in [k_\xi]$, we specify the coordinates independently as
$$
\begin{aligned}
\alphab_j &= \frac{c_5}{\sqrt{n}}\, b^{(1)}_j \gamma^{(1)}_j,
\end{aligned}
$$
where $b^{(1)}_j={\rm Ber}(q^{(1)}_j)$,
$$q^{(1)}_j=c_4\cdot\frac{ |\xi_j|\exp(-\lambda^2/\xi_j^2)}{\sqrt{\sum_{i=1}^p \xi_i^2\exp(-\lambda^2/\xi_i^2)}},\quad\text{and}\quad \gamma^{(1)}_j=\left\{\begin{array}{lc}
   {\rm sign}(\xi_j)& j\le  j_1,\\
   {\lambda}/{\xi_j}& j> j_1,
\end{array}\right.$$
where $c_4,c_5 > 0$ are some constants to be specified later, $\lambda$ is defined in \eqref{eq: equation sol}, and $j_1=\max\left\{j\in [p]:|\xi_j|\ge  \lambda\right\}$.

Given $\alphab$, we construct the covariance matrix as
\begin{equation}\label{eq: null point mat 2}
    \Sigma^z=
    \left(
    \begin{array}{c|c}
        \sigma_*^2 & \kappa\alphab^\top \\ \hline
        \kappa\alphab& \bfI_{p\times p}
    \end{array}
    \right)\stackrel{\triangle}{=}g_2(\alphab),
\end{equation}
where $\kappa$ is a value to be chosen so that either $\Sigma^z$ is the identity or the above
matrix corresponds to a parameter in the transformed null space.
Concretely, for any $\tau>0$, define
\[
\mathcal{G}_\tau=\{ \alphab\in \mathbb{R}^p : \xi^\top \alphab \ge  \tau, ~\|\alphab\|_0\le  k_u/2,
~\|\alphab\|_2^2\le  (\xi^\top \alphab)^2\sigma_*^2/(2\tau^2)\}.
\]
Given the value of $\tau>0$, we set
\begin{equation}\label{eq: identity cov kappa choice}
  \kappa=  \begin{cases}
        \frac{\tau}{\xi^\top \alphab}& \quad \alphab\in \mathcal{G}_\tau, \\
        0 & \quad \text{otherwise}.
    \end{cases}
\end{equation}
Then $\kappa\in [0,1]$.
Furthermore, when $\pi_3(\mathcal{G}_\tau)>0$, denote the restriction of $\pi_3$ on $\mathcal{G}_\tau$ by $\pi_3(\cdot\mid G_\tau)$.

Define
$\pi_4$ and $\widetilde{\pi}_4$ as the pushforward of $\pi_3$ and $\pi_3(\cdot\mid G_\tau)$, respectively, under the map \(h\circ g_2\).

The next lemma states that for the case $\nu_1 \ge C_4 |\xi_1|$, there exists a choice of the constants \(c_4,c_5>0\) and a corresponding \(c_6\) such that $\mathcal{G}_\tau$ holds with a probability close to 1 and $\widetilde{\pi}_4$ is supported on the transformed null space \(\Theta(k_u/2;\xi,c_6\nu_1/\sqrt{n})\).
\begin{lemma}\label{lem: valid covariance 2}
Consider $\alphab\sim \pi_3$.
For any constants $c>0$ and $c_5\in (0,1)$, one can choose the constants $c_4$ sufficiently small, $C_4$ sufficiently large, and $c_6$ such that for \(\tau=c_6\nu_1/\sqrt n\), if $\nu_1\ge  C_4|\xi_1|$ and $k_u\ge  C_4$, then
\[
\pi_3(G_\tau)\ge 1-c/2,
\]
and $\widetilde{\pi}_4$ is supported on \(\Theta(k_u/2;\xi,\tau)\).

\end{lemma}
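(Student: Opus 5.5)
The plan is to verify the three defining conditions of $\mathcal G_\tau$ separately under $\alphab\sim\pi_3$, each with probability at least $1-c/6$. Then check that on $\mathcal G_\tau$ the induced parameter $h(g_2(\alphab))$ lies in $\Theta(k_u/2;\xi,\tau)$.

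\emph{Support of the prior.} On $\mathcal G_\tau$ we have $\kappa=\tau/\xi^\top\alphab\in(0,1]$. The map $h$ gives $\beta=\kappa\alphab$, $\Sigma=\mathbf I_p$ and $\sigma^2=\sigma_*^2-\kappa^2\|\alphab\|_2^2$. Hence $\xi^\top\beta=\tau$ and $\|\beta\|_0\le k_u/2$. The third condition in $\mathcal G_\tau$ gives $\kappa^2\|\alphab\|_2^2\le\sigma_*^2/2$, so $\sigma^2\in[\sigma_*^2/2,\sigma_*^2]$. With $\sigma_*=M_2/2$ this means $\sigma\le M_2$, and positive-definiteness of $\Sigma^z$ follows. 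So $\widetilde\pi_4$ is supported on the transformed null space, and the remaining work is the probability bound.

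\emph{Moment computations.} Write $W=\sqrt{\sum_i\xi_i^2e^{-\lambda^2/\xi_i^2}}$ and $q_j=c_4|\xi_j|e^{-\lambda^2/\xi_j^2}/W$. Three ingredients are needed.
\begin{itemize}
\item[(a)] Sparsity. $\mathbb E\|\alphab\|_0=\sum_jq_j=c_4\sum_j|\xi_j|e^{-\lambda^2/\xi_j^2}/W$. When $\zeta>0$ this equals $c_4k_u/2$ by \eqref{eq: equation sol}. When $\zeta\le0$ (so $\lambda=0$) it is at most $c_4k_u/2$, because the left side of \eqref{eq: equation sol} is decreasing. Since the $b_j$ are independent Bernoullis and $k_u\ge C_4$ is large, Chernoff or Chebyshev gives $\|\alphab\|_0\le k_u/2$ with high probability once $c_4$ is small.
\item[(b)] Mean of the functional. $\xi_j\gamma_j=|\xi_j|$ for $j\le j_1$ and $\xi_j\gamma_j=\lambda$ for $j>j_1$, so $\xi_j\gamma_j=\min(|\xi_j|,\lambda)$ for $j\le j_1$ up to the sign convention, and in every case $\xi_j\gamma_j\ge0$. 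Therefore
\[
\sqrt n\,\mathbb E\,\xi^\top\alphab=c_5\sum_jq_j\,\xi_j\gamma_j=\frac{c_4c_5}{W}\Big(\sum_{j\le j_1}\xi_j^2e^{-\lambda^2/\xi_j^2}+\lambda\sum_{j>j_1}|\xi_j|e^{-\lambda^2/\xi_j^2}\Big).
\]
For $j\le j_1$ we have $\xi_j^2\ge\lambda|\xi_j|$, so this is at least $c_4c_5\lambda\cdot(k_u/2)$ by \eqref{eq: equation sol}. Separately, for $j>j_1$ we have $\lambda\ge|\xi_j|$, so the bracket is at least $W^2$ and the expression is at least $c_4c_5W$. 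Thus $\sqrt n\,\mathbb E\,\xi^\top\alphab\gtrsim c_4c_5(\lambda k_u+W)\asymp c_4c_5\nu_1$. The case $\lambda=0$ is handled directly, since then the bracket is $\sum_j\xi_j^2=W^2$.
\item[(c)] Variance. $n\Var(\xi^\top\alphab)\le c_5^2\sum_jq_j(\xi_j\gamma_j)^2\le c_5^2\max_j|\xi_j\gamma_j|\cdot\sum_jq_j|\xi_j\gamma_j|\le c_5^2|\xi_1|\cdot\sqrt n\,\mathbb E\,\xi^\top\alphab/c_5$. With $\mu:=\sqrt n\,\mathbb E\,\xi^\top\alphab\gtrsim\nu_1\ge C_4|\xi_1|$, the standard deviation is at most $\sqrt{c_5|\xi_1|\mu}\le\mu/\sqrt{C_4'}$.
\end{itemize}

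\emph{Concentration and choice of $\tau$.} By Chebyshev, $\xi^\top\alphab\ge\mu/(2\sqrt n)$ with probability at least $1-c/6$ once $C_4$ is large. Choose $c_6$ small enough that $\tau=c_6\nu_1/\sqrt n\le\mu/(2\sqrt n)$; this gives the first condition of $\mathcal G_\tau$. For the third condition, $\mathbb E\,n\|\alphab\|_2^2=c_5^2\sum_jq_j\gamma_j^2$. For $j>j_1$ each term is $q_j\lambda^2/\xi_j^2$, which is harder to control because $\gamma_j$ can be large. I would bound $\gamma_j^2\le\lambda^2/\xi_j^2$ and use $e^{-x}x\lesssim1$ to get $q_j\lambda^2/\xi_j^2\lesssim c_4|\xi_j|\cdot(\text{bounded})/W$, and compare the resulting sum with $\sum q_j\xi_j\gamma_j/\lambda$. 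The target is $n\|\alphab\|_2^2\lesssim c_5^2(\text{sum of }q_j)\lesssim c_5^2k_u$ in expectation, plus Markov. The requirement $\|\alphab\|_2^2\le(\xi^\top\alphab)^2\sigma_*^2/(2\tau^2)$ then holds because $(\xi^\top\alphab)/\tau\ge\mu/(\sqrt n\,\tau)$ is large when $c_6$ is small. The term $\lambda k_u$ in $\nu_1$ dominates $\sqrt{k_u}$ times the typical $\gamma$, which closes the estimate.

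I expect this last step, controlling $\|\alphab\|_2$ against $(\xi^\top\alphab/\tau)^2$ when the $\gamma_j=\lambda/\xi_j$ are unbounded, to be the main obstacle. If needed, I would enlarge the ratio by taking $c_6$ smaller, since this condition only requires $\|\alphab\|_2^2\lesssim(\mu/c_6\nu_1)^2$.
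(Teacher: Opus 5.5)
Your verification of the support of $\widetilde\pi_4$, the sparsity bound, and the lower bound on $\mathbb{E}\,\xi^\top\alphab$ agrees with the paper. The gap is the third condition of $\mathcal G_\tau$: it is left open, and the route you sketch cannot close it.

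\emph{The proposed bound fails.} Applying $xe^{-x}\lesssim 1$ with $x=\lambda^2/\xi_j^2$ throws away the factor $e^{-\lambda^2/\xi_j^2}$. What remains is $c_4\sum_{j>j_1}|\xi_j|/W$, which \eqref{eq: equation sol} does not control. The comparison with $\sum_j q_j\xi_j\gamma_j/\lambda$ also runs the wrong way, since for $j>j_1$ one has $\gamma_j^2=\lambda^2/\xi_j^2\ge 1=\xi_j\gamma_j/\lambda$.

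\emph{The target is false.} Take a flat loading $|\xi_j|=a$ for $j\le K$, with $K\gg k_u^2$. Then $\lambda^2=a^2\log(4K/k_u^2)>a^2$, $j_1=0$, $W=ak_u/2$ and $q_j=c_4k_u/(2K)$. Hence $\mathbb{E}\,n\|\alphab\|_2^2=\tfrac12 c_4c_5^2k_u\log(4K/k_u^2)$, which is of order $k_u\log p$ when $K\asymp p$, not $c_5^2k_u$. Your residual sum is of order $c_4K/k_u\gg k_u$.

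\emph{What the condition really requires.} The third condition is invariant under rescaling $\xi$, and $\xi^\top\alphab/\tau$ is of constant order (between roughly $c_4c_5/c_6$ up to constants). So the condition amounts to $\|\alphab\|_2^2=O(1)$, i.e.\ $\sum_j q_j\gamma_j^2\lesssim n$. Nothing about $\nu_1$ or the term $\lambda k_u$ helps with this.

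Two ingredients are missing.
\begin{enumerate}
\item The inequality $\sum_j q_j^2e^{\gamma_j^2}\le e c_4^2$. For $j>j_1$, the factor $e^{\gamma_j^2}$ cancels exactly one copy of $e^{-\lambda^2/\xi_j^2}$ in $q_j^2$; for $j\le j_1$, $\gamma_j^2=1$. This gives $\gamma_j^2\le\log(ec_4^2/q_j^2)$. Set $Q=\sum_jq_j\in[c_4,c_4k_u/2]$. Concavity of $x\mapsto x\log(ec_4^2/x^2)$ and Jensen then give $\sum_jq_j\gamma_j^2\le Q\log(ec_4^2k_\xi^2/Q^2)\lesssim c_4k_u\log p$.
\item The sample-size condition $n\gtrsim k_u\log p$ inherited from \Cref{thm: hypothesis}. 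You never invoke it, and without it the flat example violates the third condition for any fixed constants.
\end{enumerate}
With both, $\mathbb{E}\|\alphab\|_2^2\lesssim c_4c_5^2k_u\log p/n$, and Markov gives $\|\alphab\|_2^2\le\sigma_*^2/2$ with probability at least $1-c/6$ for $c_4$ small. Shrinking $c_6$, as you suggest, is a valid way to finish only once this $O(1)$ bound is in hand.

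\emph{A smaller error in the variance step.} In fact $\xi_j\gamma_j=\max(|\xi_j|,\lambda)$, not $\min$. So $\max_j\xi_j\gamma_j=\max(|\xi_1|,\lambda)$, and your bound with $|\xi_1|$ fails whenever $\lambda>|\xi_1|$. That is exactly the dense regime: in the flat example $\xi_j\gamma_j=\lambda>a$ for every $j$. The fix is to add $\lambda\le\nu_1/k_u\le\nu_1/C_4$ alongside $|\xi_1|\le\nu_1/C_4$. The Chebyshev failure probability is then $O\bigl(\max(|\xi_1|,\lambda)/(c_4\nu_1)\bigr)=O\bigl(1/(c_4C_4)\bigr)$. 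This is where the hypothesis $k_u\ge C_4$ enters the functional step, not only the sparsity step, and it forces $C_4$ to be chosen after $c_4$.
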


\textit{Step 2: Control the $\chi^2$-divergence.}\quad
The null prior is given by $\widetilde{\pi}_4$.
Since total variation distance cannot
increase under a measurable pushforward, we have
$$
\mathrm{TV}(\widetilde{\pi}_4, \pi_4) \le   \mathrm{TV}\left(\pi_3, \pi_3(\cdot\mid \mathbf{G}_\tau) \right) = 1 - \pi_3(\mathbf{G}_\tau) \le c/2,
$$
where the last inequality follows from \Cref{lem: valid covariance 2}.
By \Cref{eq: restriction TV}, it remains to show that the $\chi^2$-divergence between the mixture distribution induced by $\pi_4$ and that of $\theta_*$ is small. Similar to \Cref{lem: chi square integral 1}, we have the following result.
\begin{lemma}\label{lem: chi square integral 2}
    For any pair of $\alphab$ and $\tilde{\alphab}$ sampled independently from ${\pi}_3$, let $\theta = h(g_2( \alphab))$ and $\tilde{\theta} = h(g_2( \tilde{\alphab}))$. If $c_4$ is chosen sufficiently small, there exists some constant $c_7>0$ only depending on $\sigma_*$ such that
    \begin{equation*}
        \bbE_{\theta_*}\left(\frac{\rmd \bbP^n_{\pi_4}}{\rmd \bbP^n_{\theta_*}}-1\right)^2
    \le  \bbE_{(\alphab,\tilde{\alphab})\sim \pi_3\otimes \pi_3} \exp\left(c_7 n\alphab^\top \tilde{\alphab}\right)-1.
    \end{equation*}
\end{lemma}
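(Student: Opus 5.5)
The plan is to compute the $\chi^2$ integral explicitly through Gaussian covariance algebra, in the same way as \Cref{lem: chi square integral 1}. Here $\theta_*$ has $\Sigma_*^z=\operatorname{diag}(\sigma_*^2,\bfI_p)$. Each $\theta=h(g_2(\alphab))$ has $\Sigma^z=\Sigma_*^z+\Delta(\alphab)$, where
\[
\Delta(\alphab)=\begin{pmatrix}0&\kappa\alphab^\top\\ \kappa\alphab&\mathbf 0\end{pmatrix}.
\]
Write $\mathbb P_\theta^n=\mathcal N(0,\Sigma^z)^{\otimes n}$. For centered Gaussians we have the standard identity
\[
\int\frac{{\rm d}\mathbb P^n_\theta\,{\rm d}\mathbb P^n_{\tilde\theta}}{{\rm d}\mathbb P^n_{\theta_*}}
=\det\!\bigl(\bfI-(\Sigma_*^z)^{-1}\Delta\,(\Sigma_*^z)^{-1}\tilde\Delta\bigr)^{-n/2}\cdot R,
\]
where $R$ is an explicit correction. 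Because the null prior has zero mean, the cleanest route is to use the exact formula
\[
\int \frac{\phi_{\Sigma_1}\phi_{\Sigma_2}}{\phi_{\Sigma_*}}
=\Bigl[\det(\Sigma_*)\det(\Sigma_1^{-1}+\Sigma_2^{-1}-\Sigma_*^{-1})^{-1}\det(\Sigma_1)^{-1}\det(\Sigma_2)^{-1}\Bigr]^{1/2}.
\]
This is valid provided $\Sigma_1^{-1}+\Sigma_2^{-1}-\Sigma_*^{-1}\succ0$. We then raise it to the $n$th power and insert it into \eqref{eq: chi square decomp}, taking the expectation over $\pi_3\otimes\pi_3$.

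Next, I would evaluate these determinants using the arrow structure. Set $a=\kappa\alphab$ and $\tilde a=\tilde\kappa\tilde\alphab$. The Schur complement gives $\det\Sigma^z=\sigma_*^2-\|a\|_2^2$. Since $\Sigma^z$ is a rank-two perturbation, its inverse has a closed form. Computing $\Sigma_1^{-1}+\Sigma_2^{-1}-\Sigma_*^{-1}$ reduces to a determinant of a small matrix in the variables $\|a\|^2$, $\|\tilde a\|^2$, and $a^\top\tilde a$. I expect the combined ratio to simplify exactly to
\[
\Bigl(1-\frac{(a^\top\tilde a)^2}{\sigma_*^4}\cdots\Bigr)^{-n/2}
\quad\text{or}\quad
\Bigl(1-\frac{a^\top\tilde a}{\sigma_*^2}\Bigr)^{-n},
\]
that is, a function of $a^\top\tilde a/\sigma_*^2$ alone. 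This parallels the familiar fact that for $\beta$-perturbations with identity design the integral equals $(1-\beta^\top\tilde\beta/\sigma^2)^{-n}$, since here $\beta=a$ and the noise variance is $\sigma_*^2-\|a\|^2$.

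Having reached that point, I would bound $-\log(1-x)\le 2x$ for $|x|\le1/2$, with $x=\kappa\tilde\kappa\,\alphab^\top\tilde\alphab/\sigma_*^2$. This yields a factor of $\exp(2n\kappa\tilde\kappa\,\alphab^\top\tilde\alphab/\sigma_*^2)$. The requirement $|x|\le 1/2$ holds deterministically. Every coordinate satisfies $|\alphab_j|\le c_5|\gamma_j|/\sqrt n$, so $|\alphab_j|\le c_5/\sqrt n$ for $j\le j_1$ and $|\alphab_j|\le c_5\lambda/(|\xi_j|\sqrt n)\le c_5/\sqrt n$ for $j>j_1$. In addition, $\kappa\|\alphab\|_2\le\sigma_*/\sqrt2$ on $\mathcal G_\tau$, with $\kappa=0$ otherwise, and Cauchy--Schwarz gives $|a^\top\tilde a|\le\sigma_*^2/2$. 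This bound also gives positive definiteness.

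The main obstacle is the sign issue in replacing $\kappa\tilde\kappa\,\alphab^\top\tilde\alphab$ by $\alphab^\top\tilde\alphab$. Since $\kappa,\tilde\kappa\in[0,1]$, this step is harmless whenever $\alphab^\top\tilde\alphab\ge0$. All nonzero entries of $\alphab_j$ share the sign of $\xi_j$, because $\gamma_j$ has sign $\operatorname{sign}(\xi_j)$ in both cases. Hence $\alphab^\top\tilde\alphab=\sum_j\alphab_j\tilde\alphab_j\ge0$ termwise, and therefore $\exp(2n\kappa\tilde\kappa\,\alphab^\top\tilde\alphab/\sigma_*^2)\le\exp(c_7n\,\alphab^\top\tilde\alphab)$ with $c_7=2/\sigma_*^2$. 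Taking expectations and subtracting $1$ then yields the lemma. The role of ``$c_4$ sufficiently small'' is only to make $q_j^{(1)}\le1$, so that the prior is well defined. If the exact determinant simplification fails, the fallback is to expand $\log\det$ to second order and absorb the $\|a\|^4$ terms. These cancel between the numerator and the denominator, and the cancellation is what I expect to check most carefully.
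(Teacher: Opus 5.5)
Your route is the paper's route. You apply the centered-Gaussian second-moment identity to the arrow matrices $g_2(\alphab)$ and use the defining inequality of $\mathcal G_\tau$ to get $\|a\|_2\le\sigma_*/\sqrt2$. This gives $x:=a^\top\tilde a/\sigma_*^2=\kappa\tilde\kappa\,\alphab^\top\tilde\alphab/\sigma_*^2\le 1/2$. Then $(1-x)^{-1}\le e^{2x}$ and $\kappa\tilde\kappa\le1$ give $c_7=2/\sigma_*^2$. Your termwise sign argument is a useful addition: $\alphab_j\tilde\alphab_j\ge0$ because $\gamma^{(1)}_j$ always has the sign of $\xi_j$. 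This is exactly what justifies both the exponential inequality and dropping $\kappa\tilde\kappa$, and the paper uses it only implicitly.

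The gap is that the one substantive step, the exact value of the integral, is never computed. You offer two candidate forms and a fallback second-order expansion. It closes at once, because your first display is already exact with $R=1$. That display is \Cref{lem: Gaussian chi square}, and it is equivalent to your determinant formula via the identity $\Sigma_1(\Sigma_1^{-1}+\Sigma_2^{-1}-\Sigma_*^{-1})\Sigma_2=\Sigma_*-(\Sigma_1-\Sigma_*)\Sigma_*^{-1}(\Sigma_2-\Sigma_*)$. Since $(\Sigma_*^z)^{-1}\Delta(\alphab)$ has off-diagonal blocks $a^\top/\sigma_*^2$ and $a$,
\[
(\Sigma_*^z)^{-1}\Delta(\alphab)\,(\Sigma_*^z)^{-1}\Delta(\tilde\alphab)
=\begin{pmatrix} a^\top\tilde a/\sigma_*^2 & \bfz_{1\times p}\\ \bfz_{p\times 1} & a\tilde a^\top/\sigma_*^2\end{pmatrix}.
\]
By the matrix determinant lemma, $\det(\bfI-\cdot)=(1-x)^2$. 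So the $n$-sample integral is exactly $(1-x)^{-n}$, which is your second candidate, and no cancellation of $\|a\|^4$ terms needs checking.

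There are also two minor slips, neither load-bearing. First, for $j>j_1$ one has $|\xi_j|<\lambda$, so $|\gamma^{(1)}_j|=\lambda/|\xi_j|>1$, and your bound $|\alphab_j|\le c_5/\sqrt n$ is false there. Drop it; only the $\mathcal G_\tau$ bound is needed. Second, $-\log(1-x)\le 2x$ fails for negative $x$ (take $x=-1/2$). State it on $[0,1/2]$, which suffices once you have shown $x\ge0$.
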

For $(\alphab,\tilde{\alphab})\sim \pi_3^{\otimes 2}$, we have
$$
\forall j\in [k_\xi], \qquad \alphab_j\tilde{\alphab}_j=\left\{
    \begin{array}{cc}
        c_5^2(\gamma^{(1)}_j)^2/n & \text{w.p. } (q^{(1)}_j)^2,\\
        0 & \text{w.p. } 1-(q^{(1)}_j)^2,\\
    \end{array}
\right.$$
and $\alphab_j\tilde{\alphab}_j=0$ for all $j>k_\xi$. Therefore, we have
\begin{align*}
    \bbE_{(\alphab,\tilde{\alphab})\sim \pi_3\otimes \pi_3} \exp\left(c_7 n\alphab^\top \tilde{\alphab}\right)&=\prod_{j=1}^p \bbE_{(\alphab_j,\tilde{\alphab}_j)\sim \pi_3\otimes \pi_3} \exp\left(c_7 n\alphab_j \tilde{\alphab}_j\right)\\
    &=\prod_{j=1}^{k_\xi} \left[1+(q^{(1)}_j)^2\left(\exp\left(c_7 c_5^2(\gamma^{(1)}_j)^2\right)-1\right)\right]\\
    &\le  \exp\left(\sum_{j=1}^{k_\xi} (q^{(1)}_j)^2\left(\exp\left(c_7 c_5^2(\gamma^{(1)}_j)^2\right)-1\right)\right).
\end{align*}
By the definition of $q^{(1)}_j$ and $\gamma^{(1)}_j$, we have
\begin{align*}
    &\sum_{j=1}^{k_\xi} (q^{(1)}_j)^2\left(\exp\left(c_7c_5^2 (\gamma^{(1)}_j)^2\right)-1\right)\\
    \le  &c_4^2\frac{ \sum_{j\le  j_1} \xi_j^2\exp(-2\lambda^2/\xi_j^2)\left(\exp\left(c_7 c_5^2\right)-1\right)+\sum_{j>j_1}\xi_j^2\exp(-2\lambda^2/\xi_j^2)\left(\exp(c_7 c_5^2\lambda^2/\xi_j^2)-1\right)}{\sum_{i=1}^{k_\xi} \xi_i^2\exp(-\lambda^2/\xi_i^2)}.
\end{align*}
If we choose $c_5$ sufficiently small such that $c_7 c_5^2<\ln 2$, then we have
\begin{align*}
    \exp(-2\lambda^2/\xi_j^2)\left(\exp\left(c_7 c_5^2\right)-1\right)&\le  \exp(-\lambda^2/\xi_j^2),\quad \forall j\le  j_1,\\
    \exp(-2\lambda^2/\xi_j^2)\left(\exp\left(c_7 c_5^2\lambda^2/\xi_j^2\right)-1\right)&\le  \exp(-\lambda^2/\xi_j^2),\quad \forall j>j_1.
\end{align*}
Therefore, we have
$$\bbE_{(\alphab,\tilde{\alphab})\sim \pi_3\otimes \pi_3} \exp\left(c_7 n\alphab^\top \tilde{\alphab}\right)\le  \exp\left(c_4^2\right).$$
By choosing $c_4$ sufficiently small such that $c_4^2 \in (0, \ln(1+c^2))$, we have $\chi^{2}\!\left( \mathbb{P}_{\pi}^{n} \,\|\, \mathbb{P}_{\theta_*}^{n} \right)
\le c^{2}$.

\bigskip
In the remaining case where $\nu_1 < C_4 |\xi_1|$, we only need to prove that $\tau_{\mathrm{adap}}(k_u; \xi, t_0) \gtrsim |\xi_1|/\sqrt{n}$. In this case, we choose the point mass prior at $\theta_o$ with
$$\Sigma^z_o=\left(\begin{array}{c|c}
    \sigma_*^2& \frac{\kappa_0}{\sqrt{n}}\sign(\xi_1){\bf e}_1^\top \\ \hline
    \frac{\kappa_0}{\sqrt{n}} \sign(\xi_1){\bf e}_1& \bfI_{p\times p}
\end{array}\right),$$
where ${\bf e}_1$ is the first standard basis vector, i.e.,
$$\left({\bf e}_1\right)_j=\left\{
    \begin{array}{cc}
        1,&j=1\\
        0,&j\neq 1
    \end{array}
\right.,$$
and $\kappa_0>0$ is some constant specified later. It is easy to verify that the above matrix corresponds to a point in the transformed null space with $\tau=\kappa_0|\xi_1|/\sqrt{n}$. By an argument similar to that in \Cref{lem: chi square integral 2}, we have
$$\chi^2(\bbP_{\theta_o}^n\|\bbP_{\theta_*}^n) = \exp(\kappa_0^2)-1.$$
Therefore we finish the proof by choosing the constant $\kappa_0$ sufficiently small.
\end{proof}

\subsection{Proof of Theorem~\ref{thm: computational lower bound}}\label{sec: proof computational lower bound}
\begin{proof}
Without loss of generality we assume $8\le  2k_u<k_{\rm eff}$ and $$\sum_{k_u<j\le  k_{\rm eff}}\xi_j^2\ge  \sum_{j\le  k_u}\xi_j^2.$$
In the complementary case, the low-degree lower bound in \Cref{thm: computational lower bound} is an immediate consequence of the statistical lower bound in \Cref{thm: hypothesis}, so no additional proof is needed. Since \(D\lesssim p\) and the standing dimensional assumptions
\(\sqrt n/\log p\lesssim k_u \lesssim p^\gamma\) imply
\(\log n\lesssim \log p\), we have
\(
    \log(6npD)\lesssim \log p .
\)
Consequently,
\(
    D\log(6npD)\lesssim D\log p .
\)
On the other hand, by the definition of \(k_{\mathrm{eff}}\),
\[
    k_{\mathrm{eff}}
    \le
    \frac{k_u^2}{D\log p}.
\]
Therefore,
\[
    \frac{k_u^2}{k_{\mathrm{eff}}}
    \gtrsim
    D\log p
    \gtrsim
    D\log(6npD).
\]
Thus the exponent appearing in the Chernoff bound for the event
\(\mathcal C^c\) is at least of order \(D\log(6npD)\), which is sufficient for
the subsequent union bound over degree-\(D\) polynomial terms.

The proof is closely analogous to the argument in \Cref{sec: proof lower ultra sparse}. We again take the alternative point
\(
\theta_* = (\mathbf{0}_{p\times 1}, \mathbf{I}_{p\times p}, \sigma_*)
\)
together with the associated matrix $\Sigma_*^{z}$ defined in \eqref{eq: alter matrix}.
Our strategy is to construct a prior $\pi$ that places overwhelming probability mass on the set
\(
\Theta(\lfloor k_u/2\rfloor; \xi, \tau)
\)
with
\(
\tau = c\, \nu_3\, k_u \frac{\log p}{n}
\)
for a constant $c>0$ to be chosen later.
We then show that the low-degree likelihood ratio satisfies
\(
\mathrm{LD}(D) = 1 + o(1)
\)
for
\(
\mathbb{Q}_1 = \mathbb{P}_{\pi}^{n}
\) and \(
\mathbb{Q}_0 = \mathbb{P}_{\theta_*}^{n}.
\)
The conclusion of the theorem then follows immediately from \Cref{prop:low-degree-hardness}.

\textit{Step 1: Construct the least favorable prior.}\quad
Let $S_3 = [ k_{\rm eff}] $, $S_4 = [p] \setminus [k_{\rm eff}]$, and $S_5=[k_{\rm eff}]\setminus [k_u]$. Denote by $p_3$, $p_4$, and $p_5$ the sizes of $S_3$, $S_4$, and $S_5$, respectively; that is, $p_3 = k_{\rm eff}$, $p_4=p-k_{\rm eff}$, and $p_5=k_{\rm eff}-k_u$.
Let $I$ be chosen randomly and uniformly from all subsets of $[p_4]$ with size $\lfloor k_u/4\rfloor$.
We consider the random vectors $\alphab_1\in \bbR^{p_4},\alphab_2\in \bbR^{p_3}$ defined as
\begin{equation}\label{eq: moderate sparse delta 2}
  \left\{  \begin{aligned}
        (\alphab_1)_j&=c_8\sign(\xi_{p_3+j})\sqrt{\frac{\log p}{n}}\ind\left\{j\in I\right\},\quad \forall j\in [p_4], \\
        (\alphab_2)_j&=-\frac{\sqrt{p_5}}{k_u}\sign(\xi_j) b^{(2)}_j,\quad \forall j\in S_3,
    \end{aligned}\right.
\end{equation}
for a small enough constant $c_8>0$ to be specified later, where $b^{(2)}_j$ are independent Bernoulli variables with parameters $$q^{(2)}_j=\frac{|\xi_j|}{8\sqrt{\sum_{i\in S_5}\xi_i^2}}\cdot \frac{k_u}{\sqrt{p_5}}\ind\left\{j\in S_5\right\},\quad \forall j\in S_3.$$
Specifically, we have $\sum_{i\in S_5}\xi_i^2\ge  \sum_{i\le  k_u}\xi_i^2\ge  k_u\xi_{k_u}^2$ and $p_5>k_u$ by our assumption.
Therefore, we have $$q^{(2)}_j\le  \frac{1}{8}\sqrt{\frac{k_u}{p_5}}< \frac{1}{8}, \qquad \forall j\in S_5$$
and $q^{(2)}_j=0$ for all $j\in S_3\setminus S_5$. This verifies that the above construction of $q_j^{(2)}$ is well-defined.

Given a pair $(\alphab_1, \alphab_2)$, we can construct the following corresponding covariance matrix:
\begin{equation}\label{eq: null point mat 3}
    \Sigma^z=
    \left(
    \begin{array}{c|c|c}
        \sigma_*^2 & \bfz_{1\times p_3}&\kappa\alphab_1^\top \\ \hline
        \bfz_{p_3\times 1}&
        \bfI_{p_3\times p_3} & \alphab_2\alphab_1^\top \\
            \hline
        \kappa\alphab_1&  \alphab_1\alphab_2^\top & \bfI_{p_4\times p_4}
    \end{array}
    \right)\stackrel{\triangle}{=}g_3(\alphab_1,\alphab_2),
\end{equation}
where $\kappa = \kappa_3(\alphab_1,\alphab_2) \in \mathbb{R}$ is defined such that the above matrix corresponds to a point in the transformed null space, i.e., $h(\Sigma^z) \in \Theta(k_u/2; \xi, \tau)$ for some nonzero $\tau$ specified later. We additionally require $\kappa$ to be bounded by $1$. If no such value exists, we set $\kappa = 0$. This definition is well-posed since the null space imposes a linear constraint on $\beta$, and whenever such a $\kappa$ exists, it is uniquely determined.

Denote by $\pi_5$ the joint distribution of $(\alphab_1,\alphab_2)$ defined in \eqref{eq: moderate sparse delta 2} and $\pi_6$ the induced prior of $\pi_5$ under the mapping $h\circ g_3$.
The following lemma shows that $\pi_6$ is mostly supported on the null space.
\begin{lemma}\label{lem: valid covariance 3}
If we choose $c_8$ sufficiently small, then there exists a constant $c_9>0$ such that
$\pi_6$ assigns probability $1-o(1)$ to the transformed null space $\Theta(\lfloor k_u/2\rfloor;\xi,c_ 9\nu_3\frac{k_u\log p}{n})$ and $\kappa_3\in (0,1]$.
\end{lemma}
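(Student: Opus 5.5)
The plan is to show that on a single event of $\pi_5$-probability $1-o(1)$ the matrix $g_3(\alphab_1,\alphab_2)$ is a valid joint covariance, and that $h(g_3(\alphab_1,\alphab_2))$ satisfies the sparsity bound, the eigenvalue bounds, the noise bound, and the translated null constraint $\xi^\top\beta=\tau$ for some $\kappa\in(0,1]$, with $\tau=c_9\nu_3k_u\log p/n$. Write $a=\alphab_2\in\bbR^{p_3}$ (on $S_3$) and $b=\alphab_1\in\bbR^{p_4}$ (on $S_4$). Then
\[
\Sigma=\bfI_p+E,\qquad E=\begin{pmatrix}0&ab^\top\\ ba^\top&0\end{pmatrix}.
\]
The nonzero eigenvalues of $E$ are $\pm\|a\|_2\|b\|_2$, so the eigenvalue condition reduces to making $\|a\|_2\|b\|_2$ small.

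\emph{Step 1 (sizes of $a$, $b$ and sparsity).} By construction, $\|b\|_0=\lfloor k_u/4\rfloor$ and $\|b\|_2^2\asymp c_8^2k_u\log p/n$. For $a$, note that $|\supp(a)|=\sum_{j\in S_5}b^{(2)}_j$. By Cauchy--Schwarz,
\[
\mathbb E|\supp(a)|=\frac{k_u}{8\sqrt{p_5}}\,\frac{\sum_{S_5}|\xi_j|}{\sqrt{\sum_{S_5}\xi_j^2}}\le \frac{k_u}{8}.
\]
A Chernoff bound then gives $|\supp(a)|\le k_u/4$ with probability $1-o(1)$. On this event $\|a\|_2^2=(p_5/k_u^2)|\supp(a)|\lesssim p_5/k_u$, and hence
\[
\|a\|_2^2\|b\|_2^2\lesssim c_8^2\,\frac{p_5\log p}{n}\le c_8^2\,\frac{k_{\rm eff}\log p}{n}\le c_8^2 .
\]
Choosing $c_8$ small makes $\|E\|_2\le 1/2$, so $\Sigma$ satisfies the eigenvalue bounds in \eqref{eq: parameter space} for $M_1\ge2$.

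\emph{Step 2 (explicit $\beta$ and the functional).} Solve $\Sigma(x,y)^\top=(0,\kappa b)^\top$ directly. This gives $y=sb$ and $x=-s\|b\|_2^2a$ with $s=\kappa/(1-\|a\|_2^2\|b\|_2^2)\in[\kappa,2\kappa]$. Therefore $\supp(\beta)\subseteq\supp(a)\cup\supp(b)$, so $\|\beta\|_0\le k_u/2$, and
\[
\xi^\top\beta=s\Big(\xi_{S_4}^\top b+\|b\|_2^2\,\frac{\sqrt{p_5}}{k_u}\sum_{j\in S_5}|\xi_j|b^{(2)}_j\Big).
\]
Both terms are nonnegative, because of the sign choices in \eqref{eq: moderate sparse delta 2}.

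\emph{Step 3 (concentration of the key sum, the main obstacle).} Let $T=\frac{\sqrt{p_5}}{k_u}\sum_{S_5}|\xi_j|b_j^{(2)}$ and $H_5=(\sum_{S_5}\xi_j^2)^{1/2}$. Then $\mathbb ET=H_5/8$, and the standing assumption $\sum_{S_5}\xi_j^2\ge\sum_{j\le k_u}\xi_j^2$ gives $H_5\ge\nu_3/\sqrt2$. For the variance,
\[
\Var T\le\frac{p_5}{k_u^2}\sum_{S_5}\xi_j^2q^{(2)}_j\le\frac{\sqrt{p_5}}{8k_u}\,|\xi_{k_u}|\,H_5 .
\]
Using $|\xi_{k_u}|\le H(k_u;\xi)/\sqrt{k_u}\le H_5/\sqrt{k_u}$, this yields $\Var T/(\mathbb ET)^2\lesssim\sqrt{p_5}/k_u^{3/2}$. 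Since $p_5\le k_u^2/(D\log p)$, the ratio is at most $(k_uD\log p)^{-1/2}\to0$. Chebyshev's inequality then gives $T\ge H_5/16$ with probability $1-o(1)$. This is exactly where the choice of $k_{\rm eff}$ and the Bernoulli calibration are used.

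\emph{Step 4 (choice of $\kappa$ and noise level).} On the intersection of these events, $\xi^\top\beta\ge \kappa\|b\|_2^2H_5/16\ge c'\kappa\,c_8^2\,\nu_3k_u\log p/n$, and $\xi^\top\beta$ is continuous and increasing in $\kappa$. So for $c_9\le c'c_8^2$ there is a unique $\kappa\in(0,1]$ with $\xi^\top\beta=\tau$. Finally,
\[
\sigma^2=\sigma_*^2-\kappa^2\,b^\top(\Sigma^{-1})_{S_4S_4}b\ge\sigma_*^2-2\|b\|_2^2>0
\]
for large $n$, and $\sigma^2\le\sigma_*^2=M_2^2/4$. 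This positivity also gives $\Sigma^z\succ0$. Hence $h(\Sigma^z)\in\Theta(\lfloor k_u/2\rfloor;\xi,\tau)$ with probability $1-o(1)$, which proves the lemma.
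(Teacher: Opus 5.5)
Your proof is correct and follows essentially the same route as the paper: a Chernoff bound on $\|\alphab_2\|_0$, the eigenvalue bound $1\pm\|\alphab_1\|_2\|\alphab_2\|_2$, the explicit block inverse for $\beta$, and a Chebyshev bound on $-\xi_{S_3}^\top\alphab_2$ combined with $\sum_{S_5}\xi_j^2\ge\nu_3^2/2$, which yields $\kappa\in(0,1]$. Two small differences: your variance bound via $|\xi_{k_u}|\le H_5/\sqrt{k_u}$ is slightly sharper than the paper's (ratio $\sqrt{p_5}/k_u^{3/2}$ instead of $\sqrt{p_5}/k_u$), and positivity of $\sigma^2$ should be justified by taking $c_8$ small using $k_u\log p/n\lesssim 1$ rather than by ``large $n$'', since $\|\alphab_1\|_2^2$ need not vanish.
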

The proof is given in \Cref{sec: proof valid covariance 3}.

The low-degree comparison must be made with a prior supported on the null
space.
For the mapping
\[
    \theta(\alphab_1,\alphab_2)
    :=
    h\circ g_3(\alphab_1,\alphab_2),
\]
define the validity event
\[
    \mathcal A_n
    =
    \left\{
        (\alphab_1,\alphab_2):
        \theta(\alphab_1,\alphab_2)
        \in
        \Theta\left(k_u;\xi,c_9\nu_3\frac{k_u\log p}{n}\right)
        \ \text{and}\ 0<\kappa_3(\alphab_1,\alphab_2)\le 1
    \right\}.
\]
By \Cref{lem: valid covariance 3}, the exception probability rate $\varepsilon_n:=\pi_5(\mathcal A_n^c)$ satisfies $\varepsilon_n=o(1)$.
Let $\pi_7$ be the restricted probability measure of $\pi_5$ on $\mathcal A_n$, i.e., the conditional distribution of $(\alphab_1,\alphab_2)$ given
$\mathcal A_n$. Define
\[
    \pi_8
    :=
    \theta_{\#}\pi_7
    =
    (h\circ g_3)_{\#}\pi_7 .
\]
Then $\pi_8$ is supported on
\[
    \Theta\left(k_u;\xi,c_9\nu_3\frac{k_u\log p}{n}\right)
\]
and is the least favorable prior we aim to construct.

We also verify here that the same construction is supported on the sparse signed-spiked
covariance class used in \Cref{sec: np hard}.  On the validity event
\(\mathcal A_n\), the \(X\)-covariance block of \(g_3(\alphab_1,\alphab_2)\) is
\[
\Sigma(\alphab_1,\alphab_2)
=
\begin{pmatrix}
\mathbf I_{p_3} & \alphab_2\alphab_1^\top\\
\alphab_1\alphab_2^\top & \mathbf I_{p_4}
\end{pmatrix}
=
\mathbf I_p+
\begin{pmatrix}
0 & \alphab_2\alphab_1^\top\\
\alphab_1\alphab_2^\top & 0
\end{pmatrix}.
\]
The perturbation has rank at most two and is supported on
\(\supp(\alphab_2)\cup(k_{\rm eff}+\supp(\alphab_1))\).  The construction gives
\(\|\alphab_1\|_0=\lfloor k_u/4\rfloor\), and the proof of
\Cref{lem: valid covariance 3} gives \(\|\alphab_2\|_0\le k_u/4\) with
probability \(1-o(1)\).  Conditioning additionally on this latter event changes
the restricted prior only by a \(1+o(1)\) factor, by the same comparison as in
\eqref{eq:restriction-measure-comparison}.  Hence we may take the null prior
\(\pi_8\) to be supported on
\[
\Theta^{\mathrm{spike}}\!\left(k_u;\xi,c_9\nu_3\frac{k_u\log p}{n}\right).
\]
The alternative point \(\theta_*=(\mathbf 0,\mathbf I_p,\sigma_*)\) also has
covariance in \(\Pi_0(k,p)\).  Therefore the low-degree likelihood-ratio bound
proved below applies without change to the sparse signed-spiked null and alternative
spaces.

Moreover, for any nonnegative measurable function \(F\),
\begin{equation}\label{eq:restriction-measure-comparison}
\begin{aligned}
\mathbb E_{\pi_7^{\otimes2}}F
&=
\frac{
\mathbb E_{\pi_5^{\otimes2}}
\left[
F\,
\ind({\mathcal A_n})(\alphab_1,\alphab_2)
\ind({\mathcal A_n})(\tilde\alphab_1,\tilde\alphab_2)
\right]
}{(1-\varepsilon_n)^2} \\
&\le
\frac{1}{(1-\varepsilon_n)^2}
\mathbb E_{\pi_5^{\otimes2}}F
=
(1+o(1))\mathbb E_{\pi_5^{\otimes2}}F .
\end{aligned}
\end{equation}
This comparison will be used below to evaluate bounds under the simpler
unrestricted latent prior \(\pi_5\).

\textit{Step 2: Control $\mathrm{LD}(D)$.} \quad Set
\[
    \mathbb Q_0=\mathbb P^n_{\theta_*},
    \qquad
    \mathbb Q_1=\mathbb P^n_{\pi_8}
    =
    \int \mathbb P^n_{\theta(\alphab_1,\alphab_2)}
        \,\pi_7(d\alphab_1,d\alphab_2).
\]
For
\[
    L_\theta=\frac{d\mathbb P^n_\theta}{d\mathbb P^n_{\theta_*}},
\]
the likelihood ratio of $\mathbb Q_1$ with respect to $\mathbb Q_0$ is
\[
    L
    =
    \frac{d\mathbb Q_1}{d\mathbb Q_0}
    =
    \mathbb E_{(\alphab_1,\alphab_2)\sim\pi_7}
    L_{\theta(\alphab_1,\alphab_2)} .
\]
By linearity of the orthogonal projection onto polynomials of degree at most
$D$,
\[
    L^{\le D}
    =
    \mathbb E_{(\alphab_1,\alphab_2)\sim\pi_7}
    L_{\theta(\alphab_1,\alphab_2)}^{\le D}.
\]
Therefore,
\[
\begin{aligned}
    \mathrm{LD}(D)
    &=
    \|L^{\le D}\|_{L^2(\mathbb Q_0)}^2                                      \\
    &=
    \mathbb E_{(\alphab_1,\alphab_2),(\tilde\alphab_1,\tilde\alphab_2)
        \sim\pi_7^{\otimes 2}}
    \mathbb E_{\theta_*}
    \left[
        L_{\theta(\alphab_1,\alphab_2)}^{\le D}
        L_{\theta(\tilde\alphab_1,\tilde\alphab_2)}^{\le D}
    \right].
\end{aligned}
\]

The expectation above is with respect to the restricted latent prior
\(\pi_7^{\otimes2}\).  Hence both parameter pairs lie in the validity event
\(\mathcal A_n\), so the Gaussian covariance matrices are well defined and
\Cref{lem: low degree integral} applies. The following lemma provides some properties for the $\bbE_{\theta_*}(L_\theta^{\le D}L_{\tilde{\theta}}^{\le D})$.

\begin{lemma}\label{lem: low degree integral}
Let \((\alphab_1,\alphab_2)\) and
\((\tilde{\alphab}_1,\tilde{\alphab}_2)\) belong to the validity event
\(\mathcal A_n\), and define
\[
    \theta=h(g_3(\alphab_1,\alphab_2)),
    \qquad
    \tilde\theta=h(g_3(\tilde{\alphab}_1,\tilde{\alphab}_2)).
\]
Then
\[
    \bbE_{\theta_*}\!\left(
        L_\theta^{\le D}L_{\tilde\theta}^{\le D}
    \right)
    \le
    \bbE_{\theta_*}\!\left(
        L_\theta L_{\tilde\theta}
    \right),
\]
and
\[
    \bbE_{\theta_*}\!
        \left[L_\theta^{\le D}\right]^2
    \le
    9(6npD)^{4D}.
\]
\end{lemma}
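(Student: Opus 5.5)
The plan is to work in the Hermite basis of $L^2(\mathbb P^n_{\theta_*})$. Under $\theta_*$ the $n$ rows $Z_i$ are i.i.d.\ $\mathcal N(0,\Sigma_*^z)$ with $\Sigma_*^z=\mathrm{diag}(\sigma_*^2,\mathbf I_p)$. After rescaling the $y$-coordinate by $\sigma_*^{-1}$, the whitened data $W\in\mathbb R^{n(p+1)}$ is standard Gaussian. Under $\theta=h(g_3(\alphab_1,\alphab_2))$ the rows become i.i.d.\ $\mathcal N(0,\mathbf I_{p+1}+\Delta)$, where $\Delta$ is the whitened version of $g_3(\alphab_1,\alphab_2)-\Sigma_*^z$. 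Its only nonzero entries are:
\begin{itemize}
\item the $(y,x)$ entries $\kappa\alphab_1/\sigma_*$;
\item the cross-block entries $\alphab_2\alphab_1^\top$ and their transposes.
\end{itemize}
The $(y,y)$ entry vanishes because $\Sigma^z_{yy}=\sigma_*^2$ in \eqref{eq: null point mat 3}. Write $\{h_{\mathbf a}\}$ for the normalized multivariate Hermite polynomials indexed by multi-indices $\mathbf a\in\mathbb N^{n(p+1)}$. Then
\[
L_\theta=\sum_{\mathbf a} c_{\mathbf a}(\theta)h_{\mathbf a},\qquad c_{\mathbf a}(\theta)=\mathbb E_\theta h_{\mathbf a}(W),
\]
and $L_\theta^{\le D}$ is the truncation to $|\mathbf a|\le D$. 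Consequently
\[
\mathbb E_{\theta_*}\bigl[L_\theta^{\le D}L_{\tilde\theta}^{\le D}\bigr]=\sum_{|\mathbf a|\le D}c_{\mathbf a}(\theta)c_{\mathbf a}(\tilde\theta),
\]
whereas $\mathbb E_{\theta_*}[L_\theta L_{\tilde\theta}]$ is the same sum taken over all $\mathbf a$.

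The first claim therefore reduces to showing $c_{\mathbf a}(\theta)c_{\mathbf a}(\tilde\theta)\ge 0$ for every $\mathbf a$. For this I would compute the coefficients from the generating function. For a single row, $\mathbb E_\theta\exp(t^\top w-\|t\|^2/2)=\exp(t^\top\Delta t/2)$. Expanding both sides shows that $\sqrt{\mathbf a!}\,c_{\mathbf a}$ factorizes over the $n$ rows. Within each row it equals a sum, over perfect matchings of the multiset in which coordinate $j$ appears $a_j$ times, of products of the matched entries $\Delta_{jk}$. There are no self-pairs because the diagonal of $\Delta$ is zero.

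The key step is a common sign structure. Define $s\in\{\pm1\}^{p+1}$ by
\[
s_y=1,\qquad s_j=-\mathrm{sign}(\xi_j)\ \ (j\in S_3),\qquad s_{p_3+j}=\mathrm{sign}(\xi_{p_3+j})\ \ (j\in[p_4]).
\]
By \eqref{eq: moderate sparse delta 2} and $\kappa\in(0,1]$ on $\mathcal A_n$, every entry satisfies $\Delta_{jk}=s_js_k|\Delta_{jk}|$, and the same $s$ serves for $\tilde\Delta$. Hence each matching contributes $\prod_j s_j^{a_j}$ times a nonnegative number, so $\mathrm{sign}\,c_{\mathbf a}(\theta)=\mathrm{sign}\,c_{\mathbf a}(\tilde\theta)$ whenever both are nonzero. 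This yields the first inequality. It extends Proposition~3.6 of \cite{bandeira2022franz}, whose nonnegativity assumption is replaced by this shared-sign alignment.

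For the second claim I would bound $\sum_{|\mathbf a|\le D}c_{\mathbf a}(\theta)^2$ crudely:
\begin{itemize}
\item On $\mathcal A_n$ all entries of $\Delta$ are bounded by an absolute constant, at most $1$ after adjusting $\sigma_*$ and using $c_8$ small. This uses $\|\alphab_2\|_\infty\le \sqrt{p_5}/k_u\lesssim\sqrt{n/\log p}/k_u$ times entries of $\alphab_1$ of order $\sqrt{\log p/n}$.
\item The number of perfect matchings of a multiset of size $|\mathbf a|\le D$ is at most $D^{D}$.
\item Hence $|c_{\mathbf a}|\le D^D$, and the number of multi-indices with $|\mathbf a|\le D$ is at most $(n(p+1)+1)^D\le(3np)^D$.
\end{itemize}
Together these give a bound of order $(3np)^D D^{2D}$, which is comfortably below $9(6npD)^{4D}$, with the constant absorbing the $\sigma_*$ rescaling. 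The main obstacle I expect is the first claim: writing down the matching formula for $c_{\mathbf a}$ precisely with the $y$-row included, and checking that the sign vector $s$ is genuinely shared between independent draws. This is exactly where the $\mathrm{sign}(\xi)$ choices in \eqref{eq: moderate sparse delta 2} and $\kappa>0$ enter.
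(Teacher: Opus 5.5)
Your proposal is correct. For the first inequality it follows the paper's skeleton: Hermite expansion of $L_\theta^{\le D}$, termwise nonnegativity of $c_{\mathbf a}(\theta)c_{\mathbf a}(\tilde\theta)$, then Parseval. The coefficient computation is organized differently, however.

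\textbf{First inequality.} The paper splits each row as $U_i=(Y_i,X_{i,S_3})$ and $V_i=X_{i,S_4}$. It observes that $\mathrm{Cov}(U_i,V_i)=r_\theta c_\theta^\top$ with $r_\theta=(\kappa,\alphab_2^\top)^\top$ and $c_\theta=\alphab_1$. It then uses the closed form $\bbE\{h_\mu(U)h_\nu(V)\}=\frac{m!}{\sqrt{\mu!\nu!}}r^\mu c^\nu$, which vanishes unless $|\mu|=|\nu|=m$, so the sign check is coordinatewise on $r$ and $c$. Your perfect-matching (Wick) expansion, combined with the fixed signature $s$ satisfying $\Delta_{jk}=s_js_k|\Delta_{jk}|$, reduces to exactly this formula in the bipartite rank-one case. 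Your version is more general: it only requires every perturbation in the prior to become entrywise nonnegative after conjugation by the same $\operatorname{diag}(s)$. That is precisely the reduction to the nonnegativity setting of \cite{bandeira2022franz}, and it does not need any rank-one structure. The paper's version buys an explicit closed form in which the sign check is immediate.

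\textbf{Second inequality.} Here your route is genuinely different. The paper never uses the size of the perturbation. It bounds $|\bbE_\theta H_\alpha|$ by H\"older with exponents $d/\alpha_i$ and the Gaussian moment bound $\bbE|h_a(z)|^q\le 2(2aq)^{2aq}$. This needs only that every coordinate is marginally $\mathcal N(0,1)$ under $\theta$, so it holds for any unit-diagonal whitened covariance. Your bound $|c_{\mathbf a}|\le (|\mathbf a|-1)!!\,\max_{j,k}|\Delta_{jk}|^{|\mathbf a|/2}/\sqrt{\mathbf a!}$ is shorter and sharper. It does rely on the entrywise control $|\Delta_{jk}|\le 1$ on $\mathcal A_n$, which you verify correctly via $p_5\le k_{\rm eff}\le n/\log p$.

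\textbf{Small gap to close.} Identifying the untruncated sum with $\bbE_{\theta_*}(L_\theta L_{\tilde\theta})$ requires $L_\theta\in L^2(\bbP^n_{\theta_*})$, which you use implicitly. It holds on $\mathcal A_n$: the whitened perturbation has nonzero eigenvalues $\pm\|\alphab_1\|_2(\kappa^2/\sigma_*^2+\|\alphab_2\|_2^2)^{1/2}$, which are below $1$ for small $c_8$. State this explicitly.
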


The proof of \Cref{lem: low degree integral} is provided in \Cref{sec: proof low degree integral}.

We next split the low-degree second moment according to whether the two latent covariance perturbations have unusually large overlap. Write
\[
\mathcal{C}
    = \left\{
        (\alphab_1, \alphab_2, \tilde{\alphab}_1, \tilde{\alphab}_2)
        : \alphab_2^{\top} \tilde{\alphab}_2 \le  C_5
      \right\},
\]
where $C_5$ is an absolute constant to be specified later.
Its corresponding
event on the parameter space is the pushforward event
\[
\widetilde{\mathcal C}
=
\left\{
    \bigl(
        \theta(\alphab_1,\alphab_2),
        \theta(\tilde\alphab_1,\tilde\alphab_2)
    \bigr):
    (\alphab_1,\alphab_2,\tilde\alphab_1,\tilde\alphab_2)\in\mathcal C
\right\},
\]
where we recall that
\(
    \theta(\alphab_1,\alphab_2):=h(g_3(\alphab_1,\alphab_2)).
\)
Since \(\pi_8=\theta_\#\pi_7\), expectations over
\(\pi_8^{\otimes2}\) can be evaluated through the latent prior
\(\pi_7^{\otimes2}\). In particular, for any nonnegative measurable function
\(F\),
\[
\begin{aligned}
&\mathbb E_{\pi_8^{\otimes2}}
\left[
    F(\theta,\tilde\theta)
  \ind(\widetilde{\mathcal C})(\theta,\tilde\theta)
\right] \\
&\qquad =
\mathbb E_{\pi_7^{\otimes2}}
\left[
    F\!\left(
        \theta(\alphab_1,\alphab_2),
        \theta(\tilde\alphab_1,\tilde\alphab_2)
    \right)
   \ind(\mathcal C)
    (\alphab_1,\alphab_2,\tilde\alphab_1,\tilde\alphab_2)
\right].
\end{aligned}
\]
Thus the likelihood-ratio terms are naturally written on the parameter space,
whereas the probability of \(\mathcal C\) and \(\mathcal C^c\) is controlled on
the latent space.

Define
\[
    T_{\widetilde{\mathcal C}}
    :=
    \mathbb E_{\pi_8^{\otimes2}}
    \left[
      \ind(\widetilde{\mathcal C})
        \mathbb E_{\theta_*}
        \left(
            L_\theta^{\le D}L_{\tilde\theta}^{\le D}
        \right)
    \right],
\]
and
\[
    T_{\widetilde{\mathcal C}^c}
    :=
    \mathbb E_{\pi_8^{\otimes2}}
    \left[
        \ind(\widetilde{\mathcal C}^c)
        \mathbb E_{\theta_*}
        \left(
            L_\theta^{\le D}L_{\tilde\theta}^{\le D}
        \right)
    \right].
\]
Then
\[
    \mathrm{LD}(D)
    =
    T_{\widetilde{\mathcal C}}
    +
    T_{\widetilde{\mathcal C}^c}.
\]

\textbf{For the term $T_{\widetilde{\mathcal C}^c}$:}
By Cauchy's inequality and \Cref{lem: low degree integral},
\[
\begin{aligned}
    T_{\widetilde{\mathcal C}^c}
    &\le
    \mathbb E_{\pi_8^{\otimes2}}
    \left[
        \ind(\widetilde{\mathcal C}^c)
        \left\{
            \mathbb E_{\theta_*}\left(L_\theta^{\le D}\right)^2
            \mathbb E_{\theta_*}\left(L_{\tilde\theta}^{\le D}\right)^2
        \right\}^{1/2}
    \right] \\
    &\le
9(6npD)^{4D}\,
\mathbb P_{\pi_7^{\otimes2}}(\mathcal C^c) \\
&\le
9(6npD)^{4D}(1+o(1))\,
\mathbb P_{\pi_5^{\otimes2}}(\mathcal C^c),
\end{aligned}
\]
where the last inequality follows from
\eqref{eq:restriction-measure-comparison}.
Let
\[
    J
    :=
    \sum_{j\in S_5} b_j^{(2)}\tilde b_j^{(2)} .
\]
Since $q_j^{(2)}=0$ for $j\in S_3\setminus S_5$, this is also the sum over
$S_3$.  By the definition of $\alphab_2$,
\[
    \alphab_2^\top\tilde\alphab_2
    =
    \frac{p_5}{k_u^2}J .
\]
The variables $b_j^{(2)}\tilde b_j^{(2)}$ are independent Bernoulli random
variables with success probabilities $(q_j^{(2)})^2$, and hence
\[
    \mu_1
    :=
    \mathbb E_{\pi_5^{\otimes 2}}J
    =
    \sum_{j\in S_5}(q_j^{(2)})^2
    =
    \frac{k_u^2}{64p_5}
    \ge
    \frac{k_u^2}{64k_{\rm eff}}.
\]
Therefore,
\[
    \mathcal C^c
    =
    \left\{
        \alphab_2^\top\tilde\alphab_2>C_5
    \right\}
    =
    \left\{
        J>C_5\frac{k_u^2}{p_5}
    \right\}
    =
    \left\{
        J>64C_5\mu_1
    \right\}.
\]
Thus $\mathcal C^c$ is an upper-tail event.  For $a>1$, let
\[
    \psi(a)=a\log a-a+1 .
\]
The Chernoff bound for sums of independent Bernoulli random variables gives
\[
    \mathbb P_{\pi_5^{\otimes 2}}(J\ge a\mu_1)
    \le
    \exp\{-\mu_1\psi(a)\}.
\]
Taking $a=64C_5$ and using the comparison between $\pi_7$ and $\pi_5$ in \Cref{eq:restriction-measure-comparison}, we have
\[
\begin{aligned}
    \mathbb P_{\pi_7^{\otimes 2}}(\mathcal C^c)
    &\le
    (1-\varepsilon_n)^{-2}
    \mathbb P_{\pi_5^{\otimes 2}}(\mathcal C^c)                              \\
    &\le
    (1+o(1))
    \exp\{-\mu_1\psi(64C_5)\}.
\end{aligned}
\]
Consequently,
\[
   T_{\widetilde{\mathcal C}^c}
    \le
    9(1+o(1))
    \exp\left\{
        4D\log(6npD)
        -
        \mu_1\psi(64C_5)
    \right\}.
\]
The growth condition above gives
\[
    \mu_1\ge c_\mu D\log(6npD)
\]
for some absolute constant $c_\mu>0$.  Choose \(C_5\) large enough so that
\(c_\mu\psi(64C_5)>8\). Then
\[
    T_{\widetilde{\mathcal C}^c}
    \le
    9(1+o(1))
    \exp\{-4D\log(6npD)\}
    =
    o(1).
\]
We now fix such a value of \(C_5\). The constant \(c_8\) in the prior
construction will be chosen sufficiently small below, after \(C_5\) is fixed.

\textbf{For the term $T_{\widetilde{\mathcal C}}$:}
On this event, the overlap between the two \(\alphab_2\)-perturbations is bounded by \(C_5\), so the projected likelihood-ratio inner product can be compared with the full likelihood-ratio inner product. By the first inequality in \Cref{lem: low degree integral}, for every parameter pair in the support of
\(\pi_8^{\otimes2}\),
\[
    \mathbb E_{\theta_*}
    \left(
        L_\theta^{\le D}L_{\tilde\theta}^{\le D}
    \right)
    \le
    \mathbb E_{\theta_*}
    \left(
        L_\theta L_{\tilde\theta}
    \right).
\]
Hence
\[
    T_{\widetilde{\mathcal C}}\le \mathbb E_{\pi_8^{\otimes2}}\left[
        \ind(\widetilde{\mathcal C})
        \mathbb E_{\theta_*}
        \left(
            L_\theta L_{\tilde\theta}
        \right)
    \right].
\]
Using the pushforward representation, the right-hand side equals
\[
    \mathbb E_{\pi_7^{\otimes2}}
    \left[
        \ind({\mathcal C})
        \mathbb E_{\theta_*}
        \left(
            L_{\theta(\alphab_1,\alphab_2)}
            L_{\theta(\tilde\alphab_1,\tilde\alphab_2)}
        \right)
    \right].
\]
As in \Cref{lem: chi square integral 1}, for
$\theta=\theta(\alphab_1,\alphab_2)$ and
$\tilde\theta=\theta(\tilde\alphab_1,\tilde\alphab_2)$,
\[
    \mathbb E_{\theta_*}
    \left(
        L_\theta L_{\tilde\theta}
    \right)
    =
    \left[
        1-
        \left(
            \frac{\kappa\tilde\kappa}{\sigma_*^2}
            +
            \alphab_2^\top\tilde\alphab_2
        \right)
        \alphab_1^\top\tilde\alphab_1
    \right]^{-n}.
\]
On $\mathcal C$ and on $\mathcal A_n$, we have
\[
    0<\kappa,\tilde\kappa\le 1,
    \qquad
    \alphab_2^\top\tilde\alphab_2\le C_5 .
\]
Therefore, with
\[
    A_5:=C_5+\sigma_*^{-2},
\]
we obtain
\[
    T_{\mathcal C}
    \le
    \mathbb E_{\pi_7^{\otimes 2}}
    \left[
        \left(
            1-A_5\alphab_1^\top\tilde\alphab_1
        \right)^{-n}
    \right],
\]
where the indicator is dropped because the integrand is nonnegative. Using \eqref{eq:restriction-measure-comparison},
\[
    T_{\mathcal C}
    \le
    (1-\varepsilon_n)^{-2}
    \mathbb E_{\pi_5^{\otimes 2}}
    \left[
        \left(
            1-A_5\alphab_1^\top\tilde\alphab_1
        \right)^{-n}
    \right].
\]
This is where $\varepsilon_n=o(1)$ is needed.

Let
\[
    s_1=\lfloor k_u/4\rfloor,
    \qquad
    H=|I\cap \tilde I|.
\]
Under $\pi_5^{\otimes 2}$,
\[
    H\sim {\rm Hypergeometric}(p_4,s_1,s_1),
\]
and by the construction of $\alphab_1$,
\[
    \alphab_1^\top\tilde\alphab_1
    =
    c_8^2\frac{\log p}{n}H.
\]
Choose $c_8>0$ sufficiently small, after $C_5$ has been fixed, so that
\[
    A_5c_8^2\frac{\log p}{n}s_1\le \frac12
\]
and
\[
    2A_5c_8^2<1-2\gamma',
\]
where $\gamma'<1/2$ satisfies $s_1\le p_4^{\gamma'}$ for all sufficiently large
$n$.  Such a $\gamma'$ exists because $k_u\le p^\gamma$ with $\gamma<1/2$ and
$p_4=p-k_{\rm eff}\sim p$.
Then
\[
    0\le A_5\alphab_1^\top\tilde\alphab_1\le \frac12,
\]
and $(1-x)^{-1}\le \exp(2x)$ for $0\le x\le 1/2$ gives
\[
\begin{aligned}
    \left(
        1-A_5\alphab_1^\top\tilde\alphab_1
    \right)^{-n}
    &\le
    \exp\left(
        2nA_5c_8^2\frac{\log p}{n}H
    \right)                                                               \\
    &=
    \exp\left(
        2A_5c_8^2\log p\cdot H
    \right).
\end{aligned}
\]
By \Cref{lem: hypergeometric}, since $2A_5c_8^2<1-2\gamma'$,
\[
    \mathbb E_{\pi_5^{\otimes 2}}
    \exp\left(
        2A_5c_8^2\log p\cdot H
    \right)
    =
    1+o(1).
\]
Therefore,
\[
    T_{\widetilde{\mathcal C}}
    \le
    (1-\varepsilon_n)^{-2}(1+o(1))
    =
    1+o(1).
\]
Combining this bound with $T_{\mathcal C^c}=o(1)$ gives
\[
    \mathrm{LD}(D)
    =
    T_{\mathcal C}+T_{\mathcal C^c}
    \le
    1+o(1).
\]
Since the degree-$D$ polynomial space contains the constant function,
\[
    \mathrm{LD}(D)\ge \|1\|_{L^2(\mathbb Q_0)}^2=1.
\]
Consequently,
\[
    \mathrm{LD}(D)=1+o(1).
\]
By \Cref{prop:low-degree-hardness}, no degree-$D$ polynomial weakly separates
$\mathbb Q_1=\mathbb P^n_{\pi_8}$ from
$\mathbb Q_0=\mathbb P^n_{\theta_*}$.  Since $\pi_8$ is supported on the null
space and $\theta_*$ lies in the alternative space at separation
\[
    c_9\nu_3\frac{k_u\log p}{n},
\]
the desired low-degree lower bound follows.

\end{proof}

\subsection{Proof of Theorem~\ref{thm: reduction}}\label{sec: proof reduction}
\begin{proof}
The proof is based on constructing a polynomial-time reduction from a sparse
canonical correlation analysis (SCCA) detection problem to the linear hypothesis
testing problem considered in this paper. For a
generic detection problem $\mathcal{P}$, we use
$\mathcal{L}_{H_0}(X)$ and $\mathcal{L}_{H_1}(X)$ to denote the distributions of
the instance $X$ under the null $H_0$ and the alternative $H_1$, respectively.
The reduction principle we rely on is summarized in the following lemma, which
is adapted from \cite[Lemma~3.1]{brennan2018reducibility}.

\begin{lemma}\label{lem: reduction}
Let $\mathcal{P}$ and $\mathcal{P}'$ be detection problems with hypotheses
$(H_0, H_1)$ and $(H_0', H_1')$, and let $X$ and $Y$ be instances of
$\mathcal{P}$ and $\mathcal{P}'$, respectively.  Suppose there exists a
polynomial-time computable map $\phi$ and a prior $\pi$ on $H_1^\prime$ such that
\[
d_{\mathrm{TV}}\!\left(\mathcal{L}_{H_0}(\phi(X)), \mathcal{L}_{H_0'}(Y)\right)
\;+\;\inf
d_{\mathrm{TV}}\!\left(
\mathcal{L}_{H_1}(\phi(X)),
\int_{H_1^\prime}\mathcal{L}_{\bbP^\prime}(Y)\rmd \pi(\bbP^\prime)
\right)
\;\le\; \delta .
\]
If there exists a polynomial-time algorithm solving $\mathcal{P}'$ with
Type~I+II error at most $\epsilon$, then there exists a polynomial-time
algorithm solving $\mathcal{P}$ with Type~I+II error at most $\epsilon+\delta$.
\end{lemma}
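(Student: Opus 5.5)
The plan is to build the algorithm for $\mathcal P$ by composing $\phi$ with the assumed algorithm for $\mathcal P'$, and then to control the two errors through the total-variation bound. Let $\psi'$ be the polynomial-time test for $\mathcal P'$ with $\sup_{H_0'}\mathbb E\psi'+\sup_{H_1'}\mathbb E(1-\psi')\le\epsilon$. Define $\psi:=\psi'\circ\phi$. It runs in polynomial time because both $\phi$ and $\psi'$ do.

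\emph{Type I error.} Under $H_0$, the variable $\phi(X)$ has law $\mathcal L_{H_0}(\phi(X))$. Since $\psi'$ takes values in $[0,1]$, the defining property of total variation gives
\[
\mathbb E_{H_0}\psi(X)\le \mathbb E_{\mathcal L_{H_0'}(Y)}\psi'+d_{\mathrm{TV}}\bigl(\mathcal L_{H_0}(\phi(X)),\mathcal L_{H_0'}(Y)\bigr).
\]
If $H_0'$ is composite, this holds for whichever null law appears in the hypothesis, and $\mathbb E\psi'$ under that law is at most the worst-case size of $\psi'$.

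\emph{Type II error.} Let $\bar{\mathbb P}_\pi=\int\mathcal L_{\mathbb P'}(Y)\,\mathrm d\pi(\mathbb P')$. By Fubini,
\[
\mathbb E_{\bar{\mathbb P}_\pi}(1-\psi')=\int\mathbb E_{\mathbb P'}(1-\psi')\,\mathrm d\pi\le\sup_{H_1'}\mathbb E(1-\psi').
\]
This step uses only that $\pi$ is supported on $H_1'$. Under any alternative law of $X$ (or the infimizing one appearing in the hypothesis), the same total-variation inequality gives $\mathbb E(1-\psi(X))\le \mathbb E_{\bar{\mathbb P}_\pi}(1-\psi')+d_{\mathrm{TV}}\bigl(\mathcal L_{H_1}(\phi(X)),\bar{\mathbb P}_\pi\bigr)$. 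If $H_1$ is composite, the prior is allowed to depend on the particular alternative law, and the bound is applied pointwise.

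\emph{Conclusion.} Adding the two displays, the sum of Type I and Type II errors of $\psi$ is at most the total error of $\psi'$ plus the sum of the two total-variation terms, hence at most $\epsilon+\delta$. The only delicate point is the quantifier in the hypothesis ("$\inf$"). It must be read so that the bound holds for each alternative law of $X$ with a suitable prior, or one can pass to a near-optimizing prior with slack $\delta+o(1)$ and let the slack vanish. Otherwise the argument is a direct use of the identity $\sup_{0\le f\le1}|\mathbb E_Pf-\mathbb E_Qf|=d_{\mathrm{TV}}(P,Q)$, which extends from indicators to randomized $[0,1]$-valued tests.
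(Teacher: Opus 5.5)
Your proposal is correct and follows the argument the paper has in mind. The paper omits this proof, saying only that it follows directly from the definition of total variation. Your composition $\psi'\circ\phi$, with the variational bound $\sup_{0\le f\le 1}|\mathbb E_P f-\mathbb E_Q f|=d_{\mathrm{TV}}(P,Q)$ applied to each error and Fubini over $\pi$ for the mixture, is exactly that standard argument. Your handling of the ``$\inf$'' via vanishing slack is legitimate, because the composed test does not depend on the choice of prior.
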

The proof of \Cref{lem: reduction} follows directly from the definition of total
variation distance and is therefore omitted.

For notational convenience, we set
\(
p_6 = k_\xi,\; p_7 = p-k_\xi,
\)
and
\(
k_* = \lfloor k_u/4 \rfloor.
\)
Fix \(\sigma_*=M_2/2\), so
\((\mathbf{0}_{p\times1},\mathbf{I}_{p\times p},\sigma_*)\in
\Theta(\lfloor k_u/2\rfloor)\) by \eqref{eq: parameter space}.
Let \(\tau_{\rm red}\) be the exact value of \(\xi^\top\beta\) produced by
the \(h\)-transformation under the alternative covariance in
\eqref{eq: reduction alternative}; it is computed in
\eqref{eq: reduction tau red}.  Since \(\xi_1=1\), set
\[
    \beta_0=(t_0-\tau_{\rm red})e_1,
    \qquad
    \varphi(\beta,\Sigma,\sigma)=(\beta-\beta_0,\Sigma,\sigma).
\]
This is the transition argument from \Cref{sec: proof lower ultra sparse},
with \(j_0=1\) and \(\tau\) replaced by \(\tau_{\rm red}\).  The associated
data transformation is \((Y,X)\mapsto(Y-X\beta_0,X)\), and its inverse sends a
translated sample \(Z'=(Y',X)\) to \((Y'+X\beta_0,X)\).
For the present reduction we need two consequences.  First, the inverse image
of \((\mathbf 0,\mathbf I,\sigma_*)\) is
\((\beta_0,\mathbf I,\sigma_*)\), whose linear functional is
\(t_0-\tau_{\rm red}\).  Once \(c\) is chosen so that
\(\tau\le\tau_{\rm red}\), this inverse image belongs to the LT alternative
class \(\Theta_{\pm\tau}(k;\xi,t_0)\).  Second, if
\(\tilde\theta\in\Theta(\lfloor k_u/2\rfloor;\xi,\tau_{\rm red})\), then
\[
\xi^\top(\tilde\beta+\beta_0)
=
\tau_{\rm red}+t_0-\tau_{\rm red}
=t_0,
\]
and the sparsity increases by at most one, so
\(\varphi^{-1}(\tilde\theta)\in\Theta(k_u;\xi,t_0)\).
Thus the SCCA null is sent to an LT alternative point, whereas the SCCA
alternative is sent to the LT null class.  If \(\psi\) solves
\({\rm LT}(n,k_u,k,k_\xi,p,\tau)\), the SCCA decision uses
\[
1-\psi(Y'+X\beta_0,X).
\]
It remains to construct an exact reduction from SCCA to
\begin{equation}\label{eq: reduction target}
H_0^{\rm tr}:\theta=(\mathbf0,\mathbf I,\sigma_*),
\qquad
H_1^{\rm tr}:\theta\in
\Theta(\lfloor k_u/2\rfloor;\xi,\tau_{\rm red}).
\end{equation}
The prior on \(H_1^{\rm tr}\) is induced by the random supports of
\(\widetilde{\alphab}_1\) and \(\widetilde{\alphab}_2\) in
\eqref{eq: reduction alternative}; the construction below gives exact law
matching under both hypotheses, hence the total-variation slack is
\(\delta=0\).

We now specialize to the regime
\(
\tau \asymp \rho_n^2 k_u / \sqrt{k_\xi}
\)
for some $\rho_n \le 1$.
Consider the Gaussian testing problem with $V_1\in\mathbb{R}, V_2\in\mathbb{R}^{p_6}$, and $V_3\in\mathbb{R}^{p_7}$:
\begin{equation}\label{eq: reduction alternative}
\begin{aligned}
& H_0:\;
\begin{pmatrix}
V_1\\ V_2\\ V_3
\end{pmatrix}
&\sim
\mathcal{N}\!\left(
{\bfz},
\left(\begin{array}{c|c|c} \sigma_*^2 & {{\bf0}}_{1\times p_6} & {\bf0}_{1\times p_7} \\ \hline {{\bf0}}_{p_6\times 1} & {\bfI}_{p_6\times p_6} & {\bf0}_{p_6\times p_7} \\ \hline {\bf0}_{p_7\times 1} & {\bf0}_{p_7\times p_6} & {\bfI}_{p_7\times p_7} \end{array}\right)\right)\\
& \text{vs.}
\\[0.5em]
& H_1:\;
\begin{pmatrix}
V_1\\ V_2\\ V_3
\end{pmatrix}
&\sim \renewcommand{\arraystretch}{1.4}
\mathcal{N}\!\left(
{\bfz},
\left(\begin{array}{c|c|c}
\sigma_*^2 & \mathbf{0}_{1\times p_6} & \widetilde{\alphab}_2^\top \\
\hline
\mathbf{0}_{p_6\times 1} & \mathbf{I}_{p_6} & \widetilde{\alphab}_1\widetilde{\alphab}_2^\top \\
\hline
\widetilde{\alphab}_2 & \widetilde{\alphab}_2\widetilde{\alphab}_1^\top & \mathbf{I}_{p_7}
\end{array}\right)
\right),
\end{aligned}
\end{equation}
where $\widetilde{\alphab}_1\in\mathbb{R}^{p_6}$ and
$\widetilde{\alphab}_2\in\mathbb{R}^{p_7}$ are $k_*$-sparse vectors with supports
drawn uniformly at random from all subsets of $[p_6]$ and $[p_7]$ of size $k_*$,
respectively.
Their nonzero entries are defined by
\[
(\widetilde{\alphab}_1)_j = -\frac{c_{10}}{\sigma_*}\frac{\sqrt{p_6}}{k_*},
\quad j\in\supp(\widetilde{\alphab}_1),
\qquad
(\widetilde{\alphab}_2)_j
=
\frac{\sigma_*\rho_n}{\sqrt{2p_6}},
\quad j\in\supp(\widetilde{\alphab}_2),
\]
for a sufficiently small $c_{10}>0$.

Under the mapping $h$, the induced parameter
$\theta=(\beta,\Sigma,\sigma)$ corresponding to the covariance matrix in the alternative $H_1$ of
\eqref{eq: reduction alternative} admits the explicit expressions
\begin{align*}
\beta
&=
\begin{pmatrix}
\mathbf{I}_{p_6} & \widetilde{\alphab}_1\widetilde{\alphab}_2^\top \\
\widetilde{\alphab}_2\widetilde{\alphab}_1^\top & \mathbf{I}_{p_7}
\end{pmatrix}^{-1}
\begin{pmatrix}
\mathbf{0}_{p_6} \\
\widetilde{\alphab}_2
\end{pmatrix}
=
\begin{pmatrix}
-\dfrac{\|\widetilde{\alphab}_2\|_2^2}
{1-\|\widetilde{\alphab}_1\|_2^2\|\widetilde{\alphab}_2\|_2^2}
\,\widetilde{\alphab}_1
\\[0.6em]
\dfrac{1}
{1-\|\widetilde{\alphab}_1\|_2^2\|\widetilde{\alphab}_2\|_2^2}
\,\widetilde{\alphab}_2
\end{pmatrix},\\
\Sigma
&=
\begin{pmatrix}
\mathbf{I}_{p_6} & \widetilde{\alphab}_1\widetilde{\alphab}_2^\top \\
\widetilde{\alphab}_2\widetilde{\alphab}_1^\top & \mathbf{I}_{p_7}
\end{pmatrix},
\qquad
\sigma
=
\sqrt{
\sigma_*^2
-
\frac{\|\widetilde{\alphab}_2\|_2^2}
{1-\|\widetilde{\alphab}_1\|_2^2\|\widetilde{\alphab}_2\|_2^2}
}.
\end{align*}

It is immediate that $\beta$ is $2k_*$-sparse, and hence at most
$\lfloor k_u/2\rfloor$-sparse.
By choosing $c_{10}$ sufficiently small, $\Sigma$ and $\sigma$ are well defined and
satisfy the constraints in the parameter space \eqref{eq: parameter space}.
Moreover, since
\(
\|\widetilde{\alphab}_1\|_2^2 = c_{10}^2p_6/(k_*\sigma_*^2)
\)
and
\(
\|\widetilde{\alphab}_2\|_2^2 = \sigma_*^2\rho_n^2 k_*/(2p_6),
\)
we obtain

{
\begin{equation}\label{eq: reduction tau red}
\tau_{\rm red}
:=
\xi^\top\beta
=
\frac{ \frac{c_{10}}{\sigma_*} \sqrt{p_6}\,\|\widetilde{\alphab}_2\|_2^2}
{1-\|\widetilde{\alphab}_1\|_2^2\|\widetilde{\alphab}_2\|_2^2}
=
\frac{c_{10} \sigma_*}{2-c_{10}^2\rho_n^2}\,
\rho_n^2\frac{k_*}{\sqrt{p_6}}.
\end{equation}
Since \(p_6=k_\xi\), \(k_*=\lfloor k_u/4\rfloor\), and \(\rho_n<1/2\), the
denominator in \eqref{eq: reduction tau red} is at most \(2\), and
\(k_*\ge k_u/8\) for all sufficiently large \(n\).  Hence
\[
\tau_{\rm red}
\ge
\frac{c_{10}\sigma_*}{16}\,
\rho_n^2\frac{k_u}{\sqrt{k_\xi}}.
\]
After fixing \(c_{10}\in(0,1)\) sufficiently small for the covariance and noise
constraints above, choose the theorem constant \(c\le c_{10}\sigma_*/16\).  Then
the theorem's separation level
\(\tau=c\rho_n^2 k_u/\sqrt{k_\xi}\) satisfies
\(\tau\le \tau_{\rm red}\), as required in the translated testing problem
\eqref{eq: reduction target}.
}

It remains to construct a polynomial-time mapping from
${\rm SCCA}(2n,k_*,p_6,p_7,\rho_n)$
to the testing problem in \eqref{eq: reduction alternative}.
Since the observations are i.i.d.\ under both models, it suffices to describe
the transformation at the level of a single pair of observations.
Specifically, we describe how two independent samples
$(U_1,U_2)$ and $(U_1',U_2')$ from the SCCA model are mapped to a single
observation $(V_1,V_2,V_3)$.

The mapping is defined as follows.
\begin{itemize}
    \item Compute $W_1 = \sum_{j=1}^{p_6}(U_1')_j/\sqrt{p_6}\in\mathbb{R}$.
    \item Set
    \[
    V_1 = \sigma_* W_1,\qquad
    V_2 = -c_{10} U_1 + \sqrt{1-c_{10}^2}\,V_*,\qquad
    V_3 = (U_2+U_2')/\sqrt{2},
    \]
    where $V_* \sim \mathcal{N}(0,\mathbf{I}_{p_6})$ is independent of all other
    random variables.
\end{itemize}

Then under the null, since $U_1,U_1^\prime\sim \mathcal{N}(0,\mathbf{I}_{p_6})$ and $U_2,U_2^\prime\sim \mathcal{N}(0,\mathbf{I}_{p_7})$ are independent, we have
\[\begin{pmatrix}
V_1\\ V_2\\ V_3
\end{pmatrix}\sim
\mathcal{N}\!\left(
0,
\left(\begin{array}{c|c|c}
    \sigma_*^2 & {{\bf0}}_{1\times p_6} & {\bf0}_{1\times p_7} \\ \hline
    {{\bf0}}_{p_6\times 1} & {\bfI}_{p_6\times p_6} & {\bf0}_{p_6\times p_7} \\ \hline
    {\bf0}_{p_7\times 1} & {\bf0}_{p_7\times p_6} & {\bfI}_{p_7\times p_7} \end{array}\right)\right).
\]
Under the alternative, we first have:
$$\begin{pmatrix}
    U_1\\U_2
\end{pmatrix}\sim
\mathcal{N}\!\left(
    0,
    \begin{pmatrix}
    \mathbf{I}_{p_6}
    &
    \rho_n \alphab_1 \alphab_2^\top
    \\[0.2em]
    \rho_n \alphab_2 \alphab_1^\top
    &
    \mathbf{I}_{p_7}
    \end{pmatrix}
    \right),\qquad
\begin{pmatrix}
    W_1\\U_2^\prime
\end{pmatrix} \sim
\mathcal{N}\!\left(
    0,
    \begin{pmatrix}
    1
    &
    \rho_n\sqrt{\frac{ k_*}{p_6}} \alphab_2^\top
    \\[0.2em]
    \rho_n\sqrt{\frac{ k_*}{p_6}} \alphab_2
    &
    \mathbf{I}_{p_7}
    \end{pmatrix}
    \right)$$
are independent, where we have used the fact that $\alphab_1$ has exactly $k_*$ nonzero entries each equal to $1/\sqrt{k_*}$ under the alternative of SCCA. Therefore, we have
\[
\renewcommand{\arraystretch}{1.2}
\begin{pmatrix}
V_1\\ V_2\\ V_3
\end{pmatrix}\sim \mathcal{N}\!\left(
{\bfz},
\left(\begin{array}{c|c|c}
\sigma_*^2 & \mathbf{0}_{1\times p_6} & \rho_n\sigma_*\sqrt{\frac{k_*}{2p_6}}{\alphab}_2^\top \\
\hline
\mathbf{0}_{p_6\times 1} & \mathbf{I}_{p_6} & -\frac{c_{10}}{\sqrt{2}}\rho_n{\alphab}_1{\alphab}_2^\top \\
\hline
\rho_n\sigma_*\sqrt{\frac{k_*}{2p_6}}{\alphab}_2 & -\frac{c_{10}}{\sqrt{2}}\rho_n{\alphab}_2{\alphab}_1^\top & \mathbf{I}_{p_7}
\end{array}\right)
\right).
\]
Consequently, $(V_1,V_2,V_3)$ follows the alternative distribution in
\eqref{eq: reduction alternative}, completing the reduction.
\end{proof}

\section{Proof of Additional Technical Results for the Upper Bounds}\label{sec: proof lemma upper}

\subsection{Proof of Lemma~\ref{lem: feasibility of optimization}}\label{sec: proof feasibility of optimization}
\begin{proof}[Proof of \Cref{lem: feasibility of optimization}]
    For $u=\Omega\xi$, define
$$
d=\hat{\Sigma}u-\xi=\frac{1}{n}\sum_{i=1}^n X_{i\cdot}X_{i\cdot}^{\top}\Omega\xi-\xi\in\mathbb{R}^p,
$$
with coordinates
$$
d_j=\frac{1}{n}\sum_{i=1}^n X_{ij}X_{i\cdot}^{\top}\Omega\xi-\xi_j,\qquad j\in[p].
$$
 For any $j\in[p]$ and $i\in [n]$, define $f_{ij}=X_{ij}X_{i\cdot}^{\top}\Omega\xi$. Then $\mathbb{E}f_{ij}=\xi_j$, and $f_{ij}$ is sub-exponential.
Indeed, by \Cref{lem: product of subGaussian},
$$
\begin{aligned}
\|f_{ij}\|_{\psi_1}
=\|X_{ij}\,X_{i\cdot}^{\top}\Omega\xi\|_{\psi_1}
&\le 2\,\|X_{ij}\|_{\psi_2}\,\|X_{i\cdot}^{\top}\Omega\xi\|_{\psi_2}.
\end{aligned}
$$
Since $X_{ij}\sim \mathcal{N}(0, \Sigma_{jj})$ and $X_{i\cdot}^{\top}\Omega\xi\sim \mathcal{N}(0, \xi^\top\Omega\xi)$, we have $\|X_{ij}\|_{\psi_2}\le  \sqrt{M_1}$ and $\|X_{i\cdot}^{\top}\Omega\xi\|_{\psi_2}\le  \sqrt{M_1}\|\xi\|$ where $M_1$ is the constant in the assumption on the eigenvalues of $\Sigma$ (see \Cref{eq: parameter space}).

Hence, there exists a constant $C'>0$ such that
$\|f_{ij}\|_{\psi_1}\le C'\|\xi\|_2$.
Furthermore, Lemma~\ref{lem: sub exp center} implies that
\[
\| f_{ij} - \mathbb{E} f_{ij} \|_{\psi_1}
    \le 2 \| f_{ij} \|_{\psi_1}
    \le 2C' \|\xi\|_2 .
\]
We pick a constant $c$ sufficiently large such that $2^{2-c^2c_0}<\al/24$, where  $c_0$ is the constant in Lemma~\ref{lem: sub exp sum}.
For any $j\in [p]$, we apply Lemma~\ref{lem: sub exp sum} with $d_j= n^{-1}\sum_{i=1}^n f_{ij}$ and
\(
t = 2c C' \|\xi\|_2 \sqrt{\tfrac{\log p}{n}}
\)
to conclude that for sufficiently large $n$, it holds that
\[
\mathbb{P}\!\left( |d_j| \ge 2c C' \|\xi\|_2 \sqrt{\frac{\log p}{n}} \right)
    \le 2 \exp\!\left( - c_0 \min\!\left( c^2 \log p,\; c \sqrt{n \log p} \right) \right)
    \le 2 p^{-c^2 c_0},
\]
where the last inequality holds as long as $n>c^2 \log p$, which is guaranteed because
\( n / \log p \gtrsim k_u \to \infty \).
Taking a union bound over all \(j\), we obtain
\[
\mathbb{P}\!\left( \|d\|_\infty \ge 2c C' \|\xi\|_2 \sqrt{\frac{\log p}{n}} \right)
    \le 2 p^{\,1 - c^2 c_0}\le  2^{2-c^2c_0}\le  \al/24.
\]
Choosing $C_\xi=2c C'$ completes the proof.
\end{proof}

\subsection{Proof of Lemma~\ref{lem: sparse spiked}}\label{sec: proof sparse spiked}
\begin{proof}
Before proceeding, we record two concentration tools used repeatedly in the proof.
\begin{lemma}[{\cite[Proposition D.1]{ma2013sparse}}]\label{lem: wishart op}
Let $Y$ be an $n\times k$ matrix with i.i.d.\ $\mathcal{N}(0,1)$ entries.
For any $t>0$,
\[
\mathbb{P}\!\left(
\left\|
\frac{1}{n}Y^\top Y - \bfI_k
\right\|_2
\le
2\left(\sqrt{\frac{k}{n}}+t\right)
+
\left(\sqrt{\frac{k}{n}}+t\right)^2
\right)
\;\ge\;
1-2e^{-nt^2/2}.
\]
\end{lemma}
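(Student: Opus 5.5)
The plan is to reduce to the standard operator-norm bound for Gaussian random matrices, $\|A\|_2\le \sqrt n+\sqrt k+s$ with probability at least $1-e^{-s^2/2}$, and then expand the Gram matrix. Write $\frac1n Y^\top Y-\mathbf I_k$ in terms of the singular values of $A=Y/\sqrt n$. Every eigenvalue of $\frac1n Y^\top Y$ equals $s_i(A)^2$, where $s_i(A)$ are the singular values of $A$. Hence
\[
\Bigl\|\tfrac1n Y^\top Y-\mathbf I_k\Bigr\|_2=\max_i |s_i(A)^2-1| = \max\bigl\{s_{\max}(A)^2-1,\;1-s_{\min}(A)^2\bigr\}.
\]
It therefore suffices to control $s_{\max}(A)$ from above and $s_{\min}(A)$ from below. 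Let $\delta=\sqrt{k/n}+t$.

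First, I invoke the Gordon/Davidson--Szarek inequality for Gaussian matrices, as in \cite[Corollary 5.35]{vershynin2010introduction}. It states that for an $n\times k$ standard Gaussian matrix,
\[
\sqrt n-\sqrt k-s\le s_{\min}(Y)\le s_{\max}(Y)\le \sqrt n+\sqrt k+s
\]
with probability at least $1-2e^{-s^2/2}$. This follows from Gordon's comparison for the expectations together with Gaussian concentration of the $1$-Lipschitz functions $s_{\max},s_{\min}$. Taking $s=t\sqrt n$ and dividing by $\sqrt n$ gives
\[
1-\delta\le s_{\min}(A)\le s_{\max}(A)\le 1+\delta
\]
with probability at least $1-2e^{-nt^2/2}$, which is exactly the stated probability.

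Second, on this event I conclude by elementary algebra. On one side, $s_{\max}(A)^2-1\le (1+\delta)^2-1=2\delta+\delta^2$. On the other side, if $\delta<1$, then $1-s_{\min}(A)^2\le 1-(1-\delta)^2=2\delta-\delta^2\le 2\delta+\delta^2$. If instead $\delta\ge1$, then trivially $1-s_{\min}(A)^2\le 1\le 2\delta+\delta^2$. Either way the operator norm is at most $2\delta+\delta^2$, which is the claimed bound.

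The only nontrivial ingredient is the two-sided singular value bound with sharp constant $1$ in front of $\sqrt n$, $\sqrt k$, and the deviation term. I would quote it rather than reprove it. If a self-contained argument were demanded, the main obstacle would be establishing $\mathbb E s_{\min}\ge\sqrt n-\sqrt k$ via Gordon's min-max comparison. The concentration step is routine Lipschitz Gaussian concentration.
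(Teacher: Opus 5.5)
Your proof is correct. The paper gives no proof of its own and only cites \cite[Proposition D.1]{ma2013sparse}; your argument is the standard proof of that result: the two-sided Davidson--Szarek/Gordon bound $1-\delta\le s_{\min}(Y/\sqrt n)\le s_{\max}(Y/\sqrt n)\le 1+\delta$, with $\delta=\sqrt{k/n}+t$ and probability at least $1-2e^{-nt^2/2}$, followed by squaring.

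One small wording point: your opening sentence mentions only the upper bound on $\|A\|_2$. The lower bound on $s_{\min}$ is genuinely needed for the $1-s_{\min}^2$ side, and your actual argument does use it, including the correct treatment of the case $\delta\ge 1$ (e.g.\ $k>n$).
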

\begin{lemma}[{\cite{davidson2001local}}]\label{lem: davidson szarek}
    Let A be an $N \times n$ matrix whose entries are independent standard normal random variables. Then for every $t \ge 0$, with probability at least $1 - 2 \exp(-t^2/2)$, one has
    $$\|A\|_2\le  \sqrt{N}+\sqrt{n}+t$$
\end{lemma}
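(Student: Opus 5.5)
The plan is the standard two-step argument: first bound the expectation $\mathbb E\|A\|_2$ by $\sqrt N+\sqrt n$ through a Gaussian comparison inequality, and then pass from the expectation to a tail bound through Gaussian concentration of Lipschitz functions.

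\emph{Step 1 (expectation bound).} Write $\|A\|_2=\sup_{u\in S^{n-1},\,v\in S^{N-1}} X_{u,v}$ with $X_{u,v}=v^\top A u$, which is a centered Gaussian process indexed by $T=S^{n-1}\times S^{N-1}$. Compare it with $Y_{u,v}=g^\top u+h^\top v$, where $g\sim\mathcal N(0,\mathbf I_n)$ and $h\sim\mathcal N(0,\mathbf I_N)$ are independent. For $(u,v),(u',v')\in T$, the increments are
\[
\mathbb E(X_{u,v}-X_{u',v'})^2=\|uv^\top-u'v'^\top\|_F^2=2-2\langle u,u'\rangle\langle v,v'\rangle,
\]
\[
\mathbb E(Y_{u,v}-Y_{u',v'})^2=\|u-u'\|_2^2+\|v-v'\|_2^2=4-2\langle u,u'\rangle-2\langle v,v'\rangle.
\]
Their difference equals $2(1-\langle u,u'\rangle)(1-\langle v,v'\rangle)$, which is nonnegative. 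Hence the increments of $X$ are dominated by those of $Y$. The Sudakov--Fernique inequality then gives
\[
\mathbb E\sup_T X\le \mathbb E\sup_T Y=\mathbb E\|g\|_2+\mathbb E\|h\|_2\le\sqrt n+\sqrt N,
\]
where the last step uses Jensen's inequality.

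\emph{Step 2 (concentration).} View $A\mapsto\|A\|_2$ as a function on $\mathbb R^{Nn}$. It is $1$-Lipschitz with respect to the Euclidean (Frobenius) norm, since $\bigl|\|A\|_2-\|B\|_2\bigr|\le\|A-B\|_2\le\|A-B\|_F$. The Gaussian concentration inequality (Tsirelson--Ibragimov--Sudakov) for $1$-Lipschitz functions of a standard Gaussian vector yields
\[
\mathbb P\bigl(\|A\|_2\ge\mathbb E\|A\|_2+t\bigr)\le e^{-t^2/2}\le 2e^{-t^2/2}.
\]
Combining this with Step 1 gives the claim. The factor $2$ in the lemma is not needed for the upper tail alone; it comes from the two-sided form of the statement.

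The only nontrivial ingredient is the comparison in Step 1. The key points are checking the increment inequality, which reduces to the nonnegative product identity above, and invoking the Sudakov--Fernique form of the comparison. That form requires only increment domination, not equal variances, and it applies to suprema over the compact index set $T$ by the usual approximation through finite subsets. I expect this to be the main (though standard) step, and I would simply cite Gordon's or Sudakov--Fernique's theorem for it rather than reprove it.
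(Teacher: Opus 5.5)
Your argument is correct: the increment gap $2(1-\langle u,u'\rangle)(1-\langle v,v'\rangle)\ge 0$ is what lets you apply Sudakov--Fernique. Gaussian concentration for the $1$-Lipschitz map $A\mapsto\|A\|_2$ then gives this one-sided bound with the sharper probability $1-e^{-t^2/2}$. The paper gives no proof of its own and simply cites Davidson and Szarek, whose proof is this same comparison-plus-concentration argument; they use Gordon's inequality so as to also bound the smallest singular value, which is where the factor $2$ comes from.
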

We now prove \Cref{lem: sparse spiked}.
Let
\[
\Sigma = V\Lambda V^\top + \bfI_p,
\qquad
\Lambda=\operatorname{diag}(\lambda_1,\ldots,\lambda_r)\in\mathbb{R}^{r\times r},
\]
and let $B_*=\supp(V)\subseteq[p]$ denote the row support of $V$.
Note that $k:=|B_*|\le  k_u$ for a known upper bound $k_u$. Define the sample covariance matrix based on the
first half of the data by
\begin{equation}\label{eq: sample cov 1}
\hat{\Sigma}^{(1)}=\frac{1}{n_1}\bigl(X^{(1)}\bigr)^\top X^{(1)}.
\end{equation}

For any symmetric matrix $\bfA=(a_{ij})\in\mathbb{R}^{p\times p}$ and any index set $B\subseteq[p]$, define $\Gamma_B(\bfA)$ to be the $p\times p$ matrix whose $B\times B$ principal submatrix equals $\bfA_{BB}$, whose diagonal entries indexed
by $B^c$ are equal to one, and whose remaining off-diagonal entries are zero:
\begin{equation}\label{eq:AB}
(\Gamma_B(\bfA))_{ij}
=
a_{ij}\,\ind\{i\in B,\, j\in B\}
+
\ind\{i=j\in B^c\}.
\end{equation}
Equivalently, after permuting indices so that $B$ appears first,
\[
\Gamma_B(\bfA)
=
\begin{bmatrix}
\bfA_{BB} & \bfz\\
\bfz & \bfI_{B^cB^c}
\end{bmatrix}.
\]
For two index sets $I,J\subseteq[p]$, we write $\bfA_{IJ}$ for the
$|I|\times|J|$ submatrix of $\bfA$ with rows indexed by $I$ and columns indexed by
$J$.

Following \cite{cai2015optimal}, define the candidate support class
\begin{equation}\label{eq:supp-set}
\begin{aligned}
\bbB_{k_u}
=
\Biggl\{
&B\subseteq [p]:~
|B|\le  k_u,
~\text{and for all }D\subseteq B^c\text{ with }|D|\le k_u,\\
&
\|\Gamma_D(\hat{\Sigma}^{(1)})-\bfI_p\|_2
\le
2\left(\sqrt{\frac{|D|}{n_1}}+\sqrt{\frac{\gamma_*|D|\log p}{n_1}}\right)
+\left(\sqrt{\frac{|D|}{n_1}}+\sqrt{\frac{\gamma_*|D|\log p}{n_1}}\right)^2,\\
&\|\hat{\Sigma}^{(1)}_{DB}\|_2
\le
\sqrt{\|\Gamma_B(\hat{\Sigma}^{(1)})\|_2}\biggl(\sqrt{\frac{|D|}{n_1}}+\sqrt{\frac{|B|}{n_1}}+\sqrt{\frac{\gamma_* |D|\log p}{n_1}}\biggr)
\Biggr\}.
\end{aligned}
\end{equation}
(Here $\Gamma_D(\hat{\Sigma}^{(1)})$ and $\Gamma_B(\hat{\Sigma}^{(1)})$ are interpreted in the
sense of \eqref{eq:AB}.)

Define ${\cal E}_1=\{B_*\in\bbB_{k_u}\}$.
\begin{lemma}\label{lem: set A nonempty}
Suppose $\Sigma\in \Pi_0(k,p)$ and $\hat{\Sigma}^{(1)}$ is constructed from
$n_1$ i.i.d.\ samples drawn from $\mathcal{N}(0,\Sigma)$ as in
\eqref{eq: sample cov 1}.
If
\(
n_1\ge  c k_u \log p
\)
for some sufficiently large constant $c>0$,
then for $\gamma_*\ge  3$ and $p\ge  2$, we have
\[
\mathbb{P}\bigl({\cal E}_1\bigr)\ge  1-8 p^{ 1-\gamma_*/2 }.
\]
In particular, on \({\cal E}_1\), the class \(\bbB_{k_u}\) is nonempty.
\end{lemma}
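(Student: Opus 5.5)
The plan is to verify the three defining requirements for $B=B_*$ separately, each on an event of probability at least $1-Cp^{1-\gamma_*/2}$, and then take a union bound. The size requirement $|B_*|\le k_u$ holds deterministically, since $\Sigma\in\Pi_0(k,p)$ with $k\le k_u$. The key structural observation is that the row support of $V$ is $B_*$. Hence $\Sigma_{DD}=\bfI_{|D|}$ and $\Sigma_{DB_*}=0$ for every $D\subseteq B_*^c$, so the coordinates of $X^{(1)}$ indexed by $B_*^c$ are i.i.d.\ standard normal and independent of those indexed by $B_*$.

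For the first condition, fix $D\subseteq B_*^c$ with $|D|=d\le k_u$. Note that $\Gamma_D(\hat\Sigma^{(1)})-\bfI_p$ has the single nonzero block $\hat\Sigma^{(1)}_{DD}-\bfI_d=n_1^{-1}Y^\top Y-\bfI_d$, where $Y=X^{(1)}_{\cdot D}$ has i.i.d.\ $\mathcal N(0,1)$ entries. Apply \Cref{lem: wishart op} with $t=\sqrt{\gamma_* d\log p/n_1}$; the failure probability is $2\exp(-\gamma_* d\log p/2)=2p^{-\gamma_* d/2}$. There are at most $\binom{p}{d}\le p^d$ sets of size $d$, so the union bound over all $D$ gives
\[
\sum_{d=1}^{k_u}2p^{d}p^{-\gamma_* d/2}=\sum_{d\ge1}2p^{-d(\gamma_*/2-1)}\le 4p^{1-\gamma_*/2}.
\]
The last inequality uses $\gamma_*\ge 3$ and $p\ge2$: the ratio of the geometric series is then at most $2^{-1/2}$, and its tail is bounded by a constant times its first term. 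The exact constant may need mild adjustment.

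For the second condition, write $X^{(1)}_{\cdot B_*}=G\,\Sigma_{B_*B_*}^{1/2}$, with $G$ an $n_1\times|B_*|$ standard Gaussian matrix independent of $Y=X^{(1)}_{\cdot D}$. Then
\[
\hat\Sigma^{(1)}_{DB_*}=n_1^{-1}Y^\top G\,\Sigma_{B_*B_*}^{1/2}.
\]
The plan is to condition on $X^{(1)}_{\cdot B_*}$ and let $Q$ be an orthonormal basis of its column space. Then $Y^\top X^{(1)}_{\cdot B_*}=(Y^\top Q)(Q^\top X^{(1)}_{\cdot B_*})$, and $Y^\top Q$ is a $d\times|B_*|$ standard Gaussian matrix. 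Moreover $\|Q^\top X^{(1)}_{\cdot B_*}\|_2=\|X^{(1)}_{\cdot B_*}\|_2=\sqrt{n_1\|\hat\Sigma^{(1)}_{B_*B_*}\|_2}=\sqrt{n_1\|\Gamma_{B_*}(\hat\Sigma^{(1)})\|_2}$; the last equality holds provided $\|\hat\Sigma^{(1)}_{B_*B_*}\|_2\ge1$, which is the only nontrivial case (otherwise a trivial adjustment is needed). \Cref{lem: davidson szarek} with $t=\sqrt{\gamma_* d\log p}$ then gives
\[
\|\hat\Sigma^{(1)}_{DB_*}\|_2\le n_1^{-1}\bigl(\sqrt d+\sqrt{|B_*|}+t\bigr)\sqrt{n_1\|\Gamma_{B_*}(\hat\Sigma^{(1)})\|_2},
\]
which is exactly the stated bound, with failure probability $2p^{-\gamma_* d/2}$. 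The same union bound as before yields another $4p^{1-\gamma_*/2}$.

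Adding the two failure probabilities gives $\mathbb P(\mathcal E_1)\ge1-8p^{1-\gamma_*/2}$, and nonemptiness of $\bbB_{k_u}$ on $\mathcal E_1$ is immediate. The condition $n_1\ge ck_u\log p$ is not strictly needed for these bounds; it only ensures the thresholds are $O(1)$. The main obstacle is the second condition: the conditioning and rotation argument must produce exactly $\sqrt{\|\Gamma_B\|_2}$ rather than a bound involving $\Sigma$, and the case $\|\Gamma_{B_*}(\hat\Sigma^{(1)})\|_2$ versus $\|\hat\Sigma^{(1)}_{B_*B_*}\|_2$ must be handled carefully.
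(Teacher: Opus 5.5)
Your proposal is correct and follows essentially the same route as the paper's proof. The diagonal condition is handled by \Cref{lem: wishart op} plus a union bound, and the cross condition by conditioning on $X^{(1)}_{\cdot B_*}$, rotating onto its column space, and applying \Cref{lem: davidson szarek} plus a union bound. Both loose ends you flagged close without further work: $\|\Gamma_{B_*}(\hat\Sigma^{(1)})\|_2\ge\|\hat\Sigma^{(1)}_{B_*B_*}\|_2$ always holds, so the needed inequality requires no case split; and $\sum_{d\ge1}2r^d\le 4r$ whenever $r=p^{1-\gamma_*/2}\le 1/2$, while for $r>1/2$ the claimed bound $1-8r$ is negative and hence vacuous, so the constant $8$ needs no adjustment.
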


By \Cref{lem: set A nonempty},
\(\mathbb{P}({\cal E}_1)\ge 1-8p^{1-\gamma_*/2}\).
On the event \({\cal E}_1\), the class \(\bbB_{k_u}\) is nonempty and we choose any \(\widehat B\in\bbB_{k_u}\).
We then construct
\begin{equation}\label{eq: spike estimators}
\hat{\Sigma}_{\text{spike}}
:=
\Gamma_{\widehat{B}}(\hat{\Sigma}^{(1)})\ind\left\{{\cal E}_1\right\}+{\bfI}_p\ind\left\{{\cal E}_1^c\right\},
\qquad
\widehat{\Omega}
:=
\hat{\Sigma}_{\text{spike}}^{-1}.
\end{equation}

By triangle inequality,
\begin{align}\label{eq: triangle inequality matrix}
\bigl\|\hat{\Sigma}_{{\text{spike}}}-\Sigma\bigr\|_2 \ind\left\{{\cal E}_1\right\}
&\le
\underbrace{\,\bigl\|\Gamma_{\widehat{B}}(\hat{\Sigma}^{(1)})-\Gamma_{B_*}(\hat{\Sigma}^{(1)})\bigr\|_2\ind\left\{{\cal E}_1\right\}}_{\mathrm{I}}
+
\underbrace{\,\bigl\|\Gamma_{B_*}(\hat{\Sigma}^{(1)})-\Sigma\bigr\|_2\ind\left\{{\cal E}_1\right\}}_{\mathrm{II}}
\end{align}

It remains to control $\|\widehat{\Omega}-\Omega\|_2$ on ${\cal E}_1$.

\textit{Bounding $\mathrm{I}$ in \eqref{eq: triangle inequality matrix}.} Let
\[
G=B_*\cap \widehat{B},\qquad
M=B_*\cap \widehat{B}^{\,c},\qquad
O=B_*^{\,c}\cap \widehat{B}.
\label{eq:decomp-GMO}
\]
These correspond to correctly identified, missing, and overly identified coordinates, respectively.
Writing the matrix in the block order \((G,M,O)\), the difference
\(\Gamma_{\widehat{B}}(\hat{\Sigma}^{(1)})-\Gamma_{B_*}(\hat{\Sigma}^{(1)})\) can be decomposed as a sum of four
block-sparse matrices,
\[
\Gamma_{\widehat{B}}(\hat{\Sigma}^{(1)})-\Gamma_{B_*}(\hat{\Sigma}^{(1)})
=
A_M + A_O + A_{GM} + A_{GO},
\]
where
\[
A_M=
\begin{pmatrix}
0&0&0\\
0&\bfI_{MM}-\hat{\Sigma}^{(1)}_{MM}&0\\
0&0&0
\end{pmatrix},\quad
A_O=
\begin{pmatrix}
0&0&0\\
0&0&0\\
0&0&\hat{\Sigma}^{(1)}_{OO}-\bfI_{OO}
\end{pmatrix},
\]
and
\[
A_{GM}=
\begin{pmatrix}
0&-\hat{\Sigma}^{(1)}_{GM}&0\\
-\hat{\Sigma}^{(1)}_{MG}&0&0\\
0&0&0
\end{pmatrix},\quad
A_{GO}=
\begin{pmatrix}
0&0&\hat{\Sigma}^{(1)}_{GO}\\
0&0&0\\
\hat{\Sigma}^{(1)}_{OG}&0&0
\end{pmatrix}.
\]
By the triangle inequality for the operator norm,
\[
\bigl\|\Gamma_{\widehat{B}}(\hat{\Sigma}^{(1)})-\Gamma_{B_*}(\hat{\Sigma}^{(1)})\bigr\|_2
\le
\|A_M\|_2+\|A_O\|_2+\|A_{GM}\|_2+\|A_{GO}\|_2.
\]
Moreover, since \(A_M\) and \(A_O\) are block-diagonal, we have
\[
\|A_M\|_2=\|\bfI_{MM}-\hat{\Sigma}^{(1)}_{MM}\|_2,
\qquad
\|A_O\|_2=\|\hat{\Sigma}^{(1)}_{OO}-\bfI_{OO}\|_2.
\]
For the off-diagonal blocks, applying the triangle inequality again yields
\[
\left\|
\begin{pmatrix}
0 & X\\
X^\top & 0
\end{pmatrix}
\right\|_2
\le
\left\|
\begin{pmatrix}
0 & X\\
0 & 0
\end{pmatrix}
\right\|_2
+
\left\|
\begin{pmatrix}
0 & 0\\
X^\top & 0
\end{pmatrix}
\right\|_2
=
\|X\|_2+\|X^\top\|_2
=
2\|X\|_2,
\]
and thus \(\|A_{GM}\|_2\le 2\|\hat{\Sigma}^{(1)}_{GM}\|_2\) and
\(\|A_{GO}\|_2\le 2\|\hat{\Sigma}^{(1)}_{GO}\|_2\).

Combining these bounds gives
\begin{align}\label{eq: decomp variance error}
\|\Gamma_{\widehat{B}}(\hat{\Sigma}^{(1)})-\Gamma_{B_*}(\hat{\Sigma}^{(1)})\|_2
&\le
\|\bfI_{MM}-\hat{\Sigma}^{(1)}_{MM}\|_2
+\|\hat{\Sigma}^{(1)}_{OO}-\bfI_{OO}\|_2
+2\|\hat{\Sigma}^{(1)}_{GM}\|_2
+2\|\hat{\Sigma}^{(1)}_{GO}\|_2.
\end{align}

We now define the event
\[
{\cal E}_2=\left\{
\left\|\Gamma_{\widehat{B}}(\hat{\Sigma}^{(1)})
-\Gamma_{B_*}(\hat{\Sigma}^{(1)})\right\|_2
\lesssim C_{E,1}\sqrt{\frac{k_u\log p}{n_1}}
\right\}.
\]
The following analysis of the right-hand side of
\eqref{eq: decomp variance error} establishes that, for a sufficiently large
constant \(C_{E,1}\), the event \({\cal E}_2\) occurs with probability \(1-o(1)\).

\begin{enumerate}
    \item Since $M\subseteq \widehat{B}^{\,c}$ and $|M|\le k_u$, the defining property of $\widehat{B}\in\bbB_{k_u}$ yields, for every such subset $D\subseteq \widehat{B}^{\,c}$ with $|D|\le k_u$,
\[
\|\hat\Sigma^{(1)}_{D}-\bfI_p\|_2\le 2\left(\sqrt{\frac{|D|}{n_1}}+\sqrt{\frac{\gamma_*|D|\log p}{n_1}}\right)
+\left(\sqrt{\frac{|D|}{n_1}}+\sqrt{\frac{\gamma_*|D|\log p}{n_1}}\right)^2,
\]
where, since we have $n_1\ge  c k_u \log p$ for some sufficiently large constant $c$, the leading rate in the right-hand side is $\sqrt{(\gamma_*|D|\log p)/{n_1}}$.

Applying this with $D=M$ gives
\begin{align}\label{eq: leading I in covariance}
\|\hat\Sigma^{(1)}_{MM}-\bfI_{MM}\|_2
=
\|\hat\Sigma^{(1)}_{M}-\bfI_p\|_2
\lesssim \sqrt{\frac{k_u\log p}{n_1}}.
\end{align}

\item
For any fixed $D\subseteq B_*^{\,c}$ with $|D|\le k_u$,
the submatrix $X^{(1)}_{\cdot D}$ has i.i.d.\ $\mathcal N(0,\bfI_{|D|})$ columns, hence
\[
\hat\Sigma^{(1)}_{DD}=\frac{1}{n_1}(X^{(1)}_{\cdot D})^\top X^{(1)}_{\cdot D}
=
\frac{1}{n_1}Z^\top Z,
\]
for $Z\in\mathbb R^{n_1\times |D|}$ with i.i.d.\ $\mathcal N(0,1)$ entries.
Therefore, by similar calculation in \Cref{sec: proof A nonempty}, with probability at least $1-4(ep)^{1-\gamma_*/2}$, we have
\[
\|\hat\Sigma^{(1)}_{DD}-\bfI_{DD}\|_2\le  2\left(\sqrt{\frac{|D|}{n_1}}+\sqrt{\frac{\gamma_*|D|\log p}{n_1}}\right)
+\left(\sqrt{\frac{|D|}{n_1}}+\sqrt{\frac{\gamma_*|D|\log p}{n_1}}\right)^2,
\]
for all $D\subseteq B_*^c$ and $|D|\le  k_u$.
We take $D=O$ and conclude that the leading term on the right-hand side of the above display is of order $\sqrt{k_u\log p/n_1}$.
Therefore, with probability at least $1-4(ep)^{1-\gamma_*/2}$, it holds that
\begin{align}\label{eq: leading II in covariance}
\|\hat\Sigma^{(1)}_{OO}-\bfI_{OO}\|_2
\lesssim \sqrt{\frac{k_u\log p}{n_1}}.
\end{align}

\item
Since $M\subseteq \widehat{B}^{\,c}$ and $G\subseteq \widehat{B}$, we have $\hat\Sigma^{(1)}_{GM}$ as a submatrix of $\hat\Sigma^{(1)}_{\widehat{B}M}$.
By monotonicity of the operator norm under taking submatrices,
\(
\|\hat\Sigma^{(1)}_{GM}\|_2
\le
\|\hat\Sigma^{(1)}_{\widehat{B}M}\|_2
\).
The defining property of $\widehat{B}\in\bbB_{k_u}$ states that for every $D\subseteq \widehat{B}^{\,c}$ with $|D|\le k_u$,
\[
\|\hat\Sigma^{(1)}_{D\widehat{B}}\|_2
\le
\sqrt{\|\Gamma_{\widehat{B}}(\hat{\Sigma}^{(1)})\|_2}\biggl(\sqrt{\frac{|D|}{n_1}}+\sqrt{\frac{|\widehat{B}|}{n_1}}+\sqrt{\frac{\gamma_* |D|\log p}{n_1}}\biggr).
\]
Applying this with $D=M$ gives
\begin{align*}
\|\hat\Sigma^{(1)}_{GM}\|_2
\le
\|\hat\Sigma^{(1)}_{\widehat{B}M}\|_2
\le
\sqrt{\|\hat{\Sigma}^{(1)}_{\hat{B}\hat{B}}\|_2}\biggl(\sqrt{\frac{|M|}{n_1}}+\sqrt{\frac{|\widehat{B}|}{n_1}}+\sqrt{\frac{\gamma_* |M|\log p}{n_1}}\biggr).
\end{align*}

Moreover, we have
$$\max_{|B|\le k_u}\|\hat{\Sigma}_{BB}^{(1)}\|_2\le \max_{|B|\le k_u}\|\Sigma_{BB}\|\cdot \|\Sigma_{BB}^{-1/2}\hat{\Sigma}_{BB}^{(1)}\Sigma_{BB}^{-1/2}\|_2\le  M_1\max_{|B|\le k_u }\|\Sigma_{BB}^{-1/2}\hat{\Sigma}_{BB}^{(1)}\Sigma_{BB}^{-1/2}\|_2,$$
where we have used the eigenvalue condition $\lambda_{\max}(\Sigma)\le  M_1$.
Therefore, Lemma~\ref{lem: wishart op} with $k=k_u$, together with a union bound, gives (similarly to \Cref{sec: proof A nonempty}) that
$$\max_{|B|\le k_u}\|\hat{\Sigma}_{BB}^{(1)}\|_2\le  M_1\left(1+\sqrt{\frac{k_u}{n_1}}+\sqrt{\frac{\gamma_* k_u\log p}{n_1}}\right)=O(1)$$
with probability $1-4p^{1-\gamma_*/2}$. Therefore, we have
\begin{align}\label{eq: leading III in covariance}
\|\hat\Sigma^{(1)}_{GM}\|_2\lesssim \sqrt{\frac{k_u\log p}{n_1}}.
\end{align}

\item

We next control \(\|\hat{\Sigma}^{(1)}_{GO}\|_2\).
On $\mathcal{E}_1$, since
\[
G\subseteq B_*,
\qquad
O\subseteq B_*^c,
\qquad
|O|\le |\widehat B|\le k_u,
\]
the defining property of \(B_*\in\bbB_{k_u}\) in \Cref{eq:supp-set}, applied with \(D=O\), gives
\[
\|\hat\Sigma^{(1)}_{O B_*}\|_2
\le
\sqrt{\|\Gamma_{B_*}(\hat{\Sigma}^{(1)})\|_2}
\biggl(
\sqrt{\frac{|O|}{n_1}}
+\sqrt{\frac{|B_*|}{n_1}}
+\sqrt{\frac{\gamma_* |O|\log p}{n_1}}
\biggr).
\]
Moreover,
\[
\|\hat{\Sigma}^{(1)}_{GO}\|_2
=
\|\hat{\Sigma}^{(1)}_{OG}\|_2
\le
\|\hat{\Sigma}^{(1)}_{O B_*}\|_2.
\]
Using the same argument with Lemma~\ref{lem: wishart op} in the bound on $\|\hat{\Sigma}_{GM}^{(1)}\|_2$,
it holds with probability $1-4p^{1-\gamma_*/2}$ that
\[
\|\Gamma_{B_*}(\hat{\Sigma}^{(1)})\|_2=O(1).
\]
On the intersection of both events, we have
\begin{equation}\label{eq: leading IV in covariance}
\|\hat{\Sigma}^{(1)}_{GO}\|_2
\lesssim \sqrt{\frac{k_u\log p}{n_1}}.
\end{equation}

\end{enumerate}

\bigskip

Combining \eqref{eq: decomp variance error} with
\eqref{eq: leading I in covariance}, \eqref{eq: leading II in covariance},
\eqref{eq: leading III in covariance}, and \eqref{eq: leading IV in covariance} yields that with probability $1-20 p^{1-\gamma_*/2}$, we have
\begin{align}\label{eq: bound I in covariance}
\|\Gamma_{\widehat{B}}(\hat{\Sigma}^{(1)})-\Gamma_{B_*}(\hat{\Sigma}^{(1)})\|_2
\lesssim \sqrt{\frac{k_u\log p}{n_1}}.
\end{align}

\textit{Bounding $\mathrm{II}$ in \eqref{eq: triangle inequality matrix}.} Note that
\[\|\Gamma_{B_*}(\hat{\Sigma}^{(1)})-\Sigma\|_2=\|\hat{\Sigma}^{(1)}_{B_*B_*}-\Sigma_{B_*B_*}\|_2\le  \|\Sigma_{B_*B_*}\|_2\cdot \|\Sigma_{B_*B_*}^{-1/2}\hat{\Sigma}_{B_*B_*}^{(1)}\Sigma_{B_*B_*}^{-1/2}-\bfI_{|B_*|}\|_2,\]
where we have $\|\Sigma_{B_*B_*}\|_2\le  M_1$ by the eigenvalue condition in \Cref{eq: parameter space} and Lemma~\ref{lem: wishart op} with $k=|B_*|$ implies that with probability at least $1-4p^{1-\gamma_*/2}$, we have
$$\|\|\Sigma_{B_*B_*}^{-1/2}\hat{\Sigma}_{B_*B_*}^{(1)}\Sigma_{B_*B_*}^{-1/2}-\bfI_{|B_*|}\|_2\lesssim \sqrt{\frac{k_u\log p}{n_1}}.$$
We denote this event by \({\cal E}_3\).

Combining the above bounds for $\mathrm{I}$ and $\mathrm{II}$ in \eqref{eq: triangle inequality matrix}, we have arrived at the spectral norm of the estimation error of $\hat{\Sigma}_{\text{spike}}$:
\begin{align}\label{eq: final covariance error}
\bigl\|\hat{\Sigma}_{\text{spike}}-\Sigma\bigr\|_2
\lesssim \sqrt{\frac{k_u\log p}{n_1}}
\end{align}
conditioning on ${\cal E}_1\cap {\cal E}_2\cap {\cal E}_3$.

If $n_1\ge c k_u\log p$ for a sufficiently large constant $c>0$, then
\eqref{eq: final covariance error} implies
$\|\hat{\Sigma}_{\text{spike}}-\Sigma\|_2\le (2M_1)^{-1}$ with high probability.
On ${\cal E}_1\cap {\cal E}_2\cap {\cal E}_3$, using
\[
\widehat{\Omega}-\Omega
=
\hat{\Sigma}_{\text{spike}}^{-1}-\Sigma^{-1}
=
\hat{\Sigma}_{\text{spike}}^{-1}\,(\Sigma-\hat{\Sigma}_{\text{spike}})\,\Sigma^{-1},
\]
we obtain
\[
\|\widehat{\Omega}-\Omega\|_2
\le
\|\widehat{\Omega}\|_2\,\|\Omega\|_2\,\|\hat{\Sigma}_{\text{spike}}-\Sigma\|_2.
\]
By Weyl's inequality and $\lambda_{\min}(\Sigma)\ge 1/M_1$,
\(
\lambda_{\min}(\hat{\Sigma}_{\text{spike}})
\ge
\lambda_{\min}(\Sigma)-\|\hat{\Sigma}_{\text{spike}}-\Sigma\|_2
\ge
1/M_1-\|\hat{\Sigma}_{\text{spike}}-\Sigma\|_2,
\)
hence
\[
\|\widehat{\Omega}\|_2
=
\frac{1}{\lambda_{\min}(\hat{\Sigma}_{\text{spike}})}
\le
\frac{1}{1/M_1-\|\hat{\Sigma}_{\text{spike}}-\Sigma\|_2}.
\]
Combining the displays yields
\[
\|\widehat{\Omega}-\Omega\|_2
\le
\frac{M_1\|\hat{\Sigma}_{\text{spike}}-\Sigma\|_2}{1/M_1-\|\hat{\Sigma}_{\text{spike}}-\Sigma\|_2}
\lesssim
\sqrt{\frac{k_u\log p}{n_1}}.
\]
Since $n_1\asymp n$, this gives the stated rate
$\|\widehat{\Omega}-\Omega\|_2\lesssim \sqrt{(k_u\log p)/n}$.
Moreover, by construction $\hat{\Sigma}_{\text{spike}}$ equals $\bfI_p$ outside $\widehat{B}$ and
$|\widehat{B}|\le k_u$. The proof is completed by choosing $\gamma_*>2$.

\end{proof}

\subsection{Proof of Lemma~\ref{lem: set A nonempty}}\label{sec: proof A nonempty}
\begin{proof}
Recall that $\Sigma\in\Pi_0(k,p)$ with $\Sigma=\bfI_p+V\Lambda V^\top$ and
$B_*=\supp(V)$ satisfying $|B_*|=k\le k_u$.
Let $\hat{\Sigma}^{(1)}$ be defined in \eqref{eq: sample cov 1}.
To prove that $\bbB_{k_u}\neq\varnothing$ with high probability, it suffices to show that
$B_*\in\bbB_{k_u}$ with high probability.

From the definition of $\bbB_{k_u}$ in \eqref{eq:supp-set},
$\bbP(B_*\notin \bbB_{k_u})$ can be bounded by the sum of
\begin{equation}\label{eq:eq: prob B notin B revised-ondiag}
    \bbP\Biggl(\exists D\subseteq B_*^c, |D|\le k_u:\
\|\hat{\Sigma}^{(1)}_D-\bfI_p\|_2>
2\left(\sqrt{\frac{|D|}{n_1}}+\sqrt{\frac{\gamma_*|D|\log p}{n_1}}\right)
+\left(\sqrt{\frac{|D|}{n_1}}+\sqrt{\frac{\gamma_*|D|\log p}{n_1}}\right)^2
\Biggr),
\end{equation}
and
\begin{equation}\label{eq:eq: prob B notin B revised-offdiag}
    \bbP\Biggl(\exists D\subseteq B_*^c, |D|\le k_u:\
\|\hat{\Sigma}^{(1)}_{DB_*}\|_2>
\sqrt{\|\Gamma_{B_*}(\hat{\Sigma}^{(1)})\|_2}\biggl(\sqrt{\frac{|D|}{n_1}}+\sqrt{\frac{|B_*|}{n_1}}+\sqrt{\frac{\gamma_* |D|\log p}{n_1}}\biggr)
\Biggr).
\end{equation}

We bound these two terms separately.

For the term in \eqref{eq:eq: prob B notin B revised-ondiag}, since $D\subseteq B_*^c$ and $\Sigma_{DD}=\bfI_{DD}$, we have
$\hat{\Sigma}^{(1)}_{DD}\stackrel{d}{=}n_1^{-1}Y^\top Y$ for a matrix $Y$ with
i.i.d.\ $\mathcal{N}(0,1)$ entries. Therefore, by \Cref{lem: wishart op} and a
union bound,
\begin{align*}
&\bbP\Biggl(\exists D\subseteq B_*^c,\ |D|\le k_u:\
\|\hat{\Sigma}^{(1)}_D-\bfI_p\|_2>
2\left(\sqrt{\frac{|D|}{n_1}}+\sqrt{\frac{\gamma_*|D|\log p}{n_1}}\right)
+\left(\sqrt{\frac{|D|}{n_1}}+\sqrt{\frac{\gamma_*|D|\log p}{n_1}}\right)^2
\Biggr)\\
\le\;&
\sum_{\substack{D\subseteq B_*^c\\ |D|\le k_u}}
\bbP\Biggl(\|\hat{\Sigma}^{(1)}_D-\bfI_p\|_2>
2\left(\sqrt{\frac{|D|}{n_1}}+\sqrt{\frac{\gamma_*|D|\log p}{n_1}}\right)
+\left(\sqrt{\frac{|D|}{n_1}}+\sqrt{\frac{\gamma_*|D|\log p}{n_1}}\right)^2
\Biggr)\\
\le\;&
\sum_{\ell=1}^{k_u}
{
\binom{p-k_u}{\ell}
}
\,2\exp\!\left(-\frac{\gamma_*}{2}\,\ell\log p\right)
\le
2\sum_{\ell=1}^{k_u} p^{\ell (1-\gamma_*/2)}
\le
4 p^{1-\gamma_*/2},
\end{align*}
where the last inequality holds for all $\gamma_*\ge 3$ and $p\ge 2$.

For the term in \eqref{eq:eq: prob B notin B revised-offdiag}, fix $D\subseteq B_*^c$ with $|D|=\ell\le k_u$.
Let $W$ be the left singular vector matrix of $X_{\cdot B_*}^{(1)}$.
Then
\[
\bigl\|\hat{\Sigma}^{(1)}_{DB_*}\bigr\|_2
\le
\frac{1}{n_1}\bigl\|\bigl(X_{\cdot D}^{(1)}\bigr)^\top W\bigr\|_2\,
\bigl\|X_{\cdot B_*}^{(1)}\bigr\|_2
\stackrel{d}{=}
\frac{1}{\sqrt{n_1}}\|Y\|_2\,\sqrt{\bigl\|\Gamma_{B_*}(\hat{\Sigma}^{(1)})\bigr\|_2},
\]
where $Y$ is a $\ell\times |B_*|$ matrix with i.i.d.\ $\mathcal{N}(0,1)$ entries. The last equality in distribution is understood conditionally on
\(X_{\cdot B_*}^{(1)}\), or equivalently conditionally on
\(\Gamma_{B_*}(\hat{\Sigma}^{(1)})\). Indeed, by the block-diagonal structure of
\(\Sigma_*\) over the \(D\)- and \(B_*\)-blocks, \(X_{\cdot D}^{(1)}\) is
independent of \(X_{\cdot B_*}^{(1)}\), and hence is independent of \(W\), since
\(W\) is constructed from \(X_{\cdot B_*}^{(1)}\). Therefore, conditional on
\(W\), the Gaussian matrix
\[
\bigl(X_{\cdot D}^{(1)}\bigr)^\top W
\]
has the same distribution as
\[
\sqrt{n_1}\,Y\,\Gamma_{B_*}(\hat{\Sigma}^{(1)})^{1/2},
\]
with \(Y\) independent of \(\Gamma_{B_*}(\hat{\Sigma}^{(1)})\). Taking spectral
norms and using
\[
\bigl\|X_{\cdot B_*}^{(1)}\bigr\|_2
=
\sqrt{n_1}\sqrt{\bigl\|\Gamma_{B_*}(\hat{\Sigma}^{(1)})\bigr\|_2}
\]
gives the displayed bound.

Applying the Davidson--Szarek bound (\Cref{lem: davidson szarek}) and then taking a union bound over $D$ gives
\begin{align*}
&\bbP\left(\exists D\subseteq B_*^c,\ |D|\le k_u:\ \bigl\|\hat{\Sigma}^{(1)}_{DB_*}\bigr\|_2> \sqrt{\bigl\|\Gamma_{B_*}(\hat{\Sigma}^{(1)})\bigr\|}\biggl(\sqrt{\frac{|D|}{n_1}}+\sqrt{\frac{|B_*|}{n_1}}+\sqrt{\frac{\gamma_* |D|\log p}{n_1}}\biggr)\right)\\
\le& \sum_{\substack{D\subseteq B_*^c\\ |D|\le k_u}} \bbP\left(\bigl\|\hat{\Sigma}^{(1)}_{DB_*}\bigr\|> \sqrt{\bigl\|\Gamma_{B_*}(\hat{\Sigma}^{(1)})\bigr\|}\biggl(\sqrt{\frac{|D|}{n_1}}+\sqrt{\frac{|B_*|}{n_1}}+\sqrt{\frac{\gamma_* |D|\log p}{n_1}}\biggr)\right)\\
\le& \sum_{\substack{D\subseteq B_*^c\\ |D|\le k_u}} \bbP\left(\|Y\|_2>\sqrt{n_1}\biggl(\sqrt{\frac{|D|}{n_1}}+\sqrt{\frac{|B_*|}{n_1}}+\sqrt{\frac{\gamma_* |D|\log p}{n_1}}\biggr)\right)\\
\le& \sum_{\ell=1}^{k_u}\binom{p-k_u}{\ell}\,2\exp\!\left(-\frac{\gamma_*}{2}\,\ell\log p\right) \le 2\sum_{\ell=1}^{k_u} p^{\ell (1- {\gamma_*}/{2})} \le 4 p^{1-\gamma_*/2},
\end{align*}
where the last inequality holds for all $\gamma_*\ge 3$ and $p\ge 2$.

Combining the two bounds yields $\mathbb{P}\bigl({\cal E}_1^c\bigr)=\bbP(B_*\notin\bbB_{k_u})\le 8p^{1-\gamma_*/2}$.
Since \(B_*\in\bbB_{k_u}\) implies \(\bbB_{k_u}\neq\varnothing\), this completes the proof.

\end{proof}

\section{Proof of Additional Technical Results for the Lower Bounds}\label{sec: proof lemma lower}

\subsection{Technical lemmas}
\begin{lemma}[{\cite[Lemma 9]{cai2012optimal}}]\label{lem: Gaussian chi square}
    Let \( g_i \) be the density function of \( {\mathcal{N}}(0, \Sigma_i) \) for \( i = 0, 1, 2 \), respectively. Then
    \[
    \int \frac{g_1 g_2}{g_0} = \left( \det \left( \bfI - \Sigma_0^{-1} (\Sigma_1 - \Sigma_0) \Sigma_0^{-1} (\Sigma_2 - \Sigma_0) \right) \right)^{-1/2}.
    \]
\end{lemma}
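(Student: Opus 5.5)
The plan is a direct Gaussian computation: combine the three densities into a single quadratic form, integrate it, and then match the resulting determinant with the one in the statement by a short matrix identity. Throughout, $\Sigma_0,\Sigma_1,\Sigma_2$ are positive definite $p\times p$ matrices. The identity should be read under the standing proviso that the integral is finite, i.e.\ that
\[
A:=\Sigma_1^{-1}+\Sigma_2^{-1}-\Sigma_0^{-1}
\]
is positive definite. Otherwise both sides are interpreted as $+\infty$ or the formula is vacuous. In the paper's applications this holds because the perturbations $\Sigma_i-\Sigma_0$ are small.

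\textbf{Step 1 (integrate).} Write $g_i(x)=(2\pi)^{-p/2}\det(\Sigma_i)^{-1/2}\exp(-x^\top\Sigma_i^{-1}x/2)$. The integrand $g_1g_2/g_0$ then equals
\[
(2\pi)^{-p/2}\det(\Sigma_1)^{-1/2}\det(\Sigma_2)^{-1/2}\det(\Sigma_0)^{1/2}\exp\!\left(-\tfrac12 x^\top A x\right).
\]
Integrating the Gaussian kernel gives $(2\pi)^{p/2}\det(A)^{-1/2}$. Hence
\[
\int \frac{g_1g_2}{g_0}=\left(\frac{\det\Sigma_1\,\det A\,\det\Sigma_2}{\det\Sigma_0}\right)^{-1/2}.
\]

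\textbf{Step 2 (algebra).} Use multiplicativity of the determinant to combine the numerator:
\[
\det\Sigma_1\det A\det\Sigma_2=\det(\Sigma_1A\Sigma_2)=\det\bigl(\Sigma_2+\Sigma_1-\Sigma_1\Sigma_0^{-1}\Sigma_2\bigr).
\]
Dividing by $\det\Sigma_0$ then gives
\[
\det\bigl(\Sigma_0^{-1}(\Sigma_1+\Sigma_2-\Sigma_1\Sigma_0^{-1}\Sigma_2)\bigr).
\]
Now expand the matrix in the statement:
\[
\mathbf I-\Sigma_0^{-1}(\Sigma_1-\Sigma_0)\Sigma_0^{-1}(\Sigma_2-\Sigma_0)
=\mathbf I-\Sigma_0^{-1}\bigl(\Sigma_1\Sigma_0^{-1}\Sigma_2-\Sigma_1-\Sigma_2+\Sigma_0\bigr).
\]
The $\mathbf I$ and $-\Sigma_0^{-1}\Sigma_0$ terms cancel, leaving exactly $\Sigma_0^{-1}(\Sigma_1+\Sigma_2-\Sigma_1\Sigma_0^{-1}\Sigma_2)$. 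The two determinants therefore coincide, and the claim follows.

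The only delicate point is the integrability condition on $A$. Everything else is bookkeeping, and the expansion in Step 2 is the main place where an algebra slip could occur.
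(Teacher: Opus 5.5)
Your computation is correct, and it is the standard argument for this identity. The paper gives no proof of its own and only cites the lemma; your reduction via $A=\Sigma_1^{-1}+\Sigma_2^{-1}-\Sigma_0^{-1}$ and the identity $\Sigma_1A\Sigma_2=\Sigma_0\bigl(\mathbf I-\Sigma_0^{-1}(\Sigma_1-\Sigma_0)\Sigma_0^{-1}(\Sigma_2-\Sigma_0)\bigr)$ settles it, and I checked your expansion and it has no slip.

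One wording point: your proviso $A\succ 0$ is genuinely necessary, but ``both sides $+\infty$'' is not the right reading when it fails. For example, $\Sigma_0=\mathbf I$ and $\Sigma_1=\Sigma_2=10\,\mathbf I$ give a divergent integral but a finite right-hand side $1/80$, so the identity is simply false outside $A\succ 0$. The condition does hold wherever the paper applies the lemma.
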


\begin{lemma}[{\cite[Lemma 10]{xie2025minimax}}]\label{lem: decreasing}
   The following function $\phi_\beta:\bbR\to\bbR$ is continuous and strictly decreasing:
  $$\phi_\beta(\beta)=\frac{\sum_{j=1}^p|\xi_j| \exp(-\beta/\xi_j^2)}{\sqrt{\sum_{j=1}^p \xi_j^2\exp(-\beta/\xi_j^2)}}.$$
\end{lemma}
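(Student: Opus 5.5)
The plan is to work with $\log\phi_\beta$ and show that its derivative is strictly negative at every $\beta\in\bbR$. Throughout, the sums run only over indices with $\xi_j\neq 0$. With the natural convention $\exp(-\beta/0)=0$, which is the one the paper uses in \eqref{eq: equation sol}, where only $j\le k_\xi$ enter, the remaining terms vanish. Since $\xi\neq 0$, the index set is nonempty.

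\emph{Step 1: continuity and smoothness.} Set $w_j=w_j(\beta)=\exp(-\beta/\xi_j^2)>0$, and write
\[
N(\beta)=\sum_j|\xi_j|w_j,\qquad D(\beta)=\sum_j\xi_j^2w_j .
\]
Both are finite sums of smooth, strictly positive functions of $\beta$. Hence $\phi_\beta=N/\sqrt{D}$ is smooth, in particular continuous, and strictly positive. It therefore suffices to prove $(\log\phi_\beta)'<0$.

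\emph{Step 2: the derivative.} Since $w_j'=-w_j/\xi_j^2$, we have
\[
N'=-\sum_j|\xi_j|^{-1}w_j,\qquad D'=-\sum_j w_j .
\]
Hence
\[
(\log\phi_\beta)'=\frac{N'}{N}-\frac{D'}{2D}
=-\frac{\sum_j|\xi_j|^{-1}w_j}{\sum_j|\xi_j|w_j}+\frac12\,\frac{\sum_j w_j}{\sum_j\xi_j^2w_j}.
\]
Denote the first ratio by $A>0$.

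\emph{Step 3: two Cauchy--Schwarz inequalities.} This comparison is the only step with real content. The first inequality, obtained by splitting $w_j=(|\xi_j|w_j)^{1/2}(|\xi_j|^{-1}w_j)^{1/2}$, is
\[
\Bigl(\sum_j w_j\Bigr)^2\le\Bigl(\sum_j|\xi_j|w_j\Bigr)\Bigl(\sum_j|\xi_j|^{-1}w_j\Bigr).
\]
The second, obtained by splitting $|\xi_j|w_j=(\xi_j^2w_j)^{1/2}w_j^{1/2}$, is
\[
\Bigl(\sum_j|\xi_j|w_j\Bigr)^2\le\Bigl(\sum_j\xi_j^2w_j\Bigr)\Bigl(\sum_j w_j\Bigr).
\]
The second inequality gives $\sum_j\xi_j^2w_j\ge(\sum_j|\xi_j|w_j)^2/\sum_j w_j$. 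Substituting this and then applying the first inequality yields
\[
\frac{\sum_j w_j}{\sum_j\xi_j^2w_j}\le\frac{(\sum_j w_j)^2}{(\sum_j|\xi_j|w_j)^2}\le\frac{\sum_j|\xi_j|^{-1}w_j}{\sum_j|\xi_j|w_j}=A .
\]

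\emph{Conclusion.} Combining Steps 2 and 3 gives $(\log\phi_\beta)'\le -A+A/2=-A/2<0$ for every $\beta$. Since $\phi_\beta>0$, this implies $\phi_\beta'<0$, so $\phi_\beta$ is strictly decreasing.

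This is consistent with the equal-magnitude case: if $|\xi_j|=a$ for all $j$ in the support, then $\phi_\beta=\sqrt{k_\xi}\,e^{-\beta/(2a^2)}$ and both Cauchy--Schwarz inequalities hold with equality. The limits used after \eqref{eq: equation sol}, namely $+\infty$ and $0$, are not part of this statement. They would follow by isolating the term with the largest $|\xi_j|$ as $\beta\to+\infty$ and the term with the smallest nonzero $|\xi_j|$ as $\beta\to-\infty$.
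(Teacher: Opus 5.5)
Your proof is correct. The derivative formulas, both Cauchy--Schwarz inequalities, and the chain showing $\sum_j w_j/\sum_j\xi_j^2w_j\le A$ all check out, so $(\log\phi_\beta)'\le -A/2<0$. The paper gives no argument of its own here: it imports the statement as Lemma~10 of \cite{xie2025minimax}. There is therefore no in-paper route to compare against, and your derivation serves as a complete self-contained proof.

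Two small remarks.

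First, restricting the sums to $\supp(\xi)$ is necessary, not just convenient. For $\beta<0$, a term with $\xi_j=0$ in the displayed formula (which sums over all $j\le p$) has the form $0\cdot\infty$. Your convention is exactly how \eqref{eq: equation sol} uses the lemma, with sums up to $k_\xi$.

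Second, your quantitative bound gives more than monotonicity. Write $A=\sum_j \pi_j\,\xi_j^{-2}$ with weights $\pi_j\propto|\xi_j|w_j$. Then $A\ge\|\xi\|_\infty^{-2}$, so $(\log\phi_\beta)'\le -1/(2\|\xi\|_\infty^{2})$ uniformly in $\beta$. Integrating from $0$ gives $\phi_\beta\to 0$ as $\beta\to+\infty$ and $\phi_\beta\to+\infty$ as $\beta\to-\infty$. These are the two limits the paper asserts without proof after \eqref{eq: equation sol}, and you get them without the term-isolation argument you sketched.
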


The following lemma records a basic comparison between expectations under a probability measure
and under the corresponding conditional measure on an event.

\begin{lemma}\label{lem:restriction_expectation_short}
Let $(\Omega,\mathcal{F},P)$ be a probability space.
Let $Q=P(\,\cdot\,\mid \mathcal{A})$ for some $\mathcal{A}\in\mathcal{F}$ with $P(\mathcal{A})>0$.
Then for any nonnegative measurable $X$,
\[
\mathbb{E}_{Q}[X]
=\frac{1}{P(\mathcal{A})}\mathbb{E}_{P}\!\left[X\mathbf{1}\{\mathcal{A}\}\right]
\le \frac{1}{P(\mathcal{A})}\mathbb{E}_{P}[X].
\]
If additionally $P(\mathcal{A})>1/2$, then
\[
\mathbb{E}_{Q}[X]\le \bigl(1+2P(\mathcal{A}^{c})\bigr)\mathbb{E}_{P}[X].
\]
\end{lemma}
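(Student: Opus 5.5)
The statement is the elementary comparison in \Cref{lem:restriction_expectation_short}. It follows directly from the definition of conditional probability together with a short algebraic bound on $1/P(\mathcal A)$.

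\emph{First claim.} By definition, $Q(B)=P(B\cap\mathcal A)/P(\mathcal A)$ for every $B\in\mathcal F$. Hence, for indicators $X=\mathbf 1_B$,
\[
\mathbb E_Q[X]=\frac{\mathbb E_P[X\mathbf 1\{\mathcal A\}]}{P(\mathcal A)}.
\]
By linearity this identity extends to nonnegative simple functions. By the monotone convergence theorem, applied under both $Q$ and $P$, it extends to all nonnegative measurable $X$; the value $+\infty$ is allowed on both sides. Since $X\ge 0$, we have $X\mathbf 1\{\mathcal A\}\le X$ pointwise, so monotonicity of the integral gives the stated inequality.

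\emph{Second claim.} It remains to show that $1/P(\mathcal A)\le 1+2P(\mathcal A^c)$ whenever $P(\mathcal A)>1/2$. Write $a=P(\mathcal A^c)\in[0,1/2)$. Then
\[
(1-a)(1+2a)=1+a-2a^2=1+a(1-2a)\ge 1,
\]
because $a\ge 0$ and $1-2a>0$. Dividing by $1-a>0$ gives $1/(1-a)\le 1+2a$. Multiplying this scalar inequality by $\mathbb E_P[X]\ge 0$ and combining it with the first claim yields the result. If $\mathbb E_P[X]=\infty$, the conclusion holds trivially.

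There is no real obstacle here. The only points needing care are that the monotone-class extension is justified for nonnegative rather than integrable $X$, and that the restriction $P(\mathcal A)>1/2$ is exactly what keeps the factor $1-2a$ nonnegative.
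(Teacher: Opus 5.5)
Your proof is correct and follows the same route as the paper: the identity comes from the definition of the conditional measure $Q$, the first inequality from $X\mathbf{1}\{\mathcal{A}\}\le X$, and the second from the elementary bound $1/(1-\varepsilon)\le 1+2\varepsilon$ for $\varepsilon=P(\mathcal{A}^c)<1/2$. You supply more detail than the paper does, namely the simple-function and monotone-convergence extension and the explicit check that $(1-a)(1+2a)=1+a(1-2a)\ge 1$.
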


\begin{proof}
The identity follows from the definition of $Q$, and the first inequality uses $X\mathbf{1}\{\mathcal{A}\}\le X$.
For the second inequality, write $P(\mathcal{A})=1-\varepsilon$ with $\varepsilon=P(\mathcal{A}^c)<1/2$ and note that
$1/(1-\varepsilon)\le 1+2\varepsilon$.
\end{proof}

\subsection{Proof of Lemma~\ref{lem: valid covariance}}\label{sec: proof valid covariance}
\begin{proof}

We verify the conditions in the definition of the null space in \eqref{eq: null space} hold with probability 1 under the prior distribution $\pi_1$.

\textit{Eigenvalues Control of $\Sigma$:}
The covariance matrix of $X$ is given by the lower-right $(p\times p)$ block of the covariance matrix of $Z$, i.e. $\Sigma^z$ defined in \eqref{eq: covariance construction}:
\[
\Sigma
= \left(\begin{array}{c|c}
     \bfI_{p_1\times p_1} & \alphab_1\alphab_2^\top \\
     \hline
     \alphab_2\alphab_1^\top & \bfI_{p_2\times p_2}
    \end{array}\right).
\]
The largest and smallest eigenvalues of $\Sigma$ are given by $1\pm \|\alphab_1\|_2\|\alphab_2\|_2$, and the other $p-2$ eigenvalues are all equal to 1.
Specifically, we have $\|\alphab_1\|_2=1$ and
\begin{equation}
\label{eq: valid cov alphab2}
\|\alphab_2\|_2=c_1\sqrt{p_1\log p/n}
\in [3^{-1}c_1\sqrt{k_u \log p/n},  2^{-1}c_1\sqrt{k_u \log p/n}],
\end{equation}
where we have used $p_1=\lfloor k_u/4\rfloor \in [k_u/9, k_u/4]$ for $k_u\ge  4$.
Since $k_u\lesssim n/\log p$ from \Cref{cdt: sparsity assumption}, we can  choose $c_1$ sufficiently small such that $1/M_1\le  \lambda_{\min}(\Sigma)\le  \lambda_{\max}(\Sigma)\le  M_1$ with probability 1.

\textit{Sparsity control of $\beta$:} For the covariance matrix $\Sigma^z$ defined in \eqref{eq: null point mat}, we have
\begin{equation*}
    \beta
    = \left(
        \begin{array}{cc}
            \bfI_{p_1\times p_1}& \alphab_1\alphab_2^\top\\
            \alphab_2\alphab_1^\top & \bfI_{p_2\times p_2}
        \end{array}
	      \right)^{-1} \left(\begin{array}{c}
	        {\bfz}_{p_1\times 1}\\
	        \kappa\alphab_2
	      \end{array}\right),
	\end{equation*}
	which leads to
\begin{equation}\label{eq: beta expression}
 \left\{
 \begin{aligned}
        \beta_{S_1}&=-\frac{\kappa \|\alphab_2\|_2^2}{1-\|\alphab_2\|_2^2\|\alphab_1\|_2^2}\alphab_1,\\
        \beta_{S_2}&=\frac{\kappa}{1-\|\alphab_2\|_2^2\|\alphab_1\|_2^2}\alphab_2.
	    \end{aligned}
	    \right.
	\end{equation}
	Therefore, we have $\|\beta\|_0=\|\beta_{S_1}\|_0+\|\beta_{S_2}\|_0\le  \|\alphab_1\|_0+\|\alphab_2\|_0\le  k_u/4+k_u/4=k_u/2$.

\textit{Control of $\sigma$:}
We verify that $\sigma$ is bounded between $0$ and $M_2$, which implies the constructed $\Sigma^z$ is positive definite.
A direct calculation yields
$$
\begin{aligned}
\sigma^2
& = \sigma_*^2-\beta^\top \Sigma\beta\\
& =\sigma_*^2-\frac{\kappa^2}{1-\|\alphab_2\|_2^2\|\alphab_1\|_2^2}\|\alphab_2\|_2^2\\
&\ge  M_2^2/4 - \frac{2^{-2}\kappa^2 c_1^2 k_u\log p/n }{1-2^{-2}c_1^2 k_u\log p/n}
\end{aligned}
$$
Since $k_u\lesssim n/\log p$, we can choose $c_1$ sufficiently small such that
\begin{equation}\label{eq: temp bound of delta1}
  2^{-2}  c_1^2 k_u\log p/n < \min(1/2, M_2^2/16).
\end{equation}

Therefore, we have
\begin{equation}\label{eq: bound of sigma in lem of valid covariance}
\sigma^2
\ge  M_2^2/4 - \kappa^2 M_2^2/8.
\end{equation}

\textit{Control of $\kappa$:}
Recall that $\kappa$ is defined as the unique solution to the linear equation
\begin{equation}\label{eq: kappa equation}
    \xi^\top \beta= c_2\sqrt{\sum_{j\le  k_u}\xi_j^2}\frac{k_u\log p}{n}.
\end{equation}
Note that
$$
(\xi_{S_1})^\top \beta_{S_1}=\kappa \frac{-\|\alphab_2\|_2^2(\xi_{S_1})^\top \alphab_1}{1-\|\alphab_2\|_2^2\|\alphab_1\|_2^2}=\kappa  \cdot \frac{\|\alphab_2\|_2^2 }{1-\|\alphab_2\|_2^2 } \sqrt{\sum_{j\le  p_1}\xi_j^2}
$$
and $\xi_{S_2}^\top \beta_{S_2}$ has the same sign as $\kappa$, which is positive, we see that the coefficient of $\kappa$ in \Cref{eq: kappa equation} is positive.
Consequently, the solution to the linear equation \eqref{eq: kappa equation} exists and is unique.
From \Cref{eq: valid cov alphab2} and \Cref{eq: temp bound of delta1}, we have
$$
3^{-2}c_1^2 \frac{k_u\log p}{n}\le  \|\alphab_2\|_2^2 \le  2^{-2}c_1^2 \frac{k_u\log p}{n}\le  1/2.
$$
Therefore, the solution satisfies that
$$c_2\sqrt{\sum_{j\le  k_u}\xi_j^2}\frac{k_u\log p}{n} \ge  (\xi_{S_1})^\top \beta_{S_1} \ge  2\kappa \|\alphab_2\|_2^2  \sqrt{\sum_{j\le  p_1}\xi_j^2} \ge  \frac{2}{9} \kappa c_1^2\sqrt{\sum_{j\le  p_1}\xi_j^2}\frac{k_u\log p}{n}.
$$
Note that $\sum_{p_1<j\le  k_u}\xi_j^2$ is a sum of at most $4p_1$ terms and each term is upper bounded by $\xi_{p_1}^2$. Therefore, we have $$\sqrt{\sum_{j\le  k_u}\xi_j^2}= \sqrt{\sum_{j\le  p_1}\xi_j^2+\sum_{p_1<j\le  k_u}\xi_j^2}\le  \sqrt{5\sum_{j\le  p_1}\xi_j^2}.$$
Therefore, the solution $\kappa$ satisfies
$$
\kappa\le  9\sqrt{5}c_2/(2c_1^2).
$$
Consequently, for any given value of $c_1$, we choose $c_2$ sufficiently small such that $9\sqrt{5}c_2/(2c_1^2)\le  1$. It follows that $\kappa\le  1$.
Furthermore, the inequality in \Cref{eq: bound of sigma in lem of valid covariance} guarantees that $\sigma\in (M_2/\sqrt{8}, M_2)$.

\end{proof}

\subsection{Proof of Lemma~\ref{lem: chi square integral 1}}\label{sec: proof chi square integral}
\begin{proof}
For $\Sigma_*^z$ defined in \eqref{eq: alter matrix}
and $\Sigma^z=g_1(\alphab_2),\tilde{\Sigma}^z=g_1(\tilde{\alphab}_2)$ defined in \eqref{eq: null point mat}, we have
$$\left(\Sigma_*^z\right)^{-1}=\left(\begin{array}{c|c|c}
   1/{\sigma_*^2}&\bfz_{1\times p_1}&\bfz_{1\times p_2}\\
   \hline
    \bfz_{p_1\times 1}&\bfI_{p_1\times p_1}&\bfz_{p_1\times p_2}\\
    \hline
    \bfz_{p_2\times 1}&\bfz_{p_2\times p_1}&\bfI_{p_2\times p_2}
\end{array}\right),$$
and
$$\left(\Sigma_*^z\right)^{-1}\left(\Sigma^z-\Sigma_*^z\right)=\left(
    \begin{array}{c|c|c}
        0&\bfz_{1\times p_1}&{\kappa\alphab_2^\top}/{\sigma_*^2}\\
        \hline
        \bfz_{p_1\times 1}&\bfz_{p_1\times p_1}&\alphab_1\alphab_2^\top\\
        \hline
        \kappa\alphab_2&\alphab_2\alphab_1^\top&\bfz_{p_2\times p_2}
    \end{array},
\right)$$
and
$$\left(\Sigma_*^z\right)^{-1}\left(\Sigma^z-\Sigma_*^z\right)\left(\Sigma_*^z\right)^{-1}\left(\tilde{\Sigma}^z-\Sigma_*^z\right)=\left(
    \begin{array}{c|c}
        \Delta&\bfz_{(1+p_1)\times p_2}\\
        \hline
        \bfz_{p_2\times (1+p_1)}&\ds \left[\frac{\kappa\tilde{\kappa}}{\sigma_*^2}+\|\alphab_1\|_2^2\right]\alphab_2\tilde{\alphab}_2^\top
    \end{array}
\right),$$
where the upper left $(1+p_1)\times (1+p_1)$ block matrix is
$$\Delta=\left(\begin{array}{c}
    \kappa \alphab_2^\top/\sigma_*^2\\
    \alphab_1\alphab_2^\top
\end{array}\right)\left(\begin{array}{cc}
    \tilde{\kappa} \tilde{\alphab}_2 \,&\, \tilde{\alphab}_2\alphab_1^\top
\end{array}\right), $$
and the right lower $p_2\times p_2$ block matrix has rank no larger than 1 with the only nonzero eigenvalue (if $\tilde{\alphab}_2^\top\alphab_2\neq 0$)
$$\left[\frac{\kappa\tilde{\kappa}}{\sigma_*^2}+\|\alphab_1\|_2^2\right]\tilde{\alphab}_2^\top\alphab_2.$$
Similarly, $\Delta$ has rank no larger than 1 and its only nonzero eigenvalue is given by
$$\left(\begin{array}{cc}
   \tilde{\kappa} \tilde{\alphab}_2&\tilde{\alphab}_2\alphab_1^\top
\end{array}\right)\left(\begin{array}{c}
    \kappa \alphab_2^\top/\sigma_*^2\\
    \alphab_1\alphab_2^\top
\end{array}\right)=\left[\frac{\kappa\tilde{\kappa}}{\sigma_*^2}+\|\alphab_1\|_2^2\right]\tilde{\alphab}_2^\top\alphab_2. $$
Consequently, the matrix
$$\left(\bfI_{p+1}-\left(\Sigma_*^z\right)^{-1}\left(\Sigma^z-\Sigma_*^z\right)\left(\Sigma_*^z\right)^{-1}\left(\tilde{\Sigma}^z-\Sigma_*^z\right)\right)$$
has two eigenvalues given by $1-\left[\frac{\kappa\tilde{\kappa}}{\sigma_*^2}+\|\alphab_1\|_2^2\right]\tilde{\alphab}_2^\top\alphab_2$ and the rest $p-1$ eigenvalues are all equal to 1; if  $\tilde{\alphab}_2^\top\alphab_2=0$, then all its eigenvalues are 1.
Therefore, with \Cref{lem: Gaussian chi square}, we have
\begin{equation}\label{eq: chi square integral bound}
\begin{aligned}
    &\bbE_{(\theta,\tilde{\theta})\sim \pi_{}\times \pi_{}}\int_{\bbR^n}\frac{\rmd \bbP_{\theta}\rmd \bbP_{\tilde{\theta}}}{\rmd \bbP_{\theta_*}}\\
    =\, &\bbE_{(\alphab_2,\tilde{\alphab}_2)\sim \pi_1\times \pi_1}\left[1-\left(\frac{\kappa\tilde{\kappa}}{\sigma_*^2}+\|\alphab_1\|_2^2\right)\tilde{\alphab}_2^\top\alphab_2\right]^{-n}\\
    \le \,& \bbE_{(\alphab_2,\tilde{\alphab}_2)\sim {\pi}_1\times {\pi}_1}\exp\left(2n\left(\frac{\kappa\tilde{\kappa}}{\sigma_*^2}+\|\alphab_1\|_2^2\right)\tilde{\alphab}_2^\top\alphab_2\right),
\end{aligned}
\end{equation}
where the last inequality follows from the fact that $(1-x)^{-1} \le  \exp(2x)$ for $x \in [0,1/2]$ and based on \eqref{eq: moderate sparse delta} we have $$0\le  \left(\frac{\kappa\tilde{\kappa}}{\sigma_*^2}+\|\alphab_1\|_2^2\right)\tilde{\alphab}_2^\top\alphab_2\le  \left(\frac{\kappa\tilde{\kappa}}{\sigma_*^2}+\|\alphab_1\|_2^2\right)c_1^2\frac{k_u\log p}{n}\le  \frac{1}{2}$$ if $c_1$ is chosen sufficiently small (recall that $k_u\log p/n$ is bounded).
Since we have $\kappa, \tilde{\kappa}\le  1$,  $\sigma_*=M_2/2$, and $\|\alphab_1\|_2^2=1$, we can choose the constant $c_3$ (for example $c_3 = 2 (4/M_2^2 +1)$) to complete the proof.
\end{proof}

\subsection{Proof of Lemma~\ref{lem: hypergeometric}}\label{sec: proof hypergeometric}
\begin{proof}
Since $J$ is non-negative, we only need to show that $\mathbb{E}[\exp (c(\log p) J)]\le  1+o(1)$ as $p$ goes to infinity. Given any constant $c\in (0, 1-2\gamma)$, let $$A_m=p^{cm}\mathbb{P}(J=m)=\exp(c\log p\cdot m)
\frac{\binom{k}{m}\binom{p-k }{k - m}}{\binom{p }{k}}.$$
Then we have $\mathbb{E}[\exp (c(\log p) J)]=\sum_{m=0}^k A_m$.
Notice that for $0 \le m \le k-1$,
\begin{align*}
\frac{A_{m+1}}{A_m}
&= p^{c}\left[\frac{\binom{k}{m+1}}{\binom{k}{m}} \cdot \frac{\binom{p-k}{k-m-1}}{\binom{p-k}{k-m}}\right]\\
&= p^c \left[
    \frac{(k-m)^2}{(m + 1)(p+m+1-2k)}
  \right] \\
&\le p^c \frac{(k-m)^2}{p-2k} \\
&\le p^c \frac{p^{2\gamma}}{p - 2p^\gamma} \\
&= \frac{p^{c + 2\gamma - 1}}{1 - 2p^{\gamma - 1}} =: r_p.
\end{align*}
Since the constant $c+2\gamma-1<0$ and $\gamma-1<-1/2$, $r_p\to 0$. In particular, for sufficiently large $p$, $r_p\le  1/2$ and $A_m\le  A_1 r_p^{m-1}$, which implies
\begin{equation}\label{eq: Ak sum bound}
\sum_{m=0}^{k} A_k
= A_0 + \sum_{m=1}^{k} A_m
\le A_0 + A_1 \sum_{m=0}^{k-1} 2^{-m}
\le A_0 + 2A_1.
\end{equation}

Notice that
\[
A_0
= \frac{\binom{p - k}{k}}{\binom{p }{k}}
= \prod_{j=1}^{k} \frac{p - 2k + j}{p - k+j}
= \prod_{j=1}^{k} \left(1 - \frac{k}{p - k + j}\right).
\]
Hence,
\[
\left(1 - \frac{k}{p - k}\right)^k
\le A_0
\le \left(1 - \frac{k}{p}\right)^k.
\]
Since $k^2/p \le p^{2\gamma - 1} \to 0$, both bounds converge to $1$, and thus $A_0 \to 1$.
Furthermore, the inequality $A_1\le  r_p A_0$ implies that $A_1=o(1)$.
By \eqref{eq: Ak sum bound}, the proof is completed.
\end{proof}
\subsection{Proof of Lemma~\ref{lem: valid covariance 2}}\label{sec: proof valid covariance 2}
\begin{proof}
The proof is similar to that of \Cref{lem: valid covariance}. It remains to verify that the following conditions hold with probability $1-c/2$ under the prior distribution $\pi_3$.

\textit{Eigenvalues control of $\Sigma$:}
For the joint covariance matrix $\Sigma^z$ defined in \eqref{eq: null point mat 2}, the covariance matrix for $X$ is $\Sigma=\bfI_{p\times p}$, so the eigenvalues of $\Sigma$ are always controlled.

\textit{Sparsity control of $\beta$:}
Define
$$\mu\stackrel{\triangle}{=}\sum_{j=1}^{k_\xi} q^{(1)}_j=c_4\frac{\sum_{j=1}^{k_\xi} |\xi_j|\exp(-\lambda^2/\xi_j^2)}{\sqrt{\sum_{j=1}^{k_\xi}\xi_j^2\exp(-\lambda^2/\xi_j^2)}}.$$
Based on \eqref{eq: equation sol}, we have
$$
c_4\le  \mu\le  2^{-1} c_4 k_u,
$$
where the second inequality follows from \Cref{lem: decreasing} and the fact that $\zeta\le  \lambda^2$.

Choosing \(c_4\le  1/2\), the bound
\(\mu\le  c_4 k_u/2\) gives $\mu\le  k_u/4$.
Let $S=\|\alphab\|_0$. \(S\) is a sum of independent Bernoulli variables with
\[
\mathbb E_{\pi_3}S=\mathbb{E}_{\pi_3} \sum_{j=1}^{k_\xi} b^{(1)}_j=\mu \le k_u/4.
\]
Since \(\beta=\kappa\alphab\), we have
$\|\beta\|_0\le  S$.
By Chernoff's inequality for sums of independent Bernoulli variables (see \cite[Theorem 2.3.1]{vershynin2018high}), we have
$$
\begin{aligned}
\mathbb P_{\pi_4}\!\left(\|\beta\|_0\ge  \frac{k_u}{2}\right) \le  & \;
\mathbb P_{\pi_3}(S\ge  k_u/2) \\
\le  &
\exp\left\{-\mu+\frac{k_u}{2}\log\left(\frac{e\mu}{k_u/2}\right)\right\}\\
\le  &
\exp\left\{- \frac{k_u}{4}+\frac{k_u}{2}\log\left(\frac{e}{2}\right)\right\} \\
\le  & \exp\left\{- \frac{\log(4/e)k_u}{8}\right\},
\end{aligned}
$$
where the third inequality follows from the fact that the function $x\mapsto -x+(k_u/2)\log(ex/(k_u/2))$ is increasing on $(0, (k_u/2)]$ and $\mu\le k_u/4$.
Finally, choose \(C_4\) sufficiently large so that
\[
\exp\left(- \frac{\log(4/e)C_4}{8}\right)\le  \frac{c}{6}.
\]
Under the condition that  \(k_u\ge  C_4\), we have
\[
\mathbb P_{\pi_4}\!\left(\|\beta\|_0\ge  \frac{k_u}{2}\right)
\le  \frac{c}{6}.
\]

\textit{Control of $\kappa$:}
Note that $\beta=\kappa \alphab$. To show that the unique solution to $\xi^\top  \beta =\tau$ satisfies $\kappa\le  1$ for $\tau=c_6\nu_1/\sqrt{n}$, we only need to show  $\xi^\top \alphab\ge  c_6\nu_1/\sqrt{n}$ holds. Here $c_6$ is determined by the values of $c_4$ and $c_5$.

For $\alphab\sim \pi_3$, we have
$$\xi^\top \alphab \stackrel{{\rm d}}{=}\sum_{j=1}^{k_\xi}\frac{c_5\gamma^{(1)}_j\xi_j}{\sqrt{n}}b^{(1)}_j,$$
where $b^{(1)}_j\sim {\rm Bernoulli}(q^{(1)}_j)$.
Since $\xi_j\gamma^{(1)}_j= \max(|\xi_j|, \lambda)$ is non-increasing in $j$, we have
$$
\begin{aligned}
{\rm Var}_{\pi_3} [\xi^\top \alphab] & = \frac{c_5^2}{n}\sum_{j=1}^{k_\xi}\xi_j^2(\gamma^{(1)}_j)^2q^{(1)}_j[1-q^{(1)}_j]
 \\
 & \le  \frac{c_5^2\xi_1\gamma_1^{(1)}}{n}\sum_{j=1}^{k_\xi}\xi_j\gamma^{(1)}_jq^{(1)}_j\\
 & =  \frac{c_5}{\sqrt{n}}\max(|\xi_1|, \lambda)\bbE_{\pi_3}\xi^\top \alphab.
\end{aligned}
$$
Using $\xi_j\gamma^{(1)}_j\ge  |\xi_j|$, we have $$\bbE_{\pi_3} \xi^\top \alphab=\frac{c_5}{\sqrt{n}}\sum_{j=1}^{k_\xi}\xi_j\gamma^{(1)}_jq^{(1)}_j\ge   \frac{c_5}{\sqrt{n}} \sum_{j=1}^{k_\xi}|\xi_j|q^{(1)}_j= \frac{c_4c_5}{\sqrt{n}}\sqrt{\sum_{j=1}^{k_\xi}\xi_j^2\exp(-\lambda^2/\xi_j^2)}.$$
Using $\xi_j\gamma^{(1)}_j\ge   \lambda$, we have $$\bbE_{\pi_3} \xi^\top \alphab=\frac{c_5}{\sqrt{n}} \sum_{j=1}^{k_\xi}\xi_j\gamma^{(1)}_jq^{(1)}_j\ge  \frac{c_5}{\sqrt{n}}  \lambda \sum_{j=1}^{k_\xi}q^{(1)}_j.$$
If $\lambda>0$, \eqref{eq: equation sol} implies that $\sum_{j=1}^{k_\xi}q^{(1)}_j= c_4 k_u$ and thus $\bbE_{\pi_3} \xi^\top \alphab\ge  \frac{c_4 c_5 }{\sqrt{n}}  \lambda k_u$.
This inequality also holds when $\lambda=0$.
Consequently, we have
\begin{equation}\label{eq: control mean of LF in lem of valid covariance 2}
    \bbE_{\pi_3}\xi^\top \alphab\ge  \frac{c_4 c_5}{2\sqrt{n}} \nu_1>0.
\end{equation}

By Chebyshev's inequality, we have
\begin{equation}\label{eq: bounding LF in lem of valid covariance 2}
    \begin{aligned}
\mathbb{P}_{\pi_3}\left(\xi^\top \alphab< \frac{1}{2}\bbE_{\pi_3}[\xi^\top \alphab]\right)& \le  \frac{4{\rm Var}_{\pi_3} (\xi^\top \alphab)}{\left[\bbE_{\pi_3}(\xi^\top \alphab)\right]^2}\le  \frac{8}{c_4}\frac{\max(|\xi_1|,\lambda)}{\nu_1}.
\end{aligned}
\end{equation}
We pick $C_4 = 48/(c c_4)$.
Recall the conditions that $\nu_1\ge  C_4 |\xi_1|$, $k_u\ge  C_4$, and $\nu_1\ge  \lambda k_u$, we have $\max(|\xi_1|,\lambda)/\nu_1\le  \max(1/C_4,1/k_u)\le  1/C_4\le  c c_4 /48$.
\Cref{eq: control mean of LF in lem of valid covariance 2} and \Cref{eq: bounding LF in lem of valid covariance 2} together imply that
$$
\mathbb{P}_{\pi_4}\left(\kappa\in (0,1]\right)\ge
\mathbb{P}_{\pi_3}\left(\xi^\top \delta\ge  c_6\nu_1/\sqrt{n}\right)\ge  1- c/6
$$
with $c_6=c_4 c_5 /4$.

\textit{Control of $\sigma$:}
By the construction of $\Sigma^z$, we have $\sigma^2=\sigma_*^2-\beta^{\top} \Sigma \beta=\sigma_*^2-\kappa^2 \alphab^\top \alphab$; our goal is to show that the right-hand side is positive, so that $\Sigma^z$ is positive definite.
In the last part, we have shown that with probability $1-c/6$, the event $\{\xi^\top \delta\ge  c_6\nu_1/\sqrt{n}\}$ (and thus $\kappa\le  1$) holds.
It remains to show that $\|\alphab\|_2$ is smaller than $\sigma_*$ with probability close to 1. Note that
$$\|\alphab\|_2^2\stackrel{{\rm d}}{=}\frac{c_5^2}{n}\sum_{j=1}^{k_\xi}b^{(1)}_j(\gamma^{(1)}_j)^2$$
with $b^{(1)}_j\sim {\rm Bernoulli}(q^{(1)}_j)$.
By the definitions of $q^{(1)}_j$ and $\gamma^{(1)}_j$, we have
\begin{align*}
    &\sum_{j=1}^{k_\xi}(q^{(1)}_j)^2\exp((\gamma^{(1)}_j)^2)\\
    \le  &c_4^2\frac{\sum_{j\le  j_1} \xi_j^2\exp(-2\lambda^2/\xi_j^2) e +\sum_{j>j_1}\xi_j^2\exp(-2\lambda^2/\xi_j^2) \exp(\lambda^2/\xi_j^2)}{\sum_{i=1}^p\xi_i^2\exp(-\lambda^2/\xi_i^2)}\\ \le  &c_4^2 e .
\end{align*}
Therefore, we have $(\gamma^{(1)}_j)^2\le  \log(c_4^2 e / (q^{(1)}_j)^2) $ and
\begin{align*}
    \bbE \|\alphab\|_2^2&=\frac{c_5^2}{n}\sum_{j=1}^{k_\xi}q^{(1)}_j(\gamma^{(1)}_j)^2\le  \frac{c_5^2}{n}\sum_{j=1}^{k_\xi}q^{(1)}_j\log \left(\frac{c_4^2 e}{(q^{(1)}_j)^2}\right).
\end{align*}
Since the function $x\log(c_4^2 e /x^2)$ is concave, by Jensen's inequality, we have
\begin{align*}
    \bbE \|\alphab\|_2^2&\le  \frac{c_5^2}{n}\left(\sum_{j=1}^{k_\xi}q_j^{(1)}\right)\log \left(\frac{ e  c_4^2 k_\xi^2}{\left(\sum_{j=1}^{k_\xi} q_j^{(1)}\right)^2}\right)
\end{align*}
In the previous analysis, we have
$\mu=\sum_{j=1}^{k_\xi}q^{(1)}_j\in [c_4, c_4 k_u]$, so we further have
\begin{align*}
    \bbE \|\alphab\|_2^2\le  2 c_4c_5^2\frac{k_u\log ( e p)}{n}.
\end{align*}
By Markov's inequality,
$$
\mathbb{P}_{\pi_3}\left( \|\alphab\|_2 >  \sigma_*/\sqrt{2} \right) \le  2\sigma_*^{-2} \bbE \|\alphab\|_2^2 \le  16 M_2^{-2} c_4c_5^2\frac{k_u\log ( e p)}{n}
$$
Note that $k_u\log p\lesssim n$ based on \Cref{cdt: sparsity assumption}.
Given $c_5$, we can choose $c_4$ sufficiently small so that the right-hand side on the last inequality is bounded by $c/6$.

Combining the above analyses, we complete the proof.
\end{proof}
\subsection{Proof of Lemma~\ref{lem: chi square integral 2}}\label{sec: proof chi square integral 2}

\begin{proof}
   For $\Sigma_*^z$ defined in \eqref{eq: alter matrix} and $\Sigma^z=g_2(\alphab),\tilde{\Sigma}^z=g_2(\tilde{\alphab})$ defined in \eqref{eq: null point mat 2}, we have $\kappa,\tilde{\kappa}\in [0,1]$ and
   $$\left(\Sigma_*^z\right)^{-1}\left(\Sigma^z-\Sigma_*^z\right)=\left(
    \begin{array}{c|c}
       0& {\kappa\alphab^\top}/{\sigma_*^2}\\
       \hline
    \kappa\alphab& \bfz_{p\times p}
    \end{array}
   \right).$$
    Therefore, we have
    $$\left(\Sigma_*^z\right)^{-1}\left(\Sigma^z-\Sigma_*^z\right)\left(\Sigma_*^z\right)^{-1}\left(\tilde{\Sigma}^z-\Sigma_*^z\right)=\left(
    \begin{array}{c|c}
       \frac{\kappa\tilde{\kappa}}{\sigma_*^2}\alphab^\top \tilde{\alphab}& \bfz_{1\times p}\\
       \hline
    \bfz_{p\times 1}& \frac{\kappa\tilde{\kappa}}{\sigma_*^2}\alphab \tilde{\alphab}^\top
    \end{array}
   \right).$$
    Consequently, the matrix
    $$\left(\bfI_{p+1}-\left(\Sigma_*^z\right)^{-1}\left(\Sigma^z-\Sigma_*^z\right)\left(\Sigma_*^z\right)^{-1}\left(\tilde{\Sigma}^z-\Sigma_*^z\right)\right)$$
    has two non-unit eigenvalues given by $1-\frac{\kappa\tilde{\kappa}}{\sigma_*^2}\alphab^\top \tilde{\alphab}$ and the rest $p-1$ eigenvalues are all equal to 1.

    When $\tilde{\kappa}\kappa=0$, the desired result holds obviously.

When $\tilde{\kappa} \kappa\in (0,1]$, by definition, we have $\tilde{\alphab}, \alphab\in \mathcal{G}_\tau$.
By definition of $\mathcal{G}_\tau$, we have
$$\frac{\tilde{\kappa} \kappa}{\sigma_*^2}\alphab^\top \tilde{\alphab}\le  \frac{1}{2}.$$
Therefore, with \Cref{lem: Gaussian chi square}, we have
    \begin{equation}\label{eq: chi square integral bound 2}
    \begin{aligned}
        &\bbE_{(\theta,\tilde{\theta})\sim \pi_4\times \pi_4}\int_{\bbR^n}\frac{\rmd \bbP_{\theta}\rmd \bbP_{\tilde{\theta}}}{\rmd \bbP_{\theta_*}}\\
        =&\bbE_{(\alphab,\tilde{\alphab})\sim \pi_3\times \pi_3}\left[1-\frac{\kappa\tilde{\kappa}}{\sigma_*^2}\alphab^\top \tilde{\alphab}\right]^{-n}\\
        \le  &\bbE_{(\alphab,\tilde{\alphab})\sim {\pi}_3\times {\pi}_3}\exp\left(\frac{2n\kappa\tilde{\kappa}}{\sigma_*^2}\alphab^\top \tilde{\alphab}\right),
    \end{aligned}
    \end{equation}
    where the last inequality follows from the fact that $(1-x)^{-1} \le  \exp(2x)$ for $x \in [0,1/2]$.
    Since $\kappa\tilde{\kappa}\le1$, we complete the proof with $c_7=2/\sigma_*^2$.

\end{proof}
\subsection{Proof of Lemma~\ref{lem: valid covariance 3}}\label{sec: proof valid covariance 3}
\begin{proof}
    The proof is similar to that of \Cref{lem: valid covariance}. It remains to verify that the following conditions hold with probability arbitrarily close to 1 under the prior distribution $\pi_5$:

    \textit{Sparsity control of $\alphab_2$:} For the random vector $\alphab_2$ defined in \eqref{eq: moderate sparse delta 2}, the sparsity level of $\alphab_2$ is a sum of independent Bernoulli random variables with parameters $q^{(2)}_j$. Let
    $$\mu_2\stackrel{\triangle}{=}\sum_{j\in S_3}q_j^{(2)}=\frac{k_u}{8}\frac{\sum_{j\in S_5}|\xi_j|}{\sqrt{p_5\sum_{j\in S_5}\xi_j^2}}\le  \frac{k_u}{8},$$
    where the inequality follows from Cauchy-Schwarz inequality. Consequently, based on Chernoff's inequality \cite[Theorem 2.3.1]{vershynin2018high}, we have
    \begin{align*}
        \bbP(\|\alphab_2\|_0\ge  \frac{k_u}{4})&\le  \exp\left[-\mu_2+\frac{k_u}{4}\ln\left(\frac{4e\mu_2}{k_u}\right)\right]\\
        &\stackrel{(*)}\le  \exp\left[-\frac{k_u}{8}+\frac{k_u}{4}\ln(\frac{e}{2})\right]\le  \exp\left[- \frac{\ln(4/e) }{8}k_u\right],
    \end{align*}
    where (*) follows from the fact that the function $x\mapsto -x+(k_u/4)\ln x$ is increasing when $x\le  k_u/4$. Therefore, we have $\|\alphab_2\|_0\le  k_u/4$ with high probability.

    \textit{Eigenvalues Control of $\Sigma$:} Similar to \Cref{sec: proof valid covariance}, $\Sigma$ has largest and smallest eigenvalues given by $1\pm \|\alphab_2\|_2\|\alphab_1\|_2$ and the remaining $p-2$ eigenvalues are all equal to 1. Specifically, we have $$\|\alphab_1\|_2=c_8\sqrt{\frac{k_u\log p}{n}},\quad \text{and}\quad \|\alphab_2\|_2=\sqrt{\|\alphab_2\|_0}\frac{\sqrt{p_5}}{k_u}.$$ Since $\|\alphab_2\|_0\le  k_u/4$ with high probability from the analysis above, we have
    $$\|\alphab_1\|_2\|\alphab_2\|_2\le  \frac{c_8}{2} \sqrt{\frac{p_5\log p}{n}}.$$
    Since $p_5\le  k_{\rm eff}\lesssim n/\log p$, by choosing $c_8$ sufficiently small, we have $1/M_1\le  \lambda_{\min}(\Sigma)\le  \lambda_{\max}(\Sigma)\le  M_1$ with high probability.

    \textit{Sparsity control of $\beta$:} Following the analysis in \Cref{sec: proof valid covariance}, we have
    \begin{align*}
        \beta_{S_3}&=-\frac{\kappa \|\alphab_1\|_2^2}{1-\|\alphab_1\|_2^2\|\alphab_2\|_2^2}\alphab_2,\\
        \beta_{S_4}&=\frac{\kappa}{1-\|\alphab_1\|_2^2\|\alphab_2\|_2^2}\alphab_1.
    \end{align*}
    Note that $\|\alphab_1\|_0\le  k_u/4$ and $\|\alphab_2\|_0\le  k_u/4$ with high probability. Therefore, we have $\|\beta\|_0= \|\alphab_2\|_0+\|\alphab_1\|_0\le  k_u/2$ with high probability.

    \textit{Control of $\kappa$:} We just need to show that there exists some constant $c_9>0$ such that
    \begin{equation}\label{eq: kappa equation 2}
        \frac{-\|\alphab_1\|_2^2\xi_{S_3}^\top \alphab_2}{1-\|\alphab_1\|_2^2\|\alphab_2\|_2^2}+\frac{\xi_{S_4}^\top \alphab_1}{1-\|\alphab_1\|_2^2\|\alphab_2\|_2^2}\ge  c_9\nu_3\frac{k_u\log p}{n}
    \end{equation}
    with high probability. Specifically, we have $\xi_{S_4}^\top \alphab_1\ge  0$ based on \eqref{eq: moderate sparse delta 2} and $$\frac{\|\alphab_1\|_2^2}{1-\|\alphab_1\|_2^2\|\alphab_2\|_2^2}\asymp \|\alphab_1\|_2^2=c_8^2\frac{k_u\log p}{n}.$$
    Moreover, we have
    \begin{align*}
        -\xi_{S_3}^\top \alphab_2&=\sum_{j\in S_3}|\xi_j| \frac{\sqrt{p_5}}{k_u}b^{(2)}_j,
    \end{align*}
    where $b^{(2)}_j\sim {\rm Bernoulli}(q^{(2)}_j)$. Therefore, we have
    \begin{align*}
        \bbE(-\xi_{S_3}^\top \alphab_2)&=\frac{\sqrt{p_5}}{k_u}\sum_{j\in S_5}|\xi_j| q^{(2)}_j=\frac{1}{8}\sqrt{\sum_{j\in S_5}\xi_j^2}.\\
        {\rm Var}(-\xi_{S_5}^\top \alphab_2)&=\frac{p_5}{k_u^2}\sum_{j\in S_3}\xi_j^2q^{(2)}_j(1-q^{(2)}_j)\le  \frac{\sqrt{p_5}}{8k_u}\frac{\sum_{j \in S_5}|\xi_j|^3}{\sqrt{\sum_{j\in S_5}\xi_j^2}}.
    \end{align*}
    Note that $p_5\le  k_{\rm eff}\lesssim k_u^2/\log p$ and
    $$\sum_{j\in S_5}|\xi_j|^3\le  |\xi_1|\sum_{j\in S_5}\xi_j^2\le  \left(\sum_{j\in S_5}\xi_j^2\right)^{3/2}.$$
    Therefore, we have $${\rm Var}(-\xi_{S_3}^\top \alphab_2)\lesssim \frac{1}{\log p}\left(\bbE(-\xi_{S_3}^\top \alphab_2)\right)^2.$$
    By Chebyshev's inequality, we have for any constant $c>0$, with probability asymptotically at least $1-c/4$, we have $$-\xi_{S_3}^\top \alphab_2\ge  \frac{1}{16}\sqrt{\sum_{j\in S_5}\xi_j^2}.$$
    Based on the assumption at the beginning of \Cref{sec: proof computational lower bound}, we have
    $$\sum_{j\in S_5}\xi_j^2\ge  \frac{1}{2}\sum_{j\in S_3}\xi_j^2=\frac{1}{2}\nu_3^2$$
    Therefore, we can choose $c_9>0$ in \eqref{eq: kappa equation 2} sufficiently small such that we have $0<\kappa\le  1$ with large probability.

    \textit{Control of $\sigma$:} Finally, we verify that $\sigma$ is bounded between $0$ and $M_2$. Note that $$\sigma^2=\sigma_*^2-\beta^\top \Sigma\beta=\sigma_*^2-\frac{\kappa^2}{1-\|\alphab_1\|_2^2\|\alphab_2\|_2^2}\|\alphab_1\|_2^2.$$ Combining the previous analysis, we can choose $c_8$ sufficiently small such that $\sigma\in (0,M_2)$.
\end{proof}

\subsection{Proof of Lemma~\ref{lem: low degree integral}}\label{sec: proof low degree integral}
\begin{proof}
    There are two statements to prove:
    \begin{equation}\label{eq: statement 1}
        \bbE_{\theta_*}(L_\theta^{\le D}, L_{\tilde{\theta}}^{\le D})\le  \bbE_{\theta_*}(L_\theta, L_{\tilde{\theta}})
    \end{equation}
    and
    \begin{equation}\label{eq: statement 2}
        \bbE_{\theta_*}\left((L_{\theta}^{\le D})\right)^2\le 9(6npD)^{4D}.
    \end{equation}
    Without loss of generality, we scale the problem such that $\sigma_*=1$.
    Then $\bbP_{\theta_*}^n$ is the standard Gaussian distribution on $N={n\times(p+1)}$ dimensions and we denote it by $\mathbb{Q}$. Then the space $L^2(\mathbb{Q})$ admits the orthogonal basis of Hermite polynomials, see \cite{szeg1939orthogonal} for a standard reference. We denote these by $(H_{\alpha})_{\alpha\in\mathbb{N}^{N}}$ (where $0\in\mathbb{N}$ by convention),
where
\[
H_{\alpha}(z)
    = \prod_{i=1}^{N} h_{\alpha_{i}}(z_{i})
\]
for univariate Hermite polynomials $(h_{j})_{j\in\mathbb{N}}$, and
$z \in \mathbb{R}^{N}$.
We adopt the normalization where $\|H_{\alpha}\|_{\mathbb{Q}} = 1$,
which is not usually the standard convention in the literature.
This basis is graded in the sense that for any $D \in \mathbb{N}$,
$(H_{\alpha})_{\alpha\in\mathbb{N}^{N},\,|\alpha|\le D}$
is an orthonormal basis for the polynomials of degree at most $D$,
where $|\alpha| := \sum_{i=1}^{N} \alpha_{i}$.

Regarding \(\bbP_\theta^n\), we will make use of the following fact:
Since \(\theta=h(g_3(\alphab_1,\alphab_2))\), after the scaling
\(\sigma_*=1\), the diagonal entries of the covariance matrix in
\eqref{eq: null point mat 3} are all equal to one. Hence each coordinate
\(Z_i\) has standard normal marginal distribution under \(\bbP_\theta^n\).

\textit{Proof of \eqref{eq: statement 1}:}
Throughout this argument, $\theta$ and $\tilde{\theta}$ are generated by the
restricted prior used in \Cref{sec: proof computational lower bound}, and we
work on the validity event from \Cref{lem: valid covariance 3}; in particular,
their associated coefficients satisfy
\[
    0<\kappa\le 1,
    \qquad
    0<\tilde{\kappa}\le 1.
\]
Expanding in the orthonormal Hermite basis $\{H_{\alpha}\}$, we have
\begin{equation}\label{eq: low degree expansion}
    \begin{aligned}
        \left\langle L^{\le D}_\theta,\, L^{\le D}_{\tilde{\theta}} \right\rangle_{\mathbb{Q}}
        &= \sum_{\alpha\in\mathbb{N}^{N},\, |\alpha|\le D}
            \left\langle L_\theta, H_{\alpha} \right\rangle_{\mathbb{Q}}
            \left\langle L_{\tilde{\theta}}, H_{\alpha} \right\rangle_{\mathbb{Q}} \\
        &= \sum_{\alpha\in\mathbb{N}^{N},\, |\alpha|\le D}
            \bbE_{Z \sim \mathbb{P}_\theta^n}\!\left[H_{\alpha}(Z)\right]\,
            \bbE_{Z \sim \mathbb{P}_{\tilde{\theta}}^n}\!\left[H_{\alpha}(Z)\right],
    \end{aligned}
\end{equation}
where we have used the change-of-measure identity
$\bbE_{\mathbb{Q}}[ L_\theta f ] = \bbE_{\mathbb{P}_\theta^n}[ f ]$.

We claim that for every Hermite multi-index $\alpha$,
\begin{equation}\label{eq: claim low degree}
    \bbE_{Z \sim \mathbb{P}_\theta^n}\!\left[H_{\alpha}(Z)\right]\,
    \bbE_{Z \sim \mathbb{P}_{\tilde{\theta}}^n}\!\left[H_{\alpha}(Z)\right]
    \ge 0 .
\end{equation}

\begin{lemma}[Rank-one Hermite covariance identity]
\label{lem:rank-one-hermite-covariance}
Let $(U,V)$ be a centered Gaussian vector satisfying
\[
    \operatorname{Cov}(U)=I_a,\qquad
    \operatorname{Cov}(V)=I_b,\qquad
    \operatorname{Cov}(U,V)=r c^\top .
\]
For multi-indices $\mu\in\mathbb{N}^{a}$ and $\nu\in\mathbb{N}^{b}$, write
\[
    h_\mu(U)=\prod_{\ell=1}^a h_{\mu_\ell}(U_\ell),
    \qquad
    h_\nu(V)=\prod_{j=1}^b h_{\nu_j}(V_j),
\]
where the univariate Hermite polynomials have the normalization fixed above.
Then
\[
    \bbE\{h_\mu(U)h_\nu(V)\}=0
    \qquad \text{if } |\mu|\ne|\nu|,
\]
and, if $|\mu|=|\nu|=m$, then
\[
    \bbE\{h_\mu(U)h_\nu(V)\}
    =
    \frac{m!}{\sqrt{\mu!\nu!}}\,
    r^\mu c^\nu ,
\]
where
\[
    \mu!=\prod_{\ell=1}^a \mu_\ell!,
    \qquad
    \nu!=\prod_{j=1}^b \nu_j!,
    \qquad
    r^\mu=\prod_{\ell=1}^a r_\ell^{\mu_\ell},
    \qquad
    c^\nu=\prod_{j=1}^b c_j^{\nu_j}.
\]
\end{lemma}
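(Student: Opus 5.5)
We prove the identity through the Mehler (generating-function) formula for Hermite polynomials. With the normalization $\|h_j\|=1$, i.e. $h_j=He_j/\sqrt{j!}$, the generating function is
\[
\exp\bigl(s z-s^2/2\bigr)=\sum_{j\ge0}\frac{s^j}{\sqrt{j!}}\,h_j(z).
\]
Take $s\in\mathbb R^a$ and $t\in\mathbb R^b$, and set $F(s)=\exp(s^\top U-\|s\|^2/2)$ and $G(t)=\exp(t^\top V-\|t\|^2/2)$. Taking products over coordinates gives
\[
F(s)=\sum_\mu \frac{s^\mu}{\sqrt{\mu!}}\,h_\mu(U),\qquad G(t)=\sum_\nu\frac{t^\nu}{\sqrt{\nu!}}\,h_\nu(V).
\]

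\emph{Step 1 (the Gaussian computation).} The variable $s^\top U+t^\top V$ is centered Gaussian with variance $\|s\|^2+\|t\|^2+2(s^\top r)(c^\top t)$. Therefore
\[
\bbE\,F(s)G(t)=\exp\bigl((s^\top r)(c^\top t)\bigr)=\sum_{m\ge0}\frac{(s^\top r)^m(c^\top t)^m}{m!}.
\]

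\emph{Step 2 (expanding both sides).} Apply the multinomial theorem to each factor:
\[
(s^\top r)^m=\sum_{|\mu|=m}\frac{m!}{\mu!}\,r^\mu s^\mu,
\]
and similarly for $(c^\top t)^m$. The $m$-th term of the series in Step 1 then becomes
\[
\sum_{|\mu|=|\nu|=m}\frac{m!}{\mu!\,\nu!}\,r^\mu c^\nu\, s^\mu t^\nu .
\]
On the other side, expanding $\bbE\,F(s)G(t)$ via the Hermite series gives
\[
\sum_{\mu,\nu}\frac{s^\mu t^\nu}{\sqrt{\mu!\,\nu!}}\;\bbE\,h_\mu(U)h_\nu(V).
\]

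\emph{Step 3 (matching coefficients).} Compare the coefficients of $s^\mu t^\nu$ in the two expansions. If $|\mu|\ne|\nu|$, no such monomial occurs on the exponential side, so the expectation is $0$. If $|\mu|=|\nu|=m$, matching gives
\[
\frac{\bbE\,h_\mu h_\nu}{\sqrt{\mu!\,\nu!}}=\frac{m!}{\mu!\,\nu!}\,r^\mu c^\nu,
\]
which is exactly the claimed formula.

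\emph{Main obstacle: justifying the termwise interchange.} Each expansion must converge in $L^2$, or absolutely, so that expectation and summation commute, and coefficient matching is valid only for real-analytic functions. Three facts handle this. First, $F(s)\in L^2$ with $\bbE F^2=e^{\|s\|^2}$, and the Hermite series of $F(s)$ converges in $L^2$ because the $h_\mu$ are orthonormal and $\sum_\mu s^{2\mu}/\mu!=e^{\|s\|^2}<\infty$. Second, the same holds for $G(t)$. Third, the cross moments satisfy $|\bbE\,h_\mu h_\nu|\le1$ by Cauchy–Schwarz, so the double series converges absolutely. With these, both sides are entire functions of $(s,t)$ whose power-series coefficients must agree.
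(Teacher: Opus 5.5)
Your proposal is correct and follows the paper's proof essentially step for step: the same computation $\bbE F(s)G(t)=\exp\{(s^\top r)(c^\top t)\}$, the same multinomial expansion, and the same coefficient matching against the Hermite generating-function expansion. The one addition is your justification of the termwise interchange, using $L^2$ convergence of the Hermite series together with the bound $|\bbE\, h_\mu h_\nu|\le 1$; the paper leaves this step implicit.
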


\begin{proof}[Proof of \Cref{lem:rank-one-hermite-covariance}]
Let
\[
    G(s,t)
    =
    \bbE\left[
        \prod_{\ell=1}^a
        \exp\{s_\ell U_\ell-s_\ell^2/2\}
        \prod_{j=1}^b
        \exp\{t_j V_j-t_j^2/2\}
    \right] .
\]
Since \((U,V)\) is centered Gaussian,
\[
\begin{aligned}
G(s,t)
= &\bbE\left[
    \prod_{\ell=1}^a
    \exp\{s_\ell U_\ell-s_\ell^2/2\}
    \prod_{j=1}^b
    \exp\{t_j V_j-t_j^2/2\}
\right] \\
\qquad = & \bbE\left[\exp(s^{\top} U+t^{\top} V)\right] \exp(-\|s\|^2/2-\|t\|^2/2)\\
\qquad = &
\exp(s^\top r c^\top t),
\end{aligned}
\]
where the last equation follows from $\operatorname{Var}\left(s^{\top} U+t^{\top} V\right)=\|s\|^2+\|t\|^2+2 s^{\top} r c^{\top} t$.

Hence
\[
\begin{aligned}
G(s,t)
&=
\sum_{k=0}^\infty
\frac{1}{k!}(s^\top r)^k(c^\top t)^k  \\
&=
\sum_{k=0}^\infty
\sum_{|\mu|=k}
\sum_{|\nu|=k}
\frac{k!}{\mu!\nu!}
r^\mu c^\nu s^\mu t^\nu .
\end{aligned}
\]
Thus the coefficient of \(s^\mu t^\nu\) in \(G(s,t)\) is zero unless
\(|\mu|=|\nu|\). If \(|\mu|=|\nu|=m\), that coefficient is
\[
    \frac{m!}{\mu!\nu!}r^\mu c^\nu .
\]

On the other hand, by the generating function of the normalized Hermite
polynomials,
\[
    \exp\{sx-s^2/2\}
    =
    \sum_{k=0}^\infty h_k(x)\frac{s^k}{\sqrt{k!}},
\]
we have
\[
\begin{aligned}
G(s,t)
&=
\sum_{\mu,\nu}
\bbE\{h_\mu(U)h_\nu(V)\}
\frac{s^\mu t^\nu}{\sqrt{\mu!\nu!}} .
\end{aligned}
\]
Comparing the coefficient of \(s^\mu t^\nu\) gives
\[
    \frac{\bbE\{h_\mu(U)h_\nu(V)\}}{\sqrt{\mu!\nu!}}
    =
    \frac{m!}{\mu!\nu!}r^\mu c^\nu
\]
when \(|\mu|=|\nu|=m\), and gives zero otherwise. Therefore
\[
    \bbE\{h_\mu(U)h_\nu(V)\}
    =
    \frac{m!}{\sqrt{\mu!\nu!}}r^\mu c^\nu
\]
when \(|\mu|=|\nu|=m\), while the expectation is zero when
\(|\mu|\ne |\nu|\). This proves the lemma.
\end{proof}

We apply \Cref{lem:rank-one-hermite-covariance} row by row.  After the
rescaling $\sigma_*=1$, write each observation under the covariance model $g_3$
as
\[
    Z_i=(Y_i,X_{i,S_3},X_{i,S_4}),\qquad i=1,\ldots,n.
\]
Following the coordinate convention used in the construction of $g_3$ in
\Cref{sec: proof computational lower bound}, $\alphab_1\in\mathbb{R}^{p_4}$
is the coefficient vector paired with $X_{i,S_4}$, and
$\alphab_2\in\mathbb{R}^{p_3}$ is the coefficient vector paired with $X_{i,S_3}$.
For $\theta=h(g_3(\alphab_1,\alphab_2))$, the covariance matrix of a single
observation, denoted by
\[
\Sigma_\theta^z=
\begin{pmatrix}
1 & 0 & \kappa\alphab_1^\top\\
0 & I_{p_3} & \alphab_2\alphab_1^\top\\
\kappa\alphab_1 & \alphab_1\alphab_2^\top & I_{p_4}
\end{pmatrix}.
\]
Define
\[
    U_i=(Y_i,X_{i,S_3})\in\mathbb{R}^{1+p_3},
    \qquad
    V_i=X_{i,S_4}\in\mathbb{R}^{p_4}.
\]
Then
\[
    \operatorname{Cov}_\theta(U_i)=I_{1+p_3},
    \qquad
    \operatorname{Cov}_\theta(V_i)=I_{p_4},
    \qquad
    \operatorname{Cov}_\theta(U_i,V_i)=r_\theta c_\theta^\top,
\]
where
\[
    r_\theta=(\kappa,\alphab_2^\top)^\top\in\mathbb{R}^{1+p_3},
    \qquad
    c_\theta=\alphab_1\in\mathbb{R}^{p_4}.
\]
The same representation holds for $\tilde{\theta}$ with
\[
    r_{\tilde{\theta}}=(\tilde{\kappa},\tilde{\alphab}_2^\top)^\top,
    \qquad
    c_{\tilde{\theta}}=\tilde{\alphab}_1.
\]

The signs of the corresponding coordinates of these rank-one factors agree.
Let $\tilde I$ and $\tilde b_j^{(2)}$ denote the support and Bernoulli variables
used to construct $\tilde{\alphab}_1$ and $\tilde{\alphab}_2$.
By the construction in \eqref{eq: moderate sparse delta 2},
\[
    (\alphab_1)_j
    =
    c_8\operatorname{sign}(\xi_{p_3+j})
    \sqrt{\frac{\log p}{n}}\,\mathbf 1\{j\in I\},
\]
and the same formula holds for $\tilde{\alphab}_1$ with $\tilde I$ in place of
$I$.  Hence
\[
    (\alphab_1)_j(\tilde{\alphab}_1)_j\ge 0
    \qquad \text{for every } j.
\]
Similarly,
\[
    (\alphab_2)_j
    =
    -\frac{\sqrt{p_5}}{k_u}\operatorname{sign}(\xi_j)b_j^{(2)}
\]
and the same formula holds for $\tilde{\alphab}_2$, so
\[
    (\alphab_2)_j(\tilde{\alphab}_2)_j\ge 0
    \qquad \text{for every } j.
\]
Together with $0<\kappa,\tilde{\kappa}\le 1$, this gives
\begin{equation}\label{eq: rank-one sign relations}
    (r_\theta)_\ell(r_{\tilde{\theta}})_\ell\ge 0
    \quad\text{for every } \ell,
    \qquad
    (c_\theta)_j(c_{\tilde{\theta}})_j\ge 0
    \quad\text{for every } j.
\end{equation}

Let $\alpha^{(i)}$ be the part of $\alpha$ corresponding to the $i$th
observation, and write
\[
    \alpha^{(i)}=(\mu^{(i)},\nu^{(i)})
\]
according to the coordinates $U_i=(Y_i,X_{i,S_3})$ and $V_i=X_{i,S_4}$, where
$\mu^{(i)}\in\mathbb{N}^{1+p_3}$ and
$\nu^{(i)}\in\mathbb{N}^{p_4}$.  With this notation,
\[
    H_{\alpha^{(i)}}(Z_i)
    =
    h_{\mu^{(i)}}(U_i)h_{\nu^{(i)}}(V_i).
\]
Since the observations are independent across $i=1,\ldots,n$,
\begin{equation}\label{eq:hermite-product-form}
        H_\alpha(Z)
    =
    \prod_{i=1}^n H_{\alpha^{(i)}}(Z_i),
    \qquad
    \bbE_{\theta} H_\alpha(Z)
    =
    \prod_{i=1}^n
    \bbE_{\theta} H_{\alpha^{(i)}}(Z_i),
\end{equation}

and the same factorization holds under $\tilde{\theta}$.
Fix a row $i$.  If $|\mu^{(i)}|\ne|\nu^{(i)}|$, then
\Cref{lem:rank-one-hermite-covariance} gives
\[
    \bbE_\theta H_{\alpha^{(i)}}(Z_i)
    =
    \bbE_{\tilde{\theta}} H_{\alpha^{(i)}}(Z_i)
    =
    0.
\]
If $|\mu^{(i)}|=|\nu^{(i)}|=m_i$, set
\[
    K_i=\frac{m_i!}{\sqrt{\mu^{(i)}!\nu^{(i)}!}}>0.
\]
Then \Cref{lem:rank-one-hermite-covariance} gives
\[
    \bbE_\theta H_{\alpha^{(i)}}(Z_i)
    =
    K_i r_\theta^{\mu^{(i)}}c_\theta^{\nu^{(i)}},
    \qquad
    \bbE_{\tilde{\theta}} H_{\alpha^{(i)}}(Z_i)
    =
    K_i r_{\tilde{\theta}}^{\mu^{(i)}}c_{\tilde{\theta}}^{\nu^{(i)}}.
\]
Therefore, in the nonzero case,
\[
\begin{aligned}
&\bbE_\theta H_{\alpha^{(i)}}(Z_i)\,
\bbE_{\tilde{\theta}} H_{\alpha^{(i)}}(Z_i)\\
&\qquad =
K_i^2
\prod_{\ell=1}^{1+p_3}
\bigl\{(r_\theta)_\ell(r_{\tilde{\theta}})_\ell\bigr\}^{\mu^{(i)}_\ell}
\prod_{j=1}^{p_4}
\bigl\{(c_\theta)_j(c_{\tilde{\theta}})_j\bigr\}^{\nu^{(i)}_j}.
\end{aligned}
\]
The inequality \eqref{eq: rank-one sign relations} and the nonnegative integer
exponents $\mu^{(i)}_\ell,\nu^{(i)}_j$ imply that the last display is
nonnegative.  Thus, in both cases,
\[
    \bbE_\theta H_{\alpha^{(i)}}(Z_i)\,
    \bbE_{\tilde{\theta}} H_{\alpha^{(i)}}(Z_i)
    \ge 0
    \qquad \text{for every } i.
\]
Taking the product over $i=1,\ldots,n$ proves \eqref{eq: claim low degree}.

We now complete the proof of \eqref{eq: statement 1}.  By
\eqref{eq: claim low degree}, every summand in \eqref{eq: low degree expansion}
is nonnegative.  Hence
\[
\begin{aligned}
    \left\langle L^{\le D}_\theta, L^{\le D}_{\tilde{\theta}} \right\rangle_{\mathbb{Q}}
    &=
    \sum_{\alpha\in\mathbb{N}^{N},\,|\alpha|\le D}
    \bbE_{\theta}H_\alpha(Z)\,
    \bbE_{\tilde{\theta}}H_\alpha(Z)\\
    &\le
    \sum_{\alpha\in\mathbb{N}^{N}}
    \bbE_{\theta}H_\alpha(Z)\,
    \bbE_{\tilde{\theta}}H_\alpha(Z).
\end{aligned}
\]
For the valid covariance matrices considered here, the likelihood ratios
$L_\theta$ and $L_{\tilde{\theta}}$ belong to $L^2(\mathbb{Q})$.  Since the
Hermite polynomials form a complete orthonormal basis of $L^2(\mathbb{Q})$, the
right-hand side equals
\[
    \left\langle L_\theta,L_{\tilde{\theta}}\right\rangle_{\mathbb{Q}}
    =
    \bbE_{\theta_*}\!\left(L_\theta L_{\tilde{\theta}}\right).
\]
Therefore,
\[
    \bbE_{\theta_*}\!\left(
        L_\theta^{\le D}L_{\tilde{\theta}}^{\le D}
    \right)
    \le
    \bbE_{\theta_*}\!\left(
        L_\theta L_{\tilde{\theta}}
    \right),
\]
which proves \eqref{eq: statement 1}.

\textit{Proof of \eqref{eq: statement 2}:}
Expanding in the orthonormal basis $\{H_{\alpha}\}$, we have for any $\theta$,
\[
\|L^{\le D}_\theta\|^{2}_{\mathbb{Q}}
    = \sum_{|\alpha|\le D} \langle L_{\theta}, H_{\alpha}\rangle_{\mathbb{Q}}^{2}
    = \sum_{|\alpha|\le D}
        \left(
            \mathbb{E}_{Z\sim \bbP_\theta^n}\, H_{\alpha}(Z)
        \right)^{2}
    \le (N+1)^{D}\,
        \max_{\alpha:\,|\alpha|\le D}
        \left(
            \mathbb{E}_{Z\sim \bbP_\theta^n}\, H_{\alpha}(Z)
        \right)^{2}.
\]

For each $a \in \mathbb{N}$, the function $h_a$ admits the expansion
\[
h_{a}(z)
    = \frac{1}{\sqrt{a!}} \sum_{j=0}^{a} c_{a,j} z^{j},
\]
where the coefficients satisfy $\sum_{j=0}^{a} |c_{a,j}| = T(a)$.
Here, \(T(a)\) is known as the \emph{telephone number}, which counts the number
of involutions on \(a\) elements \cite{banderier2002generating}.
In particular, we have the trivial upper bound $T(a) \le a!$.
This means for any $a \ge 1$ and $q \in [1,\infty)$,
\begin{align*}
\mathbb{E}|h_{a}(z)|^{q}
    &= \mathbb{E}\left|
        \frac{1}{\sqrt{a!}}
        \sum_{j=0}^{a} c_{a,j} z^{j}
      \right|^{q}
    \le \mathbb{E}\left(
        \sqrt{a!} \max_{0 \le j \le a} |z|^{j}
      \right)^{q}
    = (a!)^{q/2} \mathbb{E}\left( \max\{1, |z|^{a}\} \right)^{q}\\
&= (a!)^{q/2}\, \mathbb{E}\max\{1, |z|^{aq}\}
    \le (a!)^{q/2}\,\left(1 + \mathbb{E}|z|^{aq}\right).
\end{align*}
Using the formula for Gaussian moments, and that for all $x \ge 1$,
$\Gamma(x) \le x^{x}$ (see e.g.\ \cite{li2007inequalities}), we have
\begin{equation}\label{eq:hermite-h-q-bound}
\begin{aligned}
\mathbb{E}|h_{a}(z)|^{q}&\le (a!)^{q/2}
    \left(
        1 + \pi^{-1/2} 2^{aq/2} \Gamma\!\left( \frac{aq+1}{2} \right)
    \right)\\
&\le a^{aq/2}
   \left(
      1 + 2^{aq/2}
      \left( \frac{aq+1}{2} \right)^{(aq+1)/2}
   \right)\\
&\le a^{aq/2}\bigl(1 + 2^{aq/2} (aq)^{aq}\bigr)
    \qquad \text{since } \frac{aq+1}{2} \le aq\\
&\le 2 a^{aq/2}\, 2^{aq/2} (aq)^{aq}
\le 2 (2aq)^{2aq}.
\end{aligned}
\end{equation}

Now fix \(\alpha\) with \(|\alpha|\le D\), and set
\(d=|\alpha|=\sum_i\alpha_i\). If \(d=0\), then \(H_\alpha\equiv1\), so
the desired bound is immediate. Hence assume \(d\ge1\).

Using the product form in \Cref{eq:hermite-product-form},
\(h_0\equiv 1\), H\"older's inequality with exponents
\(d/\alpha_i\) for indices \(i\) such that \(\alpha_i>0\),
we obtain
\[
\begin{aligned}
\left|
    \mathbb{E}_{Z\sim\bbP_\theta^n}H_\alpha(Z)
\right|
&\le
\mathbb{E}_{Z\sim\bbP_\theta^n}
    \prod_{i:\alpha_i>0}|h_{\alpha_i}(Z_i)|  \\
&\le
\prod_{i:\alpha_i>0}
\left(
    \mathbb{E}_{Z\sim\bbP_\theta^n}
    |h_{\alpha_i}(Z_i)|^{d/\alpha_i}
\right)^{\alpha_i/d},
\end{aligned}
\]
because
\(\sum_{i:\alpha_i>0}\alpha_i/d=1\).

For each \(i\) with $\alpha_i>0$,  set
\(a=\alpha_i\) and \(q=d/\alpha_i\).
Then \(a\ge1\), \(q\ge1\), and
\(aq=d\).
Applying \Cref{eq:hermite-h-q-bound} to the standard
normal variable \(Z_i\) yields
\[
\mathbb{E}_{Z\sim\bbP_\theta^n}|h_{\alpha_i}(Z_i)|^{d/\alpha_i}
\le
2(2d)^{2d}.
\]
Consequently,
\[
\left|
    \mathbb{E}_{Z\sim\bbP_\theta^n}H_\alpha(Z)
\right|
\le
\prod_{i:\alpha_i>0}
\left(2(2d)^{2d}\right)^{\alpha_i/d}
=
\left(2(2d)^{2d}\right)^{\sum_{i:\alpha_i>0}\alpha_i/d}
=
2(2d)^{2d}.
\]
Since \(d=|\alpha|\le D\), this gives
\[
\left|
    \mathbb{E}_{Z\sim\bbP_\theta^n}H_\alpha(Z)
\right|
\le 2(2D)^{2D}.
\]

Finally, using the bound
\(
N+1 = n(p+1)+1 \le 3np\),
we have
\[
\|L^{\le D}_{\theta}\|_{Q}^{2}
    \le (N+1)^{D} \bigl[2(2D)^{2D}\bigr]^2
    \le 4 (3np)^{D} (2D)^{4D}
    \le 4 (6npD)^{4D},
\]
which completes the proof.

\end{proof}

\section{Additional Results}\label{sec: additional results}

\subsection{Low-degree polynomial framework}\label{sec: low-degree framwork}

In this section, we briefly recall the low-degree polynomial method, which provides a widely used
formal framework for studying computational limits in high-dimensional inference problems.
The key idea is to analyze algorithms that can be represented (or well-approximated) by evaluating
polynomials of bounded degree in the observed data; we refer readers to the survey \cite{wein2025computational} for broader background and further details.

\textit{Setup.}
Let $\{\bbP^n_{\pi_1}\}_{n\ge1}$ and $\{\bbP^n_{\pi_2}\}_{n\ge1}$ be two sequences of probability
measures on an observation space $(\Omega_n,\mathcal F_n)$, corresponding to the null and alternative
(or to two priors supported on two parameter spaces). Let ${\cal Z}\in\Omega_n$ denote the observed data.
For $D\in\mathbb N$, write $\mathbb{R}[{\cal Z}]_{\le D}$ for the space of multivariate polynomials in
the coordinates of ${\cal Z}$ of total degree at most $D$. As in the main text, the degree $D=D_n$ is
allowed to grow with $n$, and with a slight abuse of notation we view a \emph{polynomial} as a sequence
$f=(f_n)_{n\ge1}$ with $f_n\in\mathbb{R}[{\cal Z}]_{\le D_n}$.

\textit{Weak and strong separation.}
The low-degree method quantifies the ability of degree-bounded polynomials to distinguish
$\bbP^n_{\pi_1}$ and $\bbP^n_{\pi_2}$ through signal-to-noise separation notions.
We recall weak separation in Definition~\ref{def: separation} and state the corresponding strong notion.

\begin{definition}[Strong separation]\label{def: strong-separation}
We say that $f\in \mathbb{R}[{\cal Z}]_{\le D}$ \emph{strongly separates}
$\bbP^n_{\pi_1}$ and $\bbP^n_{\pi_2}$ if, as $n \to \infty$,
\[
\sqrt{
\max\!\left\{
\Var_{\bbP^n_{\pi_1}}(f({\cal Z})),
\Var_{\bbP^n_{\pi_2}}(f({\cal Z}))
\right\}
}
= o\!\left(
\left|
\mathbb{E}_{\bbP^n_{\pi_1}}[f({\cal Z})]
-
\mathbb{E}_{\bbP^n_{\pi_2}}[f({\cal Z})]
\right|
\right).
\]
\end{definition}

If $f$ weakly separates, then thresholding $f({\cal Z})$ at an appropriate level yields a test
with nontrivial advantage (weak detection). If $f$ strongly separates, thresholding yields a test
whose sum of type-I and type-II errors tends to zero (strong detection). We do not reproduce
these standard reductions here.

\textit{Low-degree likelihood ratio norm.}
Set $\mathbb{Q}_1:=\bbP^n_{\pi_1}$ and $\mathbb{Q}_2:=\bbP^n_{\pi_2}$, and assume
$\mathbb{Q}_1$ is absolutely continuous with respect to $\mathbb{Q}_2$.
Define the likelihood ratio
\[
L \;=\; \frac{\rmd\mathbb{Q}_1}{\rmd\mathbb{Q}_2}.
\]
Endow $L^2(\mathbb{Q}_2)$ with inner product
$\langle f,g\rangle := \mathbb{E}_{\mathbb{Q}_2}[f({\cal Z})g({\cal Z})]$.
Let $L^{\le D}$ denote the $L^2(\mathbb{Q}_2)$-orthogonal projection of $L$ onto the polynomial
subspace $\mathbb{R}[{\cal Z}]_{\le D}$. The associated low-degree quantity is
\begin{equation}\label{eq:LD-def-app}
\mathrm{LD}(D)
\;:=\;
\|L^{\le D}\|_{L^2(\mathbb{Q}_2)}^2
\;=\;
\mathbb{E}_{\mathbb{Q}_2}\!\left[(L^{\le D}({\cal Z}))^2\right].
\end{equation}
By construction, $\mathrm{LD}(D)\ge 1$ since $1\in\mathbb{R}[{\cal Z}]_{\le D}$ and
$\mathbb{E}_{\mathbb{Q}_2}[L]=1$.

\textit{Interpreting $\mathrm{LD}(D)$: weak vs strong low-degree indistinguishability.}
The following implication formalizes the meaning of the two regimes
$\mathrm{LD}(D)\to 1$ versus $\mathrm{LD}(D)=O(1)$; we refer to
\cite{bandeira2022franz,schramm2022computational} and the aforementioned survey for proofs and refinements.

\begin{proposition}[Low-degree obstruction to separation]\label{prop:ld-weak-strong}
Let $\mathbb{Q}_1=\bbP^n_{\pi_1}$ and $\mathbb{Q}_2=\bbP^n_{\pi_2}$ with $\mathbb{Q}_1\ll \mathbb{Q}_2$.
Fix $D=D_n$.
\begin{enumerate}
\item If $\mathrm{LD}(D)=1+o(1)$ as $n\to\infty$, then no degree-$D$ polynomial weakly separates
$\mathbb{Q}_1$ and $\mathbb{Q}_2$.
\item If $\mathrm{LD}(D)=O(1)$ as $n\to\infty$, then no degree-$D$ polynomial strongly separates
$\mathbb{Q}_1$ and $\mathbb{Q}_2$.
\end{enumerate}
\end{proposition}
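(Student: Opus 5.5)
The plan is to prove both parts from a single Cauchy--Schwarz inequality in $L^2(\mathbb{Q}_2)$ that bounds the mean gap of any degree-$D$ polynomial by its $\mathbb{Q}_2$-standard deviation times $\sqrt{\mathrm{LD}(D)-1}$. Each part then follows by comparing this inequality with the definition of weak or strong separation.

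\emph{Step 1 (the key inequality).} Fix $f\in\mathbb{R}[\mathcal Z]_{\le D}$ and set $\bar f=f-\mathbb{E}_{\mathbb{Q}_2}f$. Since $\mathbb{Q}_1\ll\mathbb{Q}_2$, a change of measure gives
\[
\mathbb{E}_{\mathbb{Q}_1}f-\mathbb{E}_{\mathbb{Q}_2}f=\mathbb{E}_{\mathbb{Q}_2}[L\bar f]=\langle L,\bar f\rangle .
\]
Because constants lie in $\mathbb{R}[\mathcal Z]_{\le D}$, we have $\bar f\in\mathbb{R}[\mathcal Z]_{\le D}$. Hence $L$ may be replaced by its projection, $\langle L,\bar f\rangle=\langle L^{\le D},\bar f\rangle$. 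Also $\langle 1,\bar f\rangle=0$, so $\langle L^{\le D},\bar f\rangle=\langle L^{\le D}-1,\bar f\rangle$. Applying $\langle L^{\le D},1\rangle=\langle L,1\rangle=1$ gives
\[
\|L^{\le D}-1\|^2=\mathrm{LD}(D)-2+1=\mathrm{LD}(D)-1 .
\]
Cauchy--Schwarz therefore yields
\[
\bigl|\mathbb{E}_{\mathbb{Q}_1}f-\mathbb{E}_{\mathbb{Q}_2}f\bigr|\le\sqrt{\mathrm{LD}(D)-1}\,\sqrt{\Var_{\mathbb{Q}_2}(f)} .
\]

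\emph{Step 2 (weak separation).} Suppose some $f$ weakly separates, so $\sqrt{\max\{\Var_{\mathbb{Q}_1}f,\Var_{\mathbb{Q}_2}f\}}\le C\,|\mathbb{E}_{\mathbb{Q}_1}f-\mathbb{E}_{\mathbb{Q}_2}f|$ for large $n$. Combining with Step 1 gives
\[
\sqrt{\Var_{\mathbb{Q}_2}f}\le C\sqrt{\mathrm{LD}(D)-1}\,\sqrt{\Var_{\mathbb{Q}_2}f}.
\]
If $\Var_{\mathbb{Q}_2}f>0$, this forces $1\le C\sqrt{\mathrm{LD}(D)-1}\to0$, a contradiction.

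\emph{Step 3 (strong separation).} The argument is the same, with $O(\cdot)$ replaced by $o(\cdot)$. Strong separation together with Step 1 gives $1\le o(1)\cdot\sqrt{\mathrm{LD}(D)-1}$. This is impossible when $\mathrm{LD}(D)=O(1)$.

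\emph{Main obstacle.} The algebra is routine; the only delicate point is the degenerate case $\Var_{\mathbb{Q}_2}f=0$. Then $f$ is $\mathbb{Q}_2$-a.s.\ constant, hence $\mathbb{Q}_1$-a.s.\ constant by absolute continuity, so the mean gap is $0$. Such $f$ carries no signal and satisfies the separation conditions only vacuously (as $0=O(0)$). I would handle this by adopting the standard convention of \cite{bandeira2022franz}: separation is only considered for $f$ with nonzero mean gap, under which $\Var_{\mathbb{Q}_2}f>0$ holds automatically. With that convention the proof is complete; otherwise I would note that excluding constant polynomials is implicit in the definition.
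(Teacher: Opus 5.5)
Your proof is correct and follows the standard argument. The paper does not prove this proposition; it defers to \cite{bandeira2022franz,schramm2022computational}, which obtain it from exactly your bound $|\mathbb{E}_{\mathbb{Q}_1}f-\mathbb{E}_{\mathbb{Q}_2}f|\le\sqrt{\mathrm{LD}(D)-1}\,\sqrt{\Var_{\mathbb{Q}_2}(f)}$ (projection onto degree-$D$ polynomials plus Cauchy--Schwarz), and your treatment of the degenerate $\mathbb{Q}_2$-a.s.\ constant $f$ through the nonzero-gap convention correctly closes the only loose end in the literal definitions.
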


\textit{Remark.}
Statement (i) asserts that, when $\mathrm{LD}(D)=1+o(1)$, every degree-$D$ polynomial test is
asymptotically no better than the trivial constant statistic in the sense of weak separation.
Statement (ii) is stronger in that it rules out vanishing total error: boundedness of $\mathrm{LD}(D)$
permits at most a constant-factor signal-to-noise ratio and therefore precludes strong separation.
For additional perspectives and related connections (e.g.\ to sum-of-squares lower bounds and
pseudo-calibration), we refer readers to \cite{hopkins2018statistical} and the survey references above.

\subsection{Restricted eigenvalue condition for Gaussian designs}
\label{sec: restricted eigenvalue}

We briefly justify the restricted eigenvalue condition used after
\eqref{eq: restricted eigenvalue} under Gaussian random designs, and explain how
it implies the estimator bounds in Condition~\ref{cdt: linear estimator}. Recall
that
\[
\kappa(X,k,\alpha_0)
=
\min_{\substack{J_0\subseteq\{1,\ldots,p\}\\ |J_0|\le k}}
\;
\min_{\substack{\delta\neq 0\\
\|\delta_{J_0^c}\|_1\le \alpha_0\|\delta_{J_0}\|_1}}
\frac{\|X\delta\|_2}{\sqrt n\,\|\delta_{J_0}\|_2}.
\]
Direct verification of this condition is computationally difficult
\citep{bandeira2013certifying,tillmann2013computational}. For Gaussian random
designs, however, it follows from standard uniform concentration results. In
particular, \cite{raskutti2010restricted} show that if
\(X\in\mathbb R^{n\times p}\) has i.i.d.\ \(N(0,\Sigma)\) rows and
\(\rho(\Sigma)=\sqrt{\max_{1\le j\le p}\Sigma_{jj}}\), then with probability at
least \(1-c'\exp(-cn)\),
\[
    \frac{\|Xv\|_2}{\sqrt n}
    \ge
    \frac14\|\Sigma^{1/2}v\|_2
    -
    9\rho(\Sigma)\sqrt{\frac{\log p}{n}}\|v\|_1,
    \qquad \text{for all } v\in\mathbb R^p .
\]

Suppose further that
\[
    M_1^{-1}\le \lambda_{\min}(\Sigma)
    \le \lambda_{\max}(\Sigma)\le M_1 .
\]
For any vector \(\delta\) in the cone
\(\|\delta_{J_0^c}\|_1\le \alpha_0\|\delta_{J_0}\|_1\) with
\(|J_0|\le k_u\), we have
\[
    \|\delta\|_1
    \le
    (1+\alpha_0)\|\delta_{J_0}\|_1
    \le
    (1+\alpha_0)\sqrt{k_u}\|\delta_{J_0}\|_2 .
\]
Therefore, on the event above,
\[
\frac{\|X\delta\|_2}{\sqrt n\,\|\delta_{J_0}\|_2}
\ge
\frac{1}{4\sqrt{M_1}}
-
9\sqrt{M_1}(1+\alpha_0)
\sqrt{\frac{k_u\log p}{n}} .
\]
Consequently, for each fixed \(\alpha_0>0\), there exists a constant
\(c_{\rm RE}>0\), depending only on \(M_1\) and \(\alpha_0\), such that if
\[
    \frac{k_u\log p}{n}\le c_{\rm RE},
\]
then
\[
    \kappa(X,k_u,\alpha_0)
    \ge
    \frac{1}{8\sqrt{M_1}}
\]
with probability at least \(1-c'\exp(-cn)\).

We next explain how this lower bound yields the constants \(c_\beta\) and
\(C_\beta\) in Condition~\ref{cdt: linear estimator}. Consider the Lasso
estimator
\[
    \hat\beta
    \in
    \argmin_{b\in\mathbb R^p}
    \left\{
        \frac{1}{2n}\|Y-Xb\|_2^2+\lambda\|b\|_1
    \right\},
    \qquad
    \lambda=A\sigma\sqrt{\frac{\log p}{n}},
\]
where \(A>0\) is sufficiently large. On the event
\[
    \frac{1}{n}\left\|X^\top\varepsilon\right\|_\infty
    \le  \frac{\lambda}{2},
\]
the usual basic inequality implies that
\(\Delta=\hat\beta-\beta\) satisfies the cone condition
\[
    \|\Delta_{S^c}\|_1\le  3\|\Delta_S\|_1,
    \qquad
    S=\supp(\beta),\quad |S|\le  k_u .
\]
Thus, if \(\kappa(X,k_u,3)>0\), the restricted eigenvalue condition in
\eqref{eq: restricted eigenvalue} gives
\[
    \frac{\|X\Delta\|_2}{\sqrt n}
    \ge
    \kappa(X,k_u,3)\|\Delta_S\|_2 .
\]
Combining this with the standard Lasso oracle inequality yields
\[
    \|\hat\beta-\beta\|_1
    \le
    \frac{C A}{\kappa^2(X,k_u,3)}
    \sigma k_u\sqrt{\frac{\log p}{n}},
\]
and
\[
    \|\hat\beta-\beta\|_2
    \le
    \frac{C A}{\kappa^2(X,k_u,3)}
    \sigma\sqrt{\frac{k_u\log p}{n}},
\]
where \(C>0\) is a universal constant. Hence, if
\[
    \kappa(X,k_u,3)\ge  \kappa_0>0
\]
with probability approaching one, then Condition~\ref{cdt: linear estimator}
holds with admissible constants
\[
    c_\beta=\frac{C A}{\kappa_0^2},
    \qquad
    C_\beta=\frac{C A}{\kappa_0^2}.
\]
In particular, under Gaussian designs with bounded population eigenvalues, the
previous concentration argument gives such a lower bound with
\(\kappa_0=1/(8\sqrt{M_1})\), provided \(k_u\log p/n\le c_{\rm RE}\). Thus, the
restricted eigenvalue argument gives fixed constants \(c_\beta\) and
\(C_\beta\) depending only on \(A\), \(M_1\), and the cone constant.

\Cref{cdt: linear estimator,cdt: linear variance} are satisfied by the scaled Lasso under Gaussian designs provided
$$
k_u \log p / n \le  c_{\mathrm{RE}}
$$
where $c_{\mathrm{RE}}>0$ depends only on $M_1$ and the cone constant.
This is a small-constant version of the sparsity scaling $k_u \lesssim n / \log p$.

The scaled Lasso satisfies analogous bounds with
\(\lambda\asymp \hat\sigma\sqrt{\log p/n}\), and additionally yields a
consistent estimator of \(\sigma^2\). Hence, under the same restricted
eigenvalue scaling, the scaled Lasso verifies both
Conditions~\ref{cdt: linear estimator} and~\ref{cdt: linear variance}.

\subsection{Performance of some test statistics for ${\rm SCCA}(n,s,p_1,p_2,\lambda)$}\label{sec: test statistics}

In this section we justify the performance of several test statistics for
${\rm SCCA}(n,s,p_1,p_2,\lambda)$ defined in \eqref{eq: sparse cca detection}.
We focus on the regime
\[
s \lesssim p_1 \lesssim n \ll \sqrt{p_2},
\qquad
\text{with } s,p_1,p_2,n\to\infty,
\]
which differs from the classical sparse submatrix detection scaling where typically $p_1\asymp p_2$
(e.g.\ \cite{10.3150/12-BEJ470,ma2015computational}).
As discussed in \Cref{sec: reduction from scca}, it suffices to analyze statistics based on the
sample cross-covariance matrix
\begin{equation}\label{eq:Rhatscca}
\widehat R
\;:=\;
\frac{1}{2n}\sum_{i=1}^{2n} U_{1}^{(i)}\bigl(U_{2}^{(i)}\bigr)^\top
\;\in\;
\mathbb{R}^{p_1\times p_2},
\end{equation}
where under $H_0$ the pairs $\{(U_1^{(i)},U_2^{(i)})\}_{i=1}^{2n}$ are i.i.d.\ from
$N(0,I_{p_1})\otimes N(0,I_{p_2})$.

Under $H_1$, conditional on $(\alphab_1,\alphab_2)$, we have
\begin{equation}\label{eq:meanRhat}
\mathbb E[\widehat R \mid \alphab_1,\alphab_2] \;=\; \lambda \alphab_1\alphab_2^\top.
\end{equation}
Since $\alphab_1,\alphab_2$ are $s$-sparse and flat-on-support, each nonzero entry of
$\lambda \alphab_1\alphab_2^\top$ equals $\lambda/s$, and the signal is supported on an unknown
$s\times s$ submatrix.

\textit{Scan test.} The scan statistic is given by
\begin{equation}\label{eq:Tscan-scca}
T_{\text{scan}}
\;=\;
\max_{\substack{S_1 \subseteq [p_1],\, |S_1| = s\\
S_2 \subseteq [p_2],\, |S_2| = s}}
\frac{1}{s^2}\sum_{i \in S_1} \sum_{j \in S_2} \widehat R_{ij}.
\end{equation}
The test rejects $H_0$ when $T_{\text{scan}}$ exceeds a threshold $\tau_{\text{scan}}$.

Fix $(S_1,S_2)$ with $|S_1|=|S_2|=s$ and define the averaged submatrix sum
\[
Z(S_1,S_2)
\;:=\;
\frac{1}{s^2}\sum_{i \in S_1}\sum_{j \in S_2}\widehat R_{ij}
\;=\;
\frac{1}{2n}\sum_{\ell=1}^{2n}\underbrace{\left(\frac{1}{s}\sum_{i\in S_1}U_{1,i}^{(\ell)}\right)
\left(\frac{1}{s}\sum_{j\in S_2}U_{2,j}^{(\ell)}\right)}_{=: \;A_\ell(S_1)\,B_\ell(S_2)}.
\]
Under $H_0$, conditional on $(S_1,S_2)$, the random variables
\[
A_\ell(S_1)=\frac{1}{s}\sum_{i\in S_1}U_{1,i}^{(\ell)},\qquad
B_\ell(S_2)=\frac{1}{s}\sum_{j\in S_2}U_{2,j}^{(\ell)}
\]
are independent Gaussians with
\[
A_\ell(S_1)\sim N\!\left(0,\frac{1}{s}\right),
\qquad
B_\ell(S_2)\sim N\!\left(0,\frac{1}{s}\right),
\]
hence $A_\ell(S_1)B_\ell(S_2)$ is sub-exponential with scale $\asymp 1/s$.
Applying Bernstein's inequality yields: for all $t\ge 0$,
\begin{equation}\label{eq:Z-tail}
\bbP_{H_0}\!\left(|Z(S_1,S_2)|\ge t\right)
\le
2\exp\!\left(-c\,n\,\min\{s^2 t^2,\, s t\}\right).
\end{equation}
In particular, for $t\le 1/s$,
\begin{equation}\label{eq:Z-subg}
\bbP_{H_0}\!\left(|Z(S_1,S_2)|\ge t\right)
\le
2\exp(-c n s^2 t^2).
\end{equation}

Now take a union bound over all $(S_1,S_2)$:
the number of candidates is
\[
N_{\text{scan}}
=
\binom{p_1}{s}\binom{p_2}{s},
\qquad
\log N_{\text{scan}}
\le
s\log\!\left(\frac{e p_1}{s}\right)+s\log\!\left(\frac{e p_2}{s}\right).
\]
Setting
\begin{equation}\label{eq:tauscan}
\tau_{\text{scan}}
=
C\sqrt{\frac{\log N_{\text{scan}}}{n s^2}}
\;\asymp\;
\sqrt{\frac{\log N_{\text{scan}}}{n s^2}},
\end{equation}
and assuming $\tau_{\text{scan}}\le 1/s$ (which holds in the regime of interest),
\eqref{eq:Z-subg} implies
\begin{equation}\label{eq:type1-scan}
\bbP_{H_0}\!\left(T_{\text{scan}}>\tau_{\text{scan}}\right)
\le
2N_{\text{scan}}\exp(-c n s^2 \tau_{\text{scan}}^2)
\le
2\exp\!\left(\log N_{\text{scan}}-c C^2\log N_{\text{scan}}\right)
\to 0
\end{equation}
for $C$ large enough. Therefore, we can choose the rejection threshold $\tau_{\text{scan}}$ as in \eqref{eq:tauscan} to control the type-I error.

Under $H_1$, let $S_1^\star=\mathrm{supp}(\alphab_1)$ and $S_2^\star=\mathrm{supp}(\alphab_2)$.
By \eqref{eq:meanRhat}, the corresponding submatrix average satisfies
\[
\bbE_{H_1}\!\left[ Z(S_1^\star,S_2^\star)\mid \alphab_1,\alphab_2\right]
=
\frac{1}{s^2}\sum_{i\in S_1^\star}\sum_{j\in S_2^\star}\frac{\lambda}{s}
=
\frac{\lambda}{s}.
\]
Moreover, the concentration bound \eqref{eq:Z-subg} continues to hold under $H_1$ up to
absolute-constant changes in $c$ (since the model remains Gaussian with bounded covariance operator norm).
Thus, if
\begin{equation}\label{eq:scan-power-cond}
\frac{\lambda}{s}
\;\ge\;
2\tau_{\text{scan}}
\;\asymp\;
\sqrt{\frac{\log N_{\text{scan}}}{n s^2}},
\end{equation}
then with probability $1-o(1)$ we have
$Z(S_1^\star,S_2^\star)\ge \tau_{\text{scan}}$, hence $T_{\text{scan}}\ge \tau_{\text{scan}}$ and the scan test rejects.
Equivalently, the scan test is powerful whenever
\begin{equation}\label{eq:lambda-scan-rate}
\lambda
\;\gtrsim\;
\sqrt{\frac{\log N_{\text{scan}}}{n}}
\;\asymp\;
\sqrt{\frac{s\log(p_1/s)+s\log(p_2/s)}{n}}
\;\asymp\;
\sqrt{\frac{s\log p_2}{n}},
\end{equation}
where the last simplification uses $p_2\gg p_1\gtrsim s$ so that $\log(p_2/s)$ dominates.
This matches the information-theoretically optimal benchmark, but computing $T_{\text{scan}}$ is combinatorial.

\textit{Entrywise maximum statistic.} Consider
\begin{equation}\label{eq:Tinf-scca}
T_{\infty}:=\max_{i\in[p_1],\,j\in[p_2]}\widehat R_{ij}.
\end{equation}
Under $H_0$, for each $(i,j)$, we have
\[
\widehat R_{ij}=\frac{1}{2n}\sum_{\ell=1}^{2n} W_\ell^{(ij)},
\qquad
W_\ell^{(ij)}:=U_{1,i}^{(\ell)}U_{2,j}^{(\ell)}.
\]
where $U_{1,i}^{(\ell)}\sim N(0,1)$ and $U_{2,j}^{(\ell)}\sim N(0,1)$ are independent, so
$W_\ell^{(ij)}$ is a product of independent standard normals and is sub-exponential.
Consequently, by Bernstein's inequality for averages of i.i.d.\ sub-exponential variables,
there exist absolute constants $c,C>0$ such that for all $t\ge 0$,
\begin{equation}\label{eq:Rij-tail-scca}
\mathbb P_{0}\!\left(\left|\widehat R_{ij}\right|\ge t\right)
\le
2\exp\!\left(-c\,n\,\min\{t^2,t\}\right).
\end{equation}
In particular, for $0\le t\le 1$,
\begin{equation}\label{eq:Rij-subg-small-scca}
\mathbb P_{0}\!\left(\left|\widehat R_{ij}\right|\ge t\right)
\le
2\exp(-c n t^2),
\end{equation}
which matches sub-Gaussian behavior at the relevant scale $t\asymp \sqrt{(\log p_2)/n}$.

Now take a union bound over all $p_1p_2$ entries,
\begin{equation}\label{eq:Tinf-null}
T_{\infty}
=
O_{\bbP_{H_0}}\!\left(\sqrt{\frac{\log(p_1p_2)}{n}}\right)
=
O_{\bbP_{H_0}}\!\left(\sqrt{\frac{\log p_2}{n}}\right),
\end{equation}
using $p_2\gg p_1$.
Under $H_1$, on the signal support $(S_1^\star\times S_2^\star)$,
\[
\bbE_{H_1}[\widehat R_{ij}\mid \alphab_1,\alphab_2]=\lambda/s,
\qquad (i,j)\in S_1^\star\times S_2^\star.
\]
Therefore, if
\begin{equation}\label{eq:lambda-entrywise}
\frac{\lambda}{s}
\;\gg\;
\sqrt{\frac{\log(p_1p_2)}{n}},
\end{equation}
then $T_\infty$ exceeds any null-calibrated threshold with probability $1-o(1)$, and the test is powerful.
Equivalently, the entrywise maximum test requires
\begin{equation}\label{eq:lambda-rate-entrywise}
\lambda \;\gtrsim\; s\sqrt{\frac{\log p_2}{n}},
\end{equation}
which is strictly weaker (i.e., needs larger $\lambda$) than \eqref{eq:lambda-scan-rate} for $s\to\infty$.

\textit{Max-column statistic.} Define the max-column statistic
\begin{equation}\label{eq:Tmaxcol-scca}
T_{\text{max-col}}
:=
\max_{j\in[p_2]}\frac{1}{s}\sum_{i=1}^{p_1}\widehat R_{ij}.
\end{equation}

Under $H_0$, for fixed $j$, write
\[
C_j:=\frac{1}{s}\sum_{i=1}^{p_1}\widehat R_{ij}
=
\frac{1}{2n}\sum_{\ell=1}^{2n}
\left(\frac{1}{s}\sum_{i=1}^{p_1}U_{1,i}^{(\ell)}\right)U_{2,j}^{(\ell)}.
\]
Under $H_0$, $\sum_{i=1}^{p_1}U_{1,i}^{(\ell)}\sim N(0,p_1)$ and is independent of $U_{2,j}^{(\ell)}\sim N(0,1)$,
so the summand is a product of independent Gaussians with standard deviations $\sqrt{p_1}/s$ and $1$.
Hence $C_j$ is an average of i.i.d.\ sub-exponential variables with scale $\asymp \sqrt{p_1}/s$.
Bernstein's inequality yields, for all $t\ge 0$,
\begin{equation}\label{eq:Cj-tail}
\bbP_{H_0}\!\left(|C_j|\ge t\right)
\le
2\exp\!\left(-c\,n\,\min\left\{\frac{s^2 t^2}{p_1},\,\frac{s t}{\sqrt{p_1}}\right\}\right).
\end{equation}
In particular, for $t\le \sqrt{p_1}/s$,
\begin{equation}\label{eq:Cj-subg}
\bbP_{H_0}\!\left(|C_j|\ge t\right)
\le
2\exp\!\left(-c n \frac{s^2 t^2}{p_1}\right).
\end{equation}
Taking a union bound over $j\in[p_2]$ and choosing
\begin{equation}\label{eq:taumaxcol}
\tau_{\text{max-col}}
=
C\sqrt{\frac{p_1\log p_2}{n s^2}},
\end{equation}
we obtain $\bbP_{H_0}(T_{\text{max-col}}>\tau_{\text{max-col}})\to 0$ for $C$ large enough.

Under $H_1$, let $S_1^\star=\mathrm{supp}(\alphab_1)$ and $S_2^\star=\mathrm{supp}(\alphab_2)$.
For any $j\in S_2^\star$, using \eqref{eq:meanRhat},
\[
\bbE_{H_1}[C_j\mid \alphab_1,\alphab_2]
=
\frac{1}{s}\sum_{i\in S_1^\star}\frac{\lambda}{s}
=
\frac{\lambda}{s}.
\]
Thus, if $\lambda/s \ge 2\tau_{\text{max-col}}$, then with probability $1-o(1)$
we have $T_{\text{max-col}}\ge \tau_{\text{max-col}}$ and the max-column test is powerful.
Equivalently, the max-column statistic requires
\begin{equation}\label{eq:lambda-rate-maxcol}
\lambda
\;\gtrsim\;
\sqrt{\frac{p_1\log p_2}{n}}.
\end{equation}

\subsubsection*{Max-row statistic}

Define the max-row statistic
\begin{equation}\label{eq:Tmaxrow-scca}
T_{\text{max-row}}
:=
\max_{i\in[p_1]}\frac{1}{s}\sum_{j=1}^{p_2}\widehat R_{ij}.
\end{equation}
This statistic is polynomial-time computable.

Under $H_0$, for a fixed $i$, write
\[
R_i
:=
\frac{1}{s}\sum_{j=1}^{p_2}\widehat R_{ij}
=
\frac{1}{2n}\sum_{\ell=1}^{2n}
U_{1,i}^{(\ell)}\left(\frac{1}{s}\sum_{j=1}^{p_2}U_{2,j}^{(\ell)}\right).
\]
Under $H_0$, $U_{1,i}^{(\ell)}\sim N(0,1)$ and
$\sum_{j=1}^{p_2}U_{2,j}^{(\ell)}\sim N(0,p_2)$ are independent, hence the summand is a product
of independent Gaussians with standard deviations $1$ and $\sqrt{p_2}/s$.
Therefore $R_i$ is an average of i.i.d.\ sub-exponential variables with scale $\asymp \sqrt{p_2}/s$.
By Bernstein's inequality, there exist absolute constants $c,C>0$ such that for all $t\ge 0$,
\begin{equation}\label{eq:Ri-tail}
\mathbb P_0\!\left(|R_i|\ge t\right)
\le
2\exp\!\left(-c\,n\,\min\left\{\frac{s^2 t^2}{p_2},\,\frac{s t}{\sqrt{p_2}}\right\}\right).
\end{equation}
In particular, for $t\le \sqrt{p_2}/s$,
\begin{equation}\label{eq:Ri-subg}
\mathbb P_0\!\left(|R_i|\ge t\right)
\le
2\exp\!\left(-c n \frac{s^2 t^2}{p_2}\right).
\end{equation}
Taking a union bound over $i\in[p_1]$ and choosing the threshold
\begin{equation}\label{eq:taumaxrow}
\tau_{\text{max-row}}
=
C\sqrt{\frac{p_2\log p_1}{n s^2}},
\end{equation}
we obtain $\mathbb P_0(T_{\text{max-row}}>\tau_{\text{max-row}})\to 0$ for $C$ large enough.

Under $H_1$, let $S_1^\star=\mathrm{supp}(\alphab_1)$ and $S_2^\star=\mathrm{supp}(\alphab_2)$.
For any $i\in S_1^\star$, using \eqref{eq:meanRhat},
\[
\mathbb E_1[R_i\mid \alphab_1,\alphab_2]
=
\frac{1}{s}\sum_{j\in S_2^\star}\frac{\lambda}{s}
=
\frac{\lambda}{s}.
\]
Thus, if $\lambda/s \ge 2\tau_{\text{max-row}}$, then with probability $1-o(1)$
$T_{\text{max-row}}\ge \tau_{\text{max-row}}$ and the max-row test is powerful.
Equivalently, the max-row statistic requires
\begin{equation}\label{eq:lambda-rate-maxrow}
\lambda
\;\gtrsim\;
\sqrt{\frac{p_2\log p_1}{n}}.
\end{equation}

\textit{Global sum statistic.} Finally, consider the global sum
\begin{equation}\label{eq:Tsum-scca}
T_{\text{sum}}:=
\frac{1}{p_1p_2}\sum_{i=1}^{p_1}\sum_{j=1}^{p_2}\widehat R_{ij}.
\end{equation}
Under $H_0$,
\[
T_{\text{sum}}
=
\frac{1}{2n}\sum_{\ell=1}^{2n}
\left(\frac{1}{p_1}\sum_{i=1}^{p_1}U_{1,i}^{(\ell)}\right)
\left(\frac{1}{p_2}\sum_{j=1}^{p_2}U_{2,j}^{(\ell)}\right),
\]
where the two parentheses are independent Gaussians with variances $1/p_1$ and $1/p_2$.
Hence $T_{\text{sum}}$ concentrates at scale $\sqrt{1/(n p_1 p_2)}$ (sub-exponential tails analogously).

Under $H_1$, the mean shift equals
\[
\bbE_{H_1}[T_{\text{sum}}\mid \alphab_1,\alphab_2]
=
\frac{1}{p_1p_2}\sum_{i\in S_1^\star}\sum_{j\in S_2^\star}\frac{\lambda}{s}
=
\frac{\lambda s}{p_1p_2},
\]
which is heavily diluted for sparse alternatives. Balancing the signal mean against the null
standard deviation suggests that power requires
\begin{equation}\label{eq:lambda-rate-sum}
\frac{\lambda s}{p_1p_2}
\;\gtrsim\;
\sqrt{\frac{1}{n p_1 p_2}}
\qquad\Longleftrightarrow\qquad
\lambda
\;\gtrsim\;
\sqrt{\frac{p_1p_2}{n}}\cdot \frac{1}{s},
\end{equation}
which is far worse than \eqref{eq:lambda-scan-rate} in the sparse regime $s\ll \sqrt{p_1p_2}$.

\textit{Summary of thresholds.}
In the regime $s\lesssim p_1\lesssim n\ll p_2$, the above calculations yield the detection boundary for all test statistics:
\begin{align*}
\lambda_{\text{scan}}
\asymp
\sqrt{\frac{s\log p_2}{n}},
\quad
\lambda_{\infty}
\asymp
&s\sqrt{\frac{\log p_2}{n}},
\quad
\lambda_{\text{max-col}}
\asymp
\sqrt{\frac{p_1\log p_2}{n}}\\
\lambda_{\text{max-row}}\asymp\sqrt{\frac{p_2\log p_1}{n}}
,&\quad \lambda_{\text{sum}}\asymp \sqrt{\frac{p_1 p_2}{ns^2}}.
\end{align*}

\section{Loading-profile examples and consequences}
\label{sec:loading-examples}

This appendix works out several concrete loading profiles to illustrate the scope of
the profile-based theory developed in the main text. The calculations below serve two
purposes. First, they translate the abstract quantities
\(H(\cdot~;~\xi)\), \(\nu_1\), and \(\nu_2\) into explicit separation
rates. Second, they identify loading profiles that are not covered by existing
regular-loading or exact polynomial-decay theories.

The regular-loading example in
\Cref{app:regular-loading} recovers the phase diagram in \Cref{fig: tony loading}
and highlights the intermediate moderately sparse range \(k_u\ll K\ll k_u^2\).
\Cref{app:dense-nonregular-loading} gives dense nonregular profiles for which the
adaptive separation distance can still be determined. \Cref{app:multiscale-computational-gap}
constructs a multiscale profile for which the low-degree rate exceeds the statistical rate by a polynomial
factor, which illustrates a statistical--computational gap in sparse signed-spiked models.
\Cref{app:random-subweibull-loading} is related to the case where the loading vector is a random test point; it treats random loadings by conditioning on the realized vector and evaluating the resulting profile quantities.

Throughout this appendix, we assume \Cref{cdt: linear estimator,cdt: linear variance,cdt: sparsity assumption} hold.

\subsection{Regular loading vectors}
\label{app:regular-loading}

We derive the rates displayed in \Cref{fig: tony loading}
for loading vectors satisfying the regular loading condition~\eqref{eq: tony loading vector}.
Throughout this subsection, let
\[
    K=\|\xi\|_0,
    \qquad
    a=\|\xi\|_\infty .
\]
Since the coordinates of \(\xi\) are ordered in decreasing absolute value, the regular loading
condition implies that, for some constant \(\bar c>0\),
\[
    a/\bar c\le |\xi_j|\le a,\qquad 1\le j\le K,
    \qquad
    \xi_j=0,\qquad j>K.
\]
All constants below may depend on \(\bar c\), but not on \(n,p,k_u,K\), or \(a\).

We first state several elementary consequences of the regular loading condition.

\begin{lemma}
\label{lem:regular-loading-profile}
Suppose that \(\xi\) satisfies the regular loading condition~\eqref{eq: tony loading vector}. Then,
for all \(t\ge 1\),
\[
    H(t~;~\xi)
    =
    \left(\sum_{j\le \lceil t\rceil}\xi_j^2\right)^{1/2}
    \asymp
    a\sqrt{t\wedge K},
\]
and thus
\(
    \nu_2
    =
    H(k_u~;~\xi)
    \asymp
    a\sqrt{k_u\wedge K}\).
Moreover, the quantity \(\nu_1\) satisfies
\[
    \nu_1
    \asymp
    \begin{cases}
        a\sqrt K, & K\lesssim k_u^2,\\[4pt]
        a k_u\left\{1+\sqrt{\log\left(eK/k_u^2\right)}\right\},
        & K\gtrsim k_u^2 .
    \end{cases}
\]
\end{lemma}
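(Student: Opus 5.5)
The plan is to dispose of the $H$ and $\nu_2$ statements directly, and then treat $\nu_1$ by locating $\lambda$ through the defining equation \eqref{eq: equation sol} and applying \Cref{prop: nu1-expression}.

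\emph{Step 1: the $H$ and $\nu_2$ bounds.} Since $a^2/\bar c^2\le \xi_j^2\le a^2$ for $j\le K$ and $\xi_j=0$ for $j>K$, the sum $\sum_{j\le\lceil t\rceil}\xi_j^2$ has exactly $\lceil t\rceil\wedge K$ nonzero terms, each of order $a^2$. Hence $H(t;\xi)\asymp a\sqrt{t\wedge K}$, using $\lceil t\rceil\asymp t$ for $t\ge1$. Taking $t=k_u$ gives $\nu_2\asymp a\sqrt{k_u\wedge K}$.

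\emph{Step 2: locating $\lambda$.} Write $\phi(\zeta)$ for the left-hand side of \eqref{eq: equation sol}. At $\zeta=0$ it equals $\|\xi\|_1/\|\xi\|_2\asymp\sqrt K$. Since $\phi$ is decreasing (\Cref{lem: decreasing}), $\lambda=0$ precisely when $\phi(0)\le k_u/2$; in particular this holds whenever $K\le c k_u^2$. In that case \Cref{prop: nu1-expression} with $j_1=K$ gives $\nu_1\asymp H(K;\xi)\asymp a\sqrt K$.

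When $\zeta>0$, set $w_j=e^{-\zeta/\xi_j^2}$. Because $\xi_j^2\in[a^2/\bar c^2,a^2]$, each $w_j$ is sandwiched between $e^{-\bar c^2\zeta/a^2}$ and $e^{-\zeta/a^2}$. Dividing numerator and denominator by suitable bounds, I expect
\[
\phi(\zeta)\asymp \sqrt{K}\,e^{-C\zeta/a^2}
\]
up to constant-factor distortions in the exponent. Solving $\phi(\zeta)=k_u/2$ then should yield $\lambda^2\asymp a^2\log(eK/k_u^2)$ whenever $K\ge Ck_u^2$.

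This is the step I expect to be the main obstacle. The two-sided sandwich gives exponents $\zeta/a^2$ and $\bar c^2\zeta/a^2$, which differ by a constant factor. Inverting the equation only pins $\zeta$ within a constant multiple of $a^2\log(K/k_u^2)$, which suffices for $\lambda\asymp a\sqrt{\log(eK/k_u^2)}$. One must also check that the transition region $K\asymp k_u^2$ is consistent with both formulas: there the logarithm is $O(1)$, and $a\sqrt K\asymp ak_u$.

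\emph{Step 3: evaluating $\nu_1$ when $\zeta>0$.} I would compute $\nu_1$ directly from \eqref{eq: nu1 definition} rather than through $j_1$, because $\lambda$ may exceed $a$, which would make $j_1=0$. The first term is $\lambda k_u\asymp ak_u\sqrt{\log(eK/k_u^2)}$. For the second term, the defining equation gives
\[
\sqrt{\textstyle\sum_j\xi_j^2w_j}=\frac{2}{k_u}\sum_j|\xi_j|w_j\le \frac{2}{k_u}\cdot\frac{\bar c}{a}\sum_j\xi_j^2 w_j .
\]
Squaring and rearranging yields $\sqrt{\sum_j\xi_j^2w_j}\gtrsim ak_u$. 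For the matching upper bound, Cauchy–Schwarz in the form $\sum_j|\xi_j|w_j\ge (a/\bar c)\sum_j w_j$, combined with $w_j\le1$, gives $\sqrt{\sum_j\xi_j^2 w_j}\lesssim ak_u$. So the second term is $\asymp ak_u$.

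Adding the two terms gives $\nu_1\asymp ak_u\{1+\sqrt{\log(eK/k_u^2)}\}$. For the intermediate range $ck_u^2\le K\le Ck_u^2$, either formula gives $\asymp ak_u\asymp a\sqrt K$, so the two cases combine to the stated dichotomy.
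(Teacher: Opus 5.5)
Your proposal is correct and follows essentially the same route as the paper. You sandwich the defining function using \(|\xi_j|\in[a/\bar c,a]\) to locate \(\zeta\), exactly as the paper does with \(F(z)\le \sqrt K e^{-z/(2a^2)}\) and \(F(z)\ge \bar c^{-1}\sqrt K e^{-\bar c^2 z/a^2}\). You then use the defining equation to show that the second term of \(\nu_1\) is \(\asymp ak_u\); the paper phrases the same step as \(F(\lambda^2)\asymp W_\lambda^{1/2}\), which gives \(W_\lambda\asymp k_u^2\).

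Two small fixes are needed. First, the upper bound on \(\sqrt{\sum_j\xi_j^2w_j}\) comes from combining the pointwise bound \(\sum_j|\xi_j|w_j\ge (a/\bar c)\sum_j w_j\) with \(\sum_j\xi_j^2 w_j\le a^2\sum_j w_j\); the fact \(w_j\le 1\) plays no role there. Second, in the transition range \(K\asymp k_u^2\) with \(\zeta>0\), the two-sided claim \(\lambda k_u\asymp ak_u\sqrt{\log(eK/k_u^2)}\) can fail. You only need the Cauchy--Schwarz upper bound \(\zeta\le a^2\log(4K/k_u^2)\), so that \(\lambda\lesssim a\), because the matching lower bound \(ak_u\) already comes from the second term.
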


\begin{proof}
The first two displays follow immediately from the regular loading condition. We prove the assertion
for \(\nu_1\).

Let \(x_j=|\xi_j|\), and define
\[
    F(z)
    =
    \frac{\sum_{j=1}^{K} x_j \exp(-z/x_j^2)}
    {\left(\sum_{j=1}^{K} x_j^2 \exp(-z/x_j^2)\right)^{1/2}},
    \qquad z\in\bbR .
\]
By the definition of \(\lambda\) in~\eqref{eq: equation sol}, \(\lambda=\sqrt{\zeta_+}\), where
\(\zeta\) is the unique solution to \(F(\zeta)=k_u/2\).
By \ref{eq: tony loading vector}, we have
\(F(0)\asymp \sqrt K\).

For \(z\ge0\), by Cauchy--Schwarz inequality, we have
\begin{equation}\label{eq: regular example CS}
    F(z)
    \le
    \left(\sum_{j=1}^{K}\exp(-z/x_j^2)\right)^{1/2}
    \le
    \sqrt K \exp(-z/(2a^2)).
\end{equation}

\textbf{Setting 1: \(K\lesssim k_u^2\)}.
If $F(0)\leq k_u/2$,  then \(\zeta\le 0\) and \(\lambda=0\).

If $F(0)>k_u/2$, we can prove that
    \(\zeta\) is bounded above by a constant multiple of \(a^2\).
    Indeed, by \Cref{eq: regular example CS} and $x_j\leq a$, we have $F(ta^2)\leq \sqrt{K}\exp(-t/2)$ for any $t>0$. Since $k_u/(2\sqrt{K})\asymp \frac{k_u}{2F(0)}\lesssim 1$ we can choose a constant $t>0$ such that $\exp(-t/2)\leq k_u/(2\sqrt{K})$.

    In both cases, we have \(\lambda\lesssim a\),  and thus $\exp(-\lambda^2/\xi_j^2)$ is bounded by some constant for all $j\leq K$.
    Therefore,
\[
    \left(\sum_{j=1}^K \xi_j^2 \exp(-\lambda^2/\xi_j^2)\right)^{1/2}
    \asymp a\sqrt K .
\]
Moreover, we have \(\lambda k_u=0\) in the first case and
\(\lambda k_u\lesssim a k_u\lesssim a\sqrt K\) in the second case. Therefore
\(\nu_1\asymp a\sqrt K\).

\textbf{Setting 2: \(K\gtrsim k_u^2\)}.
If \(K\asymp k_u^2\), the preceding argument gives
\(\lambda\lesssim a\), and the second term in \(\nu_1\) is of order \(ak_u\); hence
\(\nu_1\asymp ak_u\), which agrees with the displayed bound because
\(\log(eK/k_u^2)\asymp 1\). It remains to consider the case in which \(K/k_u^2\) is larger than
a sufficiently large constant. In this case the solution is positive and we show that
\[
    \lambda^2 \asymp a^2\log\left(eK/k_u^2\right).
\]
Since \(x_j\ge a/\bar c\) for \(1\le j\le K\),
\begin{equation}\label{eq: regular example F lower}
        F(z)
    \ge
    \frac{(a/\bar c)K\exp(-\bar c^2 z/a^2)}
    {a\sqrt K}
    =
    \bar c^{-1}\sqrt K \exp(-\bar c^2 z/a^2).
\end{equation}

Combining this with \Cref{eq: regular example CS}, we see that the solution of \(F(z)=k_u/2\) satisfies
\[
    \zeta \asymp a^2\log\left(eK/k_u^2\right),
\]
and therefore
\[
    \lambda \asymp a\sqrt{\log\left(eK/k_u^2\right)} .
\]

It remains to evaluate the second term in \(\nu_1\). Let
\[
    W_\lambda
    =
    \sum_{j=1}^K \exp(-\lambda^2/x_j^2).
\]
Since $a/\bar{c}\leq x_j\leq a$, we can follow the proofs for \Cref{eq: regular example CS,eq: regular example F lower} to see that
\[
    F(\lambda^2)
    \asymp
    W_\lambda^{1/2}.
\]
Since \(F(\lambda^2)=k_u/2\), we have \(W_\lambda\asymp k_u^2\). Consequently,
\[
    \left(\sum_{j=1}^K \xi_j^2\exp(-\lambda^2/\xi_j^2)\right)^{1/2}
    \asymp
    a W_\lambda^{1/2}
    \asymp
    a k_u .
\]
Combining this with the bound for \(\lambda\) gives
\[
    \nu_1
    =
    \lambda k_u+
    \left(\sum_{j=1}^K \xi_j^2\exp(-\lambda^2/\xi_j^2)\right)^{1/2}
    \asymp
    a k_u\left\{1+\sqrt{\log\left(eK/k_u^2\right)}\right\}.
\]
This proves the lemma.
\end{proof}

We now derive the rates in the ultra-sparse and moderately sparse regimes.

\begin{proposition}
\label{prop:regular-loading-rates}
Suppose that \(\xi\) satisfies the regular loading condition~\eqref{eq: tony loading vector}.
Let \(K=\|\xi\|_0\) and \(a=\|\xi\|_\infty\).

\begin{enumerate}
\item
Suppose \(k_u\lesssim \sqrt n/\log p\). If $K\lesssim k_u^2$, then
\[
    \tau_{\mathrm{adap}}(k_u,k~;~\xi)
    \asymp
    \frac{a\sqrt K}{\sqrt n}.
\]
If \(K\gtrsim k_u^2\), then
\[
    \tau_{\mathrm{adap}}(k_u,k~;~\xi)
    \asymp_{\log}
    a k_u\sqrt{\frac{\log p}{n}}.
\]
Moreover, if \(K/k_u^2\ge p^c\) for some constant \(c>0\), then the logarithmic equivalence
can be strengthened to
\[
    \tau_{\mathrm{adap}}(k_u,k~;~\xi)
    \asymp
    a k_u\sqrt{\frac{\log p}{n}} .
\]

\item Suppose \(k_u\gg \sqrt n/\log p\). If $K\lesssim k_u$, then
\[
    \tau_{\mathrm{adap}}(k_u,k~;~\xi)
    \asymp
    a\sqrt K\,\frac{k_u\log p}{n}.
\]
If \(K\gtrsim k_u^2\), then
\[
    \tau_{\mathrm{adap}}(k_u,k~;~\xi)
    \asymp_{\log}
    a k_u\sqrt{\frac{\log p}{n}}.
\]
Moreover, if \(K/k_u^2\ge p^c\) for some constant \(c>0\), then
\[
    \tau_{\mathrm{adap}}(k_u,k~;~\xi)
    \asymp
    a k_u\sqrt{\frac{\log p}{n}} .
\]
In the intermediate regime \(k_u\ll K\ll k_u^2\), the general bounds in
\Cref{thm: hypothesis,prop: upper bound equivalent} give
\[
    a\left\{
        \frac{\sqrt K}{\sqrt n}
        +
        \frac{k_u^{3/2}\log p}{n}
    \right\}
    \lesssim
    \tau_{\mathrm{adap}}(k_u,k~;~\xi)
    \lesssim
    a\sqrt{K\wedge \frac n{\log p}}\,
    \frac{k_u\log p}{n}.
\]
If, in addition, \(K\le n/\log p\), and both \(k_u\log p/\sqrt n\) and \(K/k_u\)
diverge by polynomial factors, then these available upper and lower bounds do not match up to
logarithmic factors.
\end{enumerate}
All the statements above hold uniformly over \(k\le k_u\).
\end{proposition}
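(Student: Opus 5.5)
The proof plugs the profile evaluations of \Cref{lem:regular-loading-profile} into the general bounds. For the upper bounds we use \Cref{prop: upper bound equivalent}, which gives $H(k_u^2\log p;\xi)/\sqrt n$ in the ultra-sparse regime and $H(n/\log p;\xi)\,k_u\log p/n$ in the moderately sparse regime. For the lower bounds we use \Cref{thm: hypothesis}, namely $\nu_1/\sqrt n\vee \nu_2 k_u\log p/n$. Since $H(t;\xi)\asymp a\sqrt{t\wedge K}$ and $\nu_2\asymp a\sqrt{k_u\wedge K}$, each case reduces to comparing explicit powers. None of these quantities depends on $k$, so uniformity over $k\le k_u$ is automatic.

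\textbf{Part 1 (ultra-sparse).} If $K\lesssim k_u^2$, the cutoff $m=K$ in \eqref{eq: upper bound} leaves no plug-in term and gives the upper bound $a\sqrt K(1/\sqrt n+k_u\log p/n)\lesssim a\sqrt K/\sqrt n$, because $k_u\log p\lesssim\sqrt n$. The lemma gives $\nu_1\asymp a\sqrt K$, which is the matching lower bound with no log loss. If $K\gtrsim k_u^2$, the choice $m=0$ (plug-in) gives $ak_u\sqrt{\log p/n}$. The lower bound is $\nu_1/\sqrt n\gtrsim ak_u\{1+\sqrt{\log(eK/k_u^2)}\}/\sqrt n$, which matches up to $\sqrt{\log p}$. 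If $K/k_u^2\ge p^c$, then $\log(eK/k_u^2)\gtrsim\log p$, and the match becomes exact.

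\textbf{Part 2 (moderately sparse).} If $K\lesssim k_u$, take $m=K$. The upper bound is $a\sqrt K(1/\sqrt n+k_u\log p/n)\asymp a\sqrt K k_u\log p/n$, since $k_u\log p\gg\sqrt n$. The lower bound $\nu_2 k_u\log p/n\asymp a\sqrt K k_u\log p/n$ matches; alternatively we can invoke \Cref{cor: sharp-nu2}, because $H(k_u;\xi)\asymp H(n/\log p;\xi)\asymp a\sqrt K$. If $K\gtrsim k_u^2$, the plug-in bound $ak_u\sqrt{\log p/n}$ still applies (it holds in both regimes), and the $\nu_1$ lower bound is the same as in Part 1, so the log-equivalence and its strengthening under $K/k_u^2\ge p^c$ carry over verbatim. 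For $k_u\ll K\ll k_u^2$, the upper bound is $H(n/\log p;\xi)k_u\log p/n\asymp a\sqrt{K\wedge n/\log p}\,k_u\log p/n$. For the lower bound, $\nu_1\asymp a\sqrt K$ (since $K\lesssim k_u^2$) and $\nu_2\asymp a\sqrt{k_u}$, giving $a\{\sqrt K/\sqrt n+k_u^{3/2}\log p/n\}$.

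\textbf{Mismatch claim.} Assume $K\le n/\log p$. The ratio of the upper bound $a\sqrt K k_u\log p/n$ to the lower bound is at least
\[
\min\Bigl\{\tfrac{k_u\log p}{\sqrt n},\ \sqrt{K/k_u}\Bigr\},
\]
obtained by comparing the upper bound against each lower-bound term separately. Both quantities diverge polynomially by assumption, so the bounds do not match up to log factors. The only step that takes care is this last comparison, where we must check that neither lower-bound term can absorb the gap. Everything else is direct substitution once \Cref{lem:regular-loading-profile} is in hand.
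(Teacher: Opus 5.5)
Your proposal is correct and follows the paper's proof essentially step for step. You substitute the profile evaluations of \Cref{lem:regular-loading-profile} into the upper bound of \Cref{thm: hypothesis} and the lower bound $\nu_1/\sqrt n\vee\nu_2 k_u\log p/n$, then compare the ratios $k_u\log p/\sqrt n$ and $\sqrt{K/k_u}$ for the mismatch claim. The only cosmetic difference is that in several cases you plug the explicit cutoffs $m=K$ and $m=0$ into \eqref{eq: upper bound}, whereas the paper evaluates $H(k_u^2\log p;\xi)$ or $H(n/\log p;\xi)$ via \Cref{prop: upper bound equivalent}; this yields the same bounds.
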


\begin{proof}
We repeatedly use \Cref{thm: hypothesis,prop: upper bound equivalent}, together with
\Cref{lem:regular-loading-profile}.

\textbf{1. Ultra-sparse regime \(k_u\lesssim \sqrt n/\log p\).}

By
\Cref{prop: upper bound equivalent},
\[
    \tau_{\mathrm{adap}}(k_u,k~;~\xi)\lesssim \frac{1}{\sqrt n}H(k_u^2\log p~;~\xi).
\]
If \(K\lesssim k_u^2\), then \(H(k_u^2\log p~;~\xi)\asymp a\sqrt K\), and the lower bound
\(\nu_1/\sqrt n\) satisfies \(\nu_1/\sqrt n\asymp a\sqrt K/\sqrt n\) by
\Cref{lem:regular-loading-profile}. Hence
\[
    \tau_{\mathrm{adap}}(k_u,k~;~\xi)
    \asymp
    \frac{a\sqrt K}{\sqrt n}.
\]

If \(K\gtrsim k_u^2\), \Cref{lem:regular-loading-profile} gives
\[
    \nu_1
    \gtrsim
    a k_u .
\]
Therefore, \Cref{cor: nu1-sharp-ultrasparse} gives
\[
    \tau_{\mathrm{adap}}(k_u,k~;~\xi)
    \asymp_{\log}
    a k_u\sqrt{\frac{\log p}{n}}.
\]
If  \(K/k_u^2\ge p^c\) for $c>0$, then \(H(k_u^2\log p~;~\xi)\asymp a\sqrt{k_u^2\log p}\). Furthermore, \Cref{lem:regular-loading-profile} gives
\[
    \nu_1
    \asymp
    a k_u\sqrt{\log p},
\]
which implies that the lower bound $\nu_1/\sqrt{n}$ matches the upper bound $a k_u\sqrt{\frac{\log p}{n}}$ up to constants.

\textbf{2. Moderately sparse regime \(k_u\gg \sqrt n/\log p\).}

In this case,
\Cref{prop: upper bound equivalent} yields the upper bound
\begin{equation}\label{eq:regular-moderate-sparse-upper}
    \tau_{\mathrm{adap}}(k_u,k~;~\xi)
    \lesssim
    \frac{k_u\log p}{n}H(n/\log p~;~\xi)
    \asymp
    a\sqrt{K\wedge \frac n{\log p}}\,
    \frac{k_u\log p}{n}.
\end{equation}
The lower bound in \Cref{thm: hypothesis} is
\[
    \tau_{\mathrm{adap}}(k_u,k~;~\xi)
    \gtrsim
    \frac{\nu_1}{\sqrt n}
    \vee
    \nu_2\frac{k_u\log p}{n}.
\]

If \(K\lesssim k_u\), then
\[
    H(n/\log p~;~\xi)\asymp H(k_u~;~\xi)\asymp a\sqrt K.
\]
Recall that $\nu_2=H(k_u;\xi)$. We see that both the upper bound and the lower bound \(\nu_2 k_u\log p/n\) are at the scale of
\[
    a\sqrt K\,\frac{k_u\log p}{n}.
\]
Thus
\[
    \tau_{\mathrm{adap}}(k_u,k~;~\xi)
    \asymp
    a\sqrt K\,\frac{k_u\log p}{n}.
\]

\smallskip

If \(K\gtrsim k_u^2\), then the upper bound in \Cref{eq:regular-moderate-sparse-upper} satisfies
\[
    \tau_{\mathrm{adap}}(k_u,k~;~\xi)
    \lesssim
    a\sqrt{\frac n{\log p}}\frac{k_u\log p}{n}
    =
    a k_u\sqrt{\frac{\log p}{n}}.
\]
By \Cref{lem:regular-loading-profile}, the lower bound involving \(\nu_1\) gives at least
\[
    \frac{\nu_1}{\sqrt n}
    \gtrsim
    \frac{a k_u}{\sqrt n}.
\]
Hence the upper and lower bounds match up to logarithmic factors.

\smallskip

Furthermore, if \(K/k_u^2\ge p^c\) for some $c>0$, then
\Cref{lem:regular-loading-profile} gives
\[
    \frac{\nu_1}{\sqrt n}
    \asymp
    a k_u\sqrt{\frac{\log p}{n}},
\]
which matches the upper bound up to constants and thus \[
    \tau_{\mathrm{adap}}(k_u,k~;~\xi)
    \asymp
    a k_u\sqrt{\frac{\log p}{n}}.
\]

\smallskip

Finally, suppose \(k_u\ll K\ll k_u^2\). Then $\nu_2=H(k_u;\xi)\asymp a\sqrt{k_u}$, while \Cref{lem:regular-loading-profile} gives
\[
    \nu_1\asymp a\sqrt K.
\]
The lower bound in \Cref{thm: hypothesis} therefore gives
\[
    \tau_{\mathrm{adap}}(k_u,k~;~\xi)
    \gtrsim
    \frac{a\sqrt K}{\sqrt n}
    \vee
    a\sqrt{k_u}\frac{k_u\log p}{n}
    \asymp
    a\left\{
        \frac{\sqrt K}{\sqrt n}
        +
        \frac{k_u^{3/2}\log p}{n}
    \right\},
\]
where the last equivalence uses that the maximum and the sum are the same up to a universal
constant.
If \(K\le n/\log p\),
the upper bound in \Cref{eq:regular-moderate-sparse-upper} reduces to
\[
    \tau_{\mathrm{adap}}(k_u,k~;~\xi)
    \lesssim
    a\sqrt{K\wedge \frac n{\log p}}\,
    \frac{k_u\log p}{n} \asymp a\sqrt K\,\frac{k_u\log p}{n}.
\]

The ratios of the upper bound to the two lower-bound terms are
\[
    \frac{a\sqrt K\, k_u\log p/n}{a\sqrt K/\sqrt n}
    =
    \frac{k_u\log p}{\sqrt n},
    \qquad
    \frac{a\sqrt K\, k_u\log p/n}{a k_u^{3/2}\log p/n}
    =
    \sqrt{\frac{K}{k_u}}.
\]
Thus the additional polynomial-divergence assumptions in the proposition make the upper bound
polynomially larger than each available lower-bound term.

This proves the intermediate-regime display and completes the proof.
\end{proof}

\subsection{Dense nonregular loading profiles}
\label{app:dense-nonregular-loading}

This subsection contains two dense profiles that violate the regular loading condition.
Since \Cref{cor: nu1-sharp-ultrasparse} has already determined the adaptive separation distance $\nu_1/\sqrt{n}$ up to logarithmic factors, we will focus on the moderately sparse regime in this subsection.
Let
\[
    N=\left\lfloor \frac{n}{\log p}\right\rfloor.
\]

The common purpose of the two examples is to demonstrate that the regular loading condition
\eqref{eq: tony loading vector} is sufficient but not necessary for determining the adaptive
separation rate through \(\nu_1/\sqrt n\) in the moderately sparse regime.

In both examples, the leading coordinates at the
scale \(N\) carry dense, nearly flat energy even though the full support of \(\xi\) is
nonregular.
The examples differ in how this nonregularity appears: the first has a nearly
flat leading block and allows a small nonzero tail, while the second lets the coordinate
magnitudes vary logarithmically across the full support.

We again assume $\{|\xi_j|\}$ are sorted in the decreasing order.

\medskip
\noindent\textbf{Example 1: nearly flat leading block with a small tail.}

For the first example, suppose that there exist constants \(c_0,C_0,C_1,c_1>0\), a scale \(a>0\), and an integer \(K\)
such that $N\le K\ll p$, $K\ge  k_u^2 p^{c_1}$, $k_u\leq p^{c_2}$ with $c_2<c_1$, and the loading vector satisfies
\begin{equation}\label{eq:nearly-flat-leading-block}
\begin{cases}
    c_0a\le |\xi_j|\le C_0a,
    \quad
   \text{ for } 1\le j\le K,  \\
   \xi_j\neq 0, \quad  \text{ for } j > K, \\
    \sum_{j>K}\xi_j^2\le C_1Ka^2.
\end{cases}
\end{equation}
In other words, the leading $K$ elements are of the same order while  the remaining coordinates do not dominate the total energy.

We note that the loading vector fails the regular loading condition \Cref{eq: tony loading vector}.
Since $0<\xi_p^2\leq (p-K)^{-1}\sum_{j>K}\xi_j^2\lesssim a^2 K/(p-K)$, we have
\(|\xi_p|/a\to0\) while \(|\xi_p|>0\). Then
\[
    \frac{\max_{j\in\operatorname{supp}(\xi)}|\xi_j|}
    {\min_{j\in\operatorname{supp}(\xi)}|\xi_j|}
    \to\infty .
\]

Furthermore, \Cref{eq:nearly-flat-leading-block} excludes the exact polynomially decaying profile considered in
\cite{cai2019optimal}, since they assume $\xi_j\asymp j^{-\alpha}$ with fixed
\(\alpha>0\). This is because the ratio between its first and $K$th coordinates is $K^\alpha$ and does not meet \Cref{eq:nearly-flat-leading-block}.

Thus, in this nonregular dense example, the regular-loading optimality theory of \cite{cai2017confidence} and the
polynomial-decay analysis of \cite{cai2019optimal} do not apply.

\begin{proposition}
\label{prop:dense-nonregular-loading}
Suppose \(k_u\gg \sqrt n/\log p\) and the loading vector satisfies \Cref{eq:nearly-flat-leading-block}.
Then, uniformly over \(k\le k_u\),
\[
    \tau_{\mathrm{adap}}(k_u,k~;~\xi)
    \asymp
    a k_u\sqrt{\frac{\log p}{n}}.
\]
\end{proposition}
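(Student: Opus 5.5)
We prove matching upper and lower bounds, both driven by the flat leading block $\{1,\dots,K\}$ with $K\ge N$.

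\emph{Upper bound.} We use the moderately sparse branch of \Cref{prop: upper bound equivalent}:
\[
\tau_{\text{adap}}(k_u,k;\xi)\lesssim \frac{k_u\log p}{n}\,H(n/\log p;\xi).
\]
Since $N\le K$, every coordinate among the top $\lceil n/\log p\rceil$ has magnitude at most $C_0a$. Hence $H(n/\log p;\xi)\lesssim a\sqrt{n/\log p}$, and the bound becomes $a k_u\sqrt{\log p/n}$. The tail condition $\sum_{j>K}\xi_j^2\le C_1Ka^2$ plays no role here.

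\emph{Lower bound.} We use the term $\nu_1/\sqrt n$ in \Cref{thm: hypothesis}. It suffices to show
\[
\nu_1\gtrsim a k_u\sqrt{\log p},
\]
and we do this by pinning down $\lambda$ from \eqref{eq: equation sol}, following the proof of \Cref{lem:regular-loading-profile}. Let $F(z)$ be the left-hand side of \eqref{eq: equation sol}.

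\emph{Lower bound on $F$.} Restricting the numerator to $j\le K$ gives
\[
\sum_{j\le K}|\xi_j|e^{-z/\xi_j^2}\ge c_0aK\,e^{-z/(c_0a)^2}.
\]
For the denominator, split the sum at $K$ and use the decreasing ordering. For $j>K$ we have $|\xi_j|\le C_0a$, so $e^{-z/\xi_j^2}\le e^{-z/(C_0a)^2}$, and the tail energy is at most $C_1Ka^2$. This bounds the denominator by $\bigl((C_0^2+C_1)Ka^2\bigr)^{1/2}e^{-z/(2C_0^2a^2)}$, so
\[
F(z)\gtrsim \sqrt K\, e^{-z(c_0^{-2}-C_0^{-2}/2)/a^2}.
\]

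\emph{Upper bound on $F$.} By Cauchy--Schwarz, $F(z)\le \bigl(\sum_j e^{-z/\xi_j^2}\bigr)^{1/2}$. This is the main obstacle: the tail contains up to $p-K$ tiny nonzero coordinates, and a crude count would give $\sqrt p$ instead of $\sqrt K$. To handle this, we do not bound $F$ above at all. We only need a lower bound on $\lambda$, and by monotonicity (\Cref{lem: decreasing}) the estimate $F(z)\gtrsim\sqrt K e^{-Cz/a^2}$ already gives
\[
\zeta\ge c\,a^2\log(K/k_u^2)\gtrsim a^2\log p,
\]
because $K/k_u^2\ge p^{c_1}$. Hence $\lambda\gtrsim a\sqrt{\log p}$, and therefore
\[
\nu_1\ge \lambda k_u\gtrsim a k_u\sqrt{\log p}.
\]

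\emph{Conclusion.} The bound on $\nu_1$ gives $\tau_{\text{adap}}\gtrsim a k_u\sqrt{\log p/n}$, matching the upper bound up to constants, uniformly over $k\le k_u$. The conditions $k_u\le p^{c_2}$ with $c_2<c_1$ and $K\ll p$ are used only to guarantee $\log(K/k_u^2)\asymp\log p$ and that the tail is nonempty; they are not otherwise needed.
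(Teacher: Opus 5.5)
Your proposal is correct and follows the paper's proof. The upper bound comes from the moderately sparse branch of \Cref{prop: upper bound equivalent} with $H(n/\log p;\xi)\lesssim a\sqrt{n/\log p}$. The lower bound comes from $\nu_1\ge\lambda k_u$, after showing $\lambda\gtrsim a\sqrt{\log p}$ through a lower bound on $F$: the flat block controls the numerator, the total energy $\sum_j\xi_j^2\lesssim Ka^2$ controls the denominator, and $K/k_u^2\ge p^{c_1}$ together with monotonicity of $F$ finishes it.

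The only cosmetic difference is that you keep the factor $e^{-z/(2C_0^2a^2)}$ in the denominator, which the paper simply bounds by one; either choice works. Also, $K/k_u^2\ge p^{c_1}$ by itself already gives $\log(K/k_u^2)\asymp\log p$, so the condition $k_u\le p^{c_2}$ is not needed for that step.
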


\begin{proof}
Since \(N\le K\) and the first \(K\) coordinates are all of order \(a\),
\[
    H(N~;~\xi)
    =
    \left(\sum_{j\le N}\xi_j^2\right)^{1/2}
    \asymp
    a\sqrt N.
\]
Because \(k_u\gg \sqrt n/\log p\), \Cref{prop: upper bound equivalent} gives
\[
    \tau_{\mathrm{adap}}(k_u,k~;~\xi)
    \lesssim
    \frac{k_u\log p}{n}H(N~;~\xi)
    \asymp
    \frac{k_u\log p}{n}a\sqrt{\frac n{\log p}}
    =
    a k_u\sqrt{\frac{\log p}{n}}.
\]

It remains to show that the \(\nu_1\)-term gives the matching lower bound. Let \(x_j=|\xi_j|\),
and define
\[
    F(z)
    =
    \frac{\sum_{j=1}^{k_\xi}x_j\exp(-z/x_j^2)}
    {\left(\sum_{j=1}^{k_\xi}x_j^2\exp(-z/x_j^2)\right)^{1/2}},
    \qquad z\in\bbR .
\]
By \eqref{eq: equation sol}, \(\lambda=\sqrt{\zeta_+}\), where \(F(\zeta)=k_u/2\).

We claim that \(\lambda\gtrsim a\sqrt{\log p}\). Let \(z=c_\star a^2\log p\), where
\(c_\star>0\) is a sufficiently small constant.
For $j\leq K$, we have $x_j\exp(-z/x_j^2)\geq c_0 a \exp(-c_0^2 z/a^2)$.
Combining this with $\sum_{j>K}\xi_j^2\le C_1Ka^2$, we have
\[
    F(c_\star a^2\log p)
    \gtrsim
    \frac{Ka\exp(-c_0^2 z/a^2)}
    {a\sqrt K}
    =
    \sqrt K \exp(-c_0^2 c_\star\log p).
\]
Since \(K/k_u^2\ge p^{c_1}\), we can choose \(c_\star>0\) sufficiently small such that $c_1-c_0^2c_\star>c_2$.
Then for sufficiently large $p$, we have
\[
    F(c_\star a^2\log p) \geq \exp( c_2 \log p) \ge k_u.
\]
As \(F\) is decreasing and \(F(\zeta)=k_u/2\), this implies
\[
    \lambda^2=\zeta_+\gtrsim a^2\log p.
\]
Consequently,
\[
    \nu_1
    =
    \lambda k_u+
    \left(\sum_{j=1}^{k_\xi}\xi_j^2\exp(-\lambda^2/\xi_j^2)\right)^{1/2}
    \ge
    \lambda k_u
    \gtrsim
    a k_u\sqrt{\log p}.
\]
The lower bound in \Cref{thm: hypothesis} therefore yields
\[
    \tau_{\mathrm{adap}}(k_u,k~;~\xi)
    \gtrsim
    \frac{\nu_1}{\sqrt n}
    \gtrsim
    a k_u\sqrt{\frac{\log p}{n}}.
\]
This matches the upper bound and proves the proposition.
\end{proof}

This example reveals that, even if a small nonzero tail makes the loading vector nonregular, the adaptive separation
distance is the same as for a dense regular loading vector because the leading \(N\) coordinates
already contain dense, nearly flat energy.

\medskip
\noindent\textbf{Example 2: logarithmically varying dense loading vectors.}

The second example places the nonregularity across the full support rather than in a small tail. It
gives a dense loading profile that violates the regular loading condition only through a logarithmic
variation across coordinates.

Let \(b>0\) be fixed and suppose
\[
    |\xi_j|=a\{\log(e+j)\}^{-b},
    \qquad
    1\le j\le p,
\]
where \(a>0\). Then
\[
    \frac{|\xi_1|}{|\xi_p|}
    \asymp
    (\log p)^b\to\infty,
\]
so the regular loading condition fails.
This logarithmically varying profile is also outside the exact polynomial-decay class in
\cite{cai2019optimal}. For instance, we can check the ratio of the $j$th and $2j$th coordinates for $j=\lfloor \sqrt{p}\rfloor$:
\[
    \frac{|\xi_j|}{|\xi_{2j}|}
    =
    \left\{\frac{\log(e+2j)}{\log(e+j)}\right\}^{b}
    \to 1, \text{ as } p\to \infty,
\]
whereas a polynomial-decay profile \(j^{-\alpha}\) with \(\alpha>0\) has a constant dyadic ratio
\(2^\alpha>1\).
Hence neither the regular-loading condition used in \cite{cai2017confidence} nor
the exact polynomial-decay condition treated in \cite{cai2019optimal} covers this example.

\begin{proposition}
\label{prop:slowly-varying-dense-loading}
Assume that
$N\to\infty$, $\log N\asymp \log p$,
and that \(k_u\le Cp^\gamma\) for some \(\gamma<1/2\).
Suppose \(k_u\gg \sqrt n/\log p\). Then, uniformly over
\(k\le k_u\),
\[
    \tau_{\mathrm{adap}}(k_u,k~;~\xi)
    \asymp
    a k_u\sqrt{\frac{\log p}{n}}\frac{1}{(\log p)^b}.
\]
\end{proposition}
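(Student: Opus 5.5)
We follow the template of \Cref{prop:dense-nonregular-loading}: the upper bound comes from \Cref{prop: upper bound equivalent}, and the matching lower bound comes from the $\lambda k_u$ part of $\nu_1$ in \Cref{thm: hypothesis}.

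\emph{Upper bound.} Since $k_u\gg\sqrt n/\log p$, \Cref{prop: upper bound equivalent} gives
\[
\tau_{\mathrm{adap}}(k_u,k;\xi)\lesssim \frac{k_u\log p}{n}H(N;\xi).
\]
To evaluate $H(N;\xi)$, note that for $N/2\le j\le N$ we have $\log(e+j)\asymp\log N\asymp\log p$, because $\log N\asymp\log p$. Every coordinate is bounded by $a$, and more precisely $\xi_j^2\le a^2(\log(e+j))^{-2b}$. Splitting the sum at $j=\sqrt N$, the terms $j\le\sqrt N$ contribute at most $a^2\sqrt N$, and the terms $j>\sqrt N$ contribute $\lesssim a^2N(\log p)^{-2b}$. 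The first piece is negligible against the second, since $\sqrt N(\log p)^{2b}\ll N$. Hence
\[
H(N;\xi)\asymp a\sqrt N(\log p)^{-b}.
\]
Plugging in yields the upper bound $a k_u\sqrt{\log p/n}\,(\log p)^{-b}$.

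\emph{Lower bound.} It suffices to show $\lambda\gtrsim a(\log p)^{-b}\sqrt{\log p}$, since then $\nu_1/\sqrt n\ge\lambda k_u/\sqrt n$ gives the claim. Let $F$ be the decreasing function from \eqref{eq: equation sol}, with $x_j=|\xi_j|$ and $k_\xi=p$. Set $\bar a=a(\log p)^{-b}$; then $x_j\asymp\bar a$ for all $j\ge p^{1/2}$. Take $z=c_\star\bar a^2\log p$ with $c_\star$ small. For the numerator, restrict to $j\in[\sqrt p,p]$: it is at least $\gtrsim p\,\bar a\,p^{-Cc_\star}$. For the denominator, $\sum_j x_j^2e^{-z/x_j^2}\le\sum_j x_j^2\lesssim p\bar a^2$, again by the same splitting at $\sqrt p$. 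Therefore
\[
F(z)\gtrsim\sqrt p\,p^{-Cc_\star}.
\]
Since $k_u\le Cp^\gamma$ with $\gamma<1/2$, choosing $c_\star$ small enough gives $F(z)\ge k_u$ for large $p$. As $F$ is decreasing and $F(\zeta)=k_u/2$, this forces $\zeta\ge z$, i.e.\ $\lambda^2\gtrsim\bar a^2\log p$. Consequently
\[
\nu_1\ge\lambda k_u\gtrsim a k_u\sqrt{\log p}(\log p)^{-b},
\]
and \Cref{thm: hypothesis} gives the matching lower bound. Both bounds hold uniformly over $k\le k_u$.

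\emph{Main obstacle.} The delicate step is confirming that the comparability constants absorb cleanly. Coordinates with $j\le\sqrt p$ are larger than $\bar a$ by factors up to $2^b$; one must check they neither spoil the denominator of $F$ (fine, since that sum is dominated by $p\bar a^2$) nor affect $H(N;\xi)$. In the exponent, for $j\ge\sqrt p$ one has $z/x_j^2\le c_\star 2^{2b}\log p$, so $C=2^{2b}$ works. Both checks reduce to the elementary fact that $\log(e+j)\asymp\log p$ for $j\ge p^{1/2}$, and, for $H(N;\xi)$, for $j\ge\sqrt N$, using $\log N\asymp\log p$.
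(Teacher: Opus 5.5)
Your proposal is correct and follows essentially the same route as the paper. The upper bound comes from \Cref{prop: upper bound equivalent} with $H(N;\xi)\asymp a\sqrt N(\log p)^{-b}$, obtained by splitting at $\sqrt N$. The lower bound comes from $\lambda k_u\le\nu_1$ at $z=c_\star a^2(\log p)^{-2b}\log p$; the paper restricts the numerator of $F$ to $p/2\le j\le p$ rather than $[\sqrt p,p]$, which is an inessential difference.

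One small slip is in your closing remark: coordinates with $j\le\sqrt p$ can exceed $\bar a$ by a factor of order $(\log p)^b$, not $2^b$. It is the coordinates with $j\ge\sqrt p$ that lie within a factor $2^{b}$ of $\bar a$. Your actual argument only uses the crude bound $a^2\sqrt p$ for the small indices, so nothing breaks.
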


\begin{proof}
We first compute the top-\(r\) norm for $r$ sufficiently large.

A simple comparison gives
\[
    \sum_{j\le r}\{\log(e+j)\}^{-2b}
    \asymp
    \frac{r}{(\log r)^{2b}}.
\]
Indeed, the lower bound follows by summing over \(r/2<j\le r\). For the upper bound, split the
sum at \(\lfloor\sqrt r\rfloor\):
\[
    \sum_{j\le r}\{\log(e+j)\}^{-2b}
    \le
    \sqrt r
    +
    \sum_{\sqrt r<j\le r}
    \left(\frac12\log r\right)^{-2b}
    \lesssim
    \frac{r}{(\log r)^{2b}},
\]
where the last step uses \((\log r)^{2b}=o(\sqrt r)\).
Thus
\[
    H(r~;~\xi)\asymp a\frac{\sqrt r}{(\log r)^b}.
\]
In the moderately sparse regime, \Cref{prop: upper bound equivalent} gives
\[
    \tau_{\mathrm{adap}}(k_u,k~;~\xi)
    \lesssim
    \frac{k_u\log p}{n}H(N~;~\xi).
\]
Using the condition that \(\log N\asymp\log p\) and the definition of \(N=n/\log p\) (up to integer rounding),
we have
\[
    \frac{k_u\log p}{n}H(N~;~\xi)
    \asymp
    \frac{k_u\log p}{n}
    \cdot
    a\frac{\sqrt{n/\log p}}{(\log N)^b}
    \asymp
    a k_u\sqrt{\frac{\log p}{n}}\frac{1}{(\log p)^b}.
\]

We now prove the matching lower bound through \(\nu_1\). Let \(x_j=|\xi_j|\).
We have
\begin{equation}\label{eq:log-varying-example-numerator}
    x_j\asymp a_p:=a(\log p)^{-b}, ~~ p/2\le j\le p,
\end{equation}
and
\begin{equation}\label{eq:log-varying-example-denominator}
    \sum_{j=1}^p \xi_j^2 = H^2(p;\xi)
    \asymp
    a_p^2 \cdot p .
\end{equation}
As in the proof of \Cref{prop:dense-nonregular-loading}, define
\[
    F(z)
    =
    \frac{\sum_{j=1}^{p}x_j\exp(-z/x_j^2)}
    {\left(\sum_{j=1}^{p}x_j^2\exp(-z/x_j^2)\right)^{1/2}}.
\]
Let \(z=c_\star a_p^2\log p\), where \(c_\star>0\) is sufficiently small.
Using \Cref{eq:log-varying-example-numerator} for the block
\(p/2\le j\le p\) in the numerator and \Cref{eq:log-varying-example-denominator} for the total energy bound  in the denominator, we have
\[
    F(z)
    \gtrsim
    \sqrt p\,\exp(-Cc_\star\log p),
\]
where $C>0$ is a constant.
Since \(k_u\le Cp^\gamma\) with \(\gamma<1/2\), choosing \(c_\star\) sufficiently small gives
\[
    F(z)\gtrsim k_u.
\]
Therefore \(\lambda^2\gtrsim a_p^2\log p\), and hence
\[
    \lambda\gtrsim a(\log p)^{-b}\sqrt{\log p}.
\]
It follows that
\[
    \nu_1\ge \lambda k_u
    \gtrsim
    a k_u\frac{\sqrt{\log p}}{(\log p)^b}.
\]
\Cref{thm: hypothesis} then gives
\[
    \tau_{\mathrm{adap}}(k_u,k~;~\xi)
    \gtrsim
    \frac{\nu_1}{\sqrt n}
    \gtrsim
    a k_u\sqrt{\frac{\log p}{n}}\frac{1}{(\log p)^b}.
\]
This matches the upper bound.
\end{proof}

There is an interesting equivalence between this example and a dense flat loading profile.
Although the coordinate ratio diverges as \((\log p)^b\) in the current example, the loading vector behaves the same as a dense loading vector if we replace the common scale \(a\) by the effective scale
\(a(\log p)^{-b}\) on the critical dense block.

\subsection{A multiscale example for the statistical--computational gap}
\label{app:multiscale-computational-gap}

This example shows that the statistical--computational gap in the moderately sparse regime
can occur for loading vectors outside the regular-loading phase diagram in \Cref{sec: examples}.
The construction in \eqref{eq:multiscale-loading} puts equal \(\ell_2\)-energy on \(L\) blocks
whose sizes increase with the block index.  The example is interpreted together with
\Cref{thm: computational lower bound,thm: sparse spiked}: \Cref{thm: computational lower bound}
gives a low-degree lower bound for the general unknown-covariance problem, whereas
\Cref{thm: sparse spiked} gives a smaller adaptive separation distance under the
sparse signed-spiked covariance restriction.

Fix a degree \(D\) satisfying the assumptions of \Cref{thm: computational lower bound}, and define
\[
    k_{\mathrm{eff}}
    =
    \left\lfloor
        \frac n{\log p}\wedge \frac{k_u^2}{D\log p}
    \right\rfloor .
\]
Assume that
\[
    k_u\gg \frac{\sqrt n}{\log p}.
\]
Let \(L=L_n\to\infty\) be an integer such that
\begin{equation}\label{eq:multiscale-L-conditions}
    L^3\le c_0 k_u,
    \qquad
    k_uL^3\le c_0 k_{\mathrm{eff}},
\end{equation}
where \(c_0>0\) is a sufficiently small absolute constant. Define block sizes
\begin{equation}\label{eq:multiscale-block-sizes}
    m_\ell=\lceil k_u\ell^2\rceil,
    \qquad
    \ell=1,\ldots,L,
\end{equation}
and cumulative indices
\[
    M_0=0,
    \qquad
    M_\ell=\sum_{r=1}^{\ell}m_r,
    \qquad
    \ell=1,\ldots,L.
\]
For a scale \(a>0\), define the loading vector by
\begin{equation}\label{eq:multiscale-loading}
    |\xi_j|
    =
    \frac{a}{\sqrt{m_\ell}},
    \qquad
    M_{\ell-1}<j\le M_\ell,
    \quad
    \ell=1,\ldots,L,
    \qquad
    \xi_j=0 \quad \text{for } j>M_L .
\end{equation}
Since \(m_\ell\) is increasing in \(\ell\), the coordinates in \eqref{eq:multiscale-loading}
are ordered in decreasing absolute value. Moreover, \eqref{eq:multiscale-block-sizes} and
\eqref{eq:multiscale-L-conditions} give
\begin{equation}\label{eq:multiscale-support-size}
    M_L
    =
    \sum_{\ell=1}^L m_\ell
    \asymp
    k_u\sum_{\ell=1}^L\ell^2
    \asymp
    k_uL^3
    \le
    k_{\mathrm{eff}}
    \le
    \frac n{\log p}.
\end{equation}
The loading vector in \eqref{eq:multiscale-loading} is not regular when \(L\to\infty\), because
the ratio between the largest and smallest nonzero coordinates is
\[
    \frac{a/\sqrt{m_1}}{a/\sqrt{m_L}}
    =
    \sqrt{\frac{m_L}{m_1}}
    \asymp L.
\]
It is also not an exact polynomially decaying loading vector in the sense of
\cite{cai2019optimal}:
the profile in \eqref{eq:multiscale-loading} has flat
blocks whose lengths diverge. In particular, the first block contains
\(m_1=\lceil k_u\rceil\) coordinates with magnitude \(a/\sqrt{m_1}\). In contrast,
an exact polynomial-decay profile \(a_0j^{-\alpha}\), with \(\alpha>0\), would
satisfy \(|\xi_1|/|\xi_{m_1}|=m_1^\alpha\to\infty\). Thus the construction in
\eqref{eq:multiscale-loading} is not an exact deterministic polynomial-decay
profile. Consequently, the example is not covered directly by either the
regular-loading theory of \cite{cai2017confidence} or the exact polynomial-decay
analysis of \cite{cai2019optimal}.

\begin{proposition}
For the multiscale loading vector in \eqref{eq:multiscale-loading}, the following relations hold:
\[
    H(k_{\mathrm{eff}}~;~\xi)
    \asymp
    H(n/\log p~;~\xi)
    \asymp
    a\sqrt L,
    \qquad
    \nu_2\asymp a,
    \qquad
    \nu_1\asymp a\sqrt L.
\]
Consequently, the upper bound in \Cref{prop: upper bound equivalent} is of order
\begin{equation}\label{eq:multiscale-computational-scale}
    a\sqrt L\,\frac{k_u\log p}{n},
\end{equation}
and \Cref{thm: computational lower bound} gives a low-degree lower bound of the order in
\eqref{eq:multiscale-computational-scale}.
By contrast, under the sparse signed-spiked covariance model of \Cref{thm: sparse spiked}, we have
\[
    \tau^{\text{spike}}_{\text{adap}}(k_u,k~;~\xi)
    \asymp_{\log}
    \frac{a\sqrt L}{\sqrt n}
    +
    a\frac{k_u\log p}{n}.
\]

If $\sqrt L\wedge \frac{k_u\log p}{\sqrt n}$
diverges polynomially in $p$, the low-degree scale exceeds the sparse signed-spiked rate by a
polynomial factor.
\end{proposition}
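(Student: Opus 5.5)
The plan is to compute the profile quantities directly from the block structure in \eqref{eq:multiscale-loading} and then read off the three rate statements from \Cref{prop: upper bound equivalent}, \Cref{thm: computational lower bound} and \Cref{thm: sparse spiked}.

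\textbf{Step 1: the top-$t$ norms.} Each block carries energy $\sum_{M_{\ell-1}<j\le M_\ell}\xi_j^2=a^2$, so the total energy is $\|\xi\|_2^2=a^2L$. By \eqref{eq:multiscale-support-size} we have $M_L\le k_{\mathrm{eff}}\le n/\log p$. Hence both $H(k_{\mathrm{eff}};\xi)$ and $H(n/\log p;\xi)$ equal $\|\xi\|_2=a\sqrt L$. For $\nu_2=H(k_u;\xi)$, note that the first block has size $m_1=\lceil k_u\rceil\ge k_u$. It follows that $H(k_u;\xi)^2=\lceil k_u\rceil\cdot a^2/m_1\asymp a^2$, so $\nu_2\asymp a$.

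\textbf{Step 2: the quantity $\nu_1$.} I will use \Cref{prop: nu1-expression} together with the defining equation \eqref{eq: equation sol}. The upper bound is easy: $\nu_1\lesssim a\sqrt L$. Indeed, the square-root term in $\nu_1$ is at most $\|\xi\|_2$. For the term $\lambda k_u$, it suffices to show $\lambda\lesssim a\sqrt L/k_u$.

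To get the $\lambda$ bound, evaluate the left side $F$ of \eqref{eq: equation sol} at $z=0$. There $F(0)=\|\xi\|_1/\|\xi\|_2$, and $\|\xi\|_1=a\sum_\ell\sqrt{m_\ell}\asymp a\sqrt{k_u}L^2$. This gives $F(0)\asymp\sqrt{k_u}L^{3/2}$, which is $\le k_u/2$ when $L^3\le c_0k_u$ by \eqref{eq:multiscale-L-conditions}. Since $F$ is decreasing, $\zeta\le 0$ and so $\lambda=0$.

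With $\lambda=0$, the square-root term in $\nu_1$ is exactly $\|\xi\|_2=a\sqrt L$. Therefore $\nu_1\asymp a\sqrt L$. This step is where the condition $L^3\le c_0k_u$ is used; it is the only delicate point, and it works out cleanly.

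\textbf{Step 3: the rates.} In the regime $k_u\gg\sqrt n/\log p$, \Cref{prop: upper bound equivalent} gives the upper bound $\frac{k_u\log p}{n}H(n/\log p;\xi)\asymp a\sqrt L\,k_u\log p/n$. \Cref{thm: computational lower bound} gives the low-degree bound $c\,\nu_3k_u\log p/n$ with $\nu_3=H(k_{\mathrm{eff}};\xi)\asymp a\sqrt L$, which is the same order. Substituting $\nu_1\asymp a\sqrt L$ and $\nu_2\asymp a$ into \Cref{thm: sparse spiked} yields the stated spiked rate.

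\textbf{Step 4: the gap.} Compare the low-degree scale with each term of the spiked rate. Its ratio to the first term is $k_u\log p/\sqrt n$, and its ratio to the second term is $\sqrt L$. So the low-degree scale exceeds the spiked rate by the factor $\sqrt L\wedge k_u\log p/\sqrt n$. This factor is polynomial in $p$ under the stated hypothesis, which gives the gap claim.
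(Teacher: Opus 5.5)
Your proposal is correct and follows essentially the same route as the paper: the block-energy identity gives $H(k_{\mathrm{eff}};\xi)=H(n/\log p;\xi)=a\sqrt L$ and $\nu_2=a$, and $F(0)\asymp\sqrt{k_u}L^{3/2}\le k_u/2$ forces $\lambda=0$, so that $\nu_1=\|\xi\|_2=a\sqrt L$. The rates and the gap factor $\sqrt L\wedge k_u\log p/\sqrt n$ are then read off from the three cited results exactly as in the paper; once $\lambda=0$ is established, your separate bound $\lambda\lesssim a\sqrt L/k_u$ and the appeal to the $\nu_1$-expression proposition are unnecessary.
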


\begin{proof}
For each block \(\ell\), \eqref{eq:multiscale-loading} gives constant \(\ell_2\)-energy:
\begin{equation}\label{eq:multiscale-block-energy}
    \sum_{j=M_{\ell-1}+1}^{M_\ell}\xi_j^2
    =
    m_\ell\frac{a^2}{m_\ell}
    =
    a^2.
\end{equation}
By \eqref{eq:multiscale-support-size},
the support of \(\xi\) is a subset of the first
\(k_{\mathrm{eff}}\) coordinates and also a subset of the first \(n/\log p\) coordinates.
Therefore, the definition
of \(H\) and the block-energy identity \eqref{eq:multiscale-block-energy}
give
\[
    H(k_{\mathrm{eff}}~;~\xi)^2
    =
    H(n/\log p~;~\xi)^2
    =
    \sum_{\ell=1}^L a^2
    =
    La^2,
\]
which implies
\begin{equation}\label{eq:multiscale-H-result}
    H(k_{\mathrm{eff}}~;~\xi)
    \asymp
    H(n/\log p~;~\xi)
    \asymp
    a\sqrt L.
\end{equation}

The block with \(\ell=1\) has size \(m_1=\lceil k_u\rceil\) and coordinate magnitude
\(a/\sqrt{m_1}\). Therefore, we have
\begin{equation}\label{eq:multiscale-nu2}
    \nu_2
    =
    H(k_u~;~\xi)
    =
    \left(m_1\frac{a^2}{m_1}\right)^{1/2}
    =
    a.
\end{equation}

It remains to compute \(\nu_1\). Let \(x_j=|\xi_j|\), and define
\[
    F(z)
    =
    \frac{\sum_j x_j\exp(-z/x_j^2)}
    {\left(\sum_j x_j^2\exp(-z/x_j^2)\right)^{1/2}}.
\]
This \(F\) is the left-hand side of \eqref{eq: equation sol} with \(z\) in place of
\(\zeta\). At \(z=0\),
\[
    F(0)
    =
    \frac{\sum_j x_j}{\left(\sum_jx_j^2\right)^{1/2}}.
\]
For block \(\ell\), \eqref{eq:multiscale-block-sizes} and \eqref{eq:multiscale-loading} imply
\[
    \sum_{j=M_{\ell-1}+1}^{M_\ell}x_j
    =
    m_\ell\frac{a}{\sqrt{m_\ell}}
    =
    a\sqrt{m_\ell}
    \asymp
    a\sqrt{k_u}\,\ell .
\]
Summing this blockwise identity over \(\ell=1,\ldots,L\) gives
\[
    \sum_jx_j
    \asymp
    a\sqrt{k_u}\sum_{\ell=1}^L \ell
    \asymp
    a\sqrt{k_u}L^2.
\]
The same block-energy identity \eqref{eq:multiscale-block-energy} gives
\begin{equation}\label{eq:multiscale-total-l2}
    \left(\sum_jx_j^2\right)^{1/2}
    =
    a\sqrt L.
\end{equation}
Combining the formula for \(F(0)\), the \(\ell_1\) computation, and
\eqref{eq:multiscale-total-l2}, we obtain
\[
    F(0)\asymp \sqrt{k_u}L^{3/2}.
\]
By choosing \(c_0>0\) sufficiently small in \eqref{eq:multiscale-L-conditions}, this gives
\[
    F(0)\le k_u/2.
\]
Since \(F\) is decreasing by \Cref{lem: decreasing} and the defining equation
\eqref{eq: equation sol} is \(F(\zeta)=k_u/2\), the inequality \(F(0)\le k_u/2\) implies
\(\zeta\le0\). Therefore \(\lambda=\sqrt{\zeta_+}=0\). Substituting \(\lambda=0\) into
\eqref{eq: nu1 definition} and using \eqref{eq:multiscale-total-l2} gives
\begin{equation}\label{eq:multiscale-nu1}
    \nu_1
    =
    \left(\sum_j\xi_j^2\right)^{1/2}
    =
    a\sqrt L.
\end{equation}
Equations \eqref{eq:multiscale-H-result}, \eqref{eq:multiscale-nu2}, and
\eqref{eq:multiscale-nu1} prove the three estimates stated at the beginning of the proposition.

By the moderately sparse assumption \(k_u\gg \sqrt n/\log p\), the moderately sparse branch of
\Cref{prop: upper bound equivalent} applies. Combining that branch with
\eqref{eq:multiscale-H-result} gives the computationally feasible upper bound
\[
    \tau_{\mathrm{adap}}(k_u,k~;~\xi)
    \lesssim
    H(n/\log p~;~\xi)\frac{k_u\log p}{n}
    \asymp
    a\sqrt L\frac{k_u\log p}{n}.
\]
Similarly, \Cref{thm: computational lower bound} and \eqref{eq:multiscale-H-result} give a
low-degree lower bound at separation
\[
    H(k_{\mathrm{eff}}~;~\xi)\frac{k_u\log p}{n}
    \asymp
    a\sqrt L\frac{k_u\log p}{n}.
\]
Thus the computationally feasible upper bound and the low-degree scale both have order
\(a\sqrt L\,k_u\log p/n\).

Finally, \Cref{thm: sparse spiked}, \eqref{eq:multiscale-nu1}, and \eqref{eq:multiscale-nu2}
give, under the sparse signed-spiked covariance model,
\[
    \tau^{\text{spike}}_{\text{adap}}(k_u,k~;~\xi)
    \asymp_{\log}
    \frac{\nu_1}{\sqrt n}
    +
    \nu_2\frac{k_u\log p}{n}
    \asymp_{\log}
    \frac{a\sqrt L}{\sqrt n}
    +
    a\frac{k_u\log p}{n}.
\]
Dividing this sparse signed-spiked rate by the low-degree scale
\(H(k_{\mathrm{eff}}~;~\xi)k_u\log p/n\), and using
\eqref{eq:multiscale-H-result}, gives
\[
    \frac{
        \tau^{\text{spike}}_{\text{adap}}(k_u,k~;~\xi)
    }{
        H(k_{\mathrm{eff}}~;~\xi)k_u\log p/n
    }
    \asymp_{\log}
    \frac{
        a\sqrt L/\sqrt n+a k_u\log p/n
    }{
        a\sqrt L\,k_u\log p/n
    }
    =
    \frac{1}{\sqrt L}+\frac{\sqrt n}{k_u\log p}.
\]
If
\(\min(\sqrt L,~ k_u\log p/\sqrt n)\) diverges polynomially in $p$, then the displayed ratio is
polynomially small, which proves the claimed separation.
\end{proof}

\subsection{I.i.d. sub-Weibull random-predictor loadings}
\label{app:random-subweibull-loading}

This subsection treats random-predictor loadings, where \(\xi\) is the covariate vector of a new
test point. For this example, the additional randomness is the draw of \(\xi\). After the test point
is observed, we condition on its realized value, relabel coordinates so that
\[
    |\xi_1|\ge |\xi_2|\ge\cdots\ge |\xi_p|,
\]
and evaluate the deterministic quantities \(H(\cdot~;~\xi)\), \(\nu_1\), and \(\nu_2\) from
\eqref{eq: H definition}, \eqref{eq: nu1 definition}, and \eqref{eq: nu2 definition}. Thus the
probability statements below concern the draw of \(\xi\). Throughout
\Cref{app:random-subweibull-loading}, let
\[
    N=\left\lfloor \frac n{\log p}\right\rfloor .
\]

Let \(W_1,\ldots,W_p\) be i.i.d. nonnegative random variables, and let \(\xi\) be the decreasing
rearrangement of \((W_1,\ldots,W_p)\). Assume that \(W\) has a two-sided sub-Weibull tail: for some
constants \(q>0\), \(0<c_1,c_2,C_1,C_2<\infty\), and \(t_0>0\),
\begin{equation}\label{eq:subweibull-tail}
    c_1\exp(-C_1 t^q)
    \le
    \mathbb P(W\ge t)
    \le
    C_2\exp(-c_2 t^q),
    \qquad t\ge t_0.
\end{equation}
This tail condition includes absolute Gaussian coordinates \((q=2)\), absolute sub-exponential
coordinates \((q=1)\), and other light-tailed predictors, up to constants in the exponent. Assume
\begin{equation}\label{eq:subweibull-scaling}
    1\ll k_u\le N,
    \qquad
    N=o(p),
    \qquad
    k_u\le p^\gamma
    \quad\text{for some fixed }\gamma<1/2.
\end{equation}

With probability tending to one, the realized loading vector lies outside the two deterministic
classes most closely related to this example.
\begin{itemize}
    \item It fails the regular-loading condition of
\cite{cai2017confidence}.
The lower bound in \eqref{eq:subweibull-tail} implies there exists some constant $c_1>0$ such that
\(\max_{j\le p}W_j\ge c_1 (\log p)^{1/q}\) with probability tending to 1.
Furthermore, it also implies that there exists a fixed $M$ such that the interval
\([t_0,M]\) receives positive probability mass. Therefore, with probability tending to 1, there is at least one nonzero coordinate with magnitude at most
\(M\). When both events happen, we have
\[
    \frac{\max_{j\in\supp(\xi)}|\xi_j|}
    {\min_{j\in\supp(\xi)}|\xi_j|}
    \gtrsim (\log p)^{1/q},
\]
which is unbounded.
\item
It is also not the exact deterministic polynomial-decay profile treated in \cite{cai2019optimal}.
The order-statistic comparison in \Cref{eq:subweibull-order-stat-event} in the proof of \Cref{prop:random-subweibull-loading} shows that for
\(j\to\infty\) with \(2j\le N\), the ratio between $|\xi_j|$ and $|\xi_{2j}|$ is of order
\[
    \frac{\{\log(ep/j)\}^{1/q}}{\{\log(ep/(2j))\}^{1/q}}
    \to 1,
\]
whereas \(a_0j^{-\alpha}\) with \(\alpha>0\) has dyadic ratio \(2^\alpha\).
\end{itemize}
Therefore, the regular-loading
theory of \cite{cai2017confidence} and the exact polynomial-decay analysis of
\cite{cai2019optimal} do not apply directly to this random-predictor loading.

\begin{proposition}
\label{prop:random-subweibull-loading}
Under \eqref{eq:subweibull-tail} and \eqref{eq:subweibull-scaling}, with probability tending to
one, we have
\begin{equation}\label{eq:subweibull-H-rate}
    H(r~;~\xi)
    =
    \left(\sum_{j\le r}\xi_j^2\right)^{1/2}
    \asymp
    \sqrt r\,\left\{\log\left(\frac{ep}{r}\right)\right\}^{1/q},
\end{equation}
uniformly for \(1\le r\le N\).
Moreover, we have
\begin{equation}\label{eq:subweibull-nu1-rate}
    \nu_1
    \asymp
    k_u\left\{\log\left(\frac{ep}{k_u^2}\right)\right\}^{1/2+1/q}.
\end{equation}
Consequently, in the moderately sparse regime \(k_u\gg \sqrt n/\log p\), we have
\begin{equation}\label{eq:subweibull-upper-rate}
    \tau_{\mathrm{adap}}(k_u,k~;~\xi)
    \lesssim
    k_u\sqrt{\frac{\log p}{n}}
    \left\{\log\left(\frac{ep}{N}\right)\right\}^{1/q},
\end{equation}
while \Cref{thm: hypothesis} gives
\begin{equation}\label{eq:subweibull-lower-rate}
    \tau_{\mathrm{adap}}(k_u,k~;~\xi)
    \gtrsim
    \frac{k_u}{\sqrt n}
    \left\{\log\left(\frac{ep}{k_u^2}\right)\right\}^{1/2+1/q}.
\end{equation}
In the regime where it holds that
\begin{equation}\label{eq:subweibull-matched-cond}
        \log(ep/N)\asymp \log(ep/k_u^2)\asymp \log p,
\end{equation}
we have
\begin{equation}\label{eq:subweibull-matched-rate}
    \tau_{\mathrm{adap}}(k_u,k~;~\xi)
    \asymp
    k_u\frac{(\log p)^{1/2+1/q}}{\sqrt n}.
\end{equation}
\end{proposition}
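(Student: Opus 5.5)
The plan has two parts. First, I will pin down the order statistics of the sample $W_1,\ldots,W_p$ on a high-probability event. Everything after that is deterministic algebra with $H$ and with the function $F$ in \eqref{eq: equation sol}.

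\textbf{Step 1: order statistics.} For $t\ge t_0$, let $G(t)=\mathbb P(W\ge t)$. The count $\#\{j:W_j\ge t\}$ is $\mathrm{Bin}(p,G(t))$. Define the thresholds $t_j^\pm=(c_\pm\log(ep/j))^{1/q}$. The upper tail bound in \eqref{eq:subweibull-tail} gives $pG(t_j^+)\le j/2$, and the lower tail bound gives $pG(t_j^-)\ge 2j$, once the constants $c_\pm$ are chosen suitably. Chernoff bounds then yield
\[
t_j^-\le \xi_j\le t_j^+ \quad\text{with failure probability } e^{-cj}.
\]
I will take a union bound over dyadic $j=2^i$ with $j_0\le j\le N$, and use monotonicity to fill in between dyadic points. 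For the smallest indices $j<j_0$, only the crude bound $\xi_1\lesssim(\log p)^{1/q}$ is needed; I get it from a union bound using the upper tail. The output is the event \eqref{eq:subweibull-order-stat-event}:
\[
\xi_j\asymp\{\log(ep/j)\}^{1/q}\quad\text{for all } j\le N .
\]
For larger $j$, the interval mass noted before the proposition gives $\xi_j\gtrsim 1$ for a positive fraction of indices, and $\xi_j\lesssim (\log(ep/j))^{1/q}$ continues to hold. This step is where the hypothesis $N=o(p)$ is used.

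\textbf{Step 2: the bound \eqref{eq:subweibull-H-rate}.} On the event of Step 1, $\sum_{j\le r}\log(ep/j)^{2/q}$ compares with $\int_0^r\log(ep/x)^{2/q}\,dx\asymp r\log(ep/r)^{2/q}$. The lower bound comes from summing over $r/2<j\le r$. This is uniform in $r\le N$, which proves \eqref{eq:subweibull-H-rate}.

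\textbf{Step 3: the bound \eqref{eq:subweibull-nu1-rate}, the main obstacle.} The function $F$ involves all $p$ coordinates, so I need the profile globally, not only up to $N$. Beyond index $N$, the approach is to replace the empirical sums by their expectations using the law of large numbers:
\[
\sum_j x_j^s e^{-z/x_j^2}\approx p\,\mathbb E[W^s e^{-z/W^2}].
\]
This approximation is valid as long as the relevant mass comes from $\gg 1$ coordinates, which holds at the solution because $k_u\to\infty$. Set $z=\lambda^2$ with $\lambda^2=s\,(\log(p/k_u^2))^{2/q+1}$, and write $u=(\log(p/k_u^2))^{1/q}$. A Laplace-type computation with $\mathbb P(W\approx t)\approx e^{-t^q}$ should show that $\mathbb E[W^s e^{-\lambda^2/W^2}]$ is dominated near $t\asymp u$. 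This gives $F(\lambda^2)^2\asymp p\,e^{-\Theta(u^q)}$. Solving $F=k_u/2$ then forces $u^q\asymp\log(p/k_u^2)$, and hence
\[
\lambda\asymp(\log(ep/k_u^2))^{1/2+1/q}.
\]
With this $\lambda$, the term $\lambda k_u$ has the claimed order. The second term of $\nu_1$ is $\asymp (u/\lambda)F\cdot$ a comparable factor, roughly $u\,k_u\lesssim\lambda k_u$. Because the tail exponents $c_2$ and $C_1$ in \eqref{eq:subweibull-tail} differ, a direct two-sided bound on $\lambda$ only holds up to constants. I will handle this by comparing $F$ to the analogous function for exact Weibull tails with exponents $c_2$ and $C_1$ separately, using the monotonicity in Lemma~\ref{lem: decreasing}. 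The hypothesis $k_u\le p^\gamma$ with $\gamma<1/2$ guarantees $\log(ep/k_u^2)\asymp$ a positive multiple of a divergent quantity, so $\zeta>0$.

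\textbf{Step 4: consequences.} Substituting $r=N$ in \eqref{eq:subweibull-H-rate} and applying Proposition~\ref{prop: upper bound equivalent} gives
\[
\frac{k_u\log p}{n}\sqrt N\,\log(ep/N)^{1/q}=k_u\sqrt{\frac{\log p}{n}}\,\log(ep/N)^{1/q},
\]
which is \eqref{eq:subweibull-upper-rate}. The $\nu_1/\sqrt n$ term of Theorem~\ref{thm: hypothesis} combined with \eqref{eq:subweibull-nu1-rate} gives \eqref{eq:subweibull-lower-rate}. Under \eqref{eq:subweibull-matched-cond}, both bounds reduce to $k_u(\log p)^{1/2+1/q}/\sqrt n$, which is \eqref{eq:subweibull-matched-rate}.
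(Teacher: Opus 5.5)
Your Steps 1, 2 and 4 follow the paper: binomial exceedance counts for the envelope \eqref{eq:subweibull-order-stat-event}, an integral comparison for the log-sum, and then Proposition~\ref{prop: upper bound equivalent} and Theorem~\ref{thm: hypothesis}. Step 3 replaces the paper's grid-count event and deterministic geometric saddle lemma with population Laplace asymptotics plus a law of large numbers. It fails exactly where you anticipate trouble, at the mismatched tail constants.

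Write $S_\ell(z)=\sum_j|\xi_j|^\ell e^{-z/\xi_j^2}$, so that $F(z)=S_1(z)/S_2(z)^{1/2}$.
\begin{itemize}
\item $F$ is not monotone in the law of the coordinates. A few very large coordinates inflate $S_2$ more than $S_1$ and can decrease $F$. Lemma~\ref{lem: decreasing} gives monotonicity only in $z$. So $F$ cannot be sandwiched between its exact-Weibull analogues with exponents $c_2$ and $C_1$.
\item Suppose you instead bound $S_1$ and $S_2$ separately. An upper bound on $F^2=S_1^2/S_2$ then needs $S_1$ from above (tail constant $c_2$) and $S_2$ from below (tail constant $C_1$). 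The Laplace computation then only gives, for some $\kappa_q>0$,
\[
F(z)^2\lesssim_{\log} p\exp\bigl\{-\kappa_q\bigl(2c_2^{2/(q+2)}-C_1^{2/(q+2)}\bigr)z^{q/(q+2)}\bigr\}.
\]
\item Since \eqref{eq:subweibull-tail} allows $C_1\ge 2^{(q+2)/2}c_2$, this exponent need not be positive. Enlarging the constant in $z=C_+^2s^2$ then does not force $F<k_u/2$.
\end{itemize}
So the upper bound $\lambda\lesssim L^{1/2+1/q}$ is not established, and neither is the upper half of \eqref{eq:subweibull-nu1-rate}.

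The lower half is fine, and it is all that \eqref{eq:subweibull-lower-rate}--\eqref{eq:subweibull-matched-rate} use. There you need $S_1$ from below and $S_2$ from above. Evaluating at $c_-s$ with $c_-$ small makes the resulting bound $p\,e^{-O(c_-^{2q/(q+2)}L)}\gg k_u^2$. The same choice of $c_-$ also validates your Chebyshev/LLN step for $S_1$.

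The missing ingredient is the Cauchy--Schwarz bound $F(z)^2\le S_0(z)=\sum_j e^{-z/\xi_j^2}$, which the paper uses in \eqref{eq:subweibull-D-by-S0}. Since $S_0$ is increasing in every coordinate, only the upper tail enters. Markov's inequality and your Laplace bound then give $S_0(C_+^2s^2)\lesssim_{\log}p\,e^{-a_+L}$ with $a_+>1$ for $C_+$ large. This is $o(k_u^2)$ because $k_u\le p^\gamma$ with $\gamma<1/2$.

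Two smaller corrections:
\begin{itemize}
\item Two-sided tail bounds give no density, since the law of $W$ may have gaps. Replace the heuristic $\mathbb P(W\approx t)\approx e^{-t^q}$ by integration by parts against the increasing functions $w\mapsto w^\ell e^{-z/w^2}$. Done this way, your expectation-level route is a legitimate alternative. It avoids the large-$B$ annulus-mass construction in Lemma~\ref{lem:subweibull-grid-count}, at the cost of a separate concentration step at the two deterministic points $z=c_-^2s^2$ and $z=C_+^2s^2$.
\item The second term of $\nu_1$ is $S_2(\lambda^2)^{1/2}$, and no saddle-point information at $\lambda$ is needed for it. We have $S_2(\lambda^2)\le\|\xi\|_\infty S_1(\lambda^2)=\|\xi\|_\infty (k_u/2)S_2(\lambda^2)^{1/2}$. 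Hence $S_2(\lambda^2)^{1/2}\le\|\xi\|_\infty k_u/2\lesssim k_uL^{1/q}$.
\end{itemize}
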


For the moderately sparse regime $\sqrt{n}/\log p \ll k_u\lesssim n/\log p$, \Cref{eq:subweibull-matched-cond} holds if $n=O(p^{1-\varepsilon})$ for some $\varepsilon>0$ and \Cref{cdt: sparsity assumption} holds.

For Gaussian coordinates \(q=2\), \eqref{eq:subweibull-matched-rate} becomes
\[
    \tau_{\mathrm{adap}}(k_u,k~;~\xi)
    \asymp
    k_u\frac{\log p}{\sqrt n}
\]
under polynomial aspect-ratio scaling.
For sub-exponential coordinates \(q=1\),
\eqref{eq:subweibull-matched-rate} becomes \(\tau_{\mathrm{adap}}(k_u,k~;~\xi)
    \asymp k_u(\log p)^{3/2}/\sqrt n\).
Formally, bounded predictors correspond to the limiting case \(q=\infty\), and the displayed
rate then reduces to \(\tau_{\mathrm{adap}}(k_u,k~;~\xi)
    \asymp k_u\sqrt{\log p/n}\), which is the same as the rate for dense regular loadings.

\subsubsection{Proof of Proposition~\ref{prop:random-subweibull-loading}}
The proof of \Cref{prop:random-subweibull-loading} makes use of two auxiliary facts:
a high-probability order-statistic comparison and a deterministic logarithmic-sum comparison.

\begin{lemma}[Order-statistic envelope]
\label{lem:subweibull-order-statistics}
Under \eqref{eq:subweibull-tail} and \eqref{eq:subweibull-scaling}, there exist constants
\(0<c<C<\infty\), depending only on the constants in \eqref{eq:subweibull-tail}, such that, with
probability tending to one,
\begin{equation}\label{eq:subweibull-order-stat-event}
    c\left\{\log\left(\frac{ep}{j}\right)\right\}^{1/q}
    \le
    |\xi_j|
    \le
    C\left\{\log\left(\frac{ep}{j}\right)\right\}^{1/q},
    \qquad
    1\le j\le N.
\end{equation}
\end{lemma}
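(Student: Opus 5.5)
The plan is to reduce the statement about order statistics to binomial tail bounds at quantile-matched thresholds, followed by a union bound over $j\le N$. Write $\bar F(t)=\mathbb P(W\ge t)$ and $S(t)=\#\{i\le p: W_i\ge t\}\sim\mathrm{Bin}(p,\bar F(t))$. The basic identity is
\[
|\xi_j|\ge t \iff S(t)\ge j .
\]

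For the upper bound, take $t_j^+=C\{\log(ep/j)\}^{1/q}$. By \eqref{eq:subweibull-tail},
\[
p\bar F(t_j^+)\le C_2\,p\,(j/(ep))^{c_2C^q},
\]
which is at most $j/(e^2 \cdot)$ times a factor $(j/p)^{c_2C^q-1}$. The Chernoff bound $\mathbb P(S\ge j)\le (e\mu/j)^j$ then gives
\[
\mathbb P\bigl(|\xi_j|>t_j^+\bigr)\le \bigl(e\,C_2 (j/(ep))^{c_2C^q-1}\bigr)^j .
\]
Choosing $C$ large makes this at most $(j/p)^{2j}$ up to constants. Summed over $1\le j\le N$, the total is $O(p^{-1})=o(1)$, using $N=o(p)$ so that $j/p\le 1/2$ eventually. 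The constant $C$ must also satisfy $t_j^+\ge t_0$, which holds since $\log(ep/j)\ge 1$, after enlarging $C$.

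For the lower bound, take $t_j^-=c\{\log(ep/j)\}^{1/q}$. If $t_j^-<t_0$, replace the threshold by $t_0$ and bound it using the fact that $\bar F(t_0)$ is a positive constant. Otherwise,
\[
\mu_j=p\bar F(t_j^-)\ge c_1\,p\,(j/(ep))^{C_1c^q}\ge 2j\,(p/j)^{1-C_1c^q}/\text{const}.
\]
Choosing $c$ small makes $\mu_j\ge 4j$, and in fact $\mu_j\ge (p/j)^{1/2}j$. The lower-tail Chernoff bound
\[
\mathbb P(S\le j-1)\le \exp(-\mu_j/8)\le \exp\bigl(-\tfrac18 (p/j)^{1/2}j\bigr)
\]
applies. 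The sum over $j\le N$ is then at most $\sum_j\exp(-c'\sqrt{p/N}\,j)$. Since $p/N\to\infty$, this is $o(1)$.

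The case where $\log(ep/j)$ is bounded, with $j$ near $N$ and $p/N$ merely large, needs separate care. Here $\mu_j\ge j\cdot(p/N)^{1/2}\to\infty$ relative to $j$, so the same bound works. The only delicate point is handling $t<t_0$ uniformly, which is done by adjusting the constants.

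The main obstacle is bookkeeping the constants so that a single pair $(c,C)$ works uniformly in $j\le N$. In particular, the lower-tail exponent must grow fast enough for the union bound. This is guaranteed by $N=o(p)$, which makes $\mu_j/j\ge (p/N)^{1-C_1c^q}\to\infty$. Nothing from \eqref{eq:subweibull-scaling} beyond $N=o(p)$ and $N\to\infty$ should be needed.
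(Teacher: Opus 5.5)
Your proposal is correct and follows essentially the same route as the paper: binomial exceedance counts at thresholds $A_\pm\{\log(ep/j)\}^{1/q}$, the upper-tail bound $\mathbb P(X\ge j)\le (e\mu/j)^j$ and the lower-tail Chernoff bound $\exp(-\mu/8)$, and a union bound over $j\le N$ driven by $N=o(p)$. The paper also notes that $\{\log(ep/j)\}^{1/q}\ge\{\log(ep/N)\}^{1/q}\to\infty$ uniformly in $j\le N$, so all thresholds eventually exceed $t_0$; your separate treatment of $t_j^-<t_0$ and of ``bounded $\log(ep/j)$'' is therefore unnecessary, since that case cannot occur.
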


\begin{lemma}[Logarithmic sum comparison]
\label{lem:subweibull-log-sum}
Suppose \(N=o(p)\). Then, for every fixed \(q>0\), the following holds uniformly for \(1\le r\le N\):
\[
    \sum_{j\le r}
    \left\{\log\left(\frac{ep}{j}\right)\right\}^{2/q}
    \asymp
    r\left\{\log\left(\frac{ep}{r}\right)\right\}^{2/q}.
\]
\end{lemma}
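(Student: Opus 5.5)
The plan is to prove the two directions separately, writing $s=2/q>0$ and $f(x)=\{\log(ep/x)\}^{s}$ for $0<x\le p$. The function $f$ is positive and strictly decreasing on $(0,p]$, and $f(x)\ge 1$ there because $ep/x\ge e$. The hypothesis $N=o(p)$ is used only to guarantee $r\le N\le p$, so that every term is well defined and $\log(ep/r)\ge 1$. All constants will depend only on $q$, which gives uniformity over $1\le r\le N$.

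\emph{Lower bound.} For $j\le r$ we have $\log(ep/j)\ge \log(ep/r)$. Hence every summand is at least $f(r)$, and so
\[
\sum_{j\le r} f(j)\ \ge\ r\{\log(ep/r)\}^{s}.
\]

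\emph{Upper bound.} Since $f$ is decreasing, $f(j)\le \int_{j-1}^{j} f(x)\,dx$ for each $j\ge1$, which gives $\sum_{j\le r} f(j)\le \int_0^r f(x)\,dx$. The integral converges at $0$ because $f$ grows only logarithmically there. We then substitute $x=ru$ with $u\in(0,1]$ and set $L=\log(ep/r)\ge 1$. This turns $\log(ep/x)$ into $L+\log(1/u)$, and the elementary inequality $(a+b)^s\le C_s(a^s+b^s)$ with $C_s=\max(1,2^{s-1})$ yields
\[
\int_0^r f(x)\,dx \le C_s\, r\Big(L^{s}+\int_0^1\{\log(1/u)\}^{s}du\Big)=C_s\, r\big(L^{s}+\Gamma(s+1)\big).
\]
Since $L\ge1$, we have $\Gamma(s+1)\le \Gamma(s+1)L^{s}$. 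The right-hand side is therefore at most $C_s\{1+\Gamma(s+1)\}\,r\{\log(ep/r)\}^{s}$.

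Combining the two bounds gives the claimed equivalence with constants depending only on $q$. I do not expect a genuine obstacle. The only points needing care are the sum–integral comparison near $x=0$, which is handled by the integrability of $\{\log(1/u)\}^s$, and making sure that $L\ge1$ absorbs the additive Gamma constant uniformly in $r$.
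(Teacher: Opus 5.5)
Your proof is correct and follows the paper's argument. The paper also bounds the sum by $\int_0^r \{\log(ep/x)\}^{2/q}\,dx$ and evaluates it through the substitution $y=\log(ep/x)=L_r+u$, which is your $x=ru$ substitution written in logarithmic coordinates. Your lower bound, which uses that every summand is at least $\{\log(ep/r)\}^{2/q}$, is slightly more direct than the paper's restriction to the indices $r/2<j\le r$. You are also right that only $L\ge 1$ is needed, not $L_r\to\infty$.
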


By \Cref{lem:subweibull-order-statistics}, with probability tending to one,
\eqref{eq:subweibull-order-stat-event} holds uniformly for \(1\le j\le N\).
We work on this event in the following.

Using \eqref{eq:subweibull-order-stat-event}, uniformly for \(1\le r\le N\),
\[
    H(r~;~\xi)^2
    \asymp
    \sum_{j\le r}
    \left\{\log\left(\frac{ep}{j}\right)\right\}^{2/q}.
\]
By \Cref{lem:subweibull-log-sum}, the sum on the right satisfies
\[
    \sum_{j\le r}
    \left\{\log\left(\frac{ep}{j}\right)\right\}^{2/q}
    \asymp
    r\left\{\log\left(\frac{ep}{r}\right)\right\}^{2/q},
    \qquad 1\le r\le N.
\]
Taking square roots proves \eqref{eq:subweibull-H-rate}.

We next compute \(\nu_1\). As in the preceding examples, let \(F_\xi(z)\) denote the left-hand side of \eqref{eq: equation sol}: \[ F_\xi(z) = \frac{\sum_{j=1}^{k_\xi}|\xi_j| \exp(-z/\xi_j^2)} {\left(\sum_{j=1}^{k_\xi}\xi_j^2 \exp(-z/\xi_j^2)\right)^{1/2}} . \] For the tail estimates below, it is more convenient to work with the squared version on the \(t=\sqrt z\) scale. For \(t>0\), define \begin{equation}\label{eq:subweibull-D-definition} D_\xi(t) = F_\xi(t^2)^2 = \frac{ \left(\sum_{j=1}^{k_\xi}|\xi_j|e^{-t^2/\xi_j^2}\right)^2 }{ \sum_{j=1}^{k_\xi}\xi_j^2 e^{-t^2/\xi_j^2} }. \end{equation} The equation \eqref{eq: equation sol} is equivalent to \[ D_\xi(\lambda)=k_u^2/4 \text{ if } \lambda>0. \]

We state the following result but defer its proof to the end of the section.

\begin{lemma}[Localization of the solution defining \(\nu_1\)]
\label{lem:subweibull-D-localization}
Assume \eqref{eq:subweibull-tail} and \eqref{eq:subweibull-scaling}. Let
\[
    L=\log\left(\frac{ep}{k_u^2}\right),
    \qquad
    s=L^{(q+2)/(2q)}.
\]
The following properties hold with probability tending to one:
\begin{enumerate}
    \item There exist constants \(0<c_-<C_+<\infty\)
such that
\begin{equation}\label{eq:subweibull-upper-crossing}
    D_\xi(C_+s)\le \frac{k_u^2}{8},
\end{equation}
and
\begin{equation}\label{eq:subweibull-lower-crossing}
    D_\xi(c_-s)\ge \frac{k_u^2}{2}.
\end{equation}
Consequently, the solution \(\lambda\) in \eqref{eq: equation sol} is positive and satisfies
\begin{equation}\label{eq:subweibull-lambda-localization}
    c_-s\le \lambda\le C_+s.
\end{equation}
\item It holds that
\begin{equation}\label{eq:subweibull-S2-at-lambda}
    \left(\sum_{j=1}^{k_\xi}\xi_j^2\exp(-\lambda^2/\xi_j^2)\right)^{1/2}
    \lesssim
    k_u L^{1/q}.
\end{equation}
\end{enumerate}
\end{lemma}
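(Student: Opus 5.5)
We work throughout on the high-probability event \eqref{eq:subweibull-order-stat-event} of \Cref{lem:subweibull-order-statistics}. We also use the elementary consequence of \eqref{eq:subweibull-tail} and a binomial count that, with probability tending to one, for every threshold $u\ge u_0$ the number of coordinates with $|\xi_j|\ge u$ is of order $p\exp(-\Theta(u^q))$, as long as this count is at least a large multiple of $\log p$. In the same spirit, a fixed positive fraction of the coordinates lies in a bounded interval $[t_0,M]$. Write $\phi(x)=x\,e^{-t^2/x^2}$ and $\psi(x)=x^2e^{-t^2/x^2}$. Then $D_\xi(t)=(\sum_j\phi(|\xi_j|))^2/\sum_j\psi(|\xi_j|)$. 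Both functions are increasing in $x$, and for $t$ large the sums are dominated by coordinates of magnitude near the maximizer of $x\mapsto \#\{j:|\xi_j|\approx x\}\,e^{-t^2/x^2}$. Heuristically this count is $p\,e^{-x^q-t^2/x^2}$, which is maximized at $x_*\asymp t^{2/(q+2)}$. At the maximizer the exponent is $\asymp t^{2q/(q+2)}$.

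For the upper crossing \eqref{eq:subweibull-upper-crossing}, use Cauchy--Schwarz as in \eqref{eq: regular example CS}: $D_\xi(t)\le \sum_j e^{-t^2/\xi_j^2}$. Split this sum into dyadic magnitude shells $\{2^{m}\le|\xi_j|<2^{m+1}\}$. The shell counts are bounded using the upper tail in \eqref{eq:subweibull-tail}, giving at most $C p\,e^{-c2^{mq}}$ coordinates, plus the trivial bound by $p$ for bounded shells. This yields
\[
\sum_j e^{-t^2/\xi_j^2}\lesssim p\sum_m \exp\bigl(-c2^{mq}-t^22^{-2m-2}\bigr)\lesssim p\exp\bigl(-c' t^{2q/(q+2)}\bigr),
\]
where the shell at $2^m\asymp x_*$ dominates the sum. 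The top shell is capped at $\max_j|\xi_j|\lesssim(\log p)^{1/q}$, which holds on the order-statistic event. Taking $t=C_+s$, we get $t^{2q/(q+2)}=C_+^{2q/(q+2)}L$. Hence the bound is $p\,(ep/k_u^2)^{-c'C_+^{2q/(q+2)}}\le k_u^2/8$ once $C_+$ is large, because $p/k_u^2=e^{L-1}$.

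For the lower crossing \eqref{eq:subweibull-lower-crossing}, bound the numerator from below by keeping only the shell of magnitudes in $[x_*,2x_*]$, with $x_*\asymp t^{2/(q+2)}\asymp L^{1/q}$. By the lower tail and the count lemma, this shell has roughly $p\,e^{-Cx_*^q}$ coordinates. We need this to be at least $k_u^2p^{-\epsilon}$-ish and in particular to exceed $\log p$; this holds since $k_u\le p^\gamma$ with $\gamma<1/2$, provided the constants are chosen so that $Cx_*^q\le (1-\delta)\log(p/k_u^2)+$const. The numerator is then at least $(n_*x_*e^{-t^2/x_*^2})^2$. For the denominator, use the same dyadic shell upper estimate as before. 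It gives $\sum\psi\lesssim x_*^2\, p\,e^{-c x_*^q - t^2/(4x_*^2)}$ times constants, so the denominator is comparable to $x_*^2$ times a count of the same exponential order. Choosing $t=c_-s$ with $c_-$ small makes the exponent $t^2/x_*^2\asymp c_-^{2q/(q+2)}L$ negligible relative to $L$. The ratio is then at least $p^{1-o(1)}$-type mass $\gtrsim p/(ep/k_u^2)^{\delta'}\ge k_u^2/2$.

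This step is the main obstacle. The upper and lower exponential constants in \eqref{eq:subweibull-tail} differ ($c_2$ versus $C_1$), so the numerator mass and the denominator mass need not live on the same shell. I would handle this by taking $c_-$ small enough that even the crude bound $\sum\psi\le \max_j\xi_j^2\cdot\sum_j e^{-t^2/\xi_j^2}$, which is $\lesssim(\log p)^{2/q}p$, leaves room. The point is that the numerator shell already gives $n_*^2x_*^2e^{-2t^2/x_*^2}/(p(\log p)^{2/q})$, and we need this to be at least $k_u^2$. That requires $n_*\gtrsim \sqrt{p}\,k_u(\log p)^{1/q}p^{o(1)}$, which holds when $x_*^q\le \eta L$ for small $\eta$, since $\sqrt p\,k_u\le p\,(k_u^2/p)^{1/2}$. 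Monotonicity from \Cref{lem: decreasing} then gives \eqref{eq:subweibull-lambda-localization}. Finally, \eqref{eq:subweibull-S2-at-lambda} follows from the identity $\sum_j\psi(|\xi_j|)=(\sum_j\phi(|\xi_j|))^2/D_\xi(\lambda)$ combined with $D_\xi(\lambda)=k_u^2/4$ and $\sum_j\psi\le\max_j|\xi_j|\sum_j\phi$. These give $\sum_j\psi\le (\max_j|\xi_j|)^2k_u^2/4$. A sharper version replaces $\max_j|\xi_j|$ by the effective magnitude $\lesssim L^{1/q}$, obtained by splitting off the coordinates above $C L^{1/q}$, whose number is $\lesssim k_u^2$-small by the tail and whose weight $e^{-\lambda^2/\xi_j^2}$ is controlled using $\lambda\ge c_-s$. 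This yields $(\sum\psi)^{1/2}\lesssim k_uL^{1/q}$.
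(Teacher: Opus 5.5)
Your proposal is correct and follows essentially the paper's route: Cauchy--Schwarz $D_\xi(t)\le\sum_j e^{-t^2/\xi_j^2}$ with shell counts and a saddle-point sum for the upper crossing, a single magnitude level $x_*\asymp t^{2/(q+2)}$ for the lower crossing, and $S_2(\lambda)\le\|\xi\|_\infty S_1(\lambda)$ with $D_\xi(\lambda)=k_u^2/4$ for part 2. Bounding $S_2(c_-s)$ crudely by $p\max_j\xi_j^2$ instead of by the paper's saddle estimate is a harmless simplification. However, you should count $\{j:|\xi_j|\ge x_*\}$ rather than the shell $[x_*,2x_*]$, which can be empty when $C_1$ and $c_2$ in \eqref{eq:subweibull-tail} are far apart; only the lower endpoint matters, since $x\mapsto xe^{-t^2/x^2}$ is increasing.

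Your closing ``sharper version'' is unnecessary. Since $k_u\le p^\gamma$ with $\gamma<1/2$, we have $L\ge(1-2\gamma)\log p$, hence $\max_j|\xi_j|\lesssim(\log p)^{1/q}\asymp L^{1/q}$. So your crude bound $S_2(\lambda)\le(\max_j|\xi_j|)^2k_u^2/4$ already gives \eqref{eq:subweibull-S2-at-lambda}, exactly as in the paper.
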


Let
\[
    S_2(t)
    =
    \sum_{j=1}^{k_\xi}\xi_j^2\exp(-t^2/\xi_j^2).
\]

By \Cref{lem:subweibull-D-localization}, we have
\[
    \lambda
    \asymp
    \left\{\log\left(\frac{ep}{k_u^2}\right)\right\}^{1/2+1/q},
\]
and
\[
    S_2(\lambda)^{1/2}
    \lesssim
    k_u
    \left\{\log\left(\frac{ep}{k_u^2}\right)\right\}^{1/q}.
\]
Therefore, by the definition of \(\nu_1\) in \eqref{eq: nu1 definition},
\[
\begin{aligned}
    \nu_1
    &=
    k_u\lambda+S_2(\lambda)^{1/2}\\
    &\asymp
    k_u
    \left\{\log\left(\frac{ep}{k_u^2}\right)\right\}^{1/2+1/q}.
\end{aligned}
\]
This proves \eqref{eq:subweibull-nu1-rate}.

In the moderately sparse regime, \Cref{prop: upper bound equivalent} and
\(N=\lfloor n/\log p\rfloor\) give
\[
    \tau_{\mathrm{adap}}(k_u,k~;~\xi)
    \lesssim
    \frac{k_u\log p}{n}H(N~;~\xi).
\]
Using \eqref{eq:subweibull-H-rate} with \(r=N\), the above upper bound becomes
\[
\begin{aligned}
    \tau_{\mathrm{adap}}(k_u,k~;~\xi)
    &\lesssim
    \frac{k_u\log p}{n}
    \sqrt{N}
    \left\{\log\left(\frac{ep}{N}\right)\right\}^{1/q}\\
    &\asymp
    k_u\sqrt{\frac{\log p}{n}}
    \left\{\log\left(\frac{ep}{N}\right)\right\}^{1/q},
\end{aligned}
\]
which proves \eqref{eq:subweibull-upper-rate}.

The lower bound in \eqref{eq:subweibull-lower-rate} follows from the
\(\nu_1/\sqrt n\) term in \Cref{thm: hypothesis} and
\eqref{eq:subweibull-nu1-rate}.

If
\[
    \log(ep/N)\asymp \log(ep/k_u^2)\asymp\log p,
\]
then the upper bound \eqref{eq:subweibull-upper-rate} and the lower bound
\eqref{eq:subweibull-lower-rate} have the common order
\[
    k_u\frac{(\log p)^{1/2+1/q}}{\sqrt n}
\]
up to logarithmic factors. This proves \eqref{eq:subweibull-matched-rate} and completes the
proof of \Cref{prop:random-subweibull-loading}.

\subsubsection{Proofs for Lemma~\ref{lem:subweibull-order-statistics} and Lemma~\ref{lem:subweibull-log-sum}}

\begin{proof}[Proof of \Cref{lem:subweibull-order-statistics}]
For \(1\le j\le N\), put
\[
    u_j=\left\{\log\left(\frac{ep}{j}\right)\right\}^{1/q}.
\]
Since \(N=o(p)\), we have \(u_N\to\infty\). Hence all thresholds used below exceed \(t_0\)
uniformly over \(1\le j\le N\), for all sufficiently large \(p\).

We first prove the upper bound. Choose \(A_+>0\) so large that
\[
    a_+ := c_2A_+^q>1.
\]
For each \(j\le N\), define the exceedance count
\[
    X_j^+
    =
    \sum_{i=1}^p \mathbf 1\{W_i>A_+u_j\}.
\]
If \(|\xi_j|>A_+u_j\), then \(X_j^+\ge j\). Since \(X_j^+\) is binomial with mean
\(\mu_j^+=p\mathbb P(W>A_+u_j)\), the upper tail bound in \eqref{eq:subweibull-tail} gives
\[
    \mu_j^+
    \le
    C_2p\exp(-c_2A_+^q u_j^q)
    =
    C_2p\left(\frac{ep}{j}\right)^{-a_+}.
\]
Therefore, uniformly for \(1\le j\le N\), we have
\[
    \frac{e\mu_j^+}{j}
    \le
    K_+\left(\frac{N}{p}\right)^{a_+-1}
    =:\rho_p,
\]
where \(K_+<\infty\) is a constant and \(\rho_p\to0\). Recall the standard binomial tail bound
$$
\mathbb P(X\ge j)\le(e\mu/j)^j$$
where $X$ is a binomial random variable with mean \(\mu\).
Together with the fact that \(\rho_p<1\) for all sufficiently large \(p\), the standard binomial tail bound implies that
\[
    \mathbb P\!\left(\exists\,1\le j\le N:\ |\xi_j|>A_+u_j\right)
    \le
    \sum_{j=1}^N \mathbb P(X_j^+\ge j)
    \le
    \sum_{j=1}^N \rho_p^j
    =
    o(1).
\]
Thus, with probability tending to one, \(|\xi_j|\le A_+u_j\) for all \(1\le j\le N\).

We next prove the lower bound. Choose \(A_->0\) so small that
\[
    a_- :=C_1A_-^q<1.
\]
For each \(j\le N\), define
\[
    X_j^-
    =
    \sum_{i=1}^p \mathbf 1\{W_i\ge A_-u_j\}.
\]
If \(X_j^-\ge j\), then \(|\xi_j|\ge A_-u_j\).
Denote the mean as \(\mu_j^-=p\mathbb P(W\ge A_-u_j)\).
The lower tail bound in \eqref{eq:subweibull-tail} gives  the bound
\[
    \mu_j^-
    \ge
    c_1p\exp(-C_1A_-^q u_j^q)
    =
    c_1p\left(\frac{ep}{j}\right)^{-a_-}.
\]
Consequently, denoting \(K_- = c_1e^{-a_-}>0\), we have \(\mu_j^-\ge K_-p^{1-a_-}\).
Moreover,
\[
    \frac{\mu_j^-}{j}
    \ge
    \left(\frac pj\right)^{1-a_-}
    \ge
    K_- \left(\frac pN\right)^{1-a_-}
    \to\infty,
\]
uniformly for \(1\le j\le N\).
Consequently, for all sufficiently large \(p\), we have
\(j\le \mu_j^-/2\) uniformly over \(1\le j\le N\). Chernoff's lower-tail bound therefore gives
\[
    \mathbb P(X_j^-<j)
    \le
    \mathbb P(X_j^-<\mu_j^-/2)
    \le
    \exp(-\mu_j^-/8).
\]
Using \(N=o(p)\) and  \(\mu_j^-\ge K_-p^{1-a_-}\), we obtain
\[
    \mathbb P\!\left(\exists\,1\le j\le N:\ |\xi_j|<A_-u_j\right)
    \le
    \sum_{j=1}^N \mathbb P(X_j^-<j)
    \le
    N\exp(-K_-p^{1-a_-}/8)
    =
    o(1).
\]
Combining the upper and lower events proves \eqref{eq:subweibull-order-stat-event} with
\(c=A_-\) and \(C=A_+\).
\end{proof}

\bigskip

\begin{proof}[Proof of \Cref{lem:subweibull-log-sum}]
The lower bound is immediate when \(r=1\). When \(r\ge2\), the indices \(r/2<j\le r\)
contribute at least a constant multiple of \(r\) terms, and each such term is at least
\(\{\log(ep/r)\}^{2/q}\). Hence the sum is bounded below by a constant multiple of
\(r\{\log(ep/r)\}^{2/q}\). This gives the lower bound.

For the upper bound, write \(L_r=\log(ep/r)\). Since \(r\le N=o(p)\), we have \(L_r\to\infty\)
uniformly over \(1\le r\le N\). The function \(x\mapsto \{\log(ep/x)\}^{2/q}\) is decreasing on \((0,r]\), and hence
\[
    \sum_{j\le r}\left\{\log\left(\frac{ep}{j}\right)\right\}^{2/q}
    \le
    \int_0^r
    \left\{\log\left(\frac{ep}{x}\right)\right\}^{2/q}\,dx .
\]
The integral is finite. With the change of variables
\(y=\log(ep/x)\),
\[
    \int_0^r
    \left\{\log\left(\frac{ep}{x}\right)\right\}^{2/q}\,dx
    =
    ep\int_{L_r}^{\infty} y^{2/q}e^{-y}\,dy .
\]
Putting \(y=L_r+u\), we obtain
\[
    \int_{L_r}^{\infty} y^{2/q}e^{-y}\,dy
    =
    e^{-L_r}L_r^{2/q}
    \int_0^\infty \left(1+\frac{u}{L_r}\right)^{2/q}e^{-u}\,du
    \lesssim
    e^{-L_r}L_r^{2/q},
\]
where the last inequality holds uniformly for large \(p\), because \(L_r\to\infty\) and
\(\int_0^\infty(1+u)^{2/q}e^{-u}\,du<\infty\). Since \(ep\,e^{-L_r}=r\), the upper bound is
\(O(rL_r^{2/q})\). This proves the lemma.
\end{proof}

\bigskip
\subsubsection{Preliminary results for proving Lemma~\ref{lem:subweibull-D-localization}}
To prove \Cref{lem:subweibull-D-localization}, we need two more results.

\begin{lemma}[A deterministic geometric saddle comparison]
\label{lem:subweibull-geometric-saddle}
Fix \(q>0\), \(B>1\), and constants \(a,b>0\). For \(\ell=0,1,2\), there exist
constants \(0<c<C<\infty\), depending only on \(q,B,a,b,\ell\), such that for all \(t\ge1\),
\begin{equation}\label{eq:subweibull-geometric-saddle}
    \sum_{m\in\mathbb Z}
    B^{m\ell}
    \exp\left\{-a\frac{t^2}{B^{2m}}-bB^{mq}\right\}
    \le
    C\,x(t)^\ell \exp\{-c\Phi(t)\},
\end{equation}
where
\begin{equation}\label{eq:subweibull-x-phi-def}
    x(t)=t^{2/(q+2)},\qquad
    \Phi(t)=x(t)^q=t^{2q/(q+2)}.
\end{equation}
Moreover, if \(m(t)\) is any integer satisfying \(B^{m(t)}\asymp x(t)\), then
\begin{equation}\label{eq:subweibull-geometric-saddle-lower}
    B^{m(t)\ell}
    \exp\left\{-a\frac{t^2}{B^{2m(t)}}-bB^{m(t)q}\right\}
    \ge
    c\,x(t)^\ell \exp\{-C\Phi(t)\}.
\end{equation}
\end{lemma}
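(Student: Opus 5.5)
The plan is to substitute $y=B^m$ and compare each term with the continuous function
\[
g_\ell(y)=y^\ell\exp\{-a t^2/y^2-b y^q\},\qquad y>0.
\]
The exponent $\psi(y)=a t^2/y^2+b y^q$ is a sum of a decreasing and an increasing power. Its minimum is attained at $y_*\asymp t^{2/(q+2)}=x(t)$, where the two terms balance, and $\psi(y_*)\asymp t^2/x^2 = x^q=\Phi(t)$. So both the bound \eqref{eq:subweibull-geometric-saddle} and the lower bound \eqref{eq:subweibull-geometric-saddle-lower} say that the geometric sum is dominated by the terms with $B^m$ within a constant factor of the saddle point.

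For the lower bound \eqref{eq:subweibull-geometric-saddle-lower} I would take $m(t)$ with $c'x\le B^{m(t)}\le C'x$. Then $B^{m\ell}\ge (c')^\ell x^\ell$. For the exponent,
\[
a t^2/B^{2m}\le a t^2/(c'x)^2=(a/c'^2)\Phi(t), \qquad bB^{mq}\le bC'^q\Phi(t).
\]
Together these give the claim with $C=a/c'^2+bC'^q$. This step is routine.

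For the upper bound I would split the index set at $y=x$ and write $B^m=x\,w$ with $w\in B^{\mathbb Z}/x$, so that each term equals
\[
x^\ell\, w^\ell \exp\{-\Phi(t)(a w^{-2}+b w^{q})\},
\]
using $t^2=x^{q+2}$. Since $h(w)=a w^{-2}+b w^q\ge h_{\min}>0$, and since $h(w)\ge \tfrac12 h_{\min}+\tfrac12 h(w)$, I can extract the factor $\exp\{-\tfrac12 h_{\min}\Phi\}$. This leaves the sum
\[
\sum_m w_m^\ell \exp\{-\tfrac12\Phi h(w_m)\},
\]
which I need to bound by a constant uniformly in $t\ge1$; using $\Phi\ge1$, it suffices to bound $\sum_m w_m^\ell e^{-h(w_m)/2}$. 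The points $w_m$ form a geometric grid with ratio $B$ whose offset depends on $t$. The grid sum is at most a constant depending only on $B,\ell,a,b,q$, for two reasons. For $w\ge1$ the terms decay like $w^\ell e^{-bw^q/2}$ along $w=B^j$, which is summable. For $w<1$ the factor $e^{-a w^{-2}/2}$ gives super-geometric decay (and $w^\ell\le1$ for $\ell\ge0$). Each dyadic-type window $[B^j,B^{j+1})$ contains exactly one grid point, so the sum is bounded by $\sum_{j\in\mathbb Z}\sup_{w\in[B^j,B^{j+1}]} w^\ell e^{-h(w)/2}<\infty$. This yields \eqref{eq:subweibull-geometric-saddle} with $c=h_{\min}/2$.

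The main obstacle is just making the bound uniform in the $t$-dependent offset of the grid. The sup-over-windows device above handles it, so I expect no genuine difficulty.
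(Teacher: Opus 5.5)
Your proposal is correct and follows essentially the paper's route: rescale $B^m$ by the saddle scale $x(t)$ so that every exponent becomes $\Phi(t)$ times a fixed function of the rescaled grid point, use the single near-saddle term for the lower bound, and sum over the geometric grid for the upper bound. The differences are only bookkeeping. The paper anchors the grid at $B^{m_0}\le x(t)<B^{m_0+1}$ and treats $k\ge 0$ and $k<0$ separately. Your split $h\ge\tfrac12 h_{\min}+\tfrac12 h$, combined with the supremum over the windows $[B^j,B^{j+1})$, handles the $t$-dependent offset and the prefactor $w^\ell$ somewhat more cleanly than the paper's step of absorbing $B^{k\ell}$ into the exponential.
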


\begin{lemma}[Grid-count event for sub-Weibull coordinates]
\label{lem:subweibull-grid-count}
Under \eqref{eq:subweibull-tail}, there exist constants
\(B>1\), \(A>0\), \(C_M<\infty\), and \(0<c<C<\infty\) such that the following event has
probability tending to one:
\begin{enumerate}
\item
\begin{equation}\label{eq:subweibull-max-bound}
    \max_{j\le p}|\xi_j|\le C_M(\log p)^{1/q}.
\end{equation}
\item
For every grid interval \(I_m=[B^m,B^{m+1})\) intersecting
\([t_0,C_M(\log p)^{1/q}]\),
\begin{equation}\label{eq:subweibull-grid-upper-count}
    \#\{j:|\xi_j|\in I_m\}
    \le
    (\log p)^A p\exp(-cB^{mq}).
\end{equation}
\item
Whenever \(p\exp(-CB^{mq})\ge(\log p)^A\),
\begin{equation}\label{eq:subweibull-grid-lower-count}
    \#\{j:|\xi_j|\in I_m\}
    \ge
    (\log p)^{-A}p\exp(-CB^{mq}).
\end{equation}
\end{enumerate}
\end{lemma}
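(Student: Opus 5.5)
The plan is to reduce everything to binomial counts of the i.i.d.\ variables $W_1,\ldots,W_p$, in the same way as the proof of \Cref{lem:subweibull-order-statistics}.

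\textbf{The maximum bound.} For the first item, I would apply the union bound with the upper tail in \eqref{eq:subweibull-tail}. This gives
\[
\mathbb P\bigl(\max_i W_i> C_M(\log p)^{1/q}\bigr)\le C_2\, p\, \exp\bigl(-c_2C_M^q\log p\bigr)=o(1)
\]
as soon as $c_2C_M^q>1$.

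\textbf{Setting up the counts.} Fix any $B>1$, say $B=2$. The grid intervals meeting $[t_0,C_M(\log p)^{1/q}]$ are indexed by $m$ in a range of size $O(\log\log p)$, so it suffices to show that each interval fails with probability $o(1/\log\log p)$. For each such $m$, let
\[
X_m=\#\{i:W_i\in I_m\}\sim \mathrm{Bin}(p,\pi_m), \qquad \pi_m=\mathbb P(W\in I_m).
\]

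\textbf{Upper count.} The upper tail gives $\pi_m\le \mathbb P(W\ge B^m)\le C_2e^{-c_2B^{mq}}$. Put $c=c_2$ and write $\mu_m^+=pC_2e^{-c_2B^{mq}}$, so that $\mathbb E X_m\le\mu_m^+$. The target threshold is $T_m=(\log p)^A p e^{-cB^{mq}}$, which is at least $(\log p)^A\mu_m^+/C_2$. I would use the bound $\mathbb P(X\ge T)\le (e\mu/T)^T$, valid whenever $T\ge \mu$. Here $e\mu_m^+/T_m\le eC_2(\log p)^{-A}\to0$. Two cases then arise. If $T_m\ge1$, the failure probability is at most $(eC_2(\log p)^{-A})^{T_m}\le eC_2(\log p)^{-A}$. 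If $T_m<1$, the event $X_m>T_m$ means $X_m\ge1$, whose probability is at most $\mu_m^+\le C_2T_m(\log p)^{-A}\le C_2(\log p)^{-A}$. Summing over the $O(\log\log p)$ intervals gives $o(1)$ for any $A>0$.

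\textbf{Lower count (main obstacle).} This is the delicate step: the tail bounds in \eqref{eq:subweibull-tail} control $\mathbb P(W\ge t)$, not the mass of a single interval. I would write
\[
\pi_m\ge \mathbb P(W\ge B^m)-\mathbb P(W\ge B^{m+1})\ge c_1e^{-C_1B^{mq}}-C_2e^{-c_2B^{(m+1)q}}.
\]
I would choose $B$ large enough that $c_2B^q>2C_1$. Since $B^{mq}\ge t_0^q$, I also need $t_0$ large, which costs nothing: raising $t_0$ only shrinks the set of $m$ considered, and the remaining finitely many $m$ with $B^m$ of order one have $\pi_m$ bounded below unless $W$ puts no mass there. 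Hmm — finitely many small $m$ might have $\pi_m=0$. I would handle this by allowing the constant $C$ to absorb them, or by requiring $B^m$ large, since the lemma is only applied in the regime where $B^m\to\infty$. With these choices the subtracted term is at most half of the first, so $\pi_m\ge \tfrac{c_1}{2}e^{-C_1B^{mq}}$.

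Take $C=C_1$. Then $\mathbb E X_m\ge \tfrac{c_1}{2}pe^{-CB^{mq}}$. Under the hypothesis $pe^{-CB^{mq}}\ge(\log p)^A$, this mean is at least $\tfrac{c_1}{2}(\log p)^A$. The Chernoff bound gives $\mathbb P(X_m<\mathbb E X_m/2)\le \exp(-\mathbb E X_m/8)$, which is at most $\exp(-c(\log p)^A)$. Provided $A\ge 2$, this is summable over the $O(\log\log p)$ intervals. On the complementary event, $X_m\ge \tfrac{c_1}{4}pe^{-CB^{mq}}\ge (\log p)^{-A}pe^{-CB^{mq}}$ for large $p$.

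A final union bound over the three items completes the proof.
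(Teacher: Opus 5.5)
Your proposal is correct and matches the paper's proof. Both use a union bound for the maximum, binomial counts $X_m$ per grid interval with the Chernoff upper tail, an annulus-mass bound $\pi_m\ge \tfrac{c_1}{2}e^{-C_1B^{mq}}$ followed by the Chernoff lower tail, and a union bound over the $O(\log\log p)$ intervals; the paper shrinks $c$ so that $T_m\ge p^{1/2}$ instead of your $T_m<1$ case split. One small point: the paper secures the annulus bound for every $u\ge t_0$ by taking $B$ large relative to $t_0$, namely $(c_2B^q-C_1)t_0^q\ge\log(2C_2/c_1)$, which is cleaner than enlarging $t_0$ and removes your worry about intervals with $\pi_m=0$ (those could not have been absorbed into $C$ in any case).
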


\begin{proof}[Proof of \Cref{lem:subweibull-geometric-saddle}]
Set \(x=x(t)\). We have \(t^2=x^{q+2}\).

We first prove the upper bound
\eqref{eq:subweibull-geometric-saddle}.

Choose \(m_0\in\mathbb Z\) such that
\[
    B^{m_0}\le x < B^{m_0+1}.
\]
For \(m=m_0+k\), we have
\[
    B^m = B^{m_0}B^k = x\theta B^k,
    \qquad
    \theta:=\frac{B^{m_0}}{x}\in[B^{-1},1].
\]
It follows that
\[
\begin{aligned}
    \frac{t^2}{B^{2m}}
    &=
    \frac{x^{q+2}}{x^2\theta^2B^{2k}}
    =
    x^q\theta^{-2}B^{-2k},\\
    B^{mq}
    &=
    x^q\theta^qB^{qk}.
\end{aligned}
\]
Since \(\theta\in[B^{-1},1]\), there are constants \(c_1,C_1>0\), depending only on
\(B,a,b,q\), such that
\begin{equation}\label{eq:geom-saddle-exponent-lower}
    a\frac{t^2}{B^{2m}}+bB^{mq}
    \ge
    c_1x^q\left(B^{-2k}+B^{qk}\right).
\end{equation}
Also, because \(\theta\le1\), and $\ell\in \{0,1,2\}$,
\[
    B^{m\ell}
    =
    x^\ell \theta^\ell B^{k\ell}
    \le
    x^\ell B^{k\ell}.
\]
Therefore
\[
\begin{aligned}
    &\sum_{m\in\mathbb Z}
    B^{m\ell}
    \exp\left\{-a\frac{t^2}{B^{2m}}-bB^{mq}\right\}  \\
    &\qquad\le
    x^\ell
    \sum_{k\in\mathbb Z}
    B^{k\ell}
    \exp\left\{-c_1x^q\left(B^{-2k}+B^{qk}\right)\right\}.
\end{aligned}
\]
It remains to show that the last sum is at most a constant multiple of
\(\exp(-c x^q)\).

We split the sum into \(k\ge0\) and \(k<0\).

For \(k\ge0\), the term with \(B^{qk}\) in the exponent dominates. Since \(t\ge1\), we have
\(x^q\ge1\). Because \(\ell\) is fixed, the polynomial prefactor \(B^{k\ell}\) can be
absorbed into the exponential: there exists \(c_2\in(0,c_1)\) such that, for all \(k\ge0\),
\[
    B^{k\ell}\exp\{-c_1x^qB^{qk}\}
    \le
    \exp\{-c_2x^qB^{qk}\}.
\]
Hence
\[
    \sum_{k\ge0}
    B^{k\ell}
    \exp\left\{-c_1x^q\left(B^{-2k}+B^{qk}\right)\right\}
    \le
    \sum_{k\ge0}\exp\{-c_2x^qB^{qk}\}.
\]
Since \(B^{qk}\) grows geometrically, the last sum is bounded by a constant multiple of
its first term. Thus, for some constants \(C_2,c_3>0\),
\[
    \sum_{k\ge0}\exp\{-c_2x^qB^{qk}\}
    \le
    C_2\exp(-c_3x^q).
\]

For \(k<0\), write \(h=-k\ge1\). Then \(B^{-2k}=B^{2h}\), and $B^{k\ell}\leq 1$. We have
\[
\begin{aligned}
    &\sum_{k<0}
    B^{k\ell}
    \exp\left\{-c_1x^q\left(B^{-2k}+B^{qk}\right)\right\} \\
    &\qquad\le
    \sum_{h\ge1}
    \exp\{-c_1x^qB^{2h}\}
    \le
    C_3\exp(-c_4x^q),
\end{aligned}
\]
again because \(B^{2h}\) grows geometrically.

Combining the bounds for \(k\ge0\) and \(k<0\), we obtain
\[
    \sum_{m\in\mathbb Z}
    B^{m\ell}
    \exp\left\{-a\frac{t^2}{B^{2m}}-bB^{mq}\right\}
    \le
    C x^\ell \exp(-c x^q)
    =
    Cx(t)^\ell\exp\{-c\Phi(t)\}.
\]
This proves \eqref{eq:subweibull-geometric-saddle}.

We now prove the lower bound \eqref{eq:subweibull-geometric-saddle-lower}.
Let $m(t)=\lceil \frac{\log(x(t))}{\log B}\rceil$. Then
\(m(t)\) satisfies that \(B^{m(t)}\asymp x(t)\); that is,
there are constants \(0<c_5<C_5<\infty\)
such that
\[
    c_5x\le B^{m(t)}\le C_5x.
\]
Hence
\[
    B^{m(t)\ell}\asymp x^\ell.
\]
Moreover,
\[
    \frac{t^2}{B^{2m(t)}}+B^{m(t)q}
    \asymp
    \frac{x^{q+2}}{x^2}+x^q
    \asymp
    x^q
    =
    \Phi(t).
\]
Therefore the single summand at \(m=m(t)\) satisfies
\[
    B^{m(t)\ell}
    \exp\left\{-a\frac{t^2}{B^{2m(t)}}-bB^{m(t)q}\right\}
    \ge
    c x^\ell\exp(-C x^q),
\]
which is exactly \eqref{eq:subweibull-geometric-saddle-lower}.
\end{proof}

\bigskip

\begin{proof}[Proof of \Cref{lem:subweibull-grid-count}]
Choose \(C_M\) sufficiently large. By the upper tail bound in \eqref{eq:subweibull-tail},
\[
    \mathbb P\!\left(\max_{j\le p} W_j>C_M(\log p)^{1/q}\right)
    \le
    pC_2\exp(-c_2C_M^q\log p)
    =
    C_2p^{1-c_2C_M^q}
    =
    o(1),
\]
which proves \eqref{eq:subweibull-max-bound}.

Next choose \(B>1\) large enough that
\[
    c_2B^q>C_1
\]
and
\[
    \sup_{u\ge t_0}
    \exp\{C_1u^q-c_2B^qu^q\}
    <
    \frac{c_1}{2C_2}.
\]
Then, for all \(u\ge t_0\),
\begin{equation}\label{eq:subweibull-annulus-mass}
\begin{aligned}
    \mathbb P(u\le W\le Bu)
    &=
    \mathbb P(W\ge u)-\mathbb P(W>Bu)\\
    &\ge
    c_1\exp(-C_1u^q)-C_2\exp(-c_2B^q u^q)\\
    &\ge
    \frac{c_1}{2}\exp(-C_1u^q).
\end{aligned}
\end{equation}
Thus each grid interval \(I_m=[B^m,B^{m+1})\) with \(B^m\ge t_0\) has probability at least
a constant multiple of \(\exp(-CB^{mq})\), while the upper tail bound gives
\begin{equation}
    \label{eq:subweibull-grid-mean-upper}
    \mathbb P(W\in I_m)\le \mathbb P(W\ge B^m)\le C_2\exp(-c_2B^{mq}).
\end{equation}

We now justify the simultaneous count bounds. Let
\[
    \mathcal M_p
    =
    \{m\in\mathbb Z: I_m=[B^m,B^{m+1}) \text{ intersects }
    [t_0,C_M(\log p)^{1/q}]\}.
\]
Then \(|\mathcal M_p|=O(\log\log p)\). For \(m\in\mathcal M_p\), define
\[
    N_m
    =
    \#\{j:|\xi_j|\in I_m\}
    =
    \sum_{j=1}^p\mathbf 1\{W_j\in I_m\}.
\]
Thus \(N_m\) is binomial with mean $\mu_m:=
    p\,\mathbb P(W\in I_m)$.
For the finitely many boundary intervals with \(B^m<t_0\), the bounds below can be absorbed
by changing constants. Hence we focus on intervals with \(B^m\ge t_0\).

Choose a constant \(c>0\) smaller than \(c_2\), and also small enough that for all \(m\in\mathcal M_p\) and all sufficiently large \(p\), it holds that
\[
    p\exp(-cB^{mq})\ge p^{1/2}.
\]
This is possible because
\(B^{mq}\lesssim \log p\) on \(\mathcal M_p\).

For any $A>0$ to be determined, set
\[
    T_m
    =
    (\log p)^A p\exp(-cB^{mq}).
\]
By \eqref{eq:subweibull-grid-mean-upper},
\[
    \frac{T_m}{e\mu_m}
    \ge
    C(\log p)^A
    \exp\{(c_2-c)B^{mq}\}
    \ge
    C(\log p)^A.
\]
By taking \(A\) large enough, we can ensure that \(T_m\ge e\mu_m\) for all \(m\in\mathcal M_p\).
The binomial Chernoff bound yields
\[
    \mathbb P(N_m\ge T_m)
    \le
    \left(\frac{e\mu_m}{T_m}\right)^{T_m}.
\]
Therefore, for some constant $c'$ and sufficiently large $p$, we have
\[
    \mathbb P(N_m\ge T_m)
    \le
    \exp\{-c'T_m\log\log p\}
    \le
    \exp\{-c'p^{1/2}\log\log p\}.
\]
Taking a union bound over \(\mathcal M_p\), we obtain
\[
    \mathbb P\left(
    \exists m\in\mathcal M_p \text{ such that }
    N_m> T_m
    \right)
    \le o(1),
\]
since $|\mathcal M_p|=O(\log\log p)$.
This proves the simultaneous upper count bound \eqref{eq:subweibull-grid-upper-count}.

We next prove the simultaneous lower count bound. By \eqref{eq:subweibull-annulus-mass},
there exist constants \(c_\ell,C_\ell>0\) such that
\begin{equation}\label{eq:subweibull-grid-mean-lower}
    \mu_m
    =
    p\,\mathbb P(W\in I_m)
    \ge
    c_\ell p\exp(-C_\ell B^{mq})
\end{equation}
for all $m\in \mathcal M_p$. Choose \(C\ge C_\ell\).
For any $A>0$ to be determined, consider only those \(m\in\mathcal M_p\)
for which
\[
    p\exp(-CB^{mq})\ge(\log p)^A.
\]
Set
\[
    L_m
    =
    (\log p)^{-A}p\exp(-CB^{mq}).
\]
By \eqref{eq:subweibull-grid-mean-lower},
\[
    \frac{L_m}{\mu_m}
    \le
    c_\ell^{-1}(\log p)^{-A}
    \exp\{-(C-C_\ell)B^{mq}\}
    \le
    c_\ell^{-1}(\log p)^{-A}.
\]
Taking \(A\) sufficiently large gives \(L_m\le \mu_m/2\) uniformly over these intervals.
Hence Chernoff's lower-tail bound implies
\[
    \mathbb P(N_m<L_m)
    \le
    \mathbb P(N_m<\mu_m/2)
    \le
    \exp(-\mu_m/8).
\]
Moreover, since \(C\ge C_\ell\) and
\(p\exp(-CB^{mq})\ge(\log p)^A\), we have
\[
    \mu_m
    \ge
    c_\ell p\exp(-C_\ell B^{mq})
    \ge
    c_\ell p\exp(-CB^{mq})
    \ge
    c_\ell(\log p)^A.
\]
Therefore, for some constant $c>0$, we have
\[
    \mathbb P(N_m<L_m)
    \le
    \exp\{-c(\log p)^A\}.
\]
A union bound over \(O(\log\log p)\) intervals gives
\[
    \mathbb P\left(
    \exists m\in\mathcal M_p:
    p\exp(-CB^{mq})\ge(\log p)^A
    \text{ and }
    N_m< L_m
    \right)
    =o(1).
\]
This proves the simultaneous lower count bound \eqref{eq:subweibull-grid-lower-count}.

\end{proof}

\subsubsection{Proof of Lemma~\ref{lem:subweibull-D-localization}}

To prove \Cref{lem:subweibull-D-localization},
we work on the event in \Cref{lem:subweibull-grid-count}. For \(t>0\), define
\[
    S_\ell(t)
    =
    \sum_{j=1}^{k_\xi}
    |\xi_j|^\ell \exp(-t^2/\xi_j^2),
    \qquad \ell=0,1,2.
\]
Then
\[
    D_\xi(t)=\frac{S_1(t)^2}{S_2(t)}.
\]

First we prove \eqref{eq:subweibull-upper-crossing}. By Cauchy's inequality,
\[
    S_1(t)^2\le S_0(t)S_2(t),
\]
and hence
\begin{equation}\label{eq:subweibull-D-by-S0}
    D_\xi(t)\le S_0(t).
\end{equation}
Set \(t_+=C_+s\). Coordinates with $|\xi_j|\le t_0$ contribute at most
\(p\exp(-t_+^2/t_0^2)\) to \(S_0(t_+)\). For the grid intervals above \(t_0\) as defined in \Cref{lem:subweibull-grid-count},
\Cref{eq:subweibull-grid-upper-count} implies that
\begin{equation}\label{eq:subweibull-S1-upper-tminus}
    \begin{aligned}
    S_0(t_+)
    &\le
    p\exp(-t_+^2/t_0^2)
    +
    \sum_m
    \#\{j:|\xi_j|\in I_m\}
    \exp(-t_+^2/B^{2m+2})\\
    &\le
    p\exp(-t_+^2/t_0^2)
    +
    (\log p)^A p
    \sum_m
    \exp\{-t_+^2/B^{2m+2}-cB^{mq}\}.
\end{aligned}
\end{equation}

By \Cref{lem:subweibull-geometric-saddle},
\begin{equation}\label{eq:subweibull-S0-upper}
    S_0(t_+)
    \lesssim_{\log}
    p\exp\{-c_0 t_+^{2q/(q+2)}\}
    =
    p\exp\{-c_0 C_+^{2q/(q+2)}L\},
\end{equation}
for a constant \(c_0>0\). Choose \(C_+\) so large that
\[
    a_+:=c_0 C_+^{2q/(q+2)}>1.
\]
Then
\[
    \frac{p\exp(-a_+L)}{k_u^2}
    =
    e^{-a_+}
    \left(\frac{k_u^2}{p}\right)^{a_+-1}
    \to0,
\]
because \(k_u\le p^\gamma\) with \(\gamma<1/2\). The logarithmic factor in
\eqref{eq:subweibull-S0-upper} is also negligible compared with the resulting polynomial
decay. Combining this with \eqref{eq:subweibull-D-by-S0} proves
\eqref{eq:subweibull-upper-crossing}.

We next prove \eqref{eq:subweibull-lower-crossing}. Set \(t_-=c_-s\) and
\[
    x_-=x(t_-)=t_-^{2/(q+2)}.
\]
Choose a grid interval \(I_m=[B^m,B^{m+1})\) such that \(B^m\asymp x_-\). Since
\(x_-\asymp L^{1/q}\to\infty\), this interval lies above \(t_0\) for large \(p\).
Also,
\[
    B^{mq}\asymp x_-^q
    =
    t_-^{2q/(q+2)}
    =
    c_-^{2q/(q+2)}L.
\]
Taking \(c_->0\) sufficiently small, we have
\[
    p\exp(-CB^{mq})
    \ge
    (\log p)^A,
\]
so the lower count bound \eqref{eq:subweibull-grid-lower-count} applies to this interval.
Write $N_m=\#\{j:|\xi_j|\in I_m\}$.
We have
\begin{equation*}
    N_m
    \ge
    (\log p)^{-A}p\exp(-CB^{mq}).
\end{equation*}
For every coordinate with \(|\xi_j|\in I_m=[B^m,B^{m+1})\),
\[
    |\xi_j|\ge B^m,
    \qquad
    \exp(-t_-^2/\xi_j^2)\ge \exp(-t_-^2/B^{2m}).
\]
Therefore
\begin{align}
    S_1(t_-)
    &=
    \sum_{j=1}^{k_\xi}|\xi_j|\exp(-t_-^2/\xi_j^2) \notag\\
    &\ge
    \sum_{j:\,|\xi_j|\in I_m}
    |\xi_j|\exp(-t_-^2/\xi_j^2) \notag\\
    &\ge
    N_m B^m\exp(-t_-^2/B^{2m}) \notag\\
    &\ge
    (\log p)^{-A}pB^m
    \exp\{-t_-^2/B^{2m}-CB^{mq}\}.
    \label{eq:subweibull-S1-selected-interval}
\end{align}
The choice \(B^m\asymp x_-\) implies, after changing constants only by factors
depending on \(B\),
\[
    B^m\gtrsim x_-,
    \qquad
    \frac{t_-^2}{B^{2m}}\lesssim \frac{t_-^2}{x_-^2},
    \qquad
    B^{mq}\lesssim x_-^q .
\]
Substituting these three comparisons into
\eqref{eq:subweibull-S1-selected-interval} gives
\[
    S_1(t_-)
    \gtrsim_{\log}
    p\,x_-\,
    \exp\{-C_0(t_-^2/x_-^2+x_-^q)\}.
    \]
Finally, since \(x_-=t_-^{2/(q+2)}\), we have $\frac{t_-^2}{x_-^2}
    =
    t_-^{2q/(q+2)}$ and $
    x_-^q
    =
    t_-^{2q/(q+2)}$.
After increasing \(C_0\) if necessary, we have
\begin{equation}\label{eq:subweibull-S1-lower}
    S_1(t_-)
    \gtrsim_{\log}
    p\,x_-\,
    \exp\{-C_0t_-^{2q/(q+2)}\}.
\end{equation}

\medskip

For \(S_2(t_-)\), we now have \(\ell=2\) and we can still use the same argument in \Cref{eq:subweibull-S1-upper-tminus,eq:subweibull-S0-upper} together with \Cref{lem:subweibull-geometric-saddle} to get
\begin{equation}\label{eq:subweibull-S2-upper-tminus}
\begin{aligned}
    S_2(t_-)
    &\le
    p t_0^2\exp(-t_-^2/t_0^2)
    +
    (\log p)^A p
    \sum_m
    B^{2m+2}\exp\{-t_-^2/B^{2m+2}-cB^{mq}\}\\
    &\lesssim_{\log}
    p\,x_-^2\,\exp\{-c_0't_-^{2q/(q+2)}\},
\end{aligned}
\end{equation}
where $c_0'$ is a positive constant.
Combining \eqref{eq:subweibull-S1-lower} and \eqref{eq:subweibull-S2-upper-tminus}, we get
\begin{equation}\label{eq:subweibull-D-lower-general}
    D_\xi(t_-)
    =
    \frac{S_1(t_-)^2}{S_2(t_-)}
    \gtrsim_{\log}
    p\exp\{-C_1't_-^{2q/(q+2)}\}.
\end{equation}
Since
\[
    t_-^{2q/(q+2)}
    =
    c_-^{2q/(q+2)}L,
\]
we can choose \(c_->0\) sufficiently small so that
\[
    a_-:=C_1'c_-^{2q/(q+2)}<1.
\]
Then
\[
    \frac{p\exp(-a_-L)}{k_u^2}
    =
    e^{-a_-}
    \left(\frac{p}{k_u^2}\right)^{1-a_-}
    \to\infty,
\]
again because \(k_u\le p^\gamma\) with \(\gamma<1/2\). This dominates the logarithmic
loss in \eqref{eq:subweibull-D-lower-general}, proving
\eqref{eq:subweibull-lower-crossing}.

Since \Cref{lem: decreasing} shows that \(F_\xi(z)\) is nonincreasing in \(z\), the
function \(D_\xi(t)=F_\xi(t^2)^2\) is nonincreasing in \(t>0\). The two crossing inequalities
\eqref{eq:subweibull-upper-crossing} and \eqref{eq:subweibull-lower-crossing} therefore
imply \eqref{eq:subweibull-lambda-localization}.

It remains to prove \eqref{eq:subweibull-S2-at-lambda}. By \eqref{eq:subweibull-max-bound},
\[
    \|\xi\|_\infty
    \le
    C_M(\log p)^{1/q}
    \asymp
    L^{1/q},
\]
where the last comparison follows from \(k_u\le p^\gamma\) with \(\gamma<1/2\), which
implies \(L=\log(ep/k_u^2)\asymp\log p\). Since
\[
    S_2(\lambda)
    =
    \sum_j \xi_j^2e^{-\lambda^2/\xi_j^2}
    \le
    \|\xi\|_\infty
    \sum_j |\xi_j|e^{-\lambda^2/\xi_j^2}
    =
    \|\xi\|_\infty S_1(\lambda),
\]
and since \(D_\xi(\lambda)=S_1(\lambda)^2/S_2(\lambda)=k_u^2/4\), we have
\[
    S_2(\lambda)
    \le
    \|\xi\|_\infty \{D_\xi(\lambda)S_2(\lambda)\}^{1/2}.
\]
Therefore
\[
    S_2(\lambda)^{1/2}
    \le
    \|\xi\|_\infty D_\xi(\lambda)^{1/2}
    \lesssim
    k_u L^{1/q},
\]
which proves \eqref{eq:subweibull-S2-at-lambda}.

\end{document}